\documentclass[reqno,11pt]{amsart}

\usepackage[margin=1.1in]{geometry}
\usepackage[linktocpage]{hyperref}
\hypersetup{hidelinks}
\usepackage[us,12hr]{datetime}
\usepackage{enumitem}
\usepackage{cases}
\usepackage{mathrsfs}
\usepackage[T1]{fontenc}
\usepackage{mathrsfs}
\usepackage{amsmath,latexsym,amssymb,amsfonts,amsbsy, amsthm, cite}
\usepackage{bm}
\usepackage[usenames]{color}
\usepackage{xspace,colortbl}
\usepackage{epsfig}
\usepackage{graphicx}
\usepackage{subfigure}
\usepackage{amsmath,amsfonts,amsthm,amssymb,amscd}
\usepackage{color}
\usepackage[title,titletoc,toc]{appendix}
\usepackage{bigints}
\usepackage{cases}\usepackage{extarrows}
\usepackage{setspace}
\usepackage{titletoc}
\usepackage{color}
\usepackage{threeparttable}
\usepackage{array}
\usepackage{booktabs}
\usepackage{float}
\usepackage{tikz}
\usepackage{epstopdf}
\usetikzlibrary{patterns,decorations.pathreplacing}
\allowdisplaybreaks

\numberwithin{equation}{section}

\theoremstyle{plain}
\newtheorem{theorem}{Theorem}[section]
\newtheorem{corollary}{Corollary}[section]

\newtheorem{lemma}{Lemma}[section]

\theoremstyle{definition}

\theoremstyle{remark}

\newtheorem{remark}{Remark}[section]
\numberwithin{equation}{section}

\begin{document}

\title[Semilinear Schr\"{o}dinger equation with harmonic potential and nonlinearities]
{Semilinear Schr\"{o}dinger equation with harmonic potential and combined power-type nonlinearities}

\author[J. Han]{Jiangbo Han}

\address{Jiangbo Han\newline
College of Mathematical Sciences, Harbin Engineering University, 150001 Harbin, China.}

\email{han1825141364@163.com,  2014112111@hrbeu.edu.cn}

\author[R. Xu]{Runzhang Xu}

\address{Runzhang Xu\newline
College of Mathematical Sciences, Harbin Engineering University, 150001 Harbin, China;\newline
The Institute of Mathematical Sciences, The Chinese University of Hong Kong, Shatin, NT, Hong Kong, China;\newline
Sanya Nanhai Innovation and Development Base of Harbin Engineering University, Sanya, China.}

\email{xurunzh@hrbeu.edu.cn; rzxu@ims.cuhk.edu.hk; xurunzh@163.com}

\author[C. Yang]{Chao Yang}

\address{Chao Yang\newline
The Institute of Mathematical Sciences, The Chinese University of Hong Kong, Shatin, NT, Hong Kong, China;\newline
Faculty of Applied Mathematics, AGH University of Krakow, Krakow {\rm 30-059}, Poland.}

\email{cyang@ims.cuhk.edu.hk; chaoyang@agh.edu.pl; yangchao\_yc98@163.com}

\begin{abstract}
We study the Cauchy problem for the semilinear Schr\"{o}dinger equation with a harmonic potential and combined power-type nonlinearities. The goal is to classify initial data that lead to either global existence or blowup solutions by using an enhanced cross-constrained variational theory. First, we review and analyze this variational approach, pinpointing the main issue for our specific problem. We then introduce new variational structures by carefully selecting terms in the energy functional and adjusting their weights, enabling us to create eight cross-constrained variational combinations and related manifolds to distinguish initial data that result in global solutions or finite-time blowup. Using these structures, we examine three initial energy levels: negative, non-negative low, and high energy. Notably, we prove that for any initial data with negative energy, solutions blow up in finite time regardless of their size. Throughout the study, we utilize these new variational frameworks and functionals to systematically classify initial data for global existence or finite-time blowup. We highlight the comprehensiveness of our approach as we analyze numerous combinations of the energy and mass terms, despite the complexity involved. Interestingly, we find that when initial energy is minimized, classifications based on different functionals coincide. For high initial energy, an unresolved issue even for the standard Schr\"{o}dinger equation, we develop a strategy controlling bounds of energy to classify the solution dynamics, including global existence or blowup, for our model. We also provide a complete proof for the standard case addressing this open problem.
\end{abstract}

\subjclass[2020]{35Q55, 35Q41, 35A01, 35B44, 35A15.}

\keywords{Schr\"{o}dinger equation, harmonic potential,  combined power-type nonlinearities,  cross-constrained variational problems, global well-posedness.}

\maketitle

\tableofcontents

\section{Introduction}\label{sec1}
     In this paper, we consider the Cauchy problem of the semilinear Schr\"{o}dinger equation with the harmonic potential and combined power-type nonlinearities:
\begin{equation}\label{1.1}
\begin{cases}
\displaystyle i \varphi_{t}+\Delta \varphi-|x|^{2}\varphi+\lambda_{1}|\varphi|^{p-2}\varphi+\lambda_{2}|\varphi|^{q-2}\varphi=0, \ \ x\in \mathbb{R}^n, \ t>0,\\[2pt]
\displaystyle \varphi(x,0)=\varphi_0(x), \ \ \ \ \ \ \ \ \ \ \ \ \ \ \ \ \ \ \ \ \ \ \ \ \ \ \ \ \ \ \ \ \ \ \ \ \ \ \ \ \ \ \ x\in \mathbb{R}^n,
\end{cases}
\end{equation}
where ${i}$ is the imaginary unit, $\varphi(x,t)$ is the unknown complex function, ${p}$ and ${q}$ satisfy
\begin{equation}\label{1.2}
\begin{cases}
2+\frac{4}{n}<q<p<\frac{2n}{n-2}, &n\geq3,\\[2pt]
2+\frac{4}{n}<q<p<+\infty, &n=1,2,\\
\end{cases}
\end{equation}
and $\lambda_{1}>0$, $\lambda_{2}>0$ are constants.

\subsection{Physical background.} \label{sec1.1}
The equation in problem (\ref{1.1}) considered in this paper originates from various well-known mathematical and physical models.
For example,  to describe the Bose-Einstein condensation, Gross \cite{r13-2-1,r13-2-2} and Pitaevskii \cite{r13-2-3} independently introduced the so-called Gross-Pitaevskii equation:
\begin{equation}\label{1.3}
i\hbar\varphi_{t}+\frac{\hbar^{2}}{2m}\Delta \varphi-V_{ext}\varphi-g|\varphi|^{2}\varphi=0,
\end{equation}
where $\varphi$ is the condensate wave function, $\hbar$ is the Plank constant, $m$ is the mass of particle,
\begin{align*}
g:=\frac{4\pi \hbar^{2}a}{m}
\end{align*}
is related to the scattering length $a$, and  the harmonic potential term
\begin{equation*}
V_{ext}(r):=\frac{m}{2}\omega^{2}r^{2}
\end{equation*}
is used for the approximation of the confining potential of portraying the magnetic traps for alkali atoms (cf. \cite{r13-2}), a condensate of
$N$ particles of the effective scattering length $a$ and the mass $m$ (cf. \cite{r13-3}). The sign of the coefficient $g$ of the nonlinearity in (\ref{1.3}) depends on the species of the atom used for the Bose-Einstein condensation. The case $g>0$ (e.g., for $^{87}$Rb and $^{3}$Na atoms) corresponds to repulsive interatomic interactions, in which the nonlinearity in the Gross-Pitaevskii equation is defocusing (repulsive), and the case $g<0$ (e.g., for
$^{7}$Li and $^{85}$Rb atoms) corresponds to attractive interatomic interactions, in which the corresponding nonlinearity is
focusing (attractive) \cite{focusinganddefocusing}. A generalized version of this equation of the following form:
\begin{equation}\label{1.4}
i\hbar\varphi_{t}+\frac{\hbar^{2}}{2}\Delta \varphi-V(x)\varphi+|\varphi|^{p-2}\varphi=0,
\end{equation}
was also considered in \cite{r1}.  If $p=4$, (\ref{1.4}) turns to the Gross-Pitaevskii equation. To describe the Bose-Einstein condensation with both the two-body interaction and three-body
interaction, the Gross-Pitaevskii equation takes the following form:
\begin{equation}\label{1.5}
i\hbar\varphi_{t}+\frac{\hbar^{2}}{2m}\Delta \varphi-V_{ext}\varphi+k_{1}|\varphi|^{2}\varphi+k_{2}|\varphi|^{4}\varphi=0,
\end{equation}
with two nonlinear terms, where the coefficients of the nonlinear terms $k_{1}$ and $k_{2}$ denote, respectively, the strength of the
two-body and three-body effective interactions \cite{r13-3-0001,r13-3-0002}. By considering the nonlinear source term of a more general form $f(\varphi)$ instead of the above special forms, the following  equation:
\begin{equation}\label{1.6}
i\hbar\varphi_{t}+\frac{\hbar^{2}}{2}\Delta \varphi-V(x)\varphi+f(\varphi)=0
\end{equation}
was considered in \cite{r2, r4, r4-1}. In (\ref{1.6}), the general nonlinearity satisfies $f(e^{i\theta}s)=e^{i\theta}f(s)$ for $s\in\mathbb{R}$, which includes the combined power-type nonlinearities
\begin{align*}
f(\varphi)=\lambda_{1}|\varphi|^{p-2}\varphi+\lambda_{2}|\varphi|^{q-2}\varphi
\end{align*}
considered in the present paper as special cases.

\subsection{Semilinear Schr\"{o}dinger equation.}\label{sec1.2}
Next, we recall some known results about the semilinear Schr\"{o}dinger equation with the power-type nonlinearity. According to the sign (positivity or negativity) of the nonlinear source term, we classify the nonlinear term of the Schr\"{o}dinger equation describing Bose-Einstein condensation into either the focusing version or the defocusing version, as introduced in Subsection \ref{sec1.1}. As the present paper is concerned with the focusing case of the Schr\"{o}dinger equation, we shall first briefly review some related works   on the defocusing case.

For the defocusing nonlinear Schr\"{o}dinger equation
\begin{align}\label{defocusing}
i\varphi_{t}+\Delta \varphi-|\varphi|^{p-2}\varphi=0,
\end{align}
in the case $2<p<\frac{2n}{n-2}$, Ginibre and Velo \cite{r4-1-2-0} proved that the solution exists globally for any initial data in $H^{1}(\mathbb{R}^{n})$. For the case $p=\frac{2n}{n-2}$,
  Colliander,  Keel, Staffilani, Takaoka,  and Tao  \cite{Annals-2008} and Ryckman and Visan \cite{AJM-2007} proved that the solution to the Cauchy problem of (\ref{defocusing}) exists globally for any initial data with finite energy
 when  $n=3$ and $n=4$, respectively.
Schlag \cite{Annals-2009} proved the global existence of the solution and scattering for $p=4$ and $n=3$. For the mass-critical case $p=2+\frac{4}{n}$, Dodson \cite{JAMS-2012} showed the global well-posedness and scattering for the solution to the Cauchy problem of (\ref{defocusing}) with the initial data $\varphi_{0}\in L^{2}(\mathbb{R}^{n})$.
In the paper \cite{Annals-2008} mentioned above, Bourgain \cite{57-4} and Grillakis \cite{57-20} the energy-critical case $p=\frac{2n}{n-2}$ was considered and the global existence of the solution and scattering for the Cauchy problem of (\ref{defocusing}) were proved.  Chen and Holmer \cite{Invention-2019} studied (\ref{defocusing}) with $p=\frac{2n}{n-2}$ and $n=3$ and proved the uniqueness of large GP solutions.
 Deift and Zhou \cite{Acta-2002} studied the global existence of the solution and scattering for a perturbation problem of the defocusing nonlinear Schr\"{o}dinger equation:
\begin{align}
i\varphi_{t}+\Delta \varphi-2|\varphi|^{p-2}\varphi-\epsilon|\varphi|^{q-2}\varphi=0,\ \epsilon>0.
\end{align}
The above works for the defocusing semilinear Schr\"{o}dinger equation were all about the Cauchy problem in $\mathbb{R}^{n}$.
Burq, G\'erard, and Tzvetkov \cite{Invention-2005} extended the investigation for (\ref{defocusing}) on $\mathbb{R}^{n}$ to a compact Riemannian manifold $(M,g)$ of dimension 2 without
boundary. They proved that (\ref{defocusing}) with $p=4$ is uniformly well-posed in $H^{s}$ for $s>\frac{1}{4}$ on the sphere $\mathbb{S}^{2}$ and more generally on Zoll surfaces.
Colliander,  Keel, Staffilani, Takaoka,  and Tao \cite{Invention-2010} considered the periodic defocusing cubic nonlinear Schr\"{o}dinger equation $-i\varphi_{t}+\Delta \varphi-|\varphi|^{2}\varphi=0$, where the spatial variable $x$ lies
in the torus $\mathbb{T}^{2}:=\mathbb{R}^{2}/(2\mathbb{Z})^{2}$ and proved the existence of the global smooth solution.
For the defocusing nonlinear Schr\"{o}dinger equation (\ref{defocusing}), whose corresponding energy functional is composed of the non-negative terms, in the energy supercritical case, i.e., $p>\frac{2n}{n-2}$, Merle, Rapha\"el, Rodnianski, Szeftel \cite{Invention-2022} proved the finite time blowup for some suitable initial data.

For the focusing nonlinear Schr\"{o}dinger equation:
\begin{equation}\label{1.8}
  i\varphi_{t}+\Delta\varphi+|\varphi|^{p-2}\varphi=0,
\end{equation}
for  $2\leq p<2+\frac{4}{n}$,  Ginibre and  Velo \cite{r4-1-2-0} showed that the solution to the corresponding Cauchy problem exists globally for any initial data in $H^{1}(\mathbb{R}^{n})$. Different from the above, when $p\geq2+\frac{4}{n}$  the finite time blowup may occur.
 Merle and Rapha\"{e}l \cite{Annals-2005} investigated (\ref{1.8}) with $p=2+\frac{4}{n}$  and proved the finite time blowup for negative initial energy and positive initial energy and gave the blowup rate estimates for $n=1$.
 Under some special assumptions, they also extended these blowup results to $n\geq2$.
 In \cite{Invention-2004,JAMS-2005}  Merle and Rapha\"{e}l  gave the lower bound on the blowup rate of the solution to the Cauchy problem of (\ref{1.8}) by assuming some spectral
 properties.
 For the cases $2+\frac{4}{n}<p<\frac{2n}{n-2}$, $n\geq 3$ and $2+\frac{4}{n}<p<6$, $n=2$,
Ogawa and Tsutsumi \cite{r4-1-2-00} proved that the solution to the Cauchy problem of (\ref{1.8}) blows up in finite time for the radially symmetric initial data $\varphi_{0}$ in $H^{1}(\mathbb{R}^{n})$ with negative energy.
Banica,  Luc\`a, Tzvetkov, and Vega \cite{CMP-2024} considered the case $2+\frac{4}{n}<p$, $n=1$,  and proved that for a special class of initial data in the form $\varphi_{0}(x)=e^{i\frac{x^{2}}{4}}h(x)$, where $h(x)$ is a periodic function in $H^{s}(0,4\pi)$ and $s\in(0,\frac{1}{2})$, the corresponding solution to the Cauchy problem of (\ref{1.8}) blows up in a finite time.
 For the case $p\geq2+\frac{4}{n}$, the global solution may also exist.  For $p>2+\frac{4}{n}$ and $n=1$, Krieger and Schlag \cite{JAMS-2006} proved the global existence and scattering for the solution to the Cauchy problem of (\ref{1.8}). For the case $p=\frac{2n}{n-2}$, Kenig and Merle \cite{Invention-2006} considered (\ref{1.8}) and investigated the global existence, scattering and blowup of the solution. The above works studied the homogeneous semilinear Schr\"{o}dinger equation (\ref{1.8}).
 For the inhomogeneous version of (\ref{1.8}), Rapha\"{e}l and Szeftel \cite{JAMS-2011} considered the model equation $i\varphi_{t}+\Delta \varphi+k(x)|\varphi|^{p-2}\varphi=0$,
and investigated the existence and uniqueness of minimal blowup solutions.
Faou, Germain, and Hani \cite{JAMS-2015} considered the focusing semilinear Schr\"{o}dinger equation
$i\varphi_{t}-\Delta \varphi-|\varphi|^{2}\varphi=0$ on the spatial domain
$\mathbb{T}^{2}=[0,L]\times[0,L]$ of size $L$ with periodic boundary conditions. They uncovered new and structured or coherent
aspects of the out-of-equilibrium behavior, and also conducted this investigation for the defocusing version of this model. For the vectorial nonlinear Schr\"{o}dinger
equation, Masmoudi and Nakanishi \cite{Invention-2008} studied the model $2iu_{t}-\Delta u-|u|^{2}u=0$, where $u=(u_{1},u_{2})$ and investigated the simultaneous high-frequency and
the subsonic limit for this model.

The above results indicate that the focusing case of the energy-subcritical power type nonlinearity leads to the finite time blowup for some suitable initial conditions, while the defocusing case does not. For the Schr\"{o}dinger equations with two power-type nonlinearities, i.e., the case of combined power-type nonlinearities, if the focusing nonlinearity and defocusing nonlinearity both appear in the model equation, it is interesting to investigate whether the finite time blowup occurs and the corresponding conditions. Tao, Visan, and Zhang  \cite{r4-1-2-5} undertook a comprehensive study of the Schr\"{o}dinger equations with combined power-type nonlinearities:
 \begin{align}\label{juyouzuheyuan}
i\varphi_{t}+\Delta \varphi+\lambda_{1}|\varphi|^{p-2}\varphi+\lambda_{2}|\varphi|^{q-2}\varphi=0, \ p>q
\end{align}
and proved the global existence and finite time blowup of the solution and scattering. More precisely,
they proved that if the nonlinearity with the power $p$ is defocusing ($\lambda_{1}<0$) or the nonlinearity with $2\leq q<p<2+\frac{4}{n}$ is focusing ($\lambda_{1}>0$), the solution to the Cauchy problem of (\ref{juyouzuheyuan}) exists globally in time for any initial data in $H^{1}(\mathbb{R}^{n})$.
If the nonlinearity with the power $2+\frac{4}{n}<p\leq\frac{2n}{n-2}$ is focusing ($\lambda_{1}>0$) and the nonlinearity with the power $q$ is defocusing ($\lambda_{2}<0$) or the nonlinearity with $2+\frac{4}{n}<q<p$ is focusing ($\lambda_{2}>0$), the solution blows up in a finite time in the negative energy case.
And if two nonlinearities are both focusing ($\lambda_{1}>0$, $\lambda_{2}>0$) with $2\leq q\leq 2+\frac{4}{n}<p\leq\frac{2n}{n-2}$, the solution blows up in  a finite time when the energy is suitably  negative.
For the case of the Schr\"{o}dinger equation with the potential and the nonlinear terms, Ionescu and  Kenig \cite{Acta-2004} considered the Cauchy problem of the following  equation:
\begin{align}\label{acta}
i\varphi_{t}+\Delta \varphi-V\varphi-F(\varphi)=0,
\end{align}
where they showed $L^{p}$ Carleman inequalities and proved the uniqueness of the solution. For the case $V=0$ and a class of nonlinear term $F$ covering the combined power-type nonlinearities, Kenig, Ponce, and Vega \cite{55-9} proved the uniqueness of the sufficiently smooth solution; and
Mari\c{s} \cite{Annals-2013} proved the existence of nontrivial finite energy traveling waves.

In this paper, we consider the Schr\"{o}dinger equation \eqref{1.1} with harmonic potential and the focusing combined power-type nonlinearities,  aiming to classify the initial data
to distinguish  the global existence or finite time blowup of solution.
An important method to solve this problem is to apply the potential well theory and to use the sign of the Nehari functional
to partition the space of the initial data.
The classical form of the Nehari functional arises from the variational argument on the potential energy and is linked to the weak solution of the steady-state equation.
But this form can not be applied to the Schr\"{o}dinger equation directly to prove the finite time blowup
of the solution because this form can not be used to verify an important auxiliary functional $J''(t)<0$, a key step of the proof, which will be  discussed   in the following Subsection \ref{sec1.3}.
Zhang in \cite{r14} proposed a strategy for the Cauchy problem of (\ref{1.8}) to select and combine the terms in the energy functional properly and adjust their coefficients to establish an effective cross-constrained variational problem.
This method considers the interaction between the variational structure terms and the influence of different combinations with different weight for each term on the variational structure.
The goal of this paper is to develop further the cross-constrained variational theory and apply it to the semilinear Schr\"{o}dinger equation with a more complicated structure in order to obtain a deep understanding and comprehensive solutions for the related problems. We now briefly introduce the cross-constrained variational theory.

\subsection{Cross-constrained variational theory.}\label{sec1.3}
We now review the  cross-constrained variational approach introduced in \cite{r14}.
To facilitate the upcoming discussions, we denote the norm of $L^{r}(\mathbb{R}^{n})$ by $\left\|\cdot\right\|_{r}$ for $1\leq r<\infty$.
In order to study the Cauchy problem of (\ref{1.8}) for the case $p\geq2+\frac{4}{n}$,  Zhang in \cite{r14} proposed the following functionals:
\begin{align}
&K_{(a)}(\varphi):=\frac{1}{2}\|\nabla \varphi\|^{2}_{2}+\frac{1}{2}\|\varphi\|^{2}_{2}-\frac{1}{p}\|\varphi\|^{p}_{p},\label{Ka} \\
&S_{1,(a)}(\varphi):=\|\nabla \varphi\|^{2}_{2}+\|\varphi\|^{2}_{2}-\|\varphi\|^{p}_{p},\label{S1a}\\
&S_{2,(a)}(\varphi):=\|\nabla \varphi\|^{2}_{2}-\frac{n(p-2)\gamma}{2p}\|\varphi\|^{p}_{p},\label{S2a}
\end{align}
and the constrained variational problems
\begin{align}
d_{\mathcal{N}_{1,(a)}}:=\inf_{\varphi\in \mathcal{N}_{1,(a)}}K_{(a)}(\varphi),\label{d1a}
\end{align}
\begin{align}
d_{\mathcal{N}_{2,(a)}}:=\inf_{\varphi\in \mathcal{N}_{2,(a)}}K_{(a)}(\varphi),\label{d2a}
\end{align}
where
\begin{align*}
&\mathcal{N}_{1,(a)}:=\{\varphi\in H^{1}(\mathbb{R}^{n})\backslash\{0\}|\  S_{1,(a)}(\varphi)=0\},\\
&\mathcal{N}_{2,(a)}:=\{\varphi\in H^{1}(\mathbb{R}^{n})|\ S_{2,(a)}(\varphi)=0, \ S_{1,(a)}(\varphi)<0\},
\end{align*}
Zhang considered the case $\gamma=1$, and $d_{\mathcal{N}_{1,(a)}}\leq d_{\mathcal{N}_{2,(a)}}$ holds for $\gamma=1$.
The variational problem (\ref{d2a}) is called the cross-constrained variational problem due to the two constrained conditions: $S_{2,(a)}(\varphi)=0$ and $S_{1,(a)}(\varphi)<0$.
Based on the sign of the functional $S_{1,(a)}(\varphi)$, the initial data $\{\varphi\in H^{1}(\mathbb{R}^{n})\backslash\{0\}|K_{(a)}(\varphi)<d_{\mathcal{N}_{1,(a)}}\}$ is divided into two parts
\begin{align*}
\mathcal{W}_{(a)}:=\{\varphi\in H^{1}(\mathbb{R}^{n})|K_{(a)}(\varphi)<d_{\mathcal{N}_{1,(a)}}, S_{1,(a)}(\varphi)>0\},
\end{align*}
and
\begin{align*}
\mathcal{V}_{(a)}:=\{\varphi\in H^{1}(\mathbb{R}^{n})|K_{(a)}(\varphi)<d_{\mathcal{N}_{1,(a)}}, S_{1,(a)}(\varphi)<0\},
\end{align*}
 where $\mathcal{W}_{(a)}$ and $\mathcal{V}_{(a)}$ can be proved to be the invariant manifolds of the variational problem (\ref{d1a}), and
 $$\{\varphi\in H^{1}(\mathbb{R}^{n})\backslash\{0\}|K_{(a)}(\varphi)<d_{\mathcal{N}_{1,(a)}}\}=\mathcal{W}_{(a)}\cup\mathcal{V}_{(a)}.$$
  The manifold $\mathcal{W}_{(a)}$ is used to investigate the global existence of the solution, while the manifold $\mathcal{V}_{(a)}$ is further divided into the following two parts
\begin{align*}
\mathcal{W}_{(a),\rm{cross}}:=\{\varphi\in H^{1}(\mathbb{R}^{n})|K_{(a)}(\varphi)<d_{\mathcal{N}_{1,(a)}}, S_{1,(a)}(\varphi)<0, S_{2,(a)}(\varphi)>0\},
\end{align*}
and
\begin{align*}
\mathcal{V}_{(a),\rm{cross}}:=\{\varphi\in H^{1}(\mathbb{R}^{n})|K_{(a)}(\varphi)<d_{\mathcal{N}_{1,(a)}}, S_{1,(a)}(\varphi)<0, S_{2,(a)}(\varphi)<0\}
\end{align*}
according to the sign of the functional $S_{2,(a)}(\varphi)$, where $\mathcal{W}_{(a),\rm{cross}}$ and $\mathcal{V}_{(a),\rm{cross}}$ are the invariant manifolds of the cross-constrained variational problem (\ref{d2a}), and $$\mathcal{V}_{(a)}=\mathcal{W}_{(a),\rm{cross}}\cup\mathcal{V}_{(a),\rm{cross}}.$$
 Here, the manifold $\mathcal{V}_{(a),\rm{cross}}$ is used to prove the finite time blowup of the solution, while the manifold $\mathcal{W}_{(a),\rm{cross}}$ is used to show the global existence of the solution.
We use the following diagrammatic sketch in Figure 1 to show the classification of the initial data
\begin{align*}
\{\varphi\in H^{1}(\mathbb{R}^{n})\backslash\{0\}|K_{(a)}(\varphi)<d_{\mathcal{N}_{1,(a)}}\}=\mathcal{W}_{(a)}\cup\mathcal{W}_{(a),\rm{cross}}\cup\mathcal{V}_{(a),\rm{cross}}
\end{align*}
with the aid of the above invariant manifolds.
\begin{figure}[htbp]
\begin{center}
\begin{tikzpicture}[scale=3]

  \draw[thick] (-1.5,-1) rectangle (1.5,1);

  \fill[white] (0,-1) rectangle (1.5,1);

  \foreach \x in {0.15,0.45,0.75,1.05,1.35}
    \foreach \y in {-0.9,-0.7,-0.5,-0.3,-0.1,0.1,0.3,0.5,0.7,0.9}
      \fill[black] (\x,\y) circle (0.01);

  \draw (0,0) rectangle (1.5,1);

  \draw[line width=0.2pt] (0,-1) rectangle (1.5,0);

  \filldraw[fill=white, draw=black, thick] (-1.5,0) rectangle (0,1);

  \fill[gray!40] (-1.5,-1) rectangle (0,0);

  \draw[->, thick] (-1.5,0) -- (1.5,0);
  \node at (0.25, 0.83) {$S_{2,(a)}(\varphi)$};
  \draw[->, thick] (0,-1) -- (0,1);
  \node at (1.16, -0.15){$S_{1,(a)}(\varphi)$};
  \node at (-0.75, 0.6) {\large $\mathcal{W}_{(a),\text{cross}}$};
  \node at (-0.75, 0.4) {\text{global existence}};
  \node at (0.75, 0.4) {\text{global existence}};
  \node at (-0.75, -0.4) { \large $\mathcal{V}_{(a),\text{cross}}$};
  \node at (-0.75, -0.6) {\text{finite time blowup}};
  \node at (0.75, -0.6) {\text{global existence}};
  \node at (-0.1,-0.1) {$0$};
  \node at (-0.75,1.18) {\bf $\mathcal{V}_{(a)}$};
  \node at (0.75,1.18) {\bf $\mathcal{W}_{(a)}$};

  \node at (0, -1.2) {$\{\varphi\in H^{1}\backslash\{0\} | K_{(a)}(\varphi)<d_{\mathcal{N}_{1,(a)}}\}$};

  \draw[thick,decorate,decoration={brace}] (-1.5,1.05) -- (0,1.05);

  \draw[thick,decorate,decoration={brace}] (0,1.05) -- (1.5,1.05);

  \draw[thick,decorate,decoration={brace}] (1.5,-1.05) -- (-1.5,-1.05);
\end{tikzpicture}
\caption{The classification of the initial data $\{\varphi\in H^{1}\backslash\{0\}|K_{(a)}(\varphi)<d_{\mathcal{N}_{1,(a)}}\}$}
\label{a}
\end{center}
\end{figure}
With the help of the above classification of the initial data, for the Cauchy problem of (\ref{1.8}), Zhang \cite{r14}
showed that the solution exists globally for the initial data $\varphi_{0}\in\mathcal{W}_{(a)}\cup\mathcal{W}_{(a),\rm{cross}}$, and the solution blows up in finite time for the initial data $\varphi_{0}\in\mathcal{V}_{(a),\rm{cross}}$ and $|x|\varphi_{0}\in L^{2}(\mathbb{R}^{n})$.

To apply the cross-constrained variational theory in \cite{r14} for treating (\ref{1.8}), the core technique is to establish the cross-constrained variational problem (\ref{d2a}) to divide the invariant manifold $\mathcal{V}_{(a)}$ used in the classical potential well theory to investigate the finite time blowup of the solution. We shall explain this as follows. For the Cauchy problem of (\ref{1.8}), in the frame of the classical potential well theory defined by (\ref{d1a}) without introducing the cross-constrained variational problem (\ref{d2a}), we have to rely on the invariant manifolds $\mathcal{W}_{(a)}$ and $\mathcal{V}_{(a)}$ to classify the initial data for the global solution and finite time blowup solution. Here, the manifold $\mathcal{W}_{(a)}$ works for the global existence of the solution. Unfortunately, the manifold $\mathcal{V}_{(a)}$ fails in proving the finite time blowup of the solution. More precisely, for the initial data $\varphi_{0}\in \mathcal{V}_{(a)}$, the condition $S_{1,(a)}(\varphi_{0})<0$ is not sufficient to prove the finite time blowup of the solution by using the usual methods proposed in Ogawa and  Tsutsumi \cite{r4-1-2-00} and Glassey \cite{r4-1-2-00-1}. The method in \cite{r4-1-2-00} works for the case of the radially symmetric initial data with negative energy, instead of the general initial data $\varphi_{0}$ in $\mathcal{V}_{(a)}$ with positive initial energy without the radial symmetry assumption. The method proposed in \cite{r4-1-2-00-1}   is a subtle application of the auxiliary functional $J(t):=\||x|\varphi\|^{2}_{2}$ to investigate the finite time blowup of the solution, although the author of   \cite{r4-1-2-00-1}  considered a Cauchy problem of a different model $i \varphi_{t}-\Delta\varphi-|\varphi|^{p-2}\varphi=0$. By also using this method, Weinstein \cite{r4-1-2-00-2} investigated the following model
\begin{align}\label{1.14}
i \varphi_{t}+\frac{1}{2}\Delta\varphi+\frac{1}{2}|\varphi|^{p-2}\varphi=0,
\end{align}
and proved that the solution to the Cauchy problem of (\ref{1.14}) blows up in finite time for some sufficiently large  initial data. When applying the auxiliary functional $J_{(a)}(t)=J(t)=\||x|\varphi\|^{2}_{2}$ to treat (\ref{1.8}), the key step is to verify $J''_{(a)}(t)<0$, which can not be derived by the condition $\varphi_{0}\in \mathcal{V}_{(a)}$, i.e., $S_{1,(a)}(\varphi)<0$. This difficulty is overcome by introducing the so-called cross-constrained variational problem (\ref{d2a}), which divides $\mathcal{V}_{(a)}$ into $\mathcal{W}_{(a),\rm{cross}}$ and $\mathcal{V}_{(a),\rm{cross}}$, and the invariant manifold $\mathcal{V}_{(a),\rm{cross}}$ is used to prove the finite time blowup of the solution. The above manifold $\mathcal{V}_{(a),\rm{cross}}$ is defined by a new functional $S_{2,(a)}(\varphi)$, which is constructed by carefully adjusting the coefficients of the terms in the energy functional
\begin{align}\label{1.15}
E_{(a)}(t):=\frac{1}{2}\|\nabla\varphi\|^{2}_{2}-\frac{1}{p}\|\varphi\|^{p}_{p}
\end{align}
to ensure $S_{2,(a)}(\varphi)=\frac{1}{8}J''_{(a)}(t)$. Then one can deduce $J''_{(a)}(t)<0$ from $S_{2,(a)}(\varphi)<0$, i.e., $\varphi_{0}\in\mathcal{V}_{(a),\rm{cross}}$, which successfully proves the finite time blowup of the solution. From the above careful analysis of the cross-constrained variational theory, we notice that
when using this method to study a model with a complex energy structure composed of more than one structural term, such as (\ref{1.8}), the key step is to select and combine these terms by adjusting their coefficients to give an effective combination of these terms in functional form, for example, the above combination functional $S_{2,(a)}(\varphi)$ to study the finite time blowup of the solution to the Cauchy problem of (\ref{1.8}). In fact, it is usually not easy to find such effective combinations, and the difficulty is also reflected in the uniqueness of the combinations. The uniqueness depends on both the energy structure of the problem and its influence on the properties of the solution. More precisely, the uniqueness of the combination allows us to apply such combination to classify the initial data completely and clearly.
If the combinations are not unique, meaning there is more than one effective combination to classify the initial data, we should strive to find as many effective combinations as possible. This will provide a clearer and more comprehensive description of the initial data, leading to a finite time blowup.
It is important to compare the differences between these combinations and discover their relationships. This is especially true for energy structures with more than two structural terms, such as those defined for (\ref{1.16}). Since the energy functional for (\ref{1.1}) in this paper consists of four structural terms, these combinations are complex. Applying cross-constrained variational theory to study the dynamic behavior of the solution seems interesting yet challenging. To explain the motivation for this research, the research background, and the difficulties we need to overcome, we first provide a summary of the results using the cross-constrained variational theory in the following subsection.

\subsection{The results by the cross-constrained variational theory}\label{sec1.4}
   There has been a lot of of works in the literature applying the cross-constrained variational approach in  \cite{r14} directly or transforming it for different situations based on the idea of this method.
   To better illustrate the mathematical mechanics of using and adapting this method in the relevant results, we first need to focus on the relationship between the energy structure and the variational structure.
  By comparing the model equation (\ref{1.8}) and its corresponding energy (\ref{1.15}), it is easy to see that the structural terms as the cross-constrained variational functionals are from the model equation (\ref{1.8}) and its energy by considering the mass term $\|\varphi\|^{2}_{2}$ additionally. Here, although the mass conservation term $\|\varphi\|^{2}_{2}$ does not appear in the model equation, it is necessary to consider this term in the cross-constrained variational argument as it is the base norm of the $H^{1}$ norm $\|\varphi\|_{H^{1}}$ for the case of the unbounded domain $\mathbb{R}^{n}$. Different from the case of the model equation (\ref{1.8}) with two structural terms in the variational problem, Zhang \cite{r4-1-2-1}
studied further  the case with three structural terms, that is, the following semilinear Schr\"{o}dinger equation with the harmonic potential:
\begin{align}\label{1.16}
  i\varphi_{t}+\alpha\Delta\varphi-\beta|x|^{2}\varphi+|\varphi|^{p-2}\varphi=0,
\end{align}
 which takes the energy functional
\begin{align*}
E_{(b)}(t):=\frac{\alpha}{2}\|\nabla\varphi\|^{2}_{2}+\frac{\beta}{2}\||x|\varphi\|^{2}_{2}-\frac{1}{p}\|\varphi\|^{p}_{p}.
\end{align*}
By selecting and combining the energy structure terms of the above model equation (\ref{1.16}), the author established the corresponding cross-constrained variational problem following the idea introduced in \cite{r14} to give a classification of the initial data. As the model equation (\ref{1.16}) has one more structural term than (\ref{1.8}) does, there will be much more cross-constrained combinations for the case of (\ref{1.16}) compared with the case of (\ref{1.8}), which makes the corresponding cross-constrained problem much more complicated. Hence the main task here is to find out as many effective combinations as possible to classify the initial data for the global existence and finite time blowup of solution.
Some efforts in this direction have been made in \cite{r4-1-2-2,r4-1-2-2,r4-1-2-4,r4-1-2-4-1}.

\subsection{Motivation and contributions.}\label{sec1.5}
In view of the characteristic structure of the Schr\"{o}dinger equation with two conservation laws (mass conservation and energy conservation), the cross-constrained variational theory is a powerful tool for classifying the initial data leading to the global solutions or finite time blowup solutions to such model equations.
The cross-constrained variational theory also thoroughly examines the energy structure of the model equation by exploring as many combinations of the energy structure terms as possible.
It is obvious that if the model equation has many terms making its structure complicated, there exist a lot of possible combinations of the energy structure terms, which causes many difficulties and challenges. Recently, some works have been devoted to studying such complex structures caused by the potential terms and nonlinear source terms under the framework of the cross-constrained variational theory. Shu and Zhang \cite{r4-1-2-4-4} and Gan et al and Zhang \cite{r4-1-2-4-5} investigated the Davey-Stewartson system \cite{r4-1-2-4-4-000}:
$$i\varphi_{t}+\Delta\varphi+a|\varphi|^{p-2}\varphi+bE_{1}(|\varphi|^{2})\varphi=0,$$ which was introduced to describe the evolution of weakly nonlinear water waves with one predominant direction of travel. Chen and Guo in \cite{r4-1-2-4-3} considered the potential term of the special form appearing in the nonlocal Schr\"{o}dinger equation
$$i \varphi_{t}+\Delta \varphi+\left(\frac{1}{|x|^{\alpha}}\ast|\varphi|^{p}\right)|\varphi|^{p-2}\varphi=0.$$
Wang in \cite{r4-1-2-4-2} considered the combination of two different types of potential terms in the model
$$i \varphi_{t}+\Delta \varphi-|x|^{2}\varphi+\left(V\ast|\varphi|^{2}\right)\varphi=0.$$
 The works above demonstrate the profuse combinations of potential terms and nonlinear source terms in energy structures, leading to a variety of complex variational structures.

 Compared with the above models, the equation in the Cauchy problem (\ref{1.1}) considered in this paper also presents the combination of the potential term and two power-type nonlinear source terms, which come from the Gross-Pitaevskii equation in the form (\ref{1.5}) describing the Bose-Einstein condensation mentioned in Subsection \ref{sec1.1}. Even without the potential term, the NLS model equations with the combined power-type nonlinearities introduced to describe the propagation of the light in some lossless nonlinear medium \cite{r4-1-2-4-2-0000} and the effect of the saturation of nonlinear refractive index \cite{r4-1-2-4-2-00001,r4-1-2-4-2-00002} also have the profound physical background and mathematical conclusions \allowdisplaybreaks\cite{r4-1-2-5-01,r4-1-2-5-09,r4-1-2-5-13,r4-1-2-5-03,r4-1-2-5-08,r4-1-2-5-02,r4-1-2-5-05,r4-1-2-5-11,r4-1-2-5-10,r4-1-2-5-04,r4-1-2-5-06,r4-1-2-5-07,r4-1-2-5-12,r4-1-2-5},
 mainly due to the difficulties and challenges caused by the combined power-type nonlinearities.
 We note that for the model equation in problem (\ref{1.1}), the difficulties arise from both the harmonic potential term and the combined power-type nonlinearities.
 The combined power-type nonlinearities, meaning the combination of two power-type nonlinear source terms, result in a complex energy structure. Additionally, the harmonic potential term further complicates the problem significantly. As a result, we need to address the significant difficulties and challenges posed by these three terms simultaneously.
 In the theoretical framework of the cross-constrained variational theory, the main task is to create the cross-constrained variational functionals to divide the space to which the initial data belong. These functionals directly determine the characteristics of the division and the description of the initial data.
 As previously discussed, these functionals include structural terms related to energy and "mass" with varying weights. However, these combinations are not standardized, and the weights of each term are also unpredictable. The validity of these settings will be determined by assessing global existence and non-existence. Therefore, an equation with more structural terms requires consideration of more combinations, the development of complex strategies for different weights, and testing and verification of a greater number of scenarios.
 In the process of discussing this problem, we need to divide the discussion into three cases: (a) negative initial energy case, (b) non-negative low initial energy case, and (c) high initial energy case.
In the case of high initial energy, the behavior of the standard nonlinear Schr\"{o}dinger equation and the Schr\"{o}dinger equation with both the harmonic potential and combined power-type nonlinearities remains an open problem. Our current paper aims to address this issue. We will study the high initial energy case for the problem and our arguments also apply to the standard nonlinear Schr\"{o}dinger equation $i\varphi_{t}+\Delta\varphi+|\varphi|^{p-2}\varphi=0$. This is the first attempt to address the high initial energy case of Schr\"{o}dinger-like equations. Additionally, we will summarize and analyze the results obtained by applying different cross-constrained variational functionals to understand their relationships. This task is not only challenging but also meaningful and interesting.

In summary, the significant contributions of this paper can be summarized as follows.
\begin{enumerate}[leftmargin=*, label={(\arabic*)}] \addtolength{\itemsep}{0.3\baselineskip}
\item \textbf{Cross-constrained variational combinations at the sub-critical/critical initial energy level.} To classify the initial data for the global-in-time solution and the finite time blowup solution, we need to build up the variational structures by selecting and combining, with different coefficients as different weights, the four energy structural terms of the model equation (\ref{1.1}) as well as the ``mass'' term, as we discussed above.
The different  cross-constrained variational combinations  reveal various relationships between the initial data and the solution. Therefore, it is important to identify as many effective combinations as possible, rather than relying on just one, which is challenging. This paper aims to comprehensively study and identify as many effective combinations as possible within the developed cross-constrained variational theory. The objective is to systematically determine and compare the dependency relationships between the initial data and the solution at the sub-critical/critical initial energy level.
\item \textbf{Sharp conditions for high initial energy.} Classifying the initial data and applying the variational theory for the NLS at a high initial energy level has remained an open problem for decades  (cf.  \cite{rz1,r4-1-2-4-5,NA-Zhang,r4-1-2-1}).
The objective of this paper is to solve this open problem and provide the sharp conditions for the global existence and finite time blowup of a solution at a high initial energy level. We aim to draw inspiration from effective combinations for sub-critical/critical initial energy levels and apply them to the high energy case, despite the additional challenges this presents.
\item \textbf{High initial energy sharp conditions for classical NLS.}
Solving the open problem for the standard semilinear Schr\"{o}dinger equation is also one of the goals of this paper, for which we need to derive the sharp condition of global existence and finite time blowup of solution at the high initial energy level. Although this may seem similar to our previous point, our approach involves considering a combination of different structural terms, thus it is not clear that the problem can be solved using the method established in the previous point.
\end{enumerate}

The rest of this paper is organized as follows.
In Section \ref{sec2}, in order to study the classifications of the initial data under the framework of the cross-constrained variational theory, we first propose eight different cross-constrained variational functionals by the technical selection and combination of the energy structural terms, including mass term, to construct the corresponding cross-constrained problems. Then we recall the local existence theory, mass conservation law and energy conservation law of problem (\ref{1.1}). Subsequently, we prove the positivity of the potential well depth  in order to classify the initial data for three initial energy cases, i.e., negative initial energy, non-negative low initial energy and high initial energy. Meanwhile, we also prove the invariance of manifolds obtained by classifying the initial data for non-negative low initial energy.
In Section \ref{sec3}, for an important auxiliary functional, we calculate its second-order derivative to prepare to prove the finite time blowup of solution for all three initial energy cases. As a direct and quick application of this result, we prove that the negative initial energy leads to the finite time blowup, which means we need not classify the initial data for negative initial energy case.
In Section \ref{sec4}, we focus on the non-negative low initial energy and apply eight cross-constrained variational functionals to give eight classifications
 of the initial data to distinguish the global solution and finite time blowup solution.
 In Section \ref{sec5}, we study the problem of classifying the initial data for high initial energy case to try our best to solve this open problem. And we analyze the relationship between these different results obtained by applying eight different cross-constrained variational functionals. In Section \ref{sec6}, we shall treat standard nonlinear Schr\"{o}dinger equation to solve the open problem of classifying the initial data in the corresponding high initial energy case.

 \section{Completed cross-constrained variational theory}\label{sec2}
In this section, we present the energy structure of the equation with both harmonic potential term and combined power-type nonlinearities in problem (\ref{1.1}) and then establish the cross-constrained variational structures in order to classify the initial data. As the preliminary work, we define the following energy space
\begin{align*}
H:=\left\{{\varphi\in H^{1}(\mathbb{R}^n)\bigg|\int_{\mathbb{R}^n} |x|^{2}|\varphi|^{2} {\rm d}x<+\infty}\right\}\nonumber
\end{align*}
for the equation in problem (\ref{1.1}), whose inner product and norm are defined by
\begin{align*}
<\varphi,\phi>_{H}:=\int_{\mathbb{R}^n} \nabla\varphi\nabla\overline{\phi}+\varphi\overline{\phi}+|x|^{2}\varphi\overline{\phi}\ {\rm d}x\nonumber
\end{align*}
and
\begin{align*}
\|\varphi\|_{H}:=\left(\int_{\mathbb{R}^{n}} |\nabla\varphi|^{2}+|\varphi|^{2}+|x|^{2}|\varphi|^{2} {\rm d}x\right)^{\frac{1}{2}},\nonumber
\end{align*}
respectively.

According to Theorem 9.2.6 (i) of Chapter 9 in \cite{rz1}\ (see also \cite{rz1-1,rz1-2}), we know the following result about the local existence of the solution to the Cauchy problem (\ref{1.1}).
\begin{lemma}\label{lem2.1}
(Local-in-time existence \cite{rz1})
Let $\varphi_{0}\in H$, $p$ and $q$ satisfy (\ref{1.2}). Then there exists $T>0$ such that the Cauchy problem (\ref{1.1}) admits a unique local solution $\varphi\in C([0,T),H)\cap C^{1}([0,T),H^{*})$, where $H^{*}$ denotes the dual space of $H$, and $T$ is the existence time of the local solution. Moreover, if the maximum existence time $T_{\rm{max}}=+\infty$, the solution exists globally. If $T_{\rm{max}}<+\infty$, the solution blows up in finite time, that is,
\begin{align*}
\lim\limits_{t\rightarrow T_{\rm{max}}} \|\varphi\|_{H}=+\infty.
\end{align*}
\end{lemma}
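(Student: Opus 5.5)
The plan is to run the standard fixed-point argument for Schr\"{o}dinger equations with a quadratic potential, following Cazenave's framework (Theorem 9.2.6 in \cite{rz1}). Set $A:=-\Delta+|x|^{2}$, the harmonic oscillator. It is a nonnegative self-adjoint operator on $L^{2}(\mathbb{R}^n)$ whose form domain is exactly $H$, and $\|\varphi\|_{H}^{2}=\langle (A+1)\varphi,\varphi\rangle$. The propagator $U(t)=e^{-itA}$ is therefore a unitary group on $L^{2}$ and also an isometry on $H$ in the norm $\|(A+1)^{1/2}\cdot\|_{2}$. Mehler's formula gives the explicit kernel of $U(t)$. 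From it we get the dispersive bound $\|U(t)f\|_{\infty}\lesssim |\sin 2t|^{-n/2}\|f\|_{1}$, which holds uniformly for $|t|\le \pi/4$. The $TT^{*}$ argument of Keel--Tao then yields local-in-time Strichartz estimates $\|U(\cdot)f\|_{L^{\gamma}(0,T;L^{\rho})}\lesssim \|f\|_{2}$ for admissible pairs $(\gamma,\rho)$ and $T\le\pi/4$, together with the inhomogeneous (Duhamel) versions.

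Next I write \eqref{1.1} in Duhamel form,
\[
\varphi(t)=U(t)\varphi_{0}+i\int_{0}^{t}U(t-s)g(\varphi(s))\,{\rm d}s,\qquad g(\varphi)=\lambda_{1}|\varphi|^{p-2}\varphi+\lambda_{2}|\varphi|^{q-2}\varphi,
\]
and contract in a ball of $X_{T}=C([0,T],H)\cap L^{\gamma_{p}}(0,T;W^{1,\rho_{p}})\cap L^{\gamma_{q}}(0,T;W^{1,\rho_{q}})$, where $\rho_{r}=r$ and $(\gamma_{r},\rho_{r})$ is admissible. Weights are handled with the operators $\nabla$ and $x$. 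These do not commute with $A$, but they satisfy the closed commutator relations $[A,\nabla]=-2x$ and $[A,x]=-2\nabla$, so the pair $(\nabla U(t),xU(t))$ obeys a linear $2\times2$ system with bounded coefficients. This gives Strichartz control of $\|\nabla\varphi\|$ and $\|x\varphi\|$ in terms of $\|\varphi_{0}\|_{H}$ plus the Duhamel term, as in Cazenave's Lemma 9.2.x.

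The nonlinear estimates follow from H\"older and Sobolev, using the subcriticality $q<p<2^{*}$ from \eqref{1.2}. For each power $r\in\{p,q\}$ separately, I estimate $\|g_{r}(u)\|_{L^{\gamma_{r}'}W^{1,\rho_{r}'}}\lesssim T^{\theta_{r}}\|u\|_{X_{T}}^{r-1}$ with $\theta_{r}>0$. The weighted part is handled the same way: $|x g_{r}(u)|\le |u|^{r-2}|xu|$. Choosing $T$ small in terms of $\|\varphi_{0}\|_{H}$ makes the map a contraction in the weaker metric $L^{\gamma}L^{\rho}$, and closed balls of $X_{T}$ are complete for that metric. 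This gives a unique solution in $C([0,T),H)$. Since $A:H\to H^{*}$ is bounded and $g$ maps $H$ into $H^{*}$ (by Sobolev), the equation itself gives $\varphi\in C^{1}([0,T),H^{*})$. Unconditional uniqueness in $C([0,T),H)$ follows from the same Strichartz estimates on the difference of two solutions.

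Finally, for the blowup alternative: the existence time $T$ depends only on an upper bound for $\|\varphi_{0}\|_{H}$. So if $T_{\max}<\infty$ and $\|\varphi(t_{k})\|_{H}$ stayed bounded along some sequence $t_{k}\to T_{\max}$, restarting the fixed point at $t_{k}$ would extend the solution past $T_{\max}$, a contradiction. Hence $\|\varphi(t)\|_{H}\to\infty$ as $t\to T_{\max}$. The main obstacle is the Strichartz and commutator step for the weighted norm $\|x\varphi\|_{2}$. Mehler's formula only gives dispersion on short time intervals, and $x$ does not commute with $U(t)$. I would handle this by the explicit identity $xU(t)=U(t)(x\cos 2t+2i\sin 2t\,\nabla)$ and its analogue for $\nabla$, both available from Mehler's formula.
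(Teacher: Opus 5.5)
Your outline is correct and is essentially the argument behind the result the paper simply cites (Cazenave's Theorem 9.2.6). Its ingredients are short-time dispersive and Strichartz estimates from the Mehler kernel, a closed linear system for $\nabla\varphi$ and $x\varphi$ driven by $[A,\nabla]=-2x$ and $[A,x]=-2\nabla$, a Kato-type contraction in a weaker Strichartz metric, and the usual blowup alternative. There are two harmless slips: the conjugation identity should read $xU(t)=U(t)\left(x\cos 2t-i\sin 2t\,\nabla\right)$ (the small-$t$ expansion $U(-t)xU(t)=x-2it\nabla+O(t^{2})$ rules out your coefficient $+2i$), and $X_{T}$ should also carry $x\varphi$ in its Strichartz norms and use $L^{\infty}(0,T;H)$ instead of $C([0,T],H)$, so that closed balls are complete for the weak metric, with continuity in time recovered afterwards from the Duhamel formula.
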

Then, we define the following energy functional for the equation in problem (\ref{1.1}) in $H$
\begin{align}\label{2.1}
E(t):=\frac{1}{2}\|\nabla \varphi\|^{2}_{2}+\frac{1}{2}\||x|\varphi\|^{2}_{2}-\frac{\lambda_{1}}{p}\|\varphi\|^{p}_{p}-\frac{\lambda_{2}}{q}\|\varphi\|^{q}_{q}.
\end{align}
The energy (\ref{2.1}), as well as the term $\|\varphi\|_{2}$, are the conserved quantities according to Theorem 9.2.6 (ii) of Chapter 9 in \cite{rz1}, which is presented by the following Lemma \ref{lem2.2}.
\begin{lemma}\label{lem2.2}(Conservation laws \cite{rz1})
Suppose $\varphi_{0}\in H$, $p$ and $q$ satisfy (\ref{1.2}). If $\varphi$ is the solution of (\ref{1.1}), there exist the following two conservation laws
\begin{equation}\label{2.2}
\|\varphi\|_{2}=\|\varphi_{0}\|_{2}
\end{equation}
and
\begin{equation}\label{2.3}
E(t)=E(0).
\end{equation}
\end{lemma}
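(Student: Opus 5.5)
The plan is the standard formal computation: first for smooth, decaying solutions, then extended to $H$-valued solutions by density and continuous dependence. The local theory of Lemma \ref{lem2.1} is taken as given; only the two identities \eqref{2.2} and \eqref{2.3} need justification.

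\textbf{Step 1: mass conservation.} Assume first that $\varphi$ is regular, e.g. $\varphi_0\in H^2$ with $|x|^2\varphi_0\in L^2$, so that $\varphi\in C([0,T),D(-\Delta+|x|^2))$. Multiply the equation by $\overline{\varphi}$, integrate over $\mathbb{R}^n$ and take imaginary parts. The terms
\[
\int\Delta\varphi\,\overline{\varphi}\,dx=-\|\nabla\varphi\|_2^2,\qquad -\int|x|^2|\varphi|^2\,dx,\qquad \lambda_1\|\varphi\|_p^p,\qquad \lambda_2\|\varphi\|_q^q
\]
are all real. Hence
\[
\mathrm{Re}\int\varphi_t\overline{\varphi}\,dx=\tfrac12\tfrac{d}{dt}\|\varphi\|_2^2=0.
\]

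\textbf{Step 2: energy conservation.} Multiply the equation by $\overline{\varphi_t}$, integrate and take real parts. The term $i\|\varphi_t\|_2^2$ is purely imaginary and drops out. The remaining terms give
\[
\mathrm{Re}\int\Delta\varphi\,\overline{\varphi_t}\,dx=-\tfrac12\tfrac{d}{dt}\|\nabla\varphi\|_2^2,\qquad -\tfrac12\tfrac{d}{dt}\||x|\varphi\|_2^2,
\]
together with
\[
\mathrm{Re}\int\lambda_1|\varphi|^{p-2}\varphi\,\overline{\varphi_t}\,dx=\tfrac{\lambda_1}{p}\tfrac{d}{dt}\|\varphi\|_p^p
\]
and the analogous identity for $q$. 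Summing, $-\tfrac{d}{dt}E(t)=0$. In the $H$–$H^*$ framework one can instead pair the equation, viewed in $H^*$, with $\varphi_t$. The rigorous version replaces $\varphi_t$ by a regularization $J_\varepsilon\varphi_t$ with $J_\varepsilon=(I+\varepsilon(-\Delta+|x|^2))^{-1}$, in the spirit of Cazenave's argument, and then lets $\varepsilon\to0$. The growth condition \eqref{1.2} and the Sobolev embedding $H\hookrightarrow L^p\cap L^q$ ensure that the nonlinear terms are $C^1$ from $H$ to $H^*$.

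\textbf{Step 3: density.} For general $\varphi_0\in H$, approximate it by regular data $\varphi_0^k\to\varphi_0$ in $H$. By the continuous dependence part of the local theory, $\varphi^k\to\varphi$ in $C([0,T'],H)$ for every $T'<T_{\max}$. Both $\|\cdot\|_2$ and $E$ are continuous on $H$; for the nonlinear terms this uses $H\hookrightarrow L^p,L^q$ once more. Passing to the limit in the conserved identities for $\varphi^k$ yields \eqref{2.2} and \eqref{2.3}.

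The main obstacle is the justification in Steps 2 and 3. One needs either persistence of regularity for the regular data, or the regularization argument, to make the pairing with $\varphi_t$ legitimate when $\varphi_t$ lies only in $H^*$. This is exactly what Theorem 9.2.6 in \cite{rz1} provides, and the proof would cite it for these points.
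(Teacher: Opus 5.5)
Your proposal is correct and follows the paper's treatment. The paper gives no argument of its own for Lemma~\ref{lem2.2} beyond citing Theorem 9.2.6(ii) of \cite{rz1}. Your multiplier computations (imaginary part against $\overline{\varphi}$, real part against $\overline{\varphi_t}$), together with the regularization/density justification you defer to that same theorem, are the argument behind that citation.

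One simplification: mass conservation needs neither regularization nor density. Since $\varphi\in C([0,T),H)\cap C^{1}([0,T),H^{*})$, you already have $\frac{d}{dt}\|\varphi\|_{2}^{2}=2\,\mathrm{Re}\langle\varphi_{t},\varphi\rangle_{H^{*},H}$. This vanishes because $\varphi_{t}=i\bigl(\Delta\varphi-|x|^{2}\varphi+\lambda_{1}|\varphi|^{p-2}\varphi+\lambda_{2}|\varphi|^{q-2}\varphi\bigr)$ in $H^{*}$, and the pairing of the bracket with $\varphi$ is real. Only the energy identity genuinely requires the $J_{\varepsilon}$ or density argument.
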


The above Lemma \ref{lem2.2} reveals two important conservation laws for the nonlinear Schr\"{o}dinger equations, i.e., the mass conservation law \cite{rz1-3,rz1-4,rz1-5,r4-1-2-5}, also called the conservation law of charge \cite{rz1,rz1-7,rz1-6}, reflected by (\ref{2.2}), and the energy conservation law shown by (\ref{2.3}). Especially for the unbounded domain $\mathbb{R}^n$, the $L^2$ norm in the mass/charge conservation law is the ``base'' norm of the $H^1$ norm, which helps overcome the difficulties caused by the missing Poincar\'{e} inequality to make the corresponding Sobolev embedding work. Hence it is necessary to combine these two conservation laws to introduce a new functional as follows
\begin{align}\label{2.4}
K(\varphi):=\frac{1}{2}\|\nabla \varphi\|^{2}_{2}+\frac{1}{2}\|\varphi\|^{2}_{2}+\frac{1}{2}\||x|\varphi\|^{2}_{2}-\frac{\lambda_{1}}{p}\|\varphi\|^{p}_{p}-\frac{\lambda_{2}}{q}\|\varphi\|^{q}_{q}
\end{align}
as the ``energy'' to construct the cross-constrained variational framework.

The core idea of applying the cross-constrained variational theory is to construct effective and available cross-constrained variational functionals, i.e., $S_{j}(\varphi)$ we shall define later, which will be used to define the Nehari manifolds, the cross-constrained variational problems, and divide the space of the initial data. These proper constructed cross-constrained variational functionals should be in the form of combinations of the selected terms with proper weights from the ``energy'' functional $K(\varphi)$. Hence we explain this mechanism roughly here. First, we clarify that among all the terms in $K(\varphi)$, $\|\nabla \varphi\|^{2}_{2}$ and $\|\varphi\|^{p}_{p}$ are mandatory, while others are optional. The necessity of $\|\nabla \varphi\|^{2}_{2}$ will be carefully explained in Remark \ref{rem4.2} later by explaining that it helps prove the finite time blowup, although the proof of global existence can be separated from it. The term $\|\varphi\|^{p}_{p}$ is negative and should be balanced by the positive terms, which is the spirit of the potential well theory, and we shall also explain in Theorem \ref{th4.1} and Remark \ref{rem4.3} that as $p>q$, the term $\|\varphi\|^{p}_{p}$ should be controlled as a mandatory term, but the term $\|\varphi\|^{q}_{q}$ is optional. Although these two mandatory terms are essential, their combination is unique, so these three optional terms indeed construct various complicated cross-constrained variational problems. As shown in Table 1, we have eight different combinations as the cross-constrained functionals by considering the two mandatory and three optional terms. In the present paper, we shall conduct a comprehensive study of all these eight combinations, which seems the first try in applying the cross-constrained variational theory.
\begin{table}[H]
  \centering
  \caption{The cross-constrained variational combinations}\label{tab1}
  \vspace{0.3cm}
  \tiny
  \begin{threeparttable}
  \begin{footnotesize}
  \begin{tabular}{p{2.8cm}<{\centering}p{1.8cm}<{\centering}p{1.7cm}<{\centering}p{1.6cm}<{\centering}p{1.4cm}<{\centering}p{1.4cm}<{\centering}}

    \toprule
    \ &$\|\nabla \varphi\|^{2}_{2}$ &$\||x|\varphi\|^{2}_{2}$ &$\|\varphi\|^{2}_{2}$ &$\|\varphi\|^{p}_{p}$ &$\|\varphi\|^{q}_{q}$\\
    \midrule
    Case 1\ \ \ \ $S_{1}(\varphi)$&$\bullet$ &$\circ$ &$\circ$ &$\bullet$  &$\circ$ \\
    Case 2\ \ \ \ $S_{2}(\varphi)$&$\bullet$ &$\bullet$ &$\circ$ &$\bullet$  &$\circ$ \\
    Case 3\ \ \ \ $S_{3}(\varphi)$&$\bullet$ &$\circ$ &$\bullet$ &$\bullet$  &$\circ$ \\
    Case 4\ \ \ \ $S_{4}(\varphi)$&$\bullet$ &$\bullet$ &$\bullet$ &$\bullet$  &$\circ$ \\
    Case 5\ \ \ \ $S_{5}(\varphi)$&$\bullet$ &$\circ$ &$\circ$ &$\bullet$  &$\bullet$ \\
    Case 6\ \ \ \ $S_{6}(\varphi)$&$\bullet$ &$\bullet$ &$\circ$ &$\bullet$  &$\bullet$ \\
    Case 7\ \ \ \ $S_{7}(\varphi)$&$\bullet$ &$\circ$ &$\bullet$ &$\bullet$  &$\bullet$ \\
    Case 8\ \ \ \ $S_{8}(\varphi)$&$\bullet$ &$\bullet$ &$\bullet$ &$\bullet$  &$\bullet$ \\
    \bottomrule
    \end{tabular}
    \begin{tablenotes}
    \item $\bullet=$ choose \ \ \ $\circ=$ not choose
    \end{tablenotes}
  \end{footnotesize}
  \end{threeparttable}
\end{table}
\noindent Now we write down these eight combinations, i.e., Case 1-Case 8, as follows
\begin{align*}
S_{1}(\varphi):=\|\nabla \varphi\|^{2}_{2}-\frac{n(p-2)\lambda_{1}}{2p}\|\varphi\|^{p}_{p},
\end{align*}
\begin{align*}
S_{2}(\varphi):=\|\nabla \varphi\|^{2}_{2}+\||x|\varphi\|^{2}_{2}-\frac{n(p-2)\lambda_{1}}{2p}\|\varphi\|^{p}_{p},
\end{align*}
\begin{align*}
S_{3}(\varphi):=\|\nabla \varphi\|^{2}_{2}+c\|\varphi\|^{2}_{2}-\frac{n(p-2)\lambda_{1}}{2p}\|\varphi\|^{p}_{p},
\end{align*}
\begin{align*}
S_{4}(\varphi):=\|\nabla \varphi\|^{2}_{2}+c\|\varphi\|^{2}_{2}+\||x|\varphi\|^{2}_{2}-\frac{n(p-2)\lambda_{1}}{2p}\|\varphi\|^{p}_{p},
\end{align*}
\begin{align*}
S_{5}(\varphi):=\|\nabla \varphi\|^{2}_{2}-\frac{n(p-2)\lambda_{1}}{2p}\|\varphi\|^{p}_{p}-\frac{n(q-2)\lambda_{2}}{2q}\|\varphi\|^{q}_{q},
\end{align*}
\begin{align*}
S_{6}(\varphi):=\|\nabla \varphi\|^{2}_{2}+\||x|\varphi\|^{2}_{2}-\frac{n(p-2)\lambda_{1}}{2p}\|\varphi\|^{p}_{p}-\frac{n(q-2)\lambda_{2}}{2q}\|\varphi\|^{q}_{q},
\end{align*}
\begin{align*}
S_{7}(\varphi):=\|\nabla \varphi\|^{2}_{2}+\|\varphi\|^{2}_{2}-\frac{n(p-2)\lambda_{1}}{2p}\|\varphi\|^{p}_{p}-\frac{n(q-2)\lambda_{2}}{2q}\|\varphi\|^{q}_{q},
\end{align*}
and
\begin{align*}
S_{8}(\varphi):=\|\nabla \varphi\|^{2}_{2}+\|\varphi\|^{2}_{2}+\||x|\varphi\|^{2}_{2}-\frac{n(p-2)\lambda_{1}}{2p}\|\varphi\|^{p}_{p}-\frac{n(q-2)\lambda_{2}}{2q}\|\varphi\|^{q}_{q},
\end{align*}where $c>0$ is chosen such that
\begin{align}\label{cdetiaojian}
\frac{1}{2}-\left(\frac{2}{n(p-2)}+\frac{2\lambda_{2}p}{\lambda_{1}qn(p-2)}\right)c-\frac{\lambda_{2}}{q}>0
\end{align}
for the case $j=3,4$, and $\frac{1}{2}-\frac{\lambda_{2}}{q}>0$ is assumed in (\ref{cdetiaojian}).

By the functionals $K(\varphi)$ and $S_{j}(\varphi)$, $j=1,2,\cdots,8$, constructed above, we introduce the following cross-constrained problems
\begin{equation}\label{2.5}
d_{\mathcal{N}_{j}}:=\inf_{\varphi\in \mathcal{N}_{j}}K(\varphi),
\end{equation}
 where
\begin{align*}
\mathcal{N}_{j}:=\left\{\varphi\in H\backslash\{0\}|S_{j}(\varphi)=0\right\}, j=1,2,\cdots,8.\nonumber
\end{align*}
Subsequently, by applying the variational problems (\ref{2.5}), for $j=1,2,\cdots,8$, we divide the space of the initial data
\begin{align}\label{2.6}
\mathcal{H}_{j}:=\left\{\varphi\in H\big|0\leq K(\varphi)<d_{\mathcal{N}_{j}}\right\}
\end{align}
into
\begin{align}\label{2.7}
\mathcal{W}_{j}:=\left\{\varphi\in H\big|0\leq K(\varphi)<d_{{\mathcal{N}_{j}}},S_{j}(\varphi)>0\right\}\cup\{0\}
\end{align}
and
\begin{align}\label{2.8}
\mathcal{V}_{j}:=\left\{\varphi\in H\big|0\leq K(\varphi)<d_{{\mathcal{N}_{j}}},S_{j}(\varphi)<0\right\},
\end{align}
that is, $\mathcal{H}_{j}=\mathcal{W}_{j}\cup\mathcal{V}_{j}$, which are indeed the eight classifications of the initial data we shall show later. We shall use these eight classifications of the initial data to give eight sharp criteria to distinguish the global solution and finite time blowup solution to the Cauchy problem (\ref{1.1}) for the non-negative low initial energy case in following Section \ref{sec4}.

Next, we shall prove the positivity of $d_{_{\mathcal{N}_{j}}}$ defined by (\ref{2.5}), that is, $d_{_{\mathcal{N}_{j}}}>0$ for $j=1,2,\cdots,8$.
The present paper aims to figure out the threshold/sharp condition of the initial data to classify the global existence and finite time blowup solution. Hence it is necessary to consider the non-negative initial energy case $0\leq K(\varphi_{0})<d_{\mathcal{N}_{j}}$, which means that we require $d_{\mathcal{N}_{j}}>0$. For $K(\varphi_{0})<0$, i.e., the negative initial energy case, the solution must blow up in finite time according to Theorem \ref{th3.1}, which is a comparably simple case and not necessarily to be dealt with by the cross-constrained variational theory.
\begin{lemma}\label{lem2.3}
(Positivity of $d_{_{\mathcal{N}_{j}}}$, $j=1,2,\cdots,8$)
Let $p$ and $q$ satisfy (\ref{1.2}), $\lambda_{1}>0$, and $\lambda_{2}>0$.

(i)\ For the cases $j=1,2,3,4$, if $\lambda_{1}$ and $\lambda_{2}$ satisfy
\begin{align}\label{2.9}
\lambda_{2}<\left(\frac{1}{2}-\frac{2}{n(p-2)}\right)\frac{qn(p-2)\lambda_{1}}{2p}
\end{align}
and
\begin{align}\label{2.10}
\lambda_{2}<\frac{q}{2},
\end{align}
then $d_{_{\mathcal{N}_{j}}}>0$;

(ii)\ For the cases $j=5,6,7,8$, $d_{_{\mathcal{N}_{j}}}>0$.
\begin{proof}
 To prove $d_{_{\mathcal{N}_{j}}}>0$, $j=1,2,\cdots,8$, we need to control the negative terms $\|\varphi\|^{p}_{p}$ and $\|\varphi\|^{q}_{q}$ in the functional $K(\varphi)$. Here, the corresponding strategies depend on the structures of the cross-constrained variational functionals $S_{j}(\varphi)$, $j=1,2,\cdots,8$. Although we have eight cases to consider, it is not necessary to propose eight different approaches. According to the nature of these eight cases, we can divide them into three groups, i.e., Case \textrm{I} ($j=1,2$), Case \textrm{II} ($j=3,4$), and Case \textrm{III} ($j=5,6,7,8$). Next, we shall prove these three cases one by one.\\
{\bf{Case \textrm{I}: \bm{$j=1,2.$}}}\\
 To facilitate our analysis, we divide the space $\mathbb{R}^{n}$ into two parts, i.e.,
\begin{equation}
\Omega_{1}:=\{{x\in\mathbb{R}^{n}||\varphi|\geq1,\varphi\in H}\}\nonumber
\end{equation}
and
\begin{equation}
\Omega_{2}:=\{{x\in\mathbb{R}^{n}||\varphi|<1,\varphi\in H}\},\nonumber
\end{equation}
that is, $\mathbb{R}^{n}=\Omega_{1}\cup\Omega_{2}$.
Then, we know
\begin{align}\label{2.11}
\begin{split}
K(\varphi)=&\frac{1}{2}\|\nabla \varphi\|^{2}_{2}+\frac{1}{2}\|\varphi\|^{2}_{2}+\frac{1}{2}\||x|\varphi\|^{2}_{2}-\frac{\lambda_{1}}{p}\|\varphi\|^{p}_{p}-\frac{\lambda_{2}}{q}\|\varphi\|^{q}_{q}\\
=&\underbrace{\int_{\Omega_{1}} \left(\frac{1}{2}|\nabla \varphi|^2+\frac{1}{2}|\varphi|^{2}+\frac{1}{2}|x|^{2}|\varphi|^{2}-\frac{\lambda_{1}}{p}|\varphi|^{p}-\frac{\lambda_{2}}{q}|\varphi|^{q}\right){\rm d}x}_{:=K_{1}}\\
&+\underbrace{\int_{\Omega_{2}} \left(\frac{1}{2}|\nabla \varphi|^2+\frac{1}{2}|\varphi|^{2}+\frac{1}{2}|x|^{2}|\varphi|^{2}-\frac{\lambda_{1}}{p}|\varphi|^{p}-\frac{\lambda_{2}}{q}|\varphi|^{q}\right){\rm d}x}_{:=K_{2}}.
\end{split}
\end{align}
First, we estimate $K_{1}$. For $x\in\Omega_{1}$, according to $p>q$ assumed in (\ref{1.2}), we deduce
\begin{align}\label{2.12}
\frac{\lambda_{2}}{q}|\varphi|^{q}\leq\frac{\lambda_{2}}{q}|\varphi|^{p}.
\end{align}
Substituting (\ref{2.12}) into $K_{1}$, we know
\begin{align*}
K_{1}\geq&\int_{\Omega_{1}} \left(\frac{1}{2}|\nabla \varphi|^2+\frac{1}{2}|\varphi|^{2}+\frac{1}{2}|x|^{2}|\varphi|^{2}\right){\rm d}x-\int_{\Omega_{1}}\left(\frac{\lambda_{1}}{p}+\frac{\lambda_{2}}{q}\right)|\varphi|^{p}{\rm d}x,
\end{align*}
which means
\begin{align}\label{2.13}
\begin{split}
K_{1}>&\int_{\Omega_{1}} \left(\frac{1}{2}|\nabla \varphi|^2+\left(\frac{1}{2}-\frac{\lambda_{2}}{q}\right)|\varphi|^{2}+\frac{1}{2}|x|^{2}|\varphi|^{2}\right){\rm d}x\\
&-\int_{\Omega_{1}}\left(\frac{\lambda_{1}}{p}+\frac{\lambda_{2}}{q}\right)|\varphi|^{p}{\rm d}x.
\end{split}
\end{align}
Now, we estimate $K_{2}$. We notice
\begin{align}\label{2.14}
\frac{\lambda_{2}}{q}|\varphi|^{q}\leq\frac{\lambda_{2}}{q}|\varphi|^{2}
\end{align}
 for $x\in\Omega_{2}$. By substituting (\ref{2.14}) into $K_{2}$, we know
\begin{align*}
K_{2}\geq&\int_{\Omega_{2}} \left(\frac{1}{2}|\nabla \varphi|^2+\frac{1}{2}|\varphi|^{2}+\frac{1}{2}|x|^{2}|\varphi|^{2}\right){\rm d}x-\int_{\Omega_{2}}\left(\frac{\lambda_{1}}{p}|\varphi|^{p}+\frac{\lambda_{2}}{q}|\varphi|^{2}\right){\rm d}x,
\end{align*}
which implies
\begin{align}\label{2.15}
\begin{split}
K_{2}>&\int_{\Omega_{2}} \left(\frac{1}{2}|\nabla \varphi|^2+\left(\frac{1}{2}-\frac{\lambda_{2}}{q}\right)|\varphi|^{2}+\frac{1}{2}|x|^{2}|\varphi|^{2}\right){\rm d}x\\
&-\int_{\Omega_{2}}\left(\frac{\lambda_{1}}{p}+\frac{\lambda_{2}}{q}\right)|\varphi|^{p}{\rm d}x.
\end{split}
\end{align}
By substituting (\ref{2.13}) and (\ref{2.15}) into (\ref{2.11}), we obtain
\begin{align}\label{2.16}
\begin{split}
K(\varphi):=&K_{1}+K_{2}\\
>&\frac{1}{2}\|\nabla \varphi\|^{2}_{2}+\left(\frac{1}{2}-\frac{\lambda_{2}}{q}\right)\|\varphi\|^{2}_{2}+\frac{1}{2}\||x|\varphi\|^{2}_{2}-\left(\frac{\lambda_{1}}{p}+\frac{\lambda_{2}}{q}\right)\|\varphi\|^{p}_{p}.
\end{split}
\end{align}
Next, in order to estimate $K(\varphi)$ for $\varphi\in\mathcal{N}_{1}$ or $\varphi\in\mathcal{N}_{2}$ based on (\ref{2.16}), we analyze the properties of the function $\varphi\in\mathcal{N}_{1}\cup\mathcal{N}_{2}$.
For $\varphi\in \mathcal{N}_{1}$, i.e., $S_{1}(\varphi)=0$, $\varphi\neq0$, we have
\begin{align*}
\|\varphi\|^{p}_{p}=\frac{2p}{n(p-2)\lambda_{1}}\|\nabla \varphi\|^{2}_{2}<\frac{2p}{n(p-2)\lambda_{1}}\left(\|\nabla \varphi\|^{2}_{2}+\||x|\varphi\|^{2}_{2}\right).
\end{align*}
And for $\varphi\in \mathcal{N}_{2}$, i.e., $S_{2}(\varphi)=0$, $\varphi\neq0$, we have
\begin{align*}
\|\varphi\|^{p}_{p}=\frac{2p}{n(p-2)\lambda_{1}}\left(\|\nabla \varphi\|^{2}_{2}+\||x|\varphi\|^{2}_{2}\right).
\end{align*}
Thus, for $\varphi\in \mathcal{N}_{1}\cup\mathcal{N}_{2}$, we know
\begin{align}\label{2.17}
\|\varphi\|^{p}_{p}\leq\frac{2p}{n(p-2)\lambda_{1}}\left(\|\nabla \varphi\|^{2}_{2}+\||x|\varphi\|^{2}_{2}\right).
\end{align}
By substituting (\ref{2.17}) into (\ref{2.16}), we know
\begin{align}\label{2.18}
\begin{split}
K(\varphi)\geq&\frac{1}{2}\|\nabla \varphi\|^{2}_{2}+\left(\frac{1}{2}-\frac{\lambda_{2}}{q}\right)\|\varphi\|^{2}_{2}+\frac{1}{2}\||x|\varphi\|^{2}_{2}\\
&-\left(\frac{\lambda_{1}}{p}+\frac{\lambda_{2}}{q}\right)\frac{2p}{n(p-2)\lambda_{1}}\left(\|\nabla\varphi\|^{2}_{2}+\||x|\varphi\|^{2}_{2}\right)\\
=&\left(\frac{1}{2}-\frac{2}{n(p-2)}-\frac{2\lambda_{2}p}{\lambda_{1}qn(p-2)}\right)\left(\|\nabla \varphi\|^{2}_{2}+\||x|\varphi\|^{2}_{2}\right)\\
&+\left(\frac{1}{2}-\frac{\lambda_{2}}{q}\right)\|\varphi\|^{2}_{2}.
\end{split}
\end{align}
Noting that (\ref{1.2}) gives
\begin{align}\label{2.19}
\frac{1}{2}-\frac{2}{n(p-2)}>0,
\end{align}
which ensures that the condition (\ref{2.9}) makes sense for $\lambda_{1}>0$ and $\lambda_{2}>0$. And (\ref{2.9}) ensures
\begin{align}\label{2.20}
\frac{1}{2}-\frac{2}{n(p-2)}-\frac{2\lambda_{2}p}{\lambda_{1}qn(p-2)}>0,
\end{align}
which makes (\ref{2.18}) turn to
\begin{align}\label{2.21}
K(\varphi)\geq\left(\frac{1}{2}-\frac{2}{n(p-2)}-\frac{2\lambda_{2}p}{\lambda_{1}qn(p-2)}\right)\|\nabla \varphi\|^{2}_{2}+\left(\frac{1}{2}-\frac{\lambda_{2}}{q}\right)\|\varphi\|^{2}_{2}.
\end{align}
In order to further estimate $K(\varphi)$ in (\ref{2.21}), we also need
\begin{align}\label{2.22}
\frac{1}{2}-\frac{\lambda_{2}}{q}>0,
\end{align}
which is ensured by the condition (\ref{2.10}).
 More precisely, according to (\ref{2.20}) and (\ref{2.22}), to estimate $K(\varphi)$, we only need to deal with the terms $\|\nabla \varphi\|^{2}_{2}$ and $\|\varphi\|^{2}_{2}$ in (\ref{2.21}). Applying Gagliardo-Nirenberg inequality
\begin{align}
&\|\varphi\|^{p}_{p}\leq C_{1}\|\nabla\varphi\|^{p\mu}_{2}\|\varphi\|^{p(1-\mu)}_{2}\label{GN-budengshi}
\end{align}
by choosing $\mu=\frac{n(p-2)}{2p}$, i.e., $\frac{1}{p}=\mu\left(\frac{1}{2}-\frac{1}{n}\right)+\frac{1-\mu}{2}$, and using the inequality
\begin{align}\label{2.24}
\frac{2p}{n(p-2)\lambda_{1}}\|\nabla \varphi\|^{2}_{2}\leq\|\varphi\|^{p}_{p}
\end{align}
by $S_{1}(\varphi)=0$ or $S_{2}(\varphi)=0$, we know
\begin{align}\label{2.25}
\frac{2p}{n(p-2)\lambda_{1}C_{1}}\leq\|\nabla\varphi\|^{\frac{n(p-2)-4}{2}}_{2}\|\varphi\|^{p-\frac{n(p-2)}{2}}_{2}.
\end{align}
In (\ref{2.25}), we see $\frac{n(p-2)-4}{2}>0$ and $p-\frac{n(p-2)}{2}>0$ obtained from $2+\frac{4}{n}<p$ and $p<\frac{2n}{n-2}$, respectively, which are both assumed in (\ref{1.2}). According to (\ref{2.25}) and Cauchy-Schwartz inequality, we obtain
\begin{align}\label{2.26}
\frac{2p}{n(p-2)\lambda_{1}C_{1}}\leq\frac{1}{2}\|\nabla\varphi\|^{n(p-2)-4}_{2}+\frac{1}{2}\|\varphi\|^{2(p+n)-np}_{2},
\end{align}
which means
\begin{align}\label{2.27}
\frac{2p}{n(p-2)\lambda_{1}C_{1}}\leq&\|\nabla\varphi\|^{n(p-2)-4}_{2}
\end{align}
or
\begin{align}\label{2.28}
\frac{2p}{n(p-2)\lambda_{1}C_{1}}\leq&\|\varphi\|^{2(p+n)-np}_{2}.
\end{align}
By substituting (\ref{2.27}) or (\ref{2.28}) into (\ref{2.21}), we can deduce
\begin{align}\label{2.29}
K(\varphi)\geq\left(\frac{1}{2}-\frac{2}{n(p-2)}-\frac{2\lambda_{2}p}{\lambda_{1}qn(p-2)}\right)\left(\frac{2p}{n(p-2)\lambda_{1}C_{1}}\right)^{\frac{2}{n(p-2)-4}}>0
\end{align}
or
\begin{align}\label{2.30}
K(\varphi)\geq\left(\frac{1}{2}-\frac{\lambda_{2}}{q}\right)\left(\frac{2p}{n(p-2)\lambda_{1}C_{1}}\right)^{\frac{2}{2(p+n)-np}}>0,
\end{align}
respectively, which implies $d_{_{\mathcal{N}_{j}}}>0$, $j=1,2$.\\
{\bf{Case \textrm{II}: \bm{$j=3,4.$}}}

In this case, for $\varphi\in \mathcal{N}_{3}$ or $\varphi\in \mathcal{N}_{4}$, we also need to apply (\ref{2.16}) to estimate the functional $K(\varphi)$. Like Case \textrm{I}, we also need to deal with the negative term $\left(\frac{\lambda_{1}}{p}+\frac{\lambda_{2}}{q}\right)\|\varphi\|^{p}_{p}$ in (\ref{2.16}).
For $\varphi\in \mathcal{N}_{3}$, i.e., $S_{3}(\varphi)=0$, $\varphi\neq0$, we know
\begin{align*}
\|\varphi\|^{p}_{p}=&\frac{2p}{n(p-2)\lambda_{1}}\left(\|\nabla \varphi\|^{2}_{2}+c\|\varphi\|^{2}_{2}\right)\\
<&\frac{2p}{n(p-2)\lambda_{1}}\left(\|\nabla \varphi\|^{2}_{2}+c\|\varphi\|^{2}_{2}+\||x|\varphi\|^{2}_{2}\right).
\end{align*}
 And for $\varphi\in \mathcal{N}_{4}$, i.e., $S_{4}(\varphi)=0$, $\varphi\neq0$, we get
\begin{align*}
\|\varphi\|^{p}_{p}=\frac{2p}{n(p-2)\lambda_{1}}\left(\|\nabla \varphi\|^{2}_{2}+c\|\varphi\|^{2}_{2}+\||x|\varphi\|^{2}_{2}\right).
\end{align*}
 Thus,
\begin{align}\label{2.31}
\|\varphi\|^{p}_{p}\leq\frac{2p}{n(p-2)\lambda_{1}}\left(\|\nabla \varphi\|^{2}_{2}+c\|\varphi\|^{2}_{2}+\||x|\varphi\|^{2}_{2}\right)
\end{align}
holds for $\varphi\in \mathcal{N}_{3}\cup\mathcal{N}_{4}$. By substituting (\ref{2.31}) into (\ref{2.16}), we deduce
\begin{align}\label{2.32}
\begin{split}
K(\varphi)\geq&\frac{1}{2}\|\nabla \varphi\|^{2}_{2}+\left(\frac{1}{2}-\frac{\lambda_{2}}{q}\right)\|\varphi\|^{2}_{2}+\frac{1}{2}\||x|\varphi\|^{2}_{2}\\
&-\left(\frac{\lambda_{1}}{p}+\frac{\lambda_{2}}{q}\right)\frac{2p}{n(p-2)\lambda_{1}}\left(\|\nabla\varphi\|^{2}_{2}+c\|\varphi\|^{2}_{2}+\||x|\varphi\|^{2}_{2}\right)\\
=&\left(\frac{1}{2}-\frac{2}{n(p-2)}-\frac{2\lambda_{2}p}{\lambda_{1}qn(p-2)}\right)\left(\|\nabla \varphi\|^{2}_{2}+\||x|\varphi\|^{2}_{2}\right)\\
&+\left(\frac{1}{2}-\left(\frac{2}{n(p-2)}+\frac{2\lambda_{2}p}{\lambda_{1}qn(p-2)}\right)c-\frac{\lambda_{2}}{q}\right)\| \varphi\|^{2}_{2}.
\end{split}
\end{align}
The condition (\ref{2.19}) ensures that the condition (\ref{2.9}) makes sense. And (\ref{2.9}) ensures
\begin{align}\label{2.33}
\frac{1}{2}-\frac{2}{n(p-2)}-\frac{2\lambda_{2}p}{\lambda_{1}qn(p-2)}>0.
\end{align}
And (\ref{2.10}) ensures
\begin{align}\label{2.34}
\frac{1}{2}-\frac{\lambda_{2}}{q}>0.
\end{align}
Thus, $c>0$ can be chosen to make (\ref{cdetiaojian})
hold.
These facts make (\ref{2.32}) turn to
\begin{align}\label{2.35}
\begin{split}
K(\varphi)
\geq&\left(\frac{1}{2}-\frac{2}{n(p-2)}-\frac{2\lambda_{2}p}{\lambda_{1}qn(p-2)}\right)\| \nabla\varphi\|^{2}_{2}\\
&+\left(\frac{1}{2}-\left(\frac{2}{n(p-2)}+\frac{2\lambda_{2}p}{\lambda_{1}qn(p-2)}\right)c-\frac{\lambda_{2}}{q}\right)\|\varphi\|^{2}_{2}\\
\geq&M\left(\|\nabla\varphi\|^{2}_{2}+\|\varphi\|^{2}_{2}\right),
\end{split}
\end{align}where
$$M:=\min\left\{\frac{1}{2}-\frac{2}{n(p-2)}-\frac{2\lambda_{2}p}{\lambda_{1}qn(p-2)},
\frac{1}{2}-\left(\frac{2}{n(p-2)}+\frac{2\lambda_{2}p}{\lambda_{1}qn(p-2)}\right)c-\frac{\lambda_{2}}{q}\right\}.$$
Next, we estimate $\|\nabla\varphi\|^{2}_{2}+\|\varphi\|^{2}_{2}$ in (\ref{2.35}). According to $S_{3}(\varphi)=0$ or $S_{4}(\varphi)=0$, we see
\begin{equation}\label{2.36}
\min\{1,c\}\left(\|\nabla\varphi\|^{2}_{2}+\|\varphi\|^{2}_{2}\right)\leq\|\nabla\varphi\|^{2}_{2}+c\|\varphi\|^{2}_{2}\leq\frac{n(p-2)\lambda_{1}}{2p}\|\varphi\|^{p}_{p}.
\end{equation}
Then, by the Sobolev embedding inequality $\|\varphi\|_{p}\leq C_{2}\left(\|\nabla\varphi\|^{2}_{2}+\|\varphi\|^{2}_{2}\right)^{\frac{1}{2}}$ for the embedding $H^{1}(\mathbb{R}^{n})\hookrightarrow L^{p}(\mathbb{R}^{n})$, we obtain
\begin{align}\label{2.37}
\frac{n(p-2)\lambda_{1}}{2p}\|\varphi\|^{p}_{p}\leq C^{p}_{2}\frac{n(p-2)\lambda_{1}}{2p}\left(\|\nabla\varphi\|^{2}_{2}+\|\varphi\|^{2}_{2}\right)^{\frac{p}{2}},
\end{align}
where $C_{2}$ is the embedding constant of $H^{1}(\mathbb{R}^{n})\hookrightarrow L^{p}(\mathbb{R}^{n})$. According to (\ref{2.36}) and (\ref{2.37}), we know
\begin{align*}
\|\nabla\varphi\|^{2}_{2}+\|\varphi\|^{2}_{2}\leq C^{p}_{2}\frac{n(p-2)\lambda_{1}}{2p\min\{1,c\}}\left(\|\nabla\varphi\|^{2}_{2}+\|\varphi\|^{2}_{2}\right)^{\frac{p}{2}},
\end{align*}
which means
\begin{align}\label{2.38}
\|\nabla\varphi\|^{2}_{2}+\|\varphi\|^{2}_{2}\geq \left(\frac{2p\min\{1,c\}}{\lambda_{1}C_{2}^{p}n(p-2)}\right)^{\frac{2}{p-2}}
\end{align}
due to $\varphi\neq 0$ for $\varphi\in \mathcal{N}_{3}\cup\mathcal{N}_{4}$. Then, by substituting (\ref{2.38}) into (\ref{2.35}), we have
\begin{align}\label{2.39}
K(\varphi)\geq M\left(\frac{2p\min\{1,c\}}{\lambda_{1}C_{2}^{p}n(p-2)}\right)^{\frac{2}{p-2}}>0,
\end{align}
 which implies $d_{_{\mathcal{N}_{j}}}>0$, $j=3,4$.\\
{\bf{Case \textrm{III}: \bm{$j=5,6,7,8.$}}}

According to $1<\frac{n(p-2)}{n(q-2)}$ deduced from (\ref{1.2}), we get
\begin{align}\label{2.40}
\begin{split}
K(\varphi)=&\frac{1}{2}\|\nabla \varphi\|^{2}_{2}+\frac{1}{2}\|\varphi\|^{2}_{2}+\frac{1}{2}\||x|\varphi\|^{2}_{2}-\frac{\lambda_{1}}{p}\|\varphi\|^{p}_{p}-\frac{\lambda_{2}}{q}\|\varphi\|^{q}_{q}\\
\geq&\frac{1}{2}\|\nabla \varphi\|^{2}_{2}+\frac{1}{2}\|\varphi\|^{2}_{2}+\frac{1}{2}\||x|\varphi\|^{2}_{2}-\frac{n(p-2)\lambda_{1}}{n(q-2)p}\|\varphi\|^{p}_{p}-\frac{n(q-2)\lambda_{2}}{n(q-2)q}
\|\varphi\|^{q}_{q}\\
=&\frac{1}{2}\|\nabla\varphi\|^{2}_{2}+\frac{1}{2}\|\varphi\|^{2}_{2}+\frac{1}{2}\||x|\varphi\|^{2}_{2}-\frac{2}{n(q-2)}\left(\frac{n(p-2)\lambda_{1}}{2p}\|\varphi\|^{p}_{p}
+\frac{n(q-2)\lambda_{2}}{2q}\|\varphi\|^{q}_{q}\right).
\end{split}
\end{align}
Next, we estimate the functional $K(\varphi)$ for $\varphi\in \mathcal{N}_{j}$, $j=5,6,7,8$. Based on the structures of the cross-constrained variational functionals $S_{j}(\varphi)$, $j=5,6,7,8$, we require the same conditions on $\lambda_{1}$ and $\lambda_{2}$ for the cases $j=5,6,7,8$. Thus, we consider these four cases in this Case \textrm{III}. As the strategies for the cases $j=5,6$ are similar, and so are the cases $j=7,8$, we shall finish the proof of this Case \textrm{III} by considering two sub-cases, i.e., Case \textrm{III}(a) and Case \textrm{III}(b).\\
{\bf{Case \textrm{III}(a): \bm{$j=5,6.$}}}

For $\varphi\in \mathcal{N}_{5}$, i.e., $S_{5}(\varphi)=0$, $\varphi\neq0$, we know
\begin{align*}
\frac{n(p-2)\lambda_{1}}{2p}\|\varphi\|^{p}_{p}+\frac{n(q-2)\lambda_{2}}{2q}\|\varphi\|^{q}_{q}&=\|\nabla\varphi\|^{2}_{2}<\|\nabla\varphi\|^{2}_{2}+\||x|\varphi\|^{2}_{2}.
\end{align*}
And for $\varphi\in \mathcal{N}_{6}$, i.e., $S_{6}(\varphi)=0$, $\varphi\neq0$, we have
\begin{align*}
\frac{n(p-2)\lambda_{1}}{2p}\|\varphi\|^{p}_{p}+\frac{n(q-2)\lambda_{2}}{2q}\|\varphi\|^{q}_{q}=\|\nabla\varphi\|^{2}_{2}+\||x|\varphi\|^{2}_{2}.
\end{align*}
Thus, for $\varphi\in \mathcal{N}_{5}\cup\mathcal{N}_{6}$, we have
\begin{align}\label{2.41}
\frac{n(p-2)\lambda_{1}}{2p}\|\varphi\|^{p}_{p}+\frac{n(q-2)\lambda_{2}}{2q}\|\varphi\|^{q}_{q}\leq\|\nabla\varphi\|^{2}_{2}+\||x|\varphi\|^{2}_{2}.
\end{align}
By substituting (\ref{2.41}) into (\ref{2.40}), we can deduce
\begin{align}\label{2.42}
\begin{split}
K(\varphi)\geq&\frac{1}{2}\|\nabla\varphi\|^{2}_{2}+\frac{1}{2}\|\varphi\|^{2}_{2}+\frac{1}{2}\||x|\varphi\|^{2}_{2}-\frac{2}{n(q-2)}\left(\|\nabla\varphi\|^{2}_{2}
+\||x|\varphi\|^{2}_{2}\right)\\
=&\left(\frac{1}{2}-\frac{2}{n(q-2)}\right)\left(\|\nabla \varphi\|^{2}_{2}+\||x|\varphi\|^{2}_{2}\right)+\frac{1}{2}\|\varphi\|^{2}_{2}.
\end{split}
\end{align}
Here, the condition (\ref{1.2}), i.e., $2+\frac{4}{n}<q$, ensures
\begin{align}\label{2.43}
\frac{1}{2}-\frac{2}{n(q-2)}>0,
\end{align}
which makes (\ref{2.42}) turn to
\begin{align}\label{2.44}
K(\varphi)\geq&\left(\frac{1}{2}-\frac{2}{n(q-2)}\right)\|\nabla \varphi\|^{2}_{2}+\frac{1}{2}\|\varphi\|^{2}_{2}.
\end{align}
Next, we estimate the terms $\|\nabla \varphi\|^{2}_{2}$ and $\|\varphi\|^{2}_{2}$ in (\ref{2.44}).
According to $S_{5}(\varphi)=0$ or $S_{6}(\varphi)=0$, we know
\begin{equation}
\frac{n(p-2)\lambda_{1}}{2p}\|\varphi\|^{p}_{p}+\frac{n(q-2)\lambda_{2}}{2q}\|\varphi\|^{q}_{q}\geq\|\nabla\varphi\|^{2}_{2},\nonumber
\end{equation}
which means
\begin{align}\label{2.45}
&\frac{n(p-2)\lambda_{1}}{2p}\|\varphi\|^{p}_{p}\geq\frac{1}{2}\|\nabla\varphi\|^{2}_{2}
\end{align}
or
\begin{align}\label{2.46}
&\frac{n(q-2)\lambda_{2}}{2q}\|\varphi\|^{q}_{q}\geq\frac{1}{2}\|\nabla\varphi\|^{2}_{2}.
\end{align}

If (\ref{2.45}) holds, a similar process to the arguments from (\ref{GN-budengshi}) to (\ref{2.26}) gives
\begin{align}\label{2.47}
\begin{split}
\frac{p}{n(p-2)\lambda_{1}C_{1}}\leq&\|\nabla\varphi\|^{\frac{n(p-2)-4}{2}}_{2}\|\varphi\|^{p-\frac{n(p-2)}{2}}_{2}\\
\leq&\frac{1}{2}\|\nabla\varphi\|^{n(p-2)-4}_{2}+\frac{1}{2}\|\varphi\|^{2(p+n)-np}_{2},
\end{split}
\end{align}
which implies
\begin{align}\label{2.48}
\frac{p}{n(p-2)\lambda_{1}C_{1}}\leq&\|\nabla\varphi\|^{n(p-2)-4}_{2}
\end{align}
or
\begin{align}\label{2.49}
\frac{p}{n(p-2)\lambda_{1}C_{1}}\leq&\|\varphi\|^{2(p+n)-np}_{2}.
\end{align}
By substituting (\ref{2.48}) or (\ref{2.49}) into (\ref{2.44}), we can obtain
\begin{align}\label{2.50}
K(\varphi)\geq \left(\frac{1}{2}-\frac{2}{n(q-2)}\right)\left(\frac{p}{n(p-2)\lambda_{1}C_{1}}\right)^{\frac{2}{n(p-2)-4}}>0
\end{align}
or
\begin{align}\label{2.51}
K(\varphi)\geq \frac{1}{2}\left(\frac{p}{n(p-2)\lambda_{1}C_{1}}\right)^{\frac{2}{2(p+n)-np}}>0,
\end{align}
respectively.

If (\ref{2.46}) holds, the process of dealing with this case is similar to that of treating the case that (\ref{2.45}) holds, also the arguments from (\ref{GN-budengshi}) to (\ref{2.26}). For this case, we need to employ a different version of Gagliardo-Nirenberg inequality in the following form
\begin{align}
&\|\varphi\|^{q}_{q}\leq C_{3}\|\nabla\varphi\|^{q\theta}_{2}\|\varphi\|^{q(1-\theta)}_{2}\label{GN-budengshi1}
\end{align}
to deal with (\ref{2.46}) by choosing $\theta=\frac{n(q-2)}{2q}$, i.e., $\frac{1}{q}=\theta\left(\frac{1}{2}-\frac{1}{n}\right)+\frac{1-\theta}{2}$. More precisely, by (\ref{GN-budengshi1}) and applying the inequality $\frac{q}{n(q-2)\lambda_{2}}\|\nabla \varphi\|^{2}_{2}\leq\|\varphi\|^{q}_{q}$ as another version of (\ref{2.46}), we get
\begin{align*}
\frac{q}{n(q-2)\lambda_{2}C_{3}}\leq\|\nabla\varphi\|^{\frac{n(q-2)-4}{2}}_{2}\|\varphi\|^{q-\frac{n(q-2)}{2}}_{2}.
\end{align*}
Then, by the similar process of inferring the validity of (\ref{2.27}) or (\ref{2.28}), we obtain
\begin{align}\label{2.53}
\frac{q}{n(q-2)\lambda_{2}C_{3}}\leq&\|\nabla\varphi\|^{n(q-2)-4}_{2}
\end{align}
or
\begin{align}\label{2.54}
\frac{q}{n(q-2)\lambda_{2}C_{3}}\leq&\|\varphi\|^{2(q+n)-nq}_{2}.
\end{align}
By substituting (\ref{2.53}) or (\ref{2.54}) into (\ref{2.44}), we can obtain
\begin{align}\label{2.55}
K(\varphi)\geq \left(\frac{1}{2}-\frac{2}{n(q-2)}\right)\left(\frac{q}{n(q-2)\lambda_{2}C_{3}}\right)^{\frac{2}{n(q-2)-4}}>0
\end{align}
or
\begin{align}\label{2.56}
K(\varphi)\geq \frac{1}{2}\left(\frac{q}{n(q-2)\lambda_{2}C_{3}}\right)^{\frac{2}{2(q+n)-nq}}>0,
\end{align}
respectively. According to (\ref{2.50}), (\ref{2.51}), (\ref{2.55}), and (\ref{2.56}), we know $d_{_{\mathcal{N}_{j}}}>0$, $j=5,6$.\\
{\bf{Case \textrm{III}(b): \bm{$j=7,8$}.}}

For $\varphi\in \mathcal{N}_{7}$, i.e., $S_{7}(\varphi)=0$, $\varphi\neq0$, we know
\begin{align*}
&\frac{n(p-2)\lambda_{1}}{2p}\|\varphi\|^{p}_{p}+\frac{n(q-2)\lambda_{2}}{2q}\|\varphi\|^{q}_{q}\\
=&\|\nabla\varphi\|^{2}_{2}+\|\varphi\|^{2}_{2}\\
<&\|\nabla\varphi\|^{2}_{2}+\|\varphi\|^{2}_{2}+\||x|\varphi\|^{2}_{2}.
\end{align*}
And for $\varphi\in \mathcal{N}_{8}$, i.e., $S_{8}(\varphi)=0$, $\varphi\neq0$, we have
\begin{align*}
\frac{n(p-2)\lambda_{1}}{2p}\|\varphi\|^{p}_{p}+\frac{n(q-2)\lambda_{2}}{2q}\|\varphi\|^{q}_{q}=\|\nabla\varphi\|^{2}_{2}+\|\varphi\|^{2}_{2}+\||x|\varphi\|^{2}_{2}.
\end{align*}
Therefore, for $\varphi\in \mathcal{N}_{7}\cup\mathcal{N}_{8}$, we have
\begin{align}\label{2.57}
\frac{n(p-2)\lambda_{1}}{2p}\|\varphi\|^{p}_{p}+\frac{n(q-2)\lambda_{2}}{2q}\|\varphi\|^{q}_{q}\leq\|\nabla\varphi\|^{2}_{2}+\|\varphi\|^{2}_{2}+\||x|\varphi\|^{2}_{2}.
\end{align}
Substituting (\ref{2.57}) into (\ref{2.40}), we see
\begin{align}\label{2.58}
\begin{split}
K(\varphi)\geq&\frac{1}{2}\|\nabla\varphi\|^{2}_{2}+\frac{1}{2}\|\varphi\|^{2}_{2}+\frac{1}{2}\||x|\varphi\|^{2}_{2}-\frac{2}{n(q-2)}\left(\|\nabla\varphi\|^{2}_{2}+
\|\varphi\|^{2}_{2}+\||x|\varphi\|^{2}_{2}\right)\\
=&\left(\frac{1}{2}-\frac{2}{n(q-2)}\right)\left(\|\nabla\varphi\|^{2}_{2}+\|\varphi\|^{2}_{2}+\||x|\varphi\|^{2}_{2}\right).
\end{split}
\end{align}
Due to (\ref{2.43}), (\ref{2.58}) turns to
\begin{align}\label{2.59}
K(\varphi)\geq&\left(\frac{1}{2}-\frac{2}{n(q-2)}\right)\left(\|\nabla\varphi\|^{2}_{2}+\|\varphi\|^{2}_{2}\right).
\end{align}
 Next, we estimate $\|\nabla\varphi\|^{2}_{2}+\|\varphi\|^{2}_{2}$ in (\ref{2.59}). According to $S_{7}(\varphi)=0$ or $S_{8}(\varphi)=0$, we see
\begin{align*}
\frac{n(p-2)\lambda_{1}}{2p}\|\varphi\|^{p}_{p}+\frac{n(q-2)\lambda_{2}}{2q}\|\varphi\|^{q}_{q}\geq\|\nabla\varphi\|^{2}_{2}+\|\varphi\|^{2}_{2},
\end{align*}
which means
\begin{align}\label{2.60}
&\frac{n(p-2)\lambda_{1}}{2p}\|\varphi\|^{p}_{p}\geq\frac{1}{2}\left(\|\nabla\varphi\|^{2}_{2}+\|\varphi\|^{2}_{2}\right)
\end{align}
or
\begin{align}\label{2.61}
&\frac{n(q-2)\lambda_{2}}{2q}\|\varphi\|^{q}_{q}\geq\frac{1}{2}\left(\|\nabla\varphi\|^{2}_{2}+\|\varphi\|^{2}_{2}\right).
\end{align}

If (\ref{2.60}) holds, by Sobolev embedding inequality of $H^{1}(\mathbb{R}^{n})\hookrightarrow L^{p}(\mathbb{R}^{n})$, we retrieve (\ref{2.37}), which makes (\ref{2.60}) become
\begin{align*}
\frac{1}{2}\left(\|\nabla\varphi\|^{2}_{2}+\|\varphi\|^{2}_{2}\right)\leq C^{p}_{2}\frac{n(p-2)\lambda_{1}}{2p}(\|\nabla\varphi\|^{2}_{2}+\|\varphi\|^{2}_{2})^{\frac{p}{2}},
\end{align*}
i.e.,
\begin{align}\label{2.62}
\|\nabla\varphi\|^{2}_{2}+\|\varphi\|^{2}_{2}\geq \left(\frac{p}{\lambda_{1}C_{2}^{p}n(p-2)}\right)^{\frac{2}{p-2}}
\end{align}
due to $\varphi\neq 0$ for $\varphi\in \mathcal{N}_{7}\cup\mathcal{N}_{8}$.

If (\ref{2.61}) holds, by using Sobolev embedding inequality $\|\varphi\|_{q}\leq C_{4}\left(\|\nabla\varphi\|^{2}_{2}+\|\varphi\|^{2}_{2}\right)^{\frac{1}{2}}$ for the embedding $H^{1}(\mathbb{R}^{n})\hookrightarrow L^{q}(\mathbb{R}^{n})$, where $C_{4}$ is the embedding constant of $H^{1}(\mathbb{R}^{n})\hookrightarrow L^{q}(\mathbb{R}^{n})$, (\ref{2.61}) becomes
\begin{align*}
\frac{1}{2}\left(\|\nabla\varphi\|^{2}_{2}+\|\varphi\|^{2}_{2}\right)\leq C^{q}_{4}\frac{n(q-2)\lambda_{2}}{2q}(\|\nabla\varphi\|^{2}_{2}+\|\varphi\|^{2}_{2})^{\frac{q}{2}},
\end{align*}
i.e.,
\begin{align}\label{2.63}
\|\nabla\varphi\|^{2}_{2}+\|\varphi\|^{2}_{2}\geq \left(\frac{q}{\lambda_{2}C_{4}^{q}n(q-2)}\right)^{\frac{2}{q-2}}
\end{align}
due to $\varphi\neq 0$ for $\varphi\in \mathcal{N}_{7}\cup\mathcal{N}_{8}$.
By substituting (\ref{2.62}) or (\ref{2.63}) into (\ref{2.59}), we have
\begin{align*}
K(\varphi)\geq\left(\frac{1}{2}-\frac{2}{n(q-2)}\right)\left(\frac{p}{\lambda_{1}C_{2}^{p}n(p-2)}\right)^{\frac{2}{p-2}}>0
\end{align*}
or
\begin{align*}
K(\varphi)\geq\left(\frac{1}{2}-\frac{2}{n(q-2)}\right)\left(\frac{q}{\lambda_{2}C_{4}^{q}n(q-2)}\right)^{\frac{2}{q-2}}>0,
\end{align*}
respectively, which implies $d_{_{\mathcal{N}_{j}}}>0$, $j=7,8$.
\end{proof}
\end{lemma}
\noindent\begin{remark}\label{rem2.1}(The necessity of the assumptions (\ref{2.9}) and (\ref{2.10}) in Lemma \ref{lem2.3})
As we focus on the non-negative initial energy case $0<K(\varphi_{0})<d_{\mathcal{N}_{j}}$, $d_{\mathcal{N}_{j}}>0$ for $j=1,2,\cdots,8$, the conditions given in Lemma \ref{lem2.3} for $d_{\mathcal{N}_{j}}>0$ are also applied to the main conclusions to classify the initial data to distinguish the global solution and finite time blowup solution of (\ref{1.1}), i.e., Theorem \ref{th4.1} and Theorem \ref{th4.2}. In discussing the positivity of the depth of the potential well $d_{\mathcal{N}_{j}}$, we find that the strategies of the proofs depend on the structures of the cross-constrained variational functionals, i.e., $S_{j}(\varphi)$, $j=1,2,\cdots,8$. For the cases $j=5,6,7,8$, the very loose restrictions $\lambda_{1}>0$ and $\lambda_{2}>0$ are enough, but for the cases $j=1,2,3,4$, we have to add some additional restrictions on $\lambda_{1}$ and $\lambda_{2}$, like (\ref{2.9}) and (\ref{2.10}), to overcome the difficulties arising in the estimate, and these restrictions are also required later for the main conclusions. Although we believe that these conditions, i.e., (\ref{2.9}) and (\ref{2.10}), can be improved or even removed, we have to say that these conditions are the best we can achieve in this work. From this perspective in this remark, we analyze the roles of these additional conditions and the necessity. We hope such efforts can be helpful for the further improvement of these additional conditions until to remove them.\\

The assumptions (\ref{2.9}) and (\ref{2.10}) as the additional restrictions on $\lambda_{1}$ and $\lambda_{2}$ are required in the proof of Case \textrm{I}, i.e., $j=1,2$, and Case \textrm{II}, i.e., $j=3,4$. In contrast, these restrictions are not required for Case \textrm{III}, i.e., $j=5,6,7,8$. This is mainly because the cross-constrained variational functionals $S_{j}(\varphi)$, $j=1,2,3,4$, have the term $\|\varphi\|^{p}_{p}$ and do not have the term $\|\varphi\|^{q}_{q}$, while $S_{j}(\varphi)$, $j=5,6,7,8$, have both these two terms, which leads to different strategies of the proofs. More precisely, since the terms $\|\varphi\|^{p}_{p}$ and $\|\varphi\|^{q}_{q}$ are two negative ones in the functional $K(\varphi)$, in order to prove $d_{\mathcal{N}_{j}}>0$, in the cases $j=5,6,7,8$, by applying the condition $S_{j}(\varphi)=0$, i.e., $\varphi\in \mathcal{N}_{j}$, both the terms $\|\varphi\|^{p}_{p}$ and $\|\varphi\|^{q}_{q}$ can be controlled, as shown by (\ref{2.41}) for $j=5,6$ and (\ref{2.57}) for $j=7,8$. However, in the cases $j=1,2,3,4$, by applying the condition $S_{j}(\varphi)=0$, only the term $\|\varphi\|^{p}_{p}$ can be controlled, as shown by (\ref{2.17}) for $j=1,2$ and (\ref{2.31}) for $j=3,4$. Thus, we need to deal with the term $\|\varphi\|^{q}_{q}$ additionally, which needs the assumptions (\ref{2.9}) and (\ref{2.10}) for $j=1,2,3,4$.

Next, we explain the necessity of the assumptions (\ref{2.9}) and (\ref{2.10}) for the cases $j=1,2$. The analysis for the case $j=3,4$ is similar. In the arguments for treating the term $\|\varphi\|^{q}_{q}$ in the cases $j=1,2$, by dividing the space $\mathbb{R}^n$ into $\Omega_{1}$ (the domain where $|\varphi|\geq1$) and $\Omega_{2}$ (the domain where $|\varphi|<1$), the term $\frac{\lambda_{2}}{q}\|\varphi\|^{q}_{q}$ in $K(\varphi)$ is rewritten as
\begin{align*}
\frac{\lambda_{2}}{q}\|\varphi\|^{q}_{q}=\frac{\lambda_{2}}{q}\int_{\Omega_{1}} |\varphi|^{q}{\rm d}x+\frac{\lambda_{2}}{q}\int_{\Omega_{2}} |\varphi|^{q}{\rm d}x,
\end{align*}
as shown by (\ref{2.11}).

For $\frac{\lambda_{2}}{q}\int_{\Omega_{1}} |\varphi|^{q}{\rm d}x$, due to $p>q$ assumed in this paper, we know $|\varphi|^{q}\leq|\varphi|^{p}$ for $x\in\Omega_{1}$, which implies
\begin{align}\label{2.64}
\frac{\lambda_{2}}{q}\int_{\Omega_{1}} |\varphi|^{q}{\rm d}x\leq\frac{\lambda_{2}}{q}\int_{\Omega_{1}} |\varphi|^{p}{\rm d}x\leq\frac{\lambda_{2}}{q}\|\varphi\|^{p}_{p}.
\end{align}
Thus, we can deal with $\frac{\lambda_{2}}{q}\int_{\Omega_{1}} |\varphi|^{q}{\rm d}x$ by controlling $\frac{\lambda_{2}}{q}\|\varphi\|^{p}_{p}$ in (\ref{2.64}). In fact, except for $\frac{\lambda_{2}}{q}\|\varphi\|^{p}_{p}$, the negative term $\frac{\lambda_{1}}{p}\|\varphi\|^{p}_{p}$ in $K(\varphi)$ is also needed to be controlled. This means we need to control
\begin{align*}
\left(\frac{\lambda_{1}}{p}+\frac{\lambda_{2}}{q}\right)\|\varphi\|^{p}_{p}
\end{align*}
in (\ref{2.16}), that is to control
\begin{align}\label{2.65}
\left(\frac{\lambda_{1}}{p}+\frac{\lambda_{2}}{q}\right)\frac{2p}{n(p-2)\lambda_{1}}\left(\|\nabla \varphi\|^{2}_{2}+\||x|\varphi\|^{2}_{2}\right)
\end{align}
 according to (\ref{2.17}). Here, we can only use the term
\begin{align}\label{2.66}
\frac{1}{2}\left(\|\nabla\varphi\|^{2}_{2}+\||x|\varphi\|^{2}_{2}\right)
\end{align}
in the functional $K(\varphi)$ to control (\ref{2.65}). Thus, we require
\begin{align*}
\left(\frac{\lambda_{1}}{p}+\frac{\lambda_{2}}{q}\right)\frac{2p}{n(p-2)\lambda_{1}}<\frac{1}{2},
\end{align*}
i.e., the assumption (\ref{2.9}).

For $\frac{\lambda_{2}}{q}\int_{\Omega_{2}} |\varphi|^{q}{\rm d}x$, according to $q>2$ assumed in this paper, we can deduce $|\varphi|^{q}\leq|\varphi|^{2}$ for $x\in\Omega_{2}$, which means
\begin{align}\label{2.67}
\frac{\lambda_{2}}{q}\int_{\Omega_{2}} |\varphi|^{q}{\rm d}x\leq\frac{\lambda_{2}}{q}\int_{\Omega_{2}} |\varphi|^{2}{\rm d}x\leq\frac{\lambda_{2}}{q}\|\varphi\|^{2}_{2}.
\end{align}
Thus, we deal with $\frac{\lambda_{2}}{q}\int_{\Omega_{2}} |\varphi|^{q}{\rm d}x$ by controlling the term
$\frac{\lambda_{2}}{q}\|\varphi\|^{2}_{2}$ in (\ref{2.67}).
Here, we can only use the term
$\frac{1}{2}\|\varphi\|^{2}_{2}$ in $K(\varphi)$ to control $\frac{\lambda_{2}}{q}\|\varphi\|^{2}_{2}$ in (\ref{2.67}), which means that we need to require $$\frac{\lambda_{2}}{q}<\frac{1}{2},$$ i.e., the assumption (\ref{2.10}).\\

\end{remark}

Next, we shall give Lemma \ref{lem2.4} to state the invariant manifolds associated with the problem considered in the present paper. Constructing and proving the invariant manifolds is the crucial step to classify the initial data, especially to derive the sharp condition on the initial data to distinguish the initial data leading to global-in-time solution and finite time blowup solution since the invariant manifolds can ensure the solution keeps the properties we impose on the initial data. Here, for $j=1,2,\cdots,8$, we need to divide the discussion of this problem into three cases, i.e., negative initial energy case $K(\varphi_{0})<0$, non-negative low initial energy case $0\leq K(\varphi_{0})<d_{\mathcal{N}_{j}}$, and high initial energy case $K(\varphi_{0})\geq d_{\mathcal{N}_{j}}$. For negative initial energy case, it is easy to verify that the corresponding solution blows up in finite time, as we showed in Section \ref{sec3}. And the efforts to classify the initial data for this case are not necessary. For non-negative low initial energy case, we require the conclusions $d_{_{\mathcal{N}_{j}}}>0$, $j=1,2,\cdots,8$, given by Lemma \ref{lem2.3} to ensure the conditions $0\leq K(\varphi_{0})<d_{\mathcal{N}_{j}}$ make sense. And high initial energy case will be discussed in Section \ref{sec5}. Next, we shall show the invariance of the manifolds for non-negative low initial energy $0\leq K(\varphi_{0})<d_{\mathcal{N}_{j}}$, $j=1,2,\cdots,8$, by the following Lemma \ref{lem2.4}. Based on the above analysis, we will see that Lemma \ref{lem2.4} will naturally use the conditions of Lemma \ref{lem2.3}.

\begin{lemma}\label{lem2.4}
(The invariance of the manifolds $\mathcal{W}_{j}$ and $\mathcal{V}_{j}$ for non-negative low initial energy $0\leq K(\varphi_{0})<d_{\mathcal{N}_{j}}$, $j=1,2,\cdots,8$)
For $j=1,2,\cdots,8$, let $p$ and $q$ satisfy (\ref{1.2}), $\lambda_{1}>0$ and $\lambda_{2}>0$. For the cases $j=1,2,3,4$, $\lambda_{1}$ and $\lambda_{2}$ satisfy the additional restrictions (\ref{2.9}) and (\ref{2.10}). Then, for $\varphi_{0}\in \mathcal{W}_{j}$, the corresponding solution $\varphi$ of (\ref{1.1}) belongs to $\mathcal{W}_{j}$. And for $\varphi_{0}\in \mathcal{V}_{j}$, the corresponding solution $\varphi$ of (\ref{1.1}) belongs to $\mathcal{V}_{j}$.
\begin{proof}
Although we require the different conditions for the different cases, i.e., $j=1,2,3,4$, and $j=5,6,7,8$, we need not propose different approaches to deal with them since these different conditions are imposed to make the conclusions $d_{_{\mathcal{N}_{j}}}>0$ hold for these different cases according to Lemma \ref{lem2.3}. Hence, we only need to prove the case $j=1$. And the proofs of other cases, i.e., $j=2,3,\cdots,8$, are similar.

 For the initial data $\varphi_{0}\in\mathcal{W}_{1}$, according to Lemma \ref{lem2.1}, we know that there exists the unique local solution $\varphi\in C([0,T),H)\cap C^{1}([0,T),H^{*})$ and also the continuity of $S_{1}(\varphi(t))$ with respect to time $t$, where $T$ is the existence time of the local solution. The condition $\varphi_{0}\in\mathcal{W}_{1}$ means the case $\varphi_{0}=0$ or the case $0\leq K(\varphi_{0})<d_{\mathcal{N}_{1}}$ and $S_{1}(\varphi_{0})>0$. For $\varphi_{0}=0$, we can obtain $\varphi(t)=0$ for $t\in(0,T)$ according to (\ref{2.2}), which means $\varphi(t)\in \mathcal{W}_{1}$ for $t\in(0,T)$. For $0\leq K(\varphi_{0})<d_{\mathcal{N}_{1}}$ and $S_{1}(\varphi_{0})>0$, i.e., $\varphi_{0}\neq0$, we know that
 \begin{align}\label{2.68}
0\leq K(\varphi(t))<d_{\mathcal{N}_{1}}
\end{align}
 holds for all $t\in(0,T)$ due to
\begin{align*}
K(\varphi(t))=E(t)+\frac{1}{2}\|\varphi(t)\|^{2}_{2}=E(0)+\frac{1}{2}\|\varphi_{0}\|^{2}_{2}=K(\varphi_{0})
\end{align*}
 given by (\ref{2.2}) and (\ref{2.3}). Next, we claim $S_{1}(\varphi(t))>0$ for all $t\in(0,T)$. Arguing by contradiction, according to the continuity of $S_{1}(\varphi(t))$ with respect to time $t$, we suppose that there exists the first time $t_{0}\in(0,T)$ to make $S_{1}(\varphi(t_{0}))=0$ and $S_{1}(\varphi(t))>0$ for $t\in(0,t_{0})$. Here, we know $\varphi(t_{0})\neq 0$ due to $\varphi_{0}\neq0$ and (\ref{2.2}). Thus, we have $\varphi(t_{0})\in \mathcal{N}_{1}$, which means $K(\varphi(t_{0}))\geq d_{_{\mathcal{N}_{1}}}$ according to the definition of $d_{_{\mathcal{N}_{1}}}$. This contradicts $K(\varphi(t_{0}))<d_{_{\mathcal{N}_{1}}}$ shown by (\ref{2.68}). Therefore $\varphi(t)\in \mathcal{W}_{1}$ for $t\in(0,T)$.

 By the similar arguments, we can also prove that the solution $\varphi$ of (\ref{1.1}) belongs to $\mathcal{V}_{1}$ if the initial data $\varphi_{0}\in \mathcal{V}_{1}$.
\end{proof}
\end{lemma}

 \section[Calculation]{The calculation of $J''(t)$ and finite time blowup for negative initial energy $K(\varphi_{0})<0$}\label{sec3}

In the present paper, the most interesting part is to investigate the finite time blowup. Here, we shall consider three cases, i.e., negative initial energy $K(\varphi_{0})<0$, non-negative low initial energy $0\leq K(\varphi_{0})<d_{\mathcal{N}_{j}}$, and high initial energy $K(\varphi_{0})\geq d_{\mathcal{N}_{j}}$, $j=1,2,\cdots,8$. The strategy to deal with the above three cases is to apply an auxiliary functional $J(t)=\||x|\varphi\|^{2}_{2}$ introduced in Section \ref{sec1}, and to verify $J''(t)<0$ to prove finite time blowup for the above three cases. In this section, we calculate $J''(t)$ and prove the finite time blowup for negative initial energy case, i.e., $K(\varphi_{0})<0$, by using the fact $J''(t)<0$ deduced from the condition $K(\varphi_{0})<0$. Here, although the calculation of $J''(t)$ looks somewhat elementary and lengthy, we dare not omit one step in this calculation since these details can help us avoid mistakes in calculation, and omitting them will cause unnecessary confusion in reading.
\begin{lemma}\label{lem3.1}
(The calculation of $J''(t)$)
For $J(t)=\||x|\varphi\|^{2}_{2}$, if $\varphi$ is the solution to the Cauchy problem (\ref{1.1}) with the initial data $\varphi_{0}\in H$, then
\begin{equation}\label{3.1}
J''(t)=8\left(\|\nabla\varphi\|^2_{2}-\||x|\varphi\|^{2}_{2}-\frac{n(p-2)\lambda_{1}}{2p}\|\varphi\|^{p}_{p}-\frac{n(q-2)\lambda_{2}}{2q}\|\varphi\|^{q}_{q}\right).
\end{equation}
\end{lemma}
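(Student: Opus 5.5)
We will prove \eqref{3.1} by the classical virial (Glassey) computation: differentiate $J(t)=\int_{\mathbb{R}^n}|x|^2|\varphi|^2\,{\rm d}x$ twice in time, using the equation in \eqref{1.1} to trade each time derivative for spatial derivatives. Throughout we work first with smooth, rapidly decaying solutions, for which every integration by parts is legitimate. We then pass to general data $\varphi_0\in H$ by density, together with the continuous dependence in Lemma \ref{lem2.1}. Alternatively, one can use a truncated weight $e^{-\varepsilon|x|^2}|x|^2$ and let $\varepsilon\to0$, as in \cite{rz1}.

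First derivative. Write the equation as $\varphi_t=i(\Delta\varphi-|x|^2\varphi+f(\varphi))$ with $f(\varphi)=\lambda_1|\varphi|^{p-2}\varphi+\lambda_2|\varphi|^{q-2}\varphi$. Then
\begin{align*}
J'(t)=2\,{\rm Re}\int|x|^2\overline{\varphi}\varphi_t\,{\rm d}x=-2\,{\rm Im}\int|x|^2\overline{\varphi}\,\Delta\varphi\,{\rm d}x ,
\end{align*}
since $|x|^2\overline{\varphi}\,(-|x|^2\varphi+f(\varphi))$ is real. This is exactly where the harmonic potential and both nonlinearities drop out. One integration by parts then gives
\begin{align*}
J'(t)=4\,{\rm Im}\int \overline{\varphi}\,x\cdot\nabla\varphi\,{\rm d}x .
\end{align*}

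Second derivative. Differentiate $4\,{\rm Im}\int\overline{\varphi}\,x\cdot\nabla\varphi\,{\rm d}x$. After integrating by parts to move derivatives off $\varphi_t$, this becomes
\begin{align*}
J''(t)=-4\,{\rm Im}\int\overline{\varphi_t}\,\bigl(2x\cdot\nabla\varphi+n\varphi\bigr)\,{\rm d}x .
\end{align*}
Substituting the equation for $\overline{\varphi_t}$ splits the integrand into three contributions, each a real part of $\int(\cdot)\,\overline{(2x\cdot\nabla\varphi+n\varphi)}\,{\rm d}x$.

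The Laplacian term is handled with the Pohozaev-type identity ${\rm Re}\int\Delta\varphi\,x\cdot\nabla\overline{\varphi}=\frac{n-2}{2}\|\nabla\varphi\|_2^2$, together with $\int\Delta\varphi\,\overline{\varphi}=-\|\nabla\varphi\|_2^2$. It contributes $8\|\nabla\varphi\|_2^2$.

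The potential term uses ${\rm Re}\int|x|^2\varphi\,x\cdot\nabla\overline{\varphi}=-\frac{n+2}{2}\||x|\varphi\|_2^2$. It contributes $-8\||x|\varphi\|_2^2$.

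The power term uses ${\rm Re}\int|\varphi|^{r-2}\varphi\,x\cdot\nabla\overline{\varphi}=-\frac{n}{r}\|\varphi\|_r^r$ for $r=p,q$. This gives $4\lambda_r\bigl(n-\frac{2n}{r}\bigr)\|\varphi\|_r^r$ with a minus sign, i.e. $-8\cdot\frac{n(r-2)\lambda_r}{2r}\|\varphi\|_r^r$.

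Summing the three contributions yields \eqref{3.1}.

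The main obstacle is bookkeeping of signs and constants in the second differentiation, together with rigorous justification for $\varphi\in C([0,T),H)\cap C^1([0,T),H^*)$ only. For the latter I would rely on the regularization and density argument of \cite{rz1} (Chapter 9) rather than redo it. I would first verify the coefficients against the known case $\lambda_2=0$ without potential, where one recovers $8\|\nabla\varphi\|_2^2-\frac{4n(p-2)}{p}\|\varphi\|_p^p$.
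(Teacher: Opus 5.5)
Your route is essentially the paper's. The paper also reduces everything to $J''(t)=4\,{\rm Im}\int\overline{\varphi_t}\,(n\varphi+2x\cdot\nabla\varphi)\,{\rm d}x$. It then evaluates the Laplacian, potential and power pieces (its $J_1,J_2,J_3$) with the same three integration-by-parts identities you list. The only organizational difference is how that expression is reached. The paper differentiates $J'(t)=2\,{\rm Im}\int|x|^2\varphi\,\Delta\overline{\varphi}\,{\rm d}x$ directly, moves $\Delta$ onto $|x|^2\varphi$, and cancels the $|x|^2$-terms by symmetry. You first rewrite $J'(t)=4\,{\rm Im}\int\overline{\varphi}\,x\cdot\nabla\varphi\,{\rm d}x$, which only requires moving one derivative off $\varphi_t$ instead of two.

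There is, however, a sign slip in your displayed second derivative. Differentiating $4\,{\rm Im}\int\overline{\varphi}\,x\cdot\nabla\varphi$ gives $4\,{\rm Im}\int\overline{\varphi_t}\,x\cdot\nabla\varphi+4\,{\rm Im}\int\overline{\varphi}\,x\cdot\nabla\varphi_t$. For the second term, ${\rm Im}\int\overline{\varphi}\,x\cdot\nabla\varphi_t=-{\rm Im}\int(n\overline{\varphi}+x\cdot\nabla\overline{\varphi})\varphi_t={\rm Im}\int\overline{\varphi_t}(n\varphi+x\cdot\nabla\varphi)$. Hence the correct formula is $J''(t)=+4\,{\rm Im}\int\overline{\varphi_t}(2x\cdot\nabla\varphi+n\varphi)\,{\rm d}x$, as in the paper.

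With the minus sign you wrote, the Laplacian piece would be $4\,{\rm Re}\int\Delta\varphi\,\overline{(2x\cdot\nabla\varphi+n\varphi)}\,{\rm d}x=\bigl(4(n-2)-4n\bigr)\|\nabla\varphi\|_2^2=-8\|\nabla\varphi\|_2^2$. The other two contributions flip likewise, so you would end with $-8(\cdots)$. The contributions you actually state are exactly those given by the corrected sign. Once that display is fixed, the computation is complete and agrees with (\ref{3.1}).

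Your remark on justification goes beyond the paper, which performs the computation only formally. For non-even $p,q$, however, the local theory does not provide genuinely smooth solutions. The approximation step should therefore rely on the regularity actually available, or on the truncated-weight device you mention.
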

\begin{proof}
In order to calculate $J''(t)$, we first calculate $J'(t)$.
\begin{align}\label{3.2}
\begin{split}
J'(t)=&\frac{{\rm d}}{{\rm d}t}\int_{\mathbb{R}^n} |x|^{2}|\varphi|^{2} {\rm d}x=\frac{{\rm d}}{{\rm d}t}\int_{\mathbb{R}^n} |x|^{2}\varphi \overline{\varphi} {\rm d}x\\
=&\int_{\mathbb{R}^n} |x|^{2}\left(\varphi_{t} \overline{\varphi}+\varphi \overline{\varphi_{t}}\right){\rm d}x=\int_{\mathbb{R}^n} |x|^{2}\left(\varphi_{t} \overline{\varphi}+\overline{\varphi_{t}\overline{\varphi}}\right){\rm d}x\\
=&2{\rm Re}\int_{\mathbb{R}^n} |x|^{2}\varphi_{t} \overline{\varphi}{\rm d}x.
\end{split}
\end{align}
We multiply both sides of the equation in problem (\ref{1.1}) by $-i\overline{\varphi}$ to get
\begin{align}\label{3.3}
\overline{\varphi}\varphi_{t}=i\left(\overline{\varphi}\Delta\varphi-|x|^{2}|\varphi|^{2}+\lambda_{1}|\varphi|^{p}+\lambda_{2}|\varphi|^{q}\right).
\end{align}
By substituting (\ref{3.3}) into (\ref{3.2}), we get
\begin{align}\label{3.4}
\begin{split}
J'(t)=&2{\rm Re}\int_{\mathbb{R}^n} i|x|^{2}\left(\overline{\varphi}\Delta\varphi-|x|^{2}|\varphi|^{2}+\lambda_{1}|\varphi|^{p}+\lambda_{2}|\varphi|^{q}\right) {\rm d}x\\
=&-2{\rm Im}\int_{\mathbb{R}^n} |x|^{2}\left(\overline{\varphi}\Delta\varphi-|x|^{2}|\varphi|^{2}+\lambda_{1}|\varphi|^{p}+\lambda_{2}|\varphi|^{q}\right) {\rm d}x\\
=&-2{\rm Im}\int_{\mathbb{R}^n} |x|^{2}\overline{\varphi}\Delta\varphi {\rm d}x\\
=&2{\rm Im}\int_{\mathbb{R}^n} |x|^{2}\varphi\Delta\overline{\varphi} {\rm d}x.
\end{split}
\end{align}
Next, we calculate $J''(t)$ based on (\ref{3.4}) as
\begin{align}\label{3.5}
\begin{split}
J''(t)=&\frac{{\rm d}}{{\rm d}t}J'(t)\\
=&2\frac{{\rm d}}{{\rm d}t}{\rm Im}\int_{\mathbb{R}^n} |x|^{2}\varphi\Delta\overline{\varphi} {\rm d}x\\
=&2{\rm Im}\int_{\mathbb{R}^n}|x|^{2}\varphi_{t}\Delta\overline{\varphi}{\rm d}x+2{\rm Im}\int_{\mathbb{R}^n} |x|^{2}\varphi\Delta\overline{\varphi_{t}}{\rm d}x\\
=&2{\rm Im}\int_{\mathbb{R}^n}|x|^{2}\varphi_{t}\Delta\overline{\varphi}{\rm d}x+2{\rm Im}\int_{\mathbb{R}^n} \overline{\varphi_{t}}\Delta\left(|x|^{2}\varphi\right){\rm d}x\\
=&2{\rm Im}\int_{\mathbb{R}^n}\left(|x|^{2}\varphi_{t}\Delta\overline{\varphi}+\overline{\varphi_{t}}\left(\sum_{j=1}^{n}\frac{\partial^{2}}{\partial x_{j}^{2}}\left(|x|^{2}\varphi\right)\right)\right){\rm d}x\\
=&2{\rm Im}\int_{\mathbb{R}^n}\left(|x|^{2}\varphi_{t}\Delta\overline{\varphi}+\overline{\varphi_{t}}\left(\sum_{j=1}^{n}\frac{\partial}{\partial x_{j}}\left(|x|^{2}\frac{\partial \varphi}{\partial x_{j}}+2x_{j}\varphi\right)\right)\right){\rm d}x\\
=&2{\rm Im}\int_{\mathbb{R}^n}\left(|x|^{2}\varphi_{t}\Delta\overline{\varphi}+\overline{\varphi_{t}}\left(\sum_{j=1}^{n}\left(|x|^{2}\frac{\partial^{2} \varphi}{\partial x^{2}_{j}}+4x_{j}\frac{\partial \varphi}{\partial x_{j}}+2\varphi\right)\right)\right){\rm d}x\\
=&2{\rm Im}\int_{\mathbb{R}^n}\left(|x|^{2}\varphi_{t}\Delta\overline{\varphi}+\overline{\varphi_{t}}\left(|x|^{2}\Delta\varphi+4x\cdot\nabla\varphi+2n\varphi\right)\right){\rm d}x\\
=&2{\rm Im}\int_{\mathbb{R}^n}|x|^{2}\left(\varphi_{t}\Delta\overline{\varphi}+\overline{\varphi_{t}\Delta\overline{\varphi}}\right){\rm d}x+4{\rm Im}\int_{\mathbb{R}^n}\overline{\varphi_{t}}\left(n\varphi+2x\cdot\nabla\varphi\right){\rm d}x\\
=&4{\rm Im}\int_{\mathbb{R}^n}\overline{\varphi_{t}}\left(n\varphi+2x\cdot\nabla\varphi\right){\rm d}x.
\end{split}
\end{align}
We take the conjugate of both sides of the equation in problem (\ref{1.1}) and multiply both sides by $-i$ to obtain
\begin{align}\label{3.6}
\overline{\varphi_{t}}=-i\left(\Delta\overline{\varphi}-|x|^{2}\overline{\varphi}+\lambda_{1}|\varphi|^{p-2}\overline{\varphi}+\lambda_{2}|\varphi|^{q-2}\overline{\varphi}\right).
\end{align}
By substituting (\ref{3.6}) into (\ref{3.5}), we obtain
\begin{align}\label{3.7}
\begin{split}
J''(t)=&4{\rm Im}\int_{\mathbb{R}^n}-i\left(\Delta\overline{\varphi}-|x|^{2}\overline{\varphi}+\lambda_{1}|\varphi|^{p-2}\overline{\varphi}+\lambda_{2}|\varphi|^{q-2}\overline{\varphi}\right)\left(n\varphi+2x\cdot\nabla\varphi\right){\rm d}x\\
=&-4{\rm Re}\int_{\mathbb{R}^n}\left(\Delta\overline{\varphi}-|x|^{2}\overline{\varphi}+\lambda_{1}|\varphi|^{p-2}\overline{\varphi}+\lambda_{2}|\varphi|^{q-2}\overline{\varphi}\right)\left(n\varphi+2x\cdot\nabla\varphi\right){\rm d}x\\
=&-4\underbrace{{\rm Re}\int_{\mathbb{R}^n}\left(n\varphi+2x\cdot\nabla\varphi\right)\Delta\overline{\varphi}{\rm d}x}_{:=J_{1}}+4\underbrace{{\rm Re}\int_{\mathbb{R}^n}\left(n\varphi+2x\cdot\nabla\varphi\right)|x|^{2}\overline{\varphi}{\rm d}x}_{:=J_{2}}\\
&-4\underbrace{{\rm Re}\int_{\mathbb{R}^n}\left(n\varphi+2x\cdot\nabla\varphi\right)\left(\lambda_{1}|\varphi|^{p-2}\overline{\varphi}+\lambda_{2}|\varphi|^{q-2}\overline{\varphi}\right){\rm d}x}_{:=J_{3}}.
\end{split}
\end{align}
Next, we calculate $J_{j}\ (j=1,2,3)$ separately.
For $J_{1}$, we have
\begin{align}\label{3.8}
J_{1}=&{\rm Re}\int_{\mathbb{R}^n}n\varphi\Delta\overline{\varphi}{\rm d}x+{\rm Re}\int_{\mathbb{R}^n}2\left(x\cdot\nabla\varphi\right)\Delta\overline{\varphi}{\rm d}x\nonumber\\
=&-{\rm Re}\int_{\mathbb{R}^n}n\nabla\varphi\cdot\nabla\overline{\varphi}{\rm d}x-{\rm Re}\int_{\mathbb{R}^n}2\nabla \overline{\varphi}\cdot\nabla\left(x\cdot\nabla\varphi\right){\rm d}x\nonumber\\
=&-n\int_{\mathbb{R}^n}|\nabla\varphi|^{2}{\rm d}x-2{\rm Re}\int_{\mathbb{R}^n}\sum_{j=1}^{n}\frac{\partial\overline{\varphi}}{\partial x_{j}}\frac{\partial}{\partial x_{j}}\left(\sum_{l=1}^{n}x_{l}\frac{\partial\varphi}{\partial x_{l}}\right){\rm d}x\nonumber\\
=&-n\int_{\mathbb{R}^n}|\nabla\varphi|^{2}{\rm d}x-2{\rm Re}\int_{\mathbb{R}^n}\sum_{j=1}^{n}\frac{\partial\overline{\varphi}}{\partial x_{j}}\left(\sum_{l=1}^{n}\frac{\partial}{\partial x_{j}}\left(x_{l}\frac{\partial\varphi}{\partial x_{l}}\right)\right){\rm d}x\nonumber\\
=&-n\int_{\mathbb{R}^n}|\nabla\varphi|^{2}{\rm d}x-2{\rm Re}\int_{\mathbb{R}^n}\sum_{j=1}^{n}\frac{\partial\overline{\varphi}}{\partial x_{j}}\left(\sum_{l=1}^{n}\frac{\partial x_{l}}{\partial x_{j}}\frac{\partial\varphi}{\partial x_{l}}+\sum_{l=1}^{n}x_{l}\frac{\partial^{2}\varphi}{\partial x_{l}\partial x_{j}}\right){\rm d}x\nonumber\\
=&-n\int_{\mathbb{R}^n}|\nabla\varphi|^{2}{\rm d}x-2{\rm Re}\int_{\mathbb{R}^n}\sum_{j=1}^{n}\frac{\partial\overline{\varphi}}{\partial x_{j}}\left(\frac{\partial\varphi}{\partial x_{j}}+\sum_{l=1}^{n}x_{l}\frac{\partial^{2}\varphi}{\partial x_{l}\partial x_{j}}\right){\rm d}x\nonumber\\
=&-n\int_{\mathbb{R}^n}|\nabla\varphi|^{2}{\rm d}x-2{\rm Re}\int_{\mathbb{R}^n}\nabla\varphi\nabla\overline{\varphi}{\rm d}x-2{\rm Re}\int_{\mathbb{R}^n}\sum_{j=1}^{n}\sum_{l=1}^{n}x_{l}\frac{\partial^{2}\varphi}{\partial x_{l}\partial x_{j}}\frac{\partial\overline{\varphi}}{\partial x_{j}}{\rm d}x\nonumber\\
=&-(n+2)\int_{\mathbb{R}^n}|\nabla\varphi|^{2}{\rm d}x-\int_{\mathbb{R}^n}\sum_{j=1}^{n}\sum_{l=1}^{n}x_{l}\left(\frac{\partial^{2}\varphi}{\partial x_{l}\partial x_{j}}\frac{\partial\overline{\varphi}}{\partial x_{j}}+\overline{\frac{\partial^{2}\varphi}{\partial x_{l}\partial x_{j}}\frac{\partial\overline{\varphi}}{\partial x_{j}}}\right){\rm d}x\nonumber\\
=&-(n+2)\int_{\mathbb{R}^n}|\nabla\varphi|^{2}{\rm d}x-\int_{\mathbb{R}^n}\sum_{j=1}^{n}\sum_{l=1}^{n}x_{l}\left(\frac{\partial^{2}\varphi}{\partial x_{l}\partial x_{j}}\frac{\partial\overline{\varphi}}{\partial x_{j}}+\frac{\partial^{2}\overline{\varphi}}{\partial x_{l}\partial x_{j}}\frac{\partial\varphi}{\partial x_{j}}\right){\rm d}x\nonumber\\
=&-(n+2)\int_{\mathbb{R}^n}|\nabla\varphi|^{2}{\rm d}x-\int_{\mathbb{R}^n}\sum_{j=1}^{n}\sum_{l=1}^{n}x_{l}\frac{\partial}{\partial x_{l}}\left(\frac{\partial\varphi}{\partial x_{j}}\frac{\partial\overline{\varphi}}{\partial x_{j}}\right){\rm d}x\nonumber\\
=&-(n+2)\int_{\mathbb{R}^n}|\nabla\varphi|^{2}{\rm d}x-\int_{\mathbb{R}^n}\sum_{l=1}^{n}x_{l}\frac{\partial}{\partial x_{l}}\left(\sum_{j=1}^{n}\left(\frac{\partial\varphi}{\partial x_{j}}\frac{\partial\overline{\varphi}}{\partial x_{j}}\right)\right){\rm d}x\nonumber\\
=&-(n+2)\int_{\mathbb{R}^n}|\nabla\varphi|^{2}{\rm d}x-\sum_{l=1}^{n}\int_{\mathbb{R}^n}x_{l}\frac{\partial}{\partial x_{l}}\left(|\nabla\varphi|^{2}\right){\rm d}x\nonumber\\
=&-(n+2)\int_{\mathbb{R}^n}|\nabla\varphi|^{2}{\rm d}x+\sum_{l=1}^{n}\int_{\mathbb{R}^n}\frac{\partial x_{l}}{\partial x_{l}}|\nabla\varphi|^{2}{\rm d}x\nonumber\\
=&-(n+2)\int_{\mathbb{R}^n}|\nabla\varphi|^{2}{\rm d}x+\int_{\mathbb{R}^n}n|\nabla\varphi|^{2}{\rm d}x\nonumber\\
=&-2\|\nabla\varphi\|^2_{2}.
\end{align}
For $J_{2}$, we know
\begin{align}\label{3.9}
J_{2}=&{\rm Re}\int_{\mathbb{R}^n}n|x|^{2}|\varphi|^{2}{\rm d}x+2{\rm Re}\int_{\mathbb{R}^n}|x|^{2}\overline{\varphi}\left(x\cdot\nabla\varphi\right){\rm d}x\nonumber\\
=&\int_{\mathbb{R}^n}n|x|^{2}|\varphi|^{2}{\rm d}x+\int_{\mathbb{R}^n}\left(|x|^{2}\overline{\varphi}\left(x\cdot\nabla\varphi\right)+\overline{|x|^{2}\overline{\varphi}\left(x\cdot\nabla\varphi\right)}\right){\rm d}x\nonumber\\
=&\int_{\mathbb{R}^n}n|x|^{2}|\varphi|^{2}{\rm d}x+\int_{\mathbb{R}^n}|x|^{2}x\cdot\left(\overline{\varphi}\nabla\varphi+\varphi\nabla\overline{\varphi}\right){\rm d}x\nonumber\\
=&\int_{\mathbb{R}^n}n|x|^{2}|\varphi|^{2}{\rm d}x+\int_{\mathbb{R}^n}|x|^{2}x\cdot\nabla\left(\varphi\overline{\varphi}\right){\rm d}x\nonumber\\
=&\int_{\mathbb{R}^n}n|x|^{2}|\varphi|^{2}{\rm d}x+\int_{\mathbb{R}^n}|x|^{2}\left(\sum_{j=1}^{n}x_{j}\frac{\partial}{\partial x_{j}}\left(|\varphi|^{2}\right)\right){\rm d}x\nonumber\\
=&\int_{\mathbb{R}^n}n|x|^{2}|\varphi|^{2}{\rm d}x-\sum_{j=1}^{n}\int_{\mathbb{R}^n}\frac{\partial}{\partial x_{j}}\left(|x|^{2}x_{j}\right)|\varphi|^{2}{\rm d}x\nonumber\\
=&\int_{\mathbb{R}^n}n|x|^{2}|\varphi|^{2}{\rm d}x-\int_{\mathbb{R}^n}|\varphi|^{2}\left(\sum_{j=1}^{n}\left(\frac{\partial|x|^{2}}{\partial x_{j}}x_{j}+|x|^{2}\right)\right){\rm d}x\nonumber\\
=&\int_{\mathbb{R}^n}n|x|^{2}|\varphi|^{2}{\rm d}x-\int_{\mathbb{R}^n}\left(2|\varphi|^{2}\left(\sum_{j=1}^{n}x_{j}^{2}\right)+n|x|^{2}|\varphi|^{2}\right){\rm d}x\nonumber\\
=&-2\||x|\varphi\|^{2}_{2}.
\end{align}
For $J_{3}$, we have
\begin{align}\label{3.10}
J_{3}=&{\rm Re}\int_{\mathbb{R}^n}n\left(\lambda_{1}|\varphi|^{p}+\lambda_{2}|\varphi|^{q}\right){\rm d}x
+2{\rm Re}\int_{\mathbb{R}^n}\lambda_{1}|\varphi|^{p-2}\overline{\varphi}\left(x\cdot\nabla\varphi\right){\rm d}x\nonumber\\
&+2{\rm Re}\int_{\mathbb{R}^n}\lambda_{2}|\varphi|^{q-2}\overline{\varphi}\left(x\cdot\nabla\varphi\right){\rm d}x\nonumber\\
=&\int_{\mathbb{R}^n}n\left(\lambda_{1}|\varphi|^{p}+\lambda_{2}|\varphi|^{q}\right){\rm d}x+\int_{\mathbb{R}^n}\lambda_{1}|\varphi|^{p-2}\left(x\cdot\left(\overline{\varphi}\nabla\varphi+\overline{\overline{\varphi}\nabla\varphi}\right)\right){\rm d}x\nonumber\\
&+\int_{\mathbb{R}^n}\lambda_{2}|\varphi|^{q-2}\left(x\cdot\left(\overline{\varphi}\nabla\varphi+\overline{\overline{\varphi}\nabla\varphi}\right)\right){\rm d}x\nonumber\\
=&\int_{\mathbb{R}^n}n\left(\lambda_{1}|\varphi|^{p}+\lambda_{2}|\varphi|^{q}\right){\rm d}x+\int_{\mathbb{R}^n}\lambda_{1}|\varphi|^{p-2}\left(x\cdot\left(\overline{\varphi}\nabla\varphi+\varphi\nabla\overline{\varphi}\right)\right){\rm d}x\nonumber\\
&+\int_{\mathbb{R}^n}\lambda_{2}|\varphi|^{q-2}\left(x\cdot\left(\overline{\varphi}\nabla\varphi+\varphi\nabla\overline{\varphi}\right)\right){\rm d}x\nonumber\\
=&\int_{\mathbb{R}^n}n\left(\lambda_{1}|\varphi|^{p}+\lambda_{2}|\varphi|^{q}\right){\rm d}x+\int_{\mathbb{R}^n}\lambda_{1}|\varphi|^{p-2}\left(x\cdot\nabla\left(\varphi\overline{\varphi}\right)\right){\rm d}x\nonumber\\
&+\int_{\mathbb{R}^n}\lambda_{2}|\varphi|^{q-2}\left(x\cdot\nabla\left(\varphi\overline{\varphi}\right)\right){\rm d}x\nonumber\\
=&\int_{\mathbb{R}^n}n\left(\lambda_{1}|\varphi|^{p}+\lambda_{2}|\varphi|^{q}\right){\rm d}x\nonumber\\
&+\int_{\mathbb{R}^n}x\cdot\left(\lambda_{1}\left(|\varphi|^{2}\right)^{\frac{p-2}{2}}+\lambda_{2}\left(|\varphi|^{2}\right)^{\frac{q-2}{2}}\right)\nabla\left(|\varphi|^{2}\right){\rm d}x\nonumber\\
=&\int_{\mathbb{R}^n}n\left(\lambda_{1}|\varphi|^{p}+\lambda_{2}|\varphi|^{q}\right){\rm d}x+\int_{\mathbb{R}^n}x\cdot\nabla\left(\frac{2\lambda_{1}}{p}\left(|\varphi|^{2}\right)^{\frac{p}{2}}+\frac{2\lambda_{2}}{q}\left(|\varphi|^{2}\right)^{\frac{q}{2}}\right){\rm d}x\nonumber\\
=&\int_{\mathbb{R}^n}n\left(\lambda_{1}|\varphi|^{p}+\lambda_{2}|\varphi|^{q}\right){\rm d}x+\int_{\mathbb{R}^n}\sum_{j=1}^{n}x_{j}\frac{\partial}{\partial x_{j}}\left(\frac{2\lambda_{1}}{p}|\varphi|^{p}+\frac{2\lambda_{2}}{q}|\varphi|^{q}\right){\rm d}x\nonumber\\
=&\int_{\mathbb{R}^n}n\left(\lambda_{1}|\varphi|^{p}+\lambda_{2}|\varphi|^{q}\right){\rm d}x+\sum_{j=1}^{n}\int_{\mathbb{R}^n}x_{j}\frac{\partial}{\partial x_{j}}\left(\frac{2\lambda_{1}}{p}|\varphi|^{p}+\frac{2\lambda_{2}}{q}|\varphi|^{q}\right){\rm d}x\nonumber\\
=&\int_{\mathbb{R}^n}n\left(\lambda_{1}|\varphi|^{p}+\lambda_{2}|\varphi|^{q}\right){\rm d}x-\sum_{j=1}^{n}\int_{\mathbb{R}^n}\frac{\partial x_{j}}{\partial x_{j}}\left(\frac{2\lambda_{1}}{p}|\varphi|^{p}+\frac{2\lambda_{2}}{q}|\varphi|^{q}\right){\rm d}x\nonumber\\
=&\int_{\mathbb{R}^n}n\left(\lambda_{1}|\varphi|^{p}+\lambda_{2}|\varphi|^{q}\right){\rm d}x-\frac{2n\lambda_{1}}{p}\int_{\mathbb{R}^n}|\varphi|^{p}{\rm d}x-\frac{2n\lambda_{2}}{q}\int_{\mathbb{R}^n}|\varphi|^{q}{\rm d}x\nonumber\\
=&\frac{n(p-2)\lambda_{1}}{p}\|\varphi\|^{p}_{p}+\frac{n(q-2)\lambda_{2}}{q}\|\varphi\|^{q}_{q}.
\end{align}
By substituting (\ref{3.8}), (\ref{3.9}), and (\ref{3.10}) into (\ref{3.7}), we obtain (\ref{3.1}).
\end{proof}
Next, as a direct application of above calculation of $J''(t)$, we prove that negative initial energy $K(\varphi_{0})<0$ leads to the finite time blowup of the solution. In this approach, we also need to apply the following inequality introduced in \cite{r4-1-2-00-2}.
\begin{lemma}\label{lem3.2}
(\cite{r4-1-2-00-2} )
Let $\varphi\in H$. Then one has
\begin{align}
\|\varphi\|^{2}_{2}
\leq\frac{2}{n}\|\nabla\varphi\|_{2}\||x|\varphi\|_{2}.\nonumber
\end{align}
\end{lemma}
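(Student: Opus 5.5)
The inequality of Lemma \ref{lem3.2} is a Heisenberg-type uncertainty principle. I would prove it by integrating by parts against the vector field $x$, then applying the Cauchy--Schwarz inequality. By density it suffices to take $\varphi\in C_{c}^{\infty}(\mathbb{R}^{n})$, complex valued. The general case $\varphi\in H$ then follows by approximating in the norm $\|\cdot\|_{H}$: the three quantities $\|\varphi\|_{2}$, $\|\nabla\varphi\|_{2}$ and $\||x|\varphi\|_{2}$ are all continuous with respect to that norm. Density of $C_{c}^{\infty}$ in $H$ is standard: cut off with $\chi(x/R)$ and mollify, and the weight $|x|^{2}$ is harmless under both operations.

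For smooth compactly supported $\varphi$, note that $\nabla\cdot x=n$, and, exactly as in the computation of $J_{2}$ in (\ref{3.9}), $x\cdot\nabla(|\varphi|^{2})=2{\rm Re}\left(\overline{\varphi}\,x\cdot\nabla\varphi\right)$. Integrating by parts with no boundary terms gives
\begin{align*}
n\|\varphi\|^{2}_{2}=\int_{\mathbb{R}^{n}}(\nabla\cdot x)|\varphi|^{2}{\rm d}x=-\int_{\mathbb{R}^{n}}x\cdot\nabla\left(|\varphi|^{2}\right){\rm d}x=-2{\rm Re}\int_{\mathbb{R}^{n}}\overline{\varphi}\,x\cdot\nabla\varphi\,{\rm d}x.
\end{align*}
Bounding the real part by the modulus, and using $|x\cdot\nabla\varphi|\leq|x||\nabla\varphi|$ followed by the Cauchy--Schwarz inequality in $L^{2}$, yields
\begin{align*}
n\|\varphi\|^{2}_{2}\leq2\int_{\mathbb{R}^{n}}|x||\varphi||\nabla\varphi|{\rm d}x\leq2\||x|\varphi\|_{2}\|\nabla\varphi\|_{2}.
\end{align*}
Dividing by $n$ gives the claim.

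There is no real obstacle here. The only point requiring care is the density and approximation step, which justifies the integration by parts for a general $\varphi\in H$ without assuming additional decay. Alternatively, one can integrate by parts directly on balls $B_{R}$ and control the boundary term $R\int_{\partial B_{R}}|\varphi|^{2}{\rm d}S$. This term tends to zero along a sequence $R_{k}\to\infty$, because $\int_{\mathbb{R}^{n}}|x||\varphi|^{2}{\rm d}x<\infty$, which follows from $|x|\varphi\in L^{2}$ and $\varphi\in L^{2}$.
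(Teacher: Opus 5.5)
Your argument is correct and complete. The identity $n\|\varphi\|_2^2=-2\,{\rm Re}\int_{\mathbb{R}^n}\overline{\varphi}\,x\cdot\nabla\varphi\,{\rm d}x$ followed by Cauchy--Schwarz gives exactly the constant $\frac{2}{n}$, and either your density step in $H$ or the spherical boundary-term argument extends it to general $\varphi\in H$. The paper gives no proof of its own and simply quotes the inequality from Weinstein \cite{r4-1-2-00-2}; your integration-by-parts (Heisenberg uncertainty) argument is the standard proof of it.
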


\begin{theorem}\label{th3.1}
(Finite time blowup for negative initial energy $K(\varphi_{0})<0$)
Let $p$ and $q$ satisfy (\ref{1.2}), $\lambda_{1}>0$ and $\lambda_{2}>0$. For the initial data $\varphi_{0}\in H$ satisfying $K(\varphi_{0})<0$, the corresponding solution to the Cauchy problem (\ref{1.1}) blows up in finite time.
\begin{proof}
According to Lemma \ref{lem2.1}, we only need to prove $T_{{\rm max}}<+\infty$. Arguing by contradiction, we suppose $T_{{\rm max}}=+\infty$. According to the assumption (\ref{1.2}), we have $\frac{n(p-2)}{4}>1$ and $\frac{n(q-2)}{4}>1$ for any $n\in\mathbb{N}^{+}$, which means $$\frac{n(p-2)\lambda_{1}}{4p}>\frac{\lambda_{1}}{p}$$ and $$\frac{n(q-2)\lambda_{2}}{4q}>\frac{\lambda_{2}}{q}.$$ Then, based on these two inequalities and Lemma \ref{lem3.1}, we know
\begin{align}\label{3.11}
\begin{split}
J''(t)=&16\left(\frac{1}{2}\|\nabla\varphi\|^{2}_{2}-\frac{1}{2}\||x|\varphi\|^{2}_{2}-\frac{n(p-2)\lambda_{1}}{4p}\|\varphi\|^{p}_{p}-\frac{n(q-2)\lambda_{2}}{4q}\|\varphi\|^{q}_{q}\right)\\
\leq&16\left(\frac{1}{2}\|\nabla\varphi\|^{2}_{2}-\frac{1}{2}\||x|\varphi\|^{2}_{2}-\frac{\lambda_{1}}{p}\|\varphi\|^{p}_{p}-\frac{\lambda_{2}}{q}\|\varphi\|^{q}_{q}\right)\\
\leq&16\left(\frac{1}{2}\|\nabla\varphi\|^{2}_{2}+\frac{1}{2}\|\varphi\|^{2}_{2}+\frac{1}{2}\||x|\varphi\|^{2}_{2}-\frac{\lambda_{1}}{p}\|\varphi\|^{p}_{p}
-\frac{\lambda_{2}}{q}\|\varphi\|^{q}_{q}\right)\\
=&16K(\varphi).
\end{split}
\end{align}
According to $K(\varphi)=K(\varphi_{0})$ given by (\ref{2.2}) and (\ref{2.3}), and the condition $K(\varphi_{0})<0$, we notice that (\ref{3.11}) turns to
\begin{align}\label{3.12}
J''(t)\leq16K(\varphi_{0})<0,
\end{align}
which implies
\begin{align*}
&\int^{t}_{0}J''(\tau){\rm d}\tau\leq\int^{t}_{0}16K(\varphi_{0}){\rm d}\tau,\nonumber
\end{align*}
i.e.,
\begin{align}\label{3.13}
&J'(t)-J'(0)\leq16K(\varphi_{0})t,
\end{align}
for $t\in \left(0,T_{{\rm max}}\right)$. Subsequently, integrating (\ref{3.13}) from 0 to $t$, we have
\begin{align*}
&\int^{t}_{0}J'(\tau){\rm d}\tau\leq\int^{t}_{0}\left(J'(0)+16K(\varphi_{0})\tau\right){\rm d}\tau,\nonumber
\end{align*}
which means
\begin{equation}\label{3.14}
J(t)\leq J(0)+J'(0)t+8K(\varphi_{0})t^{2}
\end{equation}
 for $t\in \left(0, T_{{\rm max}}\right)$. In (\ref{3.14}), due to $K(\varphi_{0})<0$ and $T_{{\rm max}}=+\infty$ assumed, we can find large enough $\overline{t}\in \left(0, T_{{\rm max}}\right)$ to make
\begin{equation}\label{3.15}
J(0)+J'(0)t+8K(\varphi_{0})t^{2}<0
\end{equation}
hold for $t\in\left[\overline{t},+\infty\right)$. And for $J(t)$, we notice $J(0)=\||x|\varphi_{0}\|^{2}_{2}>0$ due to $K(\varphi_{0})<0$, i.e., $\varphi_{0}\neq 0$. Thus, according to this fact, (\ref{3.14}), (\ref{3.15}), and the continuity of $J(t)$ shown by Lemma \ref{lem2.1}, we can find $\widetilde{t}\in (0,\overline{t})$ to make
\begin{align}\label{3.16}
\lim\limits_{t\rightarrow \widetilde{t}} J(t)=0
\end{align}
hold. According to Lemma \ref{lem3.2} and (\ref{2.2}), we see
\begin{align}\label{3.17}
\|\varphi_{0}\|^{2}_{2}\leq\frac{2}{n}\|\nabla\varphi\|_{2}\left(J(t)\right)^{\frac{1}{2}}.
\end{align}
We notice $\|\varphi_{0}\|^{2}_{2}>0$ in (\ref{3.17}) due to $K(\varphi_{0})<0$, i.e., $\varphi_{0}\neq 0$. Based on this fact, (\ref{3.16}), and (\ref{3.17}), we obtain
\begin{align*}
\lim_{t\rightarrow \widetilde{t}}\|\nabla\varphi\|_{2}=+\infty,
\end{align*}
which implies
\begin{align*}
\lim_{t\rightarrow \widetilde{t}}\|\varphi\|_{H}=+\infty.
\end{align*}
 This contradicts the assumption $T_{\rm{max}}=+\infty$. Thus, we have $T_{\rm{max}}<+\infty$, which implies the finite time blowup according to Lemma \ref{lem2.1}.
\end{proof}
\end{theorem}

\begin{remark}\label{rem3.1}($E(0)$ v.s. $K(\varphi_{0})$)
Experience and intuition tell us that the negative total initial energy, i.e., $E(0)<0$, leads to a finite time blowup of the solution, which has been verified by Theorem \ref{th3.1}. Hence, on the surface, the condition $E(0)<0$ is weaker than $K(\varphi_{0})<0$, to ensure the finite time blowup. However, $E(0)<0$ is the sufficient condition for the finite time blowup, especially no sharp conditions or threshold results like Theorems \ref{th4.1}, \ref{th4.2}, \ref{th5.1}, \ref{th5.2} can be obtained under the condition $E(0)<0$, and so does $E(0)\geq0$. This explains why we use the functional $K(\varphi_{0})$ instead of $E(0)$.
\end{remark}
 \section{Global existence and finite time blowup for non-negative low initial energy $0\leq K(\varphi_{0})<d_{\mathcal{N}_{j}}$, $j=1,2,\cdots,8$}\label{sec4}
In this section, we focus on non-negative low initial energy $0\leq K(\varphi_{0})<d_{\mathcal{N}_{j}}$, $j=1,2,\cdots,8$, to classify the initial data leading to global existence or finite time blowup. More precisely, we give eight classifications of the initial data to distinguish the global solution and finite time blowup solution under the framework of the cross-constrained variational theory. Here, we require the conclusions $d_{\mathcal{N}_{j}}>0$, $j=1,2,\cdots,8$, given by Lemma \ref{lem2.3} to ensure that the conditions $0<K(\varphi_{0})<d_{\mathcal{N}_{j}}$ make sense. Thus, we naturally require the conditions in Lemma \ref{lem2.3} for the following Theorem $\ref{th4.1}$ and Theorem \ref{th4.2}.
\subsection{Global existence $\&$ finite time blowup}\label{sec4.1}

We prove the global existence and finite time blowup of the solution to the Cauchy problem (\ref{1.1}) for the non-negative low initial energy $0\leq K(\varphi_{0})<d_{\mathcal{N}_{j}}$, $j=1,2,\cdots,8$.
\begin{theorem}\label{th4.1}(Global existence for the non-negative low initial energy $0\leq K(\varphi_{0})<d_{\mathcal{N}_{j}}$, $j=1,2,\cdots,8$)
Let $p$ and $q$ satisfy (\ref{1.2}), and $\lambda_{1}>0$, $\lambda_{2}>0$ satisfy the following conditions

(i)\ for $j=1,2,3,4$, $\lambda_{1}$ and $\lambda_{2}$ additionally satisfy (\ref{2.9}) and (\ref{2.10});

(ii)\ for $j=5,6,7,8$, no additional restrictions on $\lambda_{1}$ and $\lambda_{2}$.\\
Then the corresponding solution $\varphi$ to problem (\ref{1.1}) exists globally in time provided $\varphi_{0}\in \mathcal{W}_{j}$, $j=1,2,\cdots,8$.
\begin{proof}
  For the initial data $\varphi_{0}$ belonging to $\mathcal{W}_{j}$, $j=1,2,\cdots,8$, we shall prove $T_{\rm{max}}=+\infty$. Here, a comparably simple case is $\varphi_{0}=0$, which implies $\varphi(t)=0$ for $t\in(0,+\infty)$ according to (\ref{2.2}), and thus it is the global solution. Hence, we shall only focus on the case of $K(\varphi_{0})<d_{\mathcal{N}_{j}}$ and $S_{j}(\varphi_{0})>0$, i.e., $\varphi_{0}\neq0$, to prove the global existence, which requires us to obtain the boundedness estimate of the norm $\|\varphi\|_{H}$. The strategies of estimating the norm $\|\varphi\|_{H}$ depend on the structures of the cross-constrained variational functionals $S_{j}(\varphi)$, $j=1,2,\cdots,8$. Although we need to consider eight cases, it is not necessary for us to propose eight different methods to estimate $\|\varphi\|_{H}$. According to the structures of the cross-constrained variational functionals $S_{j}(\varphi)$, $j=1,2,\cdots,8$, we shall finish the proof by considering two cases: Case \textrm{I} ($j=1,2,3,4$) and Case \textrm{II} ($j=5,6,7,8$), as the strategies for $S_{j}(\varphi)$, $j=1,2,3,4$, with the term $\|\varphi\|_{p}^{p}$ are similar, and the strategies for $S_{j}(\varphi)$, $j=5,6,7,8$, with the both terms $\|\varphi\|_{p}^{p}$ and $\|\varphi\|_{q}^{q}$ are similar.\\
{\bf{Case \textrm{I}: \bm{$j=1,2,3,4.$}}}

For the initial data $\varphi_{0}$ satisfying $K(\varphi_{0})<d_{\mathcal{N}_{j}}$ and $S_{j}(\varphi_{0})>0$, i.e., $\varphi_{0}\in\mathcal{W}_{j}$, we know that the corresponding solution $\varphi(t)\in\mathcal{W}_{j}$ for all $t\in [0,T_{\rm{max}})$ according to Lemma \ref{lem2.4}. Here the case $\varphi_{0}\neq0$ leads to $\varphi(t)\neq0$ according to (\ref{2.2}). We know $K(\varphi(t))<d_{\mathcal{N}_{j}}$ and $S_{j}(\varphi(t))>0$, where $t\in [0,T_{\rm{max}})$. According to $K(\varphi)<d_{\mathcal{N}_{j}}$ and the definition of $K(\varphi)$, i.e., (\ref{2.4}), we know
\begin{align}\label{4.1}
  K(\varphi)=\frac{1}{2}\|\varphi\|^{2}_{H}-\frac{\lambda_{1}}{p}\|\varphi\|^{p}_{p}-\frac{\lambda_{2}}{q}\|\varphi\|^{q}_{q}<d_{\mathcal{N}_{j}}.
\end{align}
   To estimate $\|\varphi\|^{2}_{H}$, we need to deal with the negative terms in (\ref{4.1}), i.e., $-\frac{\lambda_{1}}{p}\|\varphi\|^{p}_{p}$ and $-\frac{\lambda_{2}}{q}\|\varphi\|^{q}_{q}$.
Noticing that the cross-constrained variational functionals $S_{j}(\varphi)$ have the term $\|\varphi\|^{p}_{p}$ and do not have the term $\|\varphi\|^{q}_{q}$, by applying $S_{j}(\varphi)>0$ we get
\begin{align}\label{4.2}
\|\varphi\|^{p}_{p}<\frac{2p}{n(p-2)\lambda_{1}}\left(\|\nabla \varphi\|^{2}_{2}+c\|\varphi\|^{2}_{2}+\||x|\varphi\|^{2}_{2}\right),
\end{align}where $c>0$ is chosen such that (\ref{cdetiaojian}) holds.
Meanwhile, since the assumptions on $p$, $q$, $\lambda_{1}$ and $\lambda_{2}$ we adopted here are same as Lemma \ref{lem2.3}, in order to deal with $-\frac{\lambda_{2}}{q}\|\varphi\|^{q}_{q}$, we retrieve (\ref{2.16}), which can be rewritten as
\begin{align}\label{4.3}
K(\varphi)\geq\frac{1}{2}\| \varphi\|^{2}_{H}-\left(\frac{\lambda_{1}}{p}+\frac{\lambda_{2}}{q}\right)\|\varphi\|^{p}_{p}-\frac{\lambda_{2}}{q}\|\varphi\|^{2}_{2}
\end{align}
in terms of $\|\varphi\|^{2}_{H}$.
Next, we shall deal with the negative terms in (\ref{4.3}), i.e., $-\frac{\lambda_{2}}{q}\|\varphi\|^{2}_{2}$ and $-\left(\frac{\lambda_{1}}{p}+\frac{\lambda_{2}}{q}\right)\|\varphi\|^{p}_{p}$. Noticing that $\|\varphi\|^{2}_{2}$ is the conserved quantity according to (\ref{2.2}), to deal with the negative term $-\frac{\lambda_{2}}{q}\|\varphi\|^{2}_{2}$, we plus $\left(\frac{\lambda_{2}}{q}+\left(\frac{\lambda_{1}}{p}+\frac{\lambda_{2}}{q}\right)\frac{2p c}{n(p-2)\lambda_{1}}\right)\|\varphi\|^{2}_{2}$ on both sides of (\ref{4.3}) to get
 \begin{align}\label{4.4}
 \begin{split}
&K(\varphi)+\left(\frac{\lambda_{2}}{q}+\left(\frac{\lambda_{1}}{p}+\frac{\lambda_{2}}{q}\right)\frac{2p c}{n(p-2)\lambda_{1}}\right)\|\varphi\|^{2}_{2}\\
\geq&\frac{1}{2}\| \varphi\|^{2}_{H}-\left(\frac{\lambda_{1}}{p}+\frac{\lambda_{2}}{q}\right)\|\varphi\|^{p}_{p}+\left(\frac{\lambda_{1}}{p}+\frac{\lambda_{2}}{q}\right)\frac{2p c}{n(p-2)\lambda_{1}}\|\varphi\|^{2}_{2}.
\end{split}
\end{align}
According to (\ref{2.2}), (\ref{2.3}) and $K(\varphi_{0})<d_{\mathcal{N}_{j}}$, (\ref{4.4}) turns to
\begin{align}\label{4.5}
\begin{split}
&d_{\mathcal{N}_{j}}+\left(\frac{\lambda_{2}}{q}+\left(\frac{\lambda_{1}}{p}+\frac{\lambda_{2}}{q}\right)\frac{2p c}{n(p-2)\lambda_{1}}\right)\|\varphi_{0}\|^{2}_{2}\\
>&K(\varphi)+\left(\frac{\lambda_{2}}{q}+\left(\frac{\lambda_{1}}{p}+\frac{\lambda_{2}}{q}\right)\frac{2p c}{n(p-2)\lambda_{1}}\right)\|\varphi\|^{2}_{2}\\
\geq&\frac{1}{2}\| \varphi\|^{2}_{H}-\left(\frac{\lambda_{1}}{p}+\frac{\lambda_{2}}{q}\right)\|\varphi\|^{p}_{p}+\left(\frac{\lambda_{1}}{p}+\frac{\lambda_{2}}{q}\right)\frac{2p c}{n(p-2)\lambda_{1}}\|\varphi\|^{2}_{2}.
\end{split}
\end{align}
By substituting (\ref{4.2}) into (\ref{4.5}), we have
\begin{align}\label{4.6}
d_{\mathcal{N}_{j}}+\left(\frac{\lambda_{2}}{q}+\left(\frac{\lambda_{1}}{p}+\frac{\lambda_{2}}{q}\right)\frac{2p c}{n(p-2)\lambda_{1}}\right)\|\varphi_{0}\|^{2}_{2}>\frac{1}{2}\| \varphi\|^{2}_{H}-\left(\frac{\lambda_{1}}{p}+\frac{\lambda_{2}}{q}\right)\frac{2p}{n(p-2)\lambda_{1}}\| \varphi\|^{2}_{H}.
\end{align}
From the assumption (\ref{2.9}) for $j=1,2,3,4$, we see
$$\frac{1}{2}-\left(\frac{\lambda_{1}}{p}+\frac{\lambda_{2}}{q}\right)\frac{2p}{n(p-2)\lambda_{1}}>0.$$
Thus, from (\ref{4.6}) we have
\begin{align}\label{4.7}
\|\varphi\|^{2}_{H}
\leq\frac{2qn(p-2)\lambda_{1}}{qn(p-2)\lambda_{1}-4\lambda_{1}q-4\lambda_{2}p} \left(d_{\mathcal{N}_{j}}+\left(\frac{\lambda_{2}}{q}+\left(\frac{\lambda_{1}}{p}+\frac{\lambda_{2}}{q}\right)\frac{2p c}{n(p-2)\lambda_{1}}\right)\|\varphi_{0}\|^{2}_{2}\right)
\end{align}
for all $t\in(0,T_{\rm{max}})$ and $j=1,2,3,4$. Therefore, according to Lemma \ref{lem2.1}, we know $T_{\rm{max}}=+\infty$. Here, we need the conclusions $d_{\mathcal{N}_{j}}>0$, $j=1,2,3,4$, to ensure that the upper bound of $\|\varphi\|_{H}$ in the estimate (\ref{4.7}) is positive, which also explains the additional restrictions (\ref{2.9}) and (\ref{2.10}) for $j=1,2,3,4$.\\
{\bf{Case \textrm{II}: \bm{$j=5,6,7,8.$}}}

Similar to Case \textrm{I}, we also know that the corresponding solution $\varphi(t)\in\mathcal{W}_{j}$ for all $t\in (0,T_{\rm{max}})$ according to Lemma \ref{2.4}, and we have $K(\varphi(t))<d_{\mathcal{N}_{j}}$ and $S_{j}(\varphi(t))>0$ for all $t\in [0,T_{\rm{max}})$.
 For this case, since the cross-constrained variational functionals $S_{j}(\varphi)$ have both the terms $\|\varphi\|^{p}_{p}$ and $\|\varphi\|^{q}_{q}$, we can apply $S_{j}(\varphi)>0$ to control both the terms $\|\varphi\|^{p}_{p}$ and $\|\varphi\|^{q}_{q}$ in (\ref{4.1})
as
\begin{align}\label{4.8}
\frac{n(p-2)\lambda_{1}}{2p}\|\varphi\|^{p}_{p}+\frac{n(q-2)\lambda_{2}}{2q}\|\varphi\|^{q}_{q}<\|\varphi\|^{2}_{H}.
\end{align}
Due to $p>q$ assumed in this paper, we know
\begin{align}\label{4.9}
\begin{split}
&\frac{n(p-2)\lambda_{1}}{2p}\|\varphi\|^{p}_{p}+\frac{n(q-2)\lambda_{2}}{2q}\|\varphi\|^{q}_{q}\\
>&\frac{n(q-2)\lambda_{1}}{2p}\|\varphi\|^{p}_{p}+\frac{n(q-2)\lambda_{2}}{2q}\|\varphi\|^{q}_{q}\\
=&\frac{n(q-2)}{2}\left(\frac{\lambda_{1}}{p}\|\varphi\|^{p}_{p}+\frac{\lambda_{2}}{q}\|\varphi\|^{q}_{q}\right).
\end{split}
\end{align}
Combining (\ref{4.8}) and (\ref{4.9}), we know
\begin{align}\label{4.10}
\frac{\lambda_{1}}{p}\|\varphi\|^{p}_{p}+\frac{\lambda_{2}}{q}\|\varphi\|^{q}_{q}<\frac{2}{n(q-2)}\|\varphi\|^{2}_{H}.
\end{align}
By substituting (\ref{4.10}) and $K(\varphi)<d_{\mathcal{N}_{j}}$ into $K(\varphi)$, we can obtain
\begin{align}\label{4.11}
d_{\mathcal{N}_{j}}>\left(\frac{1}{2}-\frac{2}{n(q-2)}\right)\|\varphi\|^{2}_{H},
\end{align}
which gives the boundness estimate of the solution, i.e.,
\begin{align}\label{4.12}
\|\varphi\|^{2}_{H}\leq\frac{2n(q-2)d_{\mathcal{N}_{j}}}{n(q-2)-4}.
\end{align}
In (\ref{4.11}), we notice that $\frac{1}{2}-\frac{2}{n(q-2)}>0$ can be ensured by (\ref{1.2}), i.e., $2+\frac{4}{n}<q$, and the fact $d_{\mathcal{N}_{j}}>0$, $j=5,6,7,8$, can be ensured by Lemma \ref{lem2.3}.
According to (\ref{4.12}) and Lemma \ref{lem2.1}, we know $T_{\rm{max}}=+\infty$.

\end{proof}
\end{theorem}

According to Theorem \ref{th4.1}, we also have the following different conditions for judging the global existence for non-negative low initial energy $0\leq K(\varphi_{0})<d_{\mathcal{N}_{j}}$, $j=1,2,\cdots,8$, in Corollary \ref{cor4.1}.
\begin{corollary}\label{cor4.1}(Other conditions of the global existence for non-negative low initial energy $0\leq K(\varphi_{0})<d_{\mathcal{N}_{j}}$, $j=1,2,\cdots,8$)
Let $p$ and $q$ satisfy (\ref{1.2}), and $\lambda_{1}>0$, $\lambda_{2}>0$ satisfy the following conditions

(i)\ for $j=1,2,3,4$, $\lambda_{1}$ and $\lambda_{2}$ additionally satisfy (\ref{2.9}) and (\ref{2.10});

(ii)\ for $j=5,6,7,8$, no additional restrictions on $\lambda_{1}$ and $\lambda_{2}$.\\
If the initial data satisfy $\|\varphi_{0}\|^{2}_{H}<2d_{\mathcal{N}_{j}}$, $j=1,2,\cdots,8$, and $K(\varphi_{0})\geq0$, the solution of (\ref{1.1}) exists globally in time.
\begin{proof}
We only write down the details of the proof for the case $j=1$ since the processes of dealing with the cases $j=2,\cdots,8$ are similar. A comparably simple case is $\varphi_{0}=0$, which means $\varphi=0$ according to (\ref{2.2}). Next, we assume $\varphi_{0}\neq0$. It is clear that $K(\varphi_{0})<d_{\mathcal{N}_{1}}$ according to $\|\varphi_{0}\|^{2}_{H}<2d_{\mathcal{N}_{1}}$. Next, we prove $S_{1}(\varphi_{0})>0$. Arguing by contradiction, we suppose $S_{1}(\varphi_{0})\leq0$. Then we can choose $0<\nu\leq 1$ to make
\begin{align*}
S_{1}(\nu\varphi_{0})=\nu^{2}\|\nabla \varphi_{0}\|^{2}_{2}-\frac{n(p-2)\lambda_{1}\nu^{p}}{2p}\|\varphi_{0}\|^{p}_{p}=0.
\end{align*}
 Due to $\varphi_{0}\neq0$, i.e., $\nu\varphi_{0}\neq0$, we see
$K(\nu\varphi_{0})\geq d_{\mathcal{N}_{1}}$.
 However, we have
\begin{align*}
&K(\nu\varphi_{0})\leq\frac{1}{2}\|\nu \varphi_{0}\|^{2}_{H}=\frac{\nu^{2}}{2}\|\varphi_{0}\|^{2}_{H}<\nu^{2}d_{\mathcal{N}_{1}}\leq d_{\mathcal{N}_{1}},
\end{align*}
which contradicts $K(\nu\varphi_{0})\geq d_{\mathcal{N}_{1}}$. Thus we can see $\varphi_{0}\in \mathcal{W}_{1}$, which implies that the solution of (\ref{1.1}) exists globally according to Theorem \ref{th4.1}.
\end{proof}
\end{corollary}

Next, we prove the finite time blowup of the solution to the Cauchy problem (\ref{1.1}) by applying the manifolds $\mathcal{V}_{j}$, $j=1,2,\cdots,8$. Here, as we narrate, we investigate the finite time blowup when the conclusions $d_{\mathcal{N}_{j}}>0$, $j=1,2,\cdots,8$, hold. Here, for $j=1,2,3,4$, we also need to apply the additional restrictions (\ref{2.9}) and (\ref{2.10}) on $\lambda_{1}$ and $\lambda_{2}$ given in Lemma \ref{lem2.3} to ensure $d_{\mathcal{N}_{j}}>0$, which are not required for the cases $j=5,6,7,8$.
\begin{theorem}\label{th4.2}(Finite time blowup for the non-negative low initial energy $0\leq K(\varphi_{0})<d_{\mathcal{N}_{j}}$, $j=1,2,\cdots,8$)
Let $p$ and $q$ satisfy (\ref{1.2}), and $\lambda_{1}>0$, $\lambda_{2}>0$ satisfy the following conditions

(i)\ for $j=1,2,3,4$, $\lambda_{1}$ and $\lambda_{2}$ additionally satisfy (\ref{2.9}) and (\ref{2.10});

(ii)\ for $j=5,6,7,8$, no additional restrictions on $\lambda_{1}$ and $\lambda_{2}$.\\
For the initial data $\varphi_{0}\in \mathcal{V}_{j}$, $j=1,2,\cdots,8$, the corresponding solution $\varphi$ of (\ref{1.1}) blows up in finite time.
\begin{proof}
We apply the auxiliary functional $J(t)=\||x|\varphi\|^{2}_{2}$ to prove the finite time blowup of the solution.
As we discussed in the ``Introduction'' section of the present paper, the auxiliary functional $$J(t)=\||x|\varphi\|^{2}_{2}$$ always plays an important role in proving the finite time blowup of the solution, and we also use this auxiliary functional in this proof. According to Lemma \ref{lem2.1}, we only need to prove $T_{\rm{max}}<+\infty$. Arguing by contradiction, we suppose $T_{\rm{max}}=+\infty$. For $j=1,2,\cdots,8$, we shall apply the condition $S_{j}(\varphi)<0$, i.e., $\varphi\in \mathcal{V}_{j}$ ensured by Lemma \ref{lem2.4} to verify $J''(t)<0$, which is the key step of this proof. Here, the strategies of this process depend on the structures of the cross-constrained variational functionals $S_{j}(\varphi)$, $j=1,2,\cdots,8$. We shall divide the proof into two cases, i.e., Case \textrm{I} ($j=3,4,7,8$, here the cross-constrained variational functionals $S_{j}(\varphi)$ have the term $\|\varphi\|^{2}_{2}$) and Case \textrm{II} ($j=1,2,5,6$, here $S_{j}(\varphi)$ do not have the term $\|\varphi\|^{2}_{2}$).\\
{\bf{Case \textrm{I}: \bm{$j=3,4,7,8.$}}}

For the cases $j=3,4,7,8$, the cross-constrained variational functionals $S_{j}(\varphi)$ having the term $\|\varphi\|^{2}_{2}$, by $S_{j}(\varphi)<0$ and $\lambda_{2}>0$ we know
\begin{equation}
\|\nabla\varphi\|^{2}_{2}+\min\left\{c,1\right\}\|\varphi\|^{2}_{2}<\frac{n(p-2)}{2p}\lambda_{1}\|\varphi\|^{p}_{p}+\frac{n(q-2)}{2q}\lambda_{2}\|\varphi\|^{q}_{q},\nonumber\\
\end{equation}
i.e.,
\begin{equation}\label{4.13}
\|\nabla\varphi\|^{2}_{2}+\min\left\{c,1\right\}\|\varphi\|^{2}_{2}-\frac{n(p-2)}{2p}\lambda_{1}\|\varphi\|^{p}_{p}-\frac{n(q-2)}{2q}\lambda_{2}\|\varphi\|^{q}_{q}<0,
\end{equation}where $c>0$ is chosen such that (\ref{cdetiaojian}) holds.
Next, according to (\ref{4.13}), we deal with $J''(t)$ shown by (\ref{3.1}) as follows
\begin{align}\label{4.14}
\begin{split}
J''(t)=&8\|\nabla\varphi\|^{2}_{2}-8\||x|\varphi\|^{2}_{2}-\frac{8n(p-2)}{2p}\lambda_{1}\|\varphi\|^{p}_{p}-\frac{8n(q-2)}{2q}\lambda_{2}\|\varphi\|^{q}_{q}\\
=&8\left(\|\nabla\varphi\|^{2}_{2}+\min\left\{c,1\right\}\|\varphi\|^{2}_{2}-\frac{n(p-2)}{2p}\lambda_{1}\|\varphi\|^{p}_{p}-\frac{n(q-2)}{2q}\lambda_{2}\|\varphi\|^{q}_{q}\right)
\\&-8\||x|\varphi\|^{2}_{2}-8\min\left\{c,1\right\}\|\varphi\|^{2}_{2}\\
<&-8\min\left\{c,1\right\}\|\varphi\|^{2}_{2},
\end{split}
\end{align}
where $t\in(0,+\infty)$. By substituting (\ref{2.2}), i.e., the mass conservation law, into (\ref{4.14}), we know
\begin{align}\label{4.15}
J''(t)<-8\min\left\{c,1\right\}\|\varphi\|^{2}_{2}=-8\min\left\{c,1\right\}\|\varphi_{0}\|^{2}_{2}<0
\end{align}
due to $S_{j}(\varphi_{0})<0$, i.e., $\varphi_{0}\neq0$. By the similar process of applying (\ref{3.12}) to complete the proof of Theorem \ref{th3.1}, we can also easily apply (\ref{4.15}) to prove $T_{\rm{max}}<+\infty$, which means the finite time blowup of the solution according to Lemma \ref{lem2.1}.\\
{\bf{Case \textrm{II}: \bm{$j=1,2,5,6.$}}}

For the cases $j=1,2,5,6$, the cross-constrained variational functionals $S_{j}(\varphi)$ not having the term $\|\varphi\|^{2}_{2}$, from $S_{j}(\varphi)<0$ and $\lambda_{2}>0$, we can deduce
\begin{equation}
\|\nabla\varphi\|^{2}_{2}<\frac{n(p-2)}{2p}\lambda_{1}\|\varphi\|^{p}_{p}+\frac{n(q-2)}{2q}\lambda_{2}\|\varphi\|^{q}_{q},\nonumber
\end{equation}
i.e.,
\begin{equation}\label{4.16}
\|\nabla\varphi\|^{2}_{2}-\frac{n(p-2)}{2p}\lambda_{1}\|\varphi\|^{p}_{p}-\frac{n(q-2)}{2q}\lambda_{2}\|\varphi\|^{q}_{q}<0.
\end{equation}
 Next, according to (\ref{4.16}), we deal with $J''(t)$ shown by (\ref{3.1}) as follows
\begin{align}\label{4.17}
\begin{split}
J''(t)=&8\left(\|\nabla\varphi\|^{2}_{2}-\frac{n(p-2)}{2p}\lambda_{1}\|\varphi\|^{p}_{p}-\frac{n(q-2)}{2q}\lambda_{2}\|\varphi\|^{q}_{q}\right)-8\||x|\varphi\|^{2}_{2}\\
<&-8\||x|\varphi\|^{2}_{2}\\
=&-8J(t)
\end{split}
\end{align}
for $t\in[0,+\infty)$. Different from Case \textrm{I} of this proof (in Case \textrm{I}, we have the conserved quantity $\|\varphi\|^{2}_{2}$ in (\ref{4.14})) we need a different strategy to deal with (\ref{4.17}). Set $$g(t):=\frac{J'(t)}{J(t)}.$$ By applying (\ref{4.17}), for $t\in[0,+\infty)$, we have
\begin{align}\label{4.18}
\begin{split}
g'(t)&=\frac{J''(t)J(t)-(J'(t))^2}{(J(t))^2}\\
&=\frac{J''(t)}{J(t)}-\left(\frac{J'(t)}{J(t)}\right)^2\\
&<-8-(g(t))^2<0,
\end{split}
\end{align}
which implies
\begin{align*}
\int^{t}_{0}g'(\tau){\rm d}\tau<\int^{t}_{0}\left(-8-\left(g(\tau)\right)^2\right){\rm d}\tau<\int^{t}_{0}\left(-8\right){\rm d}\tau,
\end{align*}
i.e.,
\begin{align}\label{4.19}
g(t)<& g(0)-8t.
\end{align}
For the case $g(0)\leq0$, (\ref{4.19}) gives $g(t)<0$ for $t\in(0,+\infty)$. And for the case $g(0)>0$, since (\ref{4.19}) holds for $(0,+\infty)$, we know that there exists a large enough $T_{1}\in(0,+\infty)$ such that $g(t)<0$ for all $t\in[T_{1},+\infty)$. However, according to (\ref{4.18}), for $t\in(T_{1},+\infty)$, we have
\begin{align*}
\frac{g'(t)}{\left(g(t)\right)^{2}}<-\frac{8}{\left(g(t)\right)^{2}}-1<-1,\nonumber
\end{align*}
which means
\begin{align*}
\int^{t}_{T_{1}}\frac{g'(\tau)}{(g(\tau))^{2}} {\rm d}\tau <\int^{t}_{T_{1}}(-1) {\rm d}\tau,\nonumber
\end{align*}
i.e.,
\begin{align}\label{4.20}
\frac{1}{g(t)}>\frac{1}{g(T_{1})}+(t-T_{1}).
\end{align}
 For (\ref{4.20}) with $\frac{1}{g(T_{1})}<0$, according to the assumption $T_{{\rm max}}=+\infty$, we can find a large enough $T_{2}\in(T_{1},+\infty)$ such that $\frac{1}{g(T_{1})}+(t-T_{1})>0$, i.e., $g(t)>0$, for $t\in(T_{2},+\infty)$, which contradicts $g(t)<0$ for all $t\in(T_{1},+\infty)$ obtained above. Thus, we have $T_{\rm{max}}<+\infty$, which means that the solution $\varphi$ blows up in finite time according to Lemma \ref{lem2.1}.
\end{proof}
\end{theorem}In Theorem \ref{th4.1} and Theorem \ref{th4.2}, we use the manifolds $\mathcal{W}_{j}$ and $\mathcal{V}_{j}$, $j=1,2,\cdots,8$, to prove the global existence and finite time blowup, respectively, we show the manifolds $\mathcal{W}_{j}$ and $\mathcal{V}_{j}$, $j=1,2,\cdots,8$, are both not empty.
\begin{remark}\label{rem4.1}(The manifolds $\mathcal{W}_{j}$ and $\mathcal{V}_{j}$, $j=1,2,\cdots,8$, are not empty)
We only discuss the case $j=1$ since the the analysis of the cases $j=2,3,\cdots,8$ are similar.
For the function $u\neq0$, we take $\iota>0$ and consider
\begin{align}\label{4.21-}\begin{split}
S_{1}(\iota u)=&\left(\frac{1}{2}\|\nabla u\|_{2}^{2}\right)\iota^{2}-\left(\frac{n(p-2)\lambda_{1}}{2p}\| u\|_{p}^{p}\right)\iota^{p}\\
=&\iota^{2}\left(\frac{1}{2}\|\nabla u\|_{2}^{2}-\left(\frac{n(p-2)\lambda_{1}}{2p}\| u\|_{p}^{p}\right)\iota^{p-2}\right).
\end{split}
\end{align}
Thus, we know that exists a unique $\iota_{0}>0$ such that
\begin{align}\label{4.22-}
\frac{1}{2}\|\nabla u\|_{2}^{2}-\left(\frac{n(p-2)\lambda_{1}}{2p}\| u\|_{p}^{p}\right)\iota_{0}^{p-2}=0,
\end{align}
i.e.,
$S_{1}(\iota_{0} u)=0$, $S_{1}(\iota u)>0$ for $0<\iota<\iota_{0}$ and $S_{1}(\iota u)<0$ for $\iota>\iota_{0}$. Thus, we have $\iota_{0} u\in\mathcal{N}_{1}$, which means $K(\iota_{0} u)\geq d_{\mathcal{N}_{1}}$. We consider
\begin{align}\label{Kdebianhuan}
\begin{split}
K(\iota u)=&\left(\frac{1}{2}\|\nabla u\|_{2}^{2}+\frac{1}{2}\|u\|_{2}^{2}+\frac{1}{2}\||x| u\|_{2}^{2}\right)\iota^{2}-\left(\frac{\lambda_{1}}{p}\| u\|_{p}^{p}\right)\iota^{p}-\left(\frac{\lambda_{2}}{q}\| u\|_{q}^{q}\right)\iota^{q}\\
=&\iota^{2}\left(\frac{1}{2}\|\nabla u\|_{2}^{2}+\frac{1}{2}\|u\|_{2}^{2}+\frac{1}{2}\||x| u\|_{2}^{2}-\left(\frac{\lambda_{1}}{p}\| u\|_{p}^{p}\right)\iota^{p-2}-\left(\frac{\lambda_{2}}{q}\| u\|_{q}^{q}\right)\iota^{q-2}\right).
\end{split}
\end{align}
Then, we can explain that the manifolds $\mathcal{W}_{1}$ and $\mathcal{V}_{1}$ are both not empty as follows.\\
{\bf{(i) The manifold $\mathcal{W}_{1}$ is not empty.}} According to $(\ref{Kdebianhuan})$, for $\iota=0$, we know $K(\iota u)=0$. Combining this fact, $K(\iota_{0} u)\geq d_{\mathcal{N}_{1}}$ and the continuity of $K(\iota u)$ with respect to $\iota$, we know that there exists $\bar{\iota}\in(0,\iota_{0})$ such that $0<K(\bar{\iota} u)<d_{\mathcal{N}_{1}}$. And due to $\bar{\iota}<\iota_{0}$, we have $S_{1}(\bar{\iota} u)>0$. Thus, we know $\bar{\iota} u\in\mathcal{W}_{1}$.\\
{\bf{(ii) The manifold $\mathcal{V}_{1}$ is not empty.}} According to $(\ref{Kdebianhuan})$, there exists $\tilde{\iota}>\iota_{0}$ large enough such that $K(\tilde{\iota} u)<0$. Combining this fact, $K(\iota_{0} u)\geq d_{\mathcal{N}_{1}}$ and the continuity of $K(\iota u)$ with respect to $\iota$, we know that there exists $\hat{\iota}\in(\iota_{0},\tilde{\iota})$ such that $0\leq K(\hat{\iota} u)<d_{\mathcal{N}_{1}}$. And due to $\hat{\iota}>\iota_{0}$, we have $S_{1}(\hat{\iota} u)<0$. Thus, we know $\hat{\iota} u\in\mathcal{V}_{1}$.
\end{remark}

\subsection{Some imperfect criteria}\label{sec4.2}

For non-negative low initial energy $0\leq K(\varphi_{0})<d_{\mathcal{N}_{j}}$, $j=1,2,\cdots,8$, in Subsection \ref{sec4.1}, we give eight sharp conditions to distinguish the global solution and finite time blowup solution to the Cauchy problem (\ref{1.1}) by eight cross-constrained variational functionals $S_{j}(\varphi)$, $j=1,2,\cdots,8$, which are constructed by selecting and combining the five terms in ``energy" $K(\varphi)$ via adjusting their coefficients, as shown in Table 1.
By observing Table 1, in constructing each of these eight cross-constrained variational functionals, we select both the terms $\|\nabla\varphi\|^{2}_{2}$ and $\|\varphi\|^{p}_{p}$ for each combination, while the other terms are optional. Next, we analyze the necessity of selecting both of these two terms by testing the cases of applying the cross-constrained variational functionals without one of these two terms. We use the following Table 2 to present the combinations of the structural terms in $K(\varphi)$ for the case that only one of these two terms is selected and the case that neither of these two terms is selected.
\begin{table}[H]
  \centering
  \caption{The cross-constrained variational combinations without one of the two mandatory terms $\|\nabla\varphi\|^{2}_{2}$ and $\|\varphi\|^{p}_{p}$}\label{tab1}
  \vspace{0.3cm}
  \tiny
  \begin{threeparttable}
  \begin{scriptsize}
  \begin{tabular}{p{2.3cm}<{\centering}p{0.8cm}<{\centering}p{0.9cm}<{\centering}p{0.6cm}<{\centering}p{0.6cm}<{\centering}p{0.7cm}<{\centering}p{2.0cm}<{\centering}p{2.5cm}<{\centering}}

    \toprule
    \ &$\|\nabla \varphi\|^{2}_{2}$ &$\||x|\varphi\|^{2}_{2}$ &$\|\varphi\|^{2}_{2}$ &$\|\varphi\|^{p}_{p}$ &$\|\varphi\|^{q}_{q}$&Structures&Results\\
    \midrule
    Case 1\ \ \ \ $S_{9}(\varphi)$&$\circ$ &$\circ$ &$\bullet$ &$\bullet$  &$\circ$ & without $\|\nabla \varphi\|^{2}_{2}$&Global existence\\
    Case 2\ \ \ \ $S_{10}(\varphi)$&$\circ$ &$\bullet$ &$\circ$ &$\bullet$  &$\circ$ & without $\|\nabla \varphi\|^{2}_{2}$&Global existence\\
    Case 3\ \ \ \ $S_{11}(\varphi)$&$\circ$ &$\bullet$ &$\bullet$ &$\bullet$  &$\circ$ & without $\|\nabla \varphi\|^{2}_{2}$&Global existence\\
    Case 4\ \ \ \ $S_{12}(\varphi)$&$\circ$ &$\circ$ &$\bullet$ &$\bullet$  &$\bullet$ & without $\|\nabla \varphi\|^{2}_{2}$&Global existence\\
    Case 5\ \ \ \ $S_{13}(\varphi)$&$\circ$ &$\bullet$ &$\circ$ &$\bullet$  &$\bullet$ & without $\|\nabla \varphi\|^{2}_{2}$&Global existence\\
    Case 6\ \ \ \ $S_{14}(\varphi)$&$\circ$ &$\bullet$ &$\bullet$ &$\bullet$  &$\bullet$ & without $\|\nabla \varphi\|^{2}_{2}$&Global existence\\
    \toprule
    Case 7\ \ \ \ $S_{15}(\varphi)$&$\bullet$ &$\circ$ &$\circ$ &$\circ$  &$\bullet$ & without $\|\varphi\|^{p}_{p}$&Finite time blowup\\
    Case 8\ \ \ \ $S_{16}(\varphi)$&$\bullet$ &$\circ$ &$\bullet$ &$\circ$  &$\bullet$ &without $\|\varphi\|^{p}_{p}$&Finite time blowup\\
    Case 9\ \ \ \ $S_{17}(\varphi)$&$\bullet$ &$\bullet$ &$\circ$ &$\circ$  &$\bullet$ &without $\|\varphi\|^{p}_{p}$&Finite time blowup\\
    Case 10\ \ \ \ $S_{18}(\varphi)$&$\bullet$ &$\bullet$ &$\bullet$ &$\circ$  &$\bullet$ &without $\|\varphi\|^{p}_{p}$&Finite time blowup\\
   \toprule
    Case 11\ \ \ \ $S_{19}(\varphi)$&$\circ$ &$\circ$ &$\bullet$ &$\circ$  &$\bullet$ &without $\|\nabla \varphi\|^{2}_{2}$ and $\|\varphi\|^{p}_{p}$&Unavailable\\
    Case 12\ \ \ \ $S_{20}(\varphi)$&$\circ$ &$\bullet$ &$\circ$ &$\circ$  &$\bullet$ &without $\|\nabla \varphi\|^{2}_{2}$ and $\|\varphi\|^{p}_{p}$&Unavailable\\
    Case 13\ \ \ \ $S_{21}(\varphi)$&$\circ$ &$\bullet$ &$\bullet$ &$\circ$  &$\bullet$ &without $\|\nabla \varphi\|^{2}_{2}$ and $\|\varphi\|^{p}_{p}$&Unavailable\\
    \bottomrule
    \end{tabular}
    \begin{tablenotes}
    \item $\bullet=$ choose \ \ \ $\circ=$ not choose
    \end{tablenotes}
  \end{scriptsize}
  \end{threeparttable}
\end{table}

For these two mandatory terms $\|\nabla\varphi\|^{2}_{2}$ and $\|\varphi\|^{p}_{p}$, we select $\|\varphi\|^{p}_{p}$ and do not select $\|\nabla\varphi\|^{2}_{2}$ to construct the following cross-constrained variational functionals, i.e., $S_{9}(\varphi)-S_{14}(\varphi)$,
\begin{align*}
S_{9}(\varphi):=c\|\varphi\|^{2}_{2}-\frac{n(p-2)\lambda_{1}}{2p}\|\varphi\|^{p}_{p},
\end{align*}
\begin{align*}
S_{10}(\varphi):=\||x|\varphi\|^{2}_{2}-\frac{n(p-2)\lambda_{1}}{2p}\|\varphi\|^{p}_{p},
\end{align*}
\begin{align*}
S_{11}(\varphi):=c\|\varphi\|^{2}_{2}+\||x|\varphi\|^{2}_{2}-\frac{n(p-2)\lambda_{1}}{2p}\|\varphi\|^{p}_{p},
\end{align*}
\begin{align*}
S_{12}(\varphi):=\|\varphi\|^{2}_{2}-\frac{n(p-2)\lambda_{1}}{2p}\|\varphi\|^{p}_{p}-\frac{n(q-2)\lambda_{2}}{2q}\|\varphi\|^{q}_{q},
\end{align*}
\begin{align*}
S_{13}(\varphi):=\||x|\varphi\|^{2}_{2}-\frac{n(p-2)\lambda_{1}}{2p}\|\varphi\|^{p}_{p}-\frac{n(q-2)\lambda_{2}}{2q}\|\varphi\|^{q}_{q},
\end{align*}
and
\begin{align*}
S_{14}(\varphi):=\|\varphi\|^{2}_{2}+\||x|\varphi\|^{2}_{2}-\frac{n(p-2)\lambda_{1}}{2p}\|\varphi\|^{p}_{p}-\frac{n(q-2)\lambda_{2}}{2q}\|\varphi\|^{q}_{q},
\end{align*}where $c>0$ is chosen to make
(\ref{cdetiaojian}) hold.
And we select $\|\nabla\varphi\|^{2}_{2}$ and do not select $\|\varphi\|^{p}_{p}$ to construct the following cross-constrained variational functionals, i.e., $S_{15}(\varphi)-S_{18}(\varphi)$,
\begin{align*}
S_{15}(\varphi):=\|\nabla \varphi\|^{2}_{2}-\frac{n(q-2)\lambda_{2}}{2q}\|\varphi\|^{q}_{q},
\end{align*}
\begin{align*}
S_{16}(\varphi):=\|\nabla \varphi\|^{2}_{2}+\|\varphi\|^{2}_{2}-\frac{n(q-2)\lambda_{2}}{2q}\|\varphi\|^{q}_{q},
\end{align*}
\begin{align*}
S_{17}(\varphi):=\|\nabla \varphi\|^{2}_{2}+\||x|\varphi\|^{2}_{2}-\frac{n(q-2)\lambda_{2}}{2q}\|\varphi\|^{q}_{q},
\end{align*}
and
\begin{align*}
S_{18}(\varphi):=\|\nabla \varphi\|^{2}_{2}+\|\varphi\|^{2}_{2}+\||x|\varphi\|^{2}_{2}-\frac{n(q-2)\lambda_{2}}{2q}\|\varphi\|^{q}_{q}.
\end{align*}
In the following, we shall show that $S_{j}(\varphi)$, $j=9,10,\cdots,14$, can derive six criteria for global existence of the solution in Theorem \ref{dinglis9-14}, however, we fail to prove the finite time blowup, which is explained in Remark \ref{rem4.2}.
And for $S_{j}(\varphi)$, $j=15,16,17,18$, we shall give four criteria for finite time blowup of the solution in Theorem \ref{th4.4}, however, we fail to prove the global existence, which is explained in Remark \ref{rem4.3}. For the cross-constrained variational functionals $S_{j}(\varphi)$, $j=9,10,\cdots,18$, without only one of two mandatory terms $\|\nabla\varphi\|^{2}_{2}$ and $\|\varphi\|^{p}_{p}$, by classifying the initial data, we can only show one result between the global existence and finite time blowup. That is, we show the one-sided result of the classification of the initial data for $S_{j}(\varphi)$, $j=9,10,\cdots,18$. Thus we call these results the imperfect criteria.

We define $d_{{\mathcal{N}_{j}}}$, $\mathcal{H}_{j}$, $\mathcal{W}_{j}$ and $\mathcal{V}_{j}$, $j=9,10,\cdots,18$, as (\ref{2.5})-(\ref{2.8}). And by the similar analysis to Remark \ref{rem4.1}, we know that the manifolds $\mathcal{W}_{j}$ and $\mathcal{V}_{j}$, $j=9,10,\cdots,18$, are both not empty.
For the case that we do not select the mandatory term $\|\nabla\varphi\|^{2}_{2}$ nor $\|\varphi\|^{p}_{p}$, we present the following cross-constrained variational functionals, i.e., $S_{19}(\varphi)-S_{21}(\varphi)$,
\begin{align*}
S_{19}(\varphi):=\|\varphi\|^{2}_{2}-\frac{n(q-2)\lambda_{2}}{2q}\|\varphi\|^{q}_{q},
\end{align*}
\begin{align*}
S_{20}(\varphi):=\||x|\varphi\|^{2}_{2}-\frac{n(q-2)\lambda_{2}}{2q}\|\varphi\|^{q}_{q},
\end{align*}
and
\begin{align*}
S_{21}(\varphi):=\|\varphi\|^{2}_{2}+\||x|\varphi\|^{2}_{2}-\frac{n(q-2)\lambda_{2}}{2q}\|\varphi\|^{q}_{q}.
\end{align*}
According to Remark \ref{rem4.2} and Remark \ref{rem4.3}, we fail to prove the global existence or the finite time blowup by the cross-constrained variational functionals $S_{j}(\varphi)$, $j=19,20,21$.

Next, we apply the cross-constrained variational functionals $S_{j}(\varphi)$, $j=9,10,\cdots,14$, without the term $\|\nabla \varphi\|^{2}_{2}$ to give the criteria for the global existence.
\begin{theorem}\label{dinglis9-14}(Global existence criteria for non-negative low initial energy $0\leq K(\varphi_{0})< d_{\mathcal{N}_{j}}$, $j=9,10,\cdots,14$, by $S_{j}(\varphi)$, $j=9,10,\cdots,14$, without the term $\|\nabla \varphi\|^{2}_{2}$)
Let $n$, $p$, $q$, $\lambda_{1}>0$, $\lambda_{2}>0$ satisfy the following conditions

(i)\ for $j=9,11$: $n=1,2$, $p$, $q$ satisfy (\ref{1.2}), $\lambda_{1}$ and $\lambda_{2}$ additionally satisfy (\ref{2.9}) and (\ref{2.10});

(ii)\ for $j=10$: $n=1$, $p>q\geq6$, $\lambda_{1}$ and $\lambda_{2}$ additionally satisfy (\ref{2.9}) and (\ref{2.10});

(iii)\ for $j=12,14$: $n=1,2$, $p$, $q$ satisfy (\ref{1.2}), with no additional restrictions on $\lambda_{1}$ and $\lambda_{2}$;

(iv)\ for $j=13$: $n=1$, $p>q\geq6$, with no additional restrictions on $\lambda_{1}$ and $\lambda_{2}$.\\
Then
the corresponding solution $\varphi$ to problem (\ref{1.1}) exists globally in time provided $\varphi_{0}\in \mathcal{W}_{j}$, $j=9,10,\cdots,14$.
\end{theorem}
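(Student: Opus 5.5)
The plan follows the proof of Theorem \ref{th4.1}: we show an a priori bound on $\|\varphi(t)\|_{H}$ on $[0,T_{\rm max})$ and then invoke Lemma \ref{lem2.1}. Before that we need two ingredients for $j=9,\dots,14$. The first is $d_{\mathcal{N}_{j}}>0$. The second is the invariance of $\mathcal{W}_{j}$, which follows verbatim from the argument of Lemma \ref{lem2.4}: it uses only conservation of $K$ (from (\ref{2.2}), (\ref{2.3})), continuity of $S_{j}(\varphi(t))$ in $t$, and the definition of $d_{\mathcal{N}_{j}}$. The case $\varphi_{0}=0$ is trivial, so we assume $\varphi_{0}\neq0$ and hence $\varphi(t)\neq0$ with $S_{j}(\varphi(t))>0$ and $K(\varphi(t))=K(\varphi_0)<d_{\mathcal{N}_{j}}$.

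\textbf{Positivity of $d_{\mathcal{N}_j}$ (main obstacle).} This is where the dimension restrictions enter. On $\mathcal{N}_{j}$ the constraint controls $\|\varphi\|_p^p$ (and $\|\varphi\|_q^q$ for $j\ge12$) only by $\|\varphi\|_2^2$ and/or $\||x|\varphi\|_2^2$, not by $\|\nabla\varphi\|_2^2$.

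For the cases containing $\|\varphi\|_2^2$ ($j=9,11,12,14$), we write $\|\varphi\|_p^p\lesssim\|\varphi\|_2^2$ on $\mathcal{N}_j$. The Gagliardo--Nirenberg inequality (\ref{GN-budengshi}) gives $\|\varphi\|_p^p\le C_1\|\nabla\varphi\|_2^{p\mu}\|\varphi\|_2^{p(1-\mu)}$ with $p\mu=\frac{n(p-2)}{2}$. For $n=1,2$ the mass exponent $p(1-\mu)=p-\frac{n(p-2)}{2}$ is $\ge2$ (it equals $\frac{p+2}{2}$ for $n=1$ and $2$ for $n=2$). This is what lets us trade and obtain a lower bound on $\|\nabla\varphi\|_2$ or $\|\varphi\|_2$ away from zero, exactly as in (\ref{2.25})--(\ref{2.28}).

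For the cases containing only $\||x|\varphi\|_2^2$ ($j=10,13$), we must also pass through Lemma \ref{lem3.2} to bound $\|\varphi\|_2^{4}\le\frac{4}{n^2}\|\nabla\varphi\|_2^2\||x|\varphi\|_2^2$. Combined with Gagliardo--Nirenberg at $n=1$, the exponent bookkeeping closes only for large powers, which I expect is the origin of $q\ge6$.

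With such a lower bound, $K$ is bounded below on $\mathcal{N}_j$ through the splitting (\ref{2.16}) and conditions (\ref{2.9}), (\ref{2.10}) for $j=9,10,11$, or through (\ref{2.40}) for $j=12,13,14$. As in (\ref{2.21}), (\ref{2.44}), this produces a positive multiple of $\|\nabla\varphi\|_2^2+\|\varphi\|_2^2$, which is strictly positive by the lower bound. I would first try to make this exponent matching work case by case; it is the delicate part.

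\textbf{Bound on $\|\varphi\|_{H}$.} For $j=9,10,11$, the condition $S_j>0$ gives $\|\varphi\|_p^p<\frac{2p}{n(p-2)\lambda_1}\bigl(c\|\varphi\|_2^2+\||x|\varphi\|_2^2\bigr)\le\frac{2p}{n(p-2)\lambda_1}\max\{1,c\}\|\varphi\|_{H}^2$. Inserting this into (\ref{4.3}) and using mass conservation to absorb the $\|\varphi\|_2^2$ terms yields, exactly as in (\ref{4.4})--(\ref{4.7}),
\begin{align*}
\Bigl(\tfrac12-\bigl(\tfrac{\lambda_1}{p}+\tfrac{\lambda_2}{q}\bigr)\tfrac{2p}{n(p-2)\lambda_1}\Bigr)\|\varphi\|_H^2< d_{\mathcal{N}_j}+C\|\varphi_0\|_2^2 .
\end{align*}
The coefficient on the left is positive by (\ref{2.9}).

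For $j=12,13,14$, the condition $S_j>0$ gives (\ref{4.8}) with $\|\varphi\|_H^2$ on the right. Then (\ref{4.9})--(\ref{4.12}) give $\|\varphi\|_H^2\le\frac{2n(q-2)d_{\mathcal{N}_j}}{n(q-2)-4}$, using $q>2+\frac4n$.

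In all six cases, Lemma \ref{lem2.1} then yields $T_{\max}=+\infty$.
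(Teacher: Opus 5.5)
Your outline is correct and follows the paper's proof: positivity of $d_{\mathcal N_j}$, invariance of $\mathcal W_j$ as in Lemma~\ref{lem2.4}, and the a priori bound on $\|\varphi\|_H$ as in Theorem~\ref{th4.1}. Your last two steps coincide with the paper's Steps~II and~III.

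In the positivity step, however, the inequality you state is reversed, and it is false on $\mathcal N_{11}$ and $\mathcal N_{14}$. What the constraint actually supplies is a lower bound:
\begin{itemize}
\item for $j=9,11$, $\|\varphi\|_p^p\geq\frac{2pc}{n(p-2)\lambda_1}\|\varphi\|_2^2$;
\item for $j=12,14$, $\frac{n(p-2)\lambda_1}{2p}\|\varphi\|_p^p+\frac{n(q-2)\lambda_2}{2q}\|\varphi\|_q^q\geq\|\varphi\|_2^2$. This forces a $p$/$q$ dichotomy, with (\ref{GN-budengshi1}) used in the $q$-branch.
\end{itemize}
Only this direction, combined with (\ref{GN-budengshi}) and divided by $\|\varphi\|_2^2$, gives $\|\nabla\varphi\|_2^{n(p-2)/2}\|\varphi\|_2^{(p-2)(1-\frac{n}{2})}\gtrsim 1$. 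Its mass exponent is nonnegative exactly when $n\leq 2$.

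Note also that this step is not actually used to prove the conclusion, even though you call it the main obstacle and leave it unfinished for $j=10,13$. If $\varphi_0\in\mathcal W_j\setminus\{0\}$, then already $d_{\mathcal N_j}>K(\varphi_0)\geq 0$ by definition (\ref{2.7}). Invariance needs only $K(\varphi_0)<d_{\mathcal N_j}$, and the bound on $\|\varphi\|_H$ can be written with $K(\varphi_0)$ in place of $d_{\mathcal N_j}$. So positivity of $d_{\mathcal N_j}$, which is where the dimensional restriction enters, only guarantees that $\mathcal W_j\neq\{0\}$, i.e. that the theorem is not vacuous.
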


\begin{proof}
For the initial data $\varphi_{0}\in\mathcal{W}_{j}$, $j=9,10,\cdots,14$, we prove $T_{\rm{max}}=+\infty$. Here, in order to ensure that  $\mathcal{W}_{j}$, $j=9,10,\cdots,14$, are well-defined, we need to prove $d_{{\mathcal{N}_{j}}}>0$, $j=9,10,\cdots,14$. After this, we shall prove the invariance of the manifolds $\mathcal{W}_{j}$, $j=9,10,\cdots,14$, which is required for proving the global existence. Thus, we divide the proof into the following three steps, i.e., Step \textrm{I} ($d_{{\mathcal{N}_{j}}}>0$, $j=9,10,\cdots,14$), Step \textrm{II} (the manifold $\mathcal{W}_{j}$, $j=9,10,\cdots,14$, is invariant) and Step \textrm{III} (global existence).\\
\textbf{Step \textrm{I}}:\ \bm{$d_{{\mathcal{N}_{j}}}>0$}, \bm{$j=9,10,\cdots,14$}.

 In order to prove $d_{_{\mathcal{N}_{j}}}>0$, $j=9,10,\cdots,14$, we need to control the negative terms $-\frac{\lambda_{1}}{p}\|\varphi\|^{p}_{p}$ and $-\frac{\lambda_{2}}{q}\|\varphi\|^{q}_{q}$ in the functional $K(\varphi)$. The strategies depend on the structures of the cross-constrained variational functionals $S_{j}(\varphi)$, $j=9,10,\cdots,14$. According to the structures of $S_{j}(\varphi)$, $j=9,10,\cdots,14$, we divide the proof into two cases: Case \textrm{I} ($j=9,10,11$, here the cross-constrained variational functionals $S_{j}(\varphi)$ have the term $\|\varphi\|^{p}_{p}$ and do not have the term $\|\varphi\|^{q}_{q}$) and Case \textrm{II} ($j=12,13,14$, here $S_{j}(\varphi)$ both have the terms $\|\varphi\|^{p}_{p}$ and $\|\varphi\|^{q}_{q}$).\\
{\bf{Case \textrm{I}: \bm{$j=9,10,11.$}}}

According to $\varphi\in\mathcal{N}_{j}$, i.e., $S_{j}(\varphi)=0$, $\varphi\neq0$, $j=9,10,11$, we get
\begin{align}
\|\varphi\|^{p}_{p}\leq\frac{2p}{n(p-2)\lambda_{1}}\left(c\|\varphi\|^{2}_{2}+\||x|\varphi\|^{2}_{2}\right).\label{4.21}
\end{align}
Meanwhile, since the assumptions we adopted here are same as Lemma \ref{lem2.3}, we can retrieve (\ref{2.16}) to deal with $-\frac{\lambda_{2}}{q}\|\varphi\|^{q}_{q}$. Substituting (\ref{4.21}) into (\ref{2.16}), we have
\begin{align}\label{4.22}
\begin{split}
K(\varphi)\geq&\frac{1}{2}\|\nabla \varphi\|^{2}_{2}+\left(\frac{1}{2}-\frac{\lambda_{2}}{q}\right)\|\varphi\|^{2}_{2}+\frac{1}{2}\||x|\varphi\|^{2}_{2}\\
&-\left(\frac{\lambda_{1}}{p}+\frac{\lambda_{2}}{q}\right)\frac{2p}{n(p-2)\lambda_{1}}\left(c\|\varphi\|^{2}_{2}+\||x|\varphi\|^{2}_{2}\right)\\
=&\frac{1}{2}\|\nabla \varphi\|^{2}_{2}+\left(\frac{1}{2}-\frac{2}{n(p-2)}-\frac{2\lambda_{2}p}{\lambda_{1}qn(p-2)}\right)\||x|\varphi\|^{2}_{2}\\
&+\left(\frac{1}{2}-\left(\frac{2}{n(p-2)}+\frac{2\lambda_{2}p}{\lambda_{1}qn(p-2)}\right)c-\frac{\lambda_{2}}{q}\right)\| \varphi\|^{2}_{2},
\end{split}
\end{align}
In (\ref{4.22}), the assumption (\ref{2.10}) ensures
\begin{align*}
\frac{1}{2}-\frac{2}{n(p-2)}-\frac{2\lambda_{2}p}{\lambda_{1}qn(p-2)}>0.
\end{align*}
And $c>0$ is chosen such that
$$\frac{1}{2}-\left(\frac{2}{n(p-2)}+\frac{2\lambda_{2}p}{\lambda_{1}qn(p-2)}\right)c-\frac{\lambda_{2}}{q}>0.$$
Next, we deal with (\ref{4.22}). Since the cross-constrained variational functionals $S_{j}(\varphi)$, $j=9,11$, have the term $\|\varphi\|^{2}_{2}$, the strategies are similar. Thus, we shall finish the proof of Case \textrm{I} by considering two sub-cases, i.e., Case \textrm{I}(a) ($j=9,11$) and Case \textrm{I}(b) ($j=10$).\\
{\bf{Case \textrm{I}(a): \bm{$j=9,11.$}}}

According to $S_{j}(\varphi)=0$, $j=9,11$, we know
\begin{equation}\label{4.24}
\frac{2p c}{n(p-2)\lambda_{1}}\|\varphi\|^{2}_{2}\leq\|\varphi\|^{p}_{p}.
\end{equation}
By applying Gagliardo-Nirenberg inequality, i.e., (\ref{GN-budengshi}), and (\ref{4.24}), we have
\begin{align}
\frac{2p c}{n(p-2)\lambda_{1}C_{1}}\leq&\|\nabla\varphi\|^{\frac{n(p-2)}{2}}_{2}\|\varphi\|^{p-\frac{n(p-2)}{2}-2}_{2}.\label{4.25}
\end{align}
According to (\ref{1.2}), i.e., $2+\frac{4}{n}<p$, we notice $\frac{n(p-2)}{2}>0$ in (\ref{4.25}). In order to obtain the estimate of $\|\nabla\varphi\|^{2}_{2}$ and $\|\varphi\|^{2}_{2}$ by (\ref{4.25}), we require $p-\frac{n(p-2)}{2}-2\geq0$, i.e., $p-2\geq\frac{(p-2)n}{2}$, which means that we need the restriction $n=1,2$ in regard of the space dimension $n$. More precisely, for $n=1$, we have $$p-\frac{n(p-2)}{2}-2=\frac{p}{2}-1>0$$ according to (\ref{1.2}), i.e., $2+\frac{4}{n}<p$. By Cauchy-Schwartz inequality, we know (\ref{4.25}) turns to
\begin{align*}
\frac{2p c}{n(p-2)\lambda_{1}C_{1}}\leq&\frac{1}{2}\|\nabla\varphi\|^{n(p-2)}_{2}+\frac{1}{2}\|\varphi\|^{2(p+n)-np-4}_{2},\ n=1,
\end{align*}
which
implies
\begin{align}
\left(\frac{2p c}{n(p-2)\lambda_{1}C_{1}}\right)^{\frac{2}{n(p-2)}}\leq&\|\nabla\varphi\|^{2}_{2}, \ n=1,\label{4.26}
\end{align}
or
\begin{align}
\left(\frac{2p c}{n(p-2)\lambda_{1}C_{1}}\right)^{\frac{2}{2(p+n)-np-4}}\leq&\|\varphi\|^{2}_{2},\ n=1.\label{4.27}
\end{align}
By substituting (\ref{4.26}) or (\ref{4.27}) into (\ref{4.22}), we can obtain
\begin{align}
K(\varphi)\geq \frac{1}{2}\left(\frac{2p c}{n(p-2)\lambda_{1}C_{1}}\right)^{\frac{2}{n(p-2)}}>0,\ n=1,\label{4.27+}
\end{align}
or
\begin{align}
K(\varphi)\geq &\left(\frac{1}{2}-\left(\frac{2}{n(p-2)}+\frac{2\lambda_{2}p}{\lambda_{1}qn(p-2)}\right)c-\frac{\lambda_{2}}{q}\right)\left(\frac{2p c}{n(p-2)\lambda_{1}C_{1}}\right)^{\frac{2}{2(p+n)-np-4}}>0,\ n=1.\label{4.27++}
\end{align}
For $n=2$, we have $p-\frac{n(p-2)}{2}-2=0$. Then, we know (\ref{4.25}) turns to
\begin{align}
\left(\frac{2p c}{n(p-2)\lambda_{1}C_{1}}\right)^{\frac{4}{n(p-2)}}\leq&\|\nabla\varphi\|^{2}_{2},\ n=2.\label{4.26+}
\end{align}
By substituting (\ref{4.26+}) into (\ref{4.22}), we have
\begin{align}
K(\varphi)\geq \frac{1}{2}\left(\frac{2p c}{n(p-2)\lambda_{1}C_{1}}\right)^{\frac{4}{n(p-2)}}>0,\ n=2.\label{4.27+++}
\end{align}
According to (\ref{4.27+}), (\ref{4.27++}) and (\ref{4.27+++}), we know $d_{{\mathcal{N}_{j}}}>0$, $j=9,11$.\\
{\bf{Case \textrm{I}(b): \bm{$j=10.$}}}

According to $S_{10}(\varphi)=0$, we know
\begin{equation}\label{4.28}
\frac{2p}{n(p-2)\lambda_{1}}\||x|\varphi\|^{2}_{2}=\|\varphi\|^{p}_{p}.
\end{equation}
By applying Gagliardo-Nirenberg inequality, i.e., (\ref{GN-budengshi}), choosing $\mu=\frac{n(p-2)}{2p}$ in (\ref{GN-budengshi}), and Lemma \ref{lem3.2}, we have
\begin{align}
&\|\varphi\|^{p}_{p}\leq C_{1}\|\nabla\varphi\|^{p\mu}_{2}\|\varphi\|^{p(1-\mu)}_{2}\leq C_{1}\|\nabla\varphi\|^{p\mu}_{2}\left(\frac{2}{n}\|\nabla\varphi\|_{2}\||x|\varphi\|_{2}\right)^{\frac{p(1-\mu)}{2}}.\label{4.29}
\end{align}
Combining (\ref{4.29}) and (\ref{4.28}), we have
\begin{align*}
&\frac{2p}{n(p-2)\lambda_{1}C_{1}}\||x|\varphi\|^{2}_{2}\leq\|\nabla\varphi\|^{p\mu}_{2}\left(\frac{2}{n}\|\nabla\varphi\|_{2}\||x|\varphi\|_{2}\right)^{\frac{p(1-\mu)}{2}},
\end{align*}
i.e.,
\begin{align}
&\frac{2p}{n(p-2)\lambda_{1}C_{1}}\left(\frac{n}{2}\right)^{\frac{p(1-\mu)}{2}}\leq\|\nabla\varphi\|^{p\mu+\frac{p(1-\mu)}{2}}_{2}\||x|\varphi\|_{2}^{\frac{p(1-\mu)}{2}-2},\label{4.30}
\end{align}
where $\mu=\frac{n(p-2)}{2p}$.
The assumption $n=1$ gives $0<\mu<1$ in (\ref{4.30}). In order to estimate $\|\nabla\varphi\|^{2}_{2}$ and $\|\varphi\|^{2}_{2}$ in (\ref{4.22}), we need $$\frac{p(1-\mu)}{2}-2\geq0,$$ i.e., $$p-\frac{pn}{2}+n\geq4,$$ which requires $n=1$ and $p\geq6$.
  If $p>6$, by Cauchy-Schwartz inequality, we know (\ref{4.30}) turns to
\begin{align}
&\frac{2p}{n(p-2)\lambda_{1}C_{1}}\left(\frac{n}{2}\right)^{\frac{p(1-\mu)}{2}}\leq\frac{1}{2}\|\nabla\varphi\|^{p(\mu+1)}_{2}+\frac{1}{2}\||x|\varphi\|_{2}^{p(1-\mu)-4},\ n=1,p>6,\label{4.31}
\end{align}which implies
\begin{align}
\left(\frac{2p}{n(p-2)\lambda_{1}C_{1}}\right)^{\frac{2}{p(\mu+1)}}\left(\frac{n}{2}\right)^{\frac{1-\mu}{1+\mu}}\leq&\|\nabla\varphi\|^{2}_{2},\ n=1,p>6,\label{4.32}
\end{align}
or
\begin{align}
\left(\frac{2p}{n(p-2)\lambda_{1}C_{1}}\right)^{\frac{2}{p(1-\mu)-4}}\left(\frac{n}{2}\right)^{\frac{p(1-\mu)}{p(1-\mu)-4}}\leq&\||x|\varphi\|^{2}_{2},\ n=1,p>6.\label{4.33}
\end{align}
 By substituting (\ref{4.32}) or (\ref{4.33}) into (\ref{4.22}), we can obtain
\begin{align}
K(\varphi)\geq \frac{1}{2}\left(\frac{2p}{n(p-2)\lambda_{1}C_{1}}\right)^{\frac{4}{n(p-2)+2p}}\left(\frac{n}{2}\right)^{\frac{2p-n(p-2)}{2p+n(p-2)}}>0,\ n=1,p>6,\label{4.34-}
\end{align}
or
\begin{align}\label{4.35-}
\begin{split}
K(\varphi)\geq & \left(\frac{1}{2}-\frac{2}{n(p-2)}-\frac{2\lambda_{2}p}{\lambda_{1}qn(p-2)}\right)
\left(\frac{2p}{n(p-2)\lambda_{1}C_{1}}\right)^{\frac{4}{2p-n(p-2)-8}}\left(\frac{n}{2}\right)^{\frac{2p-n(p-2)}{2p-n(p-2)-8}}>0,\\
&\ \ n=1,p>6.
\end{split}
\end{align}
If $p=6$, according to (\ref{4.30}), we have
\begin{align}
\left(\frac{2p}{n(p-2)\lambda_{1}C_{1}}\right)^{\frac{4}{p(\mu+1)}}\left(\frac{n}{2}\right)^{\frac{2(1-\mu)}{1+\mu}}\leq&\|\nabla\varphi\|^{2}_{2},\ n=1,p=6.\label{4.32+}
\end{align}
 By substituting (\ref{4.32+}) into (\ref{4.22}), we can obtain
\begin{align}
K(\varphi)\geq \frac{1}{2}\left(\frac{2p}{n(p-2)\lambda_{1}C_{1}}\right)^{\frac{8}{n(p-2)+2p}}\left(\frac{n}{2}\right)^{\frac{4p-2n(p-2)}{2p+n(p-2)}}>0,\ n=1,p=6.\label{4.34--}
\end{align}
According to (\ref{4.34-}), (\ref{4.35-}) and (\ref{4.34--}), we know $d_{{\mathcal{N}_{10}}}>0$.\\
{\bf{Case \textrm{II}: \bm{$j=12,13,14.$}}}

Since (\ref{1.2}) is assumed, we know that (\ref{2.40}) holds. According to the condition $\varphi\in\mathcal{N}_{j}$, i.e., $S_{j}(\varphi)=0$, $\varphi\neq0$, $j=12,13,14,$ we have
\begin{align}
\frac{n(p-2)\lambda_{1}}{2p}\|\varphi\|^{p}_{p}+\frac{n(q-2)\lambda_{2}}{2q}\|\varphi\|^{q}_{q}\leq\|\varphi\|^{2}_{2}+\||x|\varphi\|^{2}_{2}.\label{4.36}
\end{align}
By substituting (\ref{4.36}) into (\ref{2.40}), we can deduce
\begin{align}
K(\varphi)\geq&\frac{1}{2}\|\nabla \varphi\|^{2}_{2}+\frac{1}{2}\|\varphi\|^{2}_{2}+\frac{1}{2}\||x|\varphi\|^{2}_{2}-\frac{2}{n(q-2)}\left(\|\varphi\|^{2}_{2}+\||x|\varphi\|^{2}_{2}\right),\label{4.37}
\end{align}
i.e.,
\begin{align}
K(\varphi)\geq&\frac{1}{2}\|\nabla\varphi\|^{2}_{2}+\left(\frac{1}{2}-\frac{2}{n(q-2)}\right)\left(\|\varphi\|^{2}_{2}+\||x|\varphi\|^{2}_{2}\right)
.\label{4.38}
\end{align}In (\ref{4.38}), we know $\frac{1}{2}-\frac{2}{n(q-2)}>0$ obtained by (\ref{1.2}), i.e., $2+\frac{4}{n}<q$.
Next, we deal with (\ref{4.38}). For the case $j=12$ and the case $j=14$, due to $S_{j}(\varphi)$ having the term $\|\varphi\|^{2}_{2}$, the strategies are similar. Thus, we shall also finish the proof of this Case \textrm{II} by considering the following two sub-cases, i.e., Case \textrm{II}(a) ($j=12,14$) and Case \textrm{II}(b) ($j=13$).\\
{\bf{Case \textrm{II}(a): \bm{$j=12,14.$}}}

According to $S_{j}(\varphi)=0$, $j=12,14$, we know
\begin{equation}
\frac{n(p-2)\lambda_{1}}{2p}\|\varphi\|^{p}_{p}+\frac{n(q-2)\lambda_{2}}{2q}\|\varphi\|^{q}_{q}\geq\|\varphi\|^{2}_{2},\nonumber
\end{equation}
which means
\begin{align}
&\frac{n(p-2)\lambda_{1}}{2p}\|\varphi\|^{p}_{p}\geq\frac{1}{2}\|\varphi\|^{2}_{2}\label{4.39}
\end{align}
or
\begin{align}
&\frac{n(q-2)\lambda_{2}}{2q}\|\varphi\|^{q}_{q}\geq\frac{1}{2}\|\varphi\|^{2}_{2}.\label{4.40}
\end{align}

If (\ref{4.39}) holds, applying (\ref{GN-budengshi}), (\ref{4.39}) and Cauchy-Schwarz inequality, we have
\begin{align}\label{4.41}
\begin{split}
\frac{p}{n(p-2)\lambda_{1}C_{1}}\leq&\|\nabla\varphi\|^{\frac{n(p-2)}{2}}_{2}\|\varphi\|^{p-\frac{n(p-2)}{2}-2}_{2}\\
\leq&\frac{1}{2}\|\nabla\varphi\|^{n(p-2)}_{2}+\frac{1}{2}\|\varphi\|^{2(p+n)-np-4}_{2}.
\end{split}
\end{align}
According to (\ref{1.2}), i.e., $2+\frac{4}{n}<p$, we notice $\frac{n(p-2)}{2}>0$ in (\ref{4.41}). In order to obtain the estimate of $\|\nabla\varphi\|^{2}_{2}$ and $\|\varphi\|^{2}_{2}$ by (\ref{4.41}), we require $p-\frac{n(p-2)}{2}-2\geq0$, which means that we need the restriction $n=1,2$. More precisely, for $n=1$, we have $p-\frac{n(p-2)}{2}-2=\frac{p}{2}-1>0$ according to (\ref{1.2}), i.e., $2+\frac{4}{n}<p$. Thus, we notice that (\ref{4.41}) implies
\begin{align}
\left(\frac{p}{n(p-2)\lambda_{1}C_{1}}\right)^{\frac{2}{n(p-2)}}\leq&\|\nabla\varphi\|^{2}_{2},\ n=1,\label{4.42}
\end{align}
or
\begin{align}
\left(\frac{p}{n(p-2)\lambda_{1}C_{1}}\right)^{\frac{2}{2(p+n)-np-4}}\leq&\|\varphi\|^{2}_{2},\ n=1.\label{4.43}
\end{align}
By substituting (\ref{4.42}) or (\ref{4.43}) into (\ref{4.38}), we obtain
\begin{align}
K(\varphi)\geq \frac{1}{2}\left(\frac{p}{n(p-2)\lambda_{1}C_{1}}\right)^{\frac{2}{n(p-2)}}>0,\ n=1,\label{4.45}
\end{align}
or
\begin{align}
K(\varphi)\geq \left(\frac{1}{2}-\frac{2}{n(q-2)}\right)\left(\frac{p}{n(p-2)\lambda_{1}C_{1}}\right)^{\frac{2}{2(p+n)-np-4}}>0,\ n=1.\label{4.46}
\end{align}
For $n=2$, we have $p-\frac{n(p-2)}{2}-2=0$. Then, according to (\ref{4.41}), we obtain
 \begin{align}
\left(\frac{p}{n(p-2)\lambda_{1}C_{1}}\right)^{\frac{4}{n(p-2)}}\leq&\|\nabla\varphi\|^{2}_{2},\ n=2.\label{4.44}
\end{align}
By substituting (\ref{4.44}) into (\ref{4.38}), we obtain
\begin{align}
K(\varphi)\geq \frac{1}{2}\left(\frac{p}{n(p-2)\lambda_{1}C_{1}}\right)^{\frac{4}{n(p-2)}}>0,\ n=2.\label{4.47}
\end{align}

If (\ref{4.40}) holds, the process of dealing with this case is similar to that of treating the case that (\ref{4.39}) holds. For this case, we need to employ a different version of Gagliardo-Nirenberg, i.e., (\ref{GN-budengshi1}),
to deal with (\ref{4.40}) by choosing $\theta=\frac{n(q-2)}{2q}$ in (\ref{GN-budengshi1}). More precisely, by (\ref{GN-budengshi1}) and the inequality $$\frac{q}{n(q-2)\lambda_{2}}\|\varphi\|^{2}_{2}\leq\|\varphi\|^{q}_{q}$$ as another version of (\ref{4.40}), we get
\begin{align}\label{4.48}
\begin{split}
\frac{q}{n(q-2)\lambda_{2}C_{3}}\leq&\|\nabla\varphi\|^{\frac{n(q-2)}{2}}_{2}\|\varphi\|^{q-\frac{n(q-2)+4}{2}}_{2}\\
\leq&\frac{1}{2}\|\nabla\varphi\|^{n(q-2)}_{2}+\frac{1}{2}\|\varphi\|^{2(q+n)-nq-4}_{2}.
\end{split}
\end{align}Similar to the analysis for the case that (\ref{4.39}) holds, we require $n=1,2$ to ensure $q-\frac{n(q-2)+4}{2}\geq0$ in (\ref{4.48}).
Then, by the similar process of inferring the validity of (\ref{4.42}) and (\ref{4.43}) for $n=1$, we obtain
\begin{align}
\left(\frac{q}{n(q-2)\lambda_{2}C_{3}}\right)^{\frac{2}{n(q-2)}}\leq\|\nabla\varphi\|^{2}_{2},\ n=1,\label{4.49}
\end{align}
or
\begin{align}
\left(\frac{q}{n(q-2)\lambda_{2}C_{3}}\right)^{\frac{2}{2(q+n)-nq-4}}\leq&\|\varphi\|^{2}_{2},\ n=1.\label{4.50}
\end{align}
By substituting (\ref{4.49}) or (\ref{4.50}) into (\ref{4.38}), we can obtain
\begin{align}
K(\varphi)\geq \frac{1}{2}\left(\frac{q}{n(q-2)\lambda_{2}C_{3}}\right)^{\frac{2}{n(q-2)}}>0,\ n=1,\label{4.52}
\end{align}
or\begin{align}
K(\varphi)\geq \left(\frac{1}{2}-\frac{2}{n(q-2)}\right)\left(\frac{q}{n(q-2)\lambda_{2}C_{3}}\right)^{\frac{2}{2(q+n)-nq-4}}>0,\ n=1.\label{4.53}
\end{align}
By the similar process of inferring the validity of (\ref{4.44}) for $n=2$, we have
\begin{align}
\left(\frac{q}{n(q-2)\lambda_{2}C_{3}}\right)^{\frac{4}{n(q-2)}}\leq\|\nabla\varphi\|^{2}_{2},\ n=2.\label{4.51}
\end{align}
 By substituting (\ref{4.51}) into (\ref{4.38}), we can obtain
\begin{align}
K(\varphi)\geq \frac{1}{2}\left(\frac{q}{n(q-2)\lambda_{2}C_{3}}\right)^{\frac{4}{n(q-2)}}>0,\ n=2.\label{4.54}
\end{align}
According to (\ref{4.45}), (\ref{4.46}), (\ref{4.47}), (\ref{4.52}), (\ref{4.53}) and (\ref{4.54}), we know $d_{_{\mathcal{N}_{j}}}>0$, $j=12,14$.\\
 {\bf{Case \textrm{II}(b): \bm{$j=13.$}}}

According to $S_{13}(\varphi)=0$, we know
\begin{equation}
\frac{n(p-2)\lambda_{1}}{2p}\|\varphi\|^{p}_{p}+\frac{n(q-2)\lambda_{2}}{2q}\|\varphi\|^{q}_{q}=\||x|\varphi\|^{2}_{2},\nonumber
\end{equation}
which means
\begin{align}
&\frac{n(p-2)\lambda_{1}}{2p}\|\varphi\|^{p}_{p}\geq\frac{1}{2}\||x|\varphi\|^{2}_{2}\label{4.55}
\end{align}
or
\begin{align}
&\frac{n(q-2)\lambda_{2}}{2q}\|\varphi\|^{q}_{q}\geq\frac{1}{2}\||x|\varphi\|^{2}_{2}.\label{4.56}
\end{align}

If (\ref{4.55}) holds, by applying Gagliardo-Nirenberg inequality (\ref{GN-budengshi}), and Lemma \ref{lem3.2}, we can also get (\ref{4.29}).
Substituting (\ref{4.29}) into (\ref{4.55}), we have
\begin{align*}
&\frac{p}{n(p-2)\lambda_{1}C_{1}}\||x|\varphi\|^{2}_{2}\leq\|\nabla\varphi\|^{p\mu}_{2}\left(\frac{2}{n}\|\nabla\varphi\|_{2}\||x|\varphi\|_{2}\right)^{\frac{p(1-\mu)}{2}},
\end{align*}
i.e.,
\begin{align}
&\frac{p}{n(p-2)\lambda_{1}C_{1}}\left(\frac{n}{2}\right)^{\frac{p(1-\mu)}{2}}\leq\|\nabla\varphi\|^{p\mu+\frac{p(1-\mu)}{2}}_{2}\||x|\varphi\|_{2}^{\frac{p(1-\mu)}{2}-2},\label{4.57}
\end{align}
where $\mu=\frac{n(p-2)}{2p}$. The assumptions $n=1$ and $p\geq6$ gives $0<\mu<1$ and $$p(1-\mu)-4\geq0$$ in (\ref{4.57}). For $p>6$, by using Cauchy-Schwartz inequality, we know (\ref{4.57}) turns to
\begin{align}
&\frac{p}{n(p-2)\lambda_{1}C_{1}}\left(\frac{n}{2}\right)^{\frac{p(1-\mu)}{2}}\leq\frac{1}{2}\|\nabla\varphi\|^{p(\mu+1)}_{2}+\frac{1}{2}\||x|\varphi\|_{2}^{p(1-\mu)-4},\ n=1,p>6,\label{4.58}
\end{align}
which implies
\begin{align}
\left(\frac{p}{n(p-2)\lambda_{1}C_{1}}\right)^{\frac{2}{p(\mu+1)}}\left(\frac{n}{2}\right)^{\frac{1-\mu}{1+\mu}}\leq&\|\nabla\varphi\|^{2}_{2},\ n=1,p>6,\label{4.59}
\end{align}
or
\begin{align}
\left(\frac{p}{n(p-2)\lambda_{1}C_{1}}\right)^{\frac{2}{p(1-\mu)-4}}\left(\frac{n}{2}\right)^{\frac{p(1-\mu)}{p(1-\mu)-4}}\leq&\||x|\varphi\|^{2}_{2},\ n=1,p>6.\label{4.60}
\end{align}
By substituting (\ref{4.59}) or (\ref{4.60}) into (\ref{4.38}), we can obtain
\begin{align}
K(\varphi)\geq \frac{1}{2}\left(\frac{p}{n(p-2)\lambda_{1}C_{1}}\right)^{\frac{4}{n(p-2)+2p}}\left(\frac{n}{2}\right)^{\frac{2p-n(p-2)}{2p+n(p-2)}}>0,\ n=1,p>6,\label{4.62}
\end{align}
or\begin{align}
K(\varphi)\geq & \left(\frac{1}{2}-\frac{2}{n(q-2)}\right)\left(\frac{p}{n(p-2)\lambda_{1}C_{1}}\right)^{\frac{4}{2p-n(p-2)-8}}\left(\frac{n}{2}\right)^{\frac{2p-n(p-2)}{2p-n(p-2)-8}}>0,\ n=1,p>6.\label{4.63}
\end{align}
If $p=6$, according to (\ref{4.57}), we have
\begin{align}
\left(\frac{p}{n(p-2)\lambda_{1}C_{1}}\right)^{\frac{4}{p(\mu+1)}}\left(\frac{n}{2}\right)^{\frac{2(1-\mu)}{1+\mu}}\leq&\|\nabla\varphi\|^{2}_{2},\ n=1,p=6.\label{4.61}
\end{align}
 By substituting (\ref{4.61}) into (\ref{4.38}), we can obtain
\begin{align}
K(\varphi)\geq \frac{1}{2}\left(\frac{p}{n(p-2)\lambda_{1}C_{1}}\right)^{\frac{8}{n(p-2)+2p}}\left(\frac{n}{2}\right)^{\frac{4p-2n(p-2)}{2p+n(p-2)}}>0,\ n=1,p=6.\label{4.64}
\end{align}

If (\ref{4.56}) holds, the process of dealing with this case is similar to that of treating the case where (\ref{4.55}) holds. For this case, we need to employ a different version of Gagliardo-Nirenberg, i.e., (\ref{GN-budengshi1}),
to deal with (\ref{4.56}) by choosing $\theta=\frac{n(q-2)}{2q}$. More precisely, by (\ref{GN-budengshi1}) and Lemma \ref{lem3.2}, we have
\begin{align}
&\|\varphi\|^{q}_{q}\leq C_{3}\|\nabla\varphi\|^{q\theta}_{2}\|\varphi\|^{q(1-\theta)}_{2}\leq C_{3}\|\nabla\varphi\|^{q\theta}_{2}\left(\frac{2}{n}\|\nabla\varphi\|_{2}\||x|\varphi\|_{2}\right)^{\frac{q(1-\theta)}{2}}.\label{4.65}
\end{align}
Combining this fact and (\ref{4.56}), we have
\begin{align}
&\frac{q}{n(q-2)\lambda_{2}C_{3}}\left(\frac{n}{2}\right)^{\frac{q(1-\theta)}{2}}\leq\|\nabla\varphi\|^{q\theta+\frac{q(1-\theta)}{2}}_{2}
\||x|\varphi\|_{2}^{\frac{q(1-\theta)}{2}-2},\label{4.66}
\end{align}
where $\theta=\frac{n(q-2)}{2q}$. By Cauchy-Schwartz inequality, we know (\ref{4.66}) turns to
\begin{align}
&\frac{q}{n(q-2)\lambda_{2}C_{3}}\left(\frac{n}{2}\right)^{\frac{q(1-\theta)}{2}}\leq\frac{1}{2}\|\nabla\varphi\|^{q(\theta+1)}_{2}+\frac{1}{2}\||x|\varphi\|_{2}^{q(1-\theta)-4}.\label{4.67}
\end{align}
The assumptions $n=1$ and $q\geq6$ gives $0<\theta<1$ and $$q(1-\theta)-4\geq0$$ in (\ref{4.67}).
For $q>6$, (\ref{4.67}) implies
\begin{align}
\left(\frac{q}{n(q-2)\lambda_{2}C_{3}}\right)^{\frac{2}{q(\theta+1)}}\left(\frac{n}{2}\right)^{\frac{1-\theta}{1+\theta}}\leq&\|\nabla\varphi\|^{2}_{2},\ n=1,q>6,\label{4.68}
\end{align}
or
\begin{align}
\left(\frac{q}{n(q-2)\lambda_{2}C_{3}}\right)^{\frac{2}{q(1-\theta)-4}}\left(\frac{n}{2}\right)^{\frac{q(1-\theta)}{q(1-\theta)-4}}\leq&\||x|\varphi\|^{2}_{2},\ n=1,q>6.\label{4.69}
\end{align}
By substituting (\ref{4.68}) or (\ref{4.69}) into (\ref{4.38}), we can obtain
\begin{align}
K(\varphi)\geq \frac{1}{2}\left(\frac{q}{n(q-2)\lambda_{2}C_{3}}\right)^{\frac{4}{n(q-2)+2q}}\left(\frac{n}{2}\right)^{\frac{2q-n(q-2)}{2q+n(q-2)}}>0,\ n=1,q>6,\label{4.71}
\end{align}
or
\begin{align}
K(\varphi)\geq & \left(\frac{1}{2}-\frac{2}{n(q-2)}\right)\left(\frac{q}{n(q-2)\lambda_{2}C_{3}}\right)^{\frac{4}{2q-n(q-2)-8}}\left(\frac{n}{2}\right)^{\frac{2q-n(q-2)}{2q-n(q-2)-8}}>0,\ n=1,q>6.\label{4.72}
\end{align}
If $q=6$, according to (\ref{4.66}), we have
\begin{align}
\left(\frac{q}{n(q-2)\lambda_{2}C_{3}}\right)^{\frac{4}{q(\theta+1)}}\left(\frac{n}{2}\right)^{\frac{2(1-\theta)}{1+\theta}}\leq&\|\nabla\varphi\|^{2}_{2},\ n=1,q=6.\label{4.70}
\end{align}
 By substituting (\ref{4.70}) into (\ref{4.38}), we can obtain
\begin{align}
K(\varphi)\geq \frac{1}{2}\left(\frac{q}{n(q-2)\lambda_{2}C_{3}}\right)^{\frac{8}{n(q-2)+2q}}\left(\frac{n}{2}\right)^{\frac{4q-2n(q-2)}{2q+n(q-2)}}>0,\ n=1,q=6.\label{4.73}
\end{align}
According to (\ref{4.62}), (\ref{4.63}), (\ref{4.64}), (\ref{4.71}), (\ref{4.72}) and (\ref{4.73}), we know $d_{_{\mathcal{N}_{13}}}>0$.\\
\textbf{Step \textrm{II}:\ The manifold \bm{$\mathcal{W}_{j}$}, \bm{$j=9,10,\cdots,14$}, is invariant.}

The conclusion $d_{_{\mathcal{N}_{j}}}>0$, $j=9,10,\cdots,14$, ensures that the conditions $0\leq K(\varphi_{0})<d_{_{\mathcal{N}_{j}}}$ make sense. For the non-negative initial energy $0\leq K(\varphi_{0})<d_{_{\mathcal{N}_{j}}}$, $j=9,10,\cdots,14$, we can show the invariance of the manifolds $\mathcal{W}_{j}$ and $\mathcal{V}_{j}$, $j=9,10,\cdots,14$. Here, since the strategy of this proof does not depend on the structures of the cross-constrained variational functionals $S_{j}(\varphi)$, by the similar process of proving Lemma \ref{lem2.4}, we can finish the proofs. Thus, we omit the details of the proofs.\\
\textbf{Step \textrm{III}:\ Global existence.}

 For the initial data $\varphi_{0}$ belonging to $\mathcal{W}_{j}$, $j=9,10,\cdots,14$, we shall prove $T_{\rm{max}}=+\infty$. Here, for the case $\varphi_{0}=0$, we know $\varphi(t)=0$ for $t\in(0,+\infty)$ according to (\ref{2.2}), which means that $\varphi$ is the global trivial solution. Hence, we shall only focus on the case $K(\varphi_{0})<d_{\mathcal{N}_{j}}$ and $S_{j}(\varphi_{0})>0$, i.e., $\varphi_{0}\neq0$, to prove the global existence. The strategies of the proofs depend on the structures of the cross-constrained variational functionals $S_{j}(\varphi)$, $j=9,10,\cdots,14$. According to the structures of $S_{j}(\varphi)$, $j=9,10,\cdots,14$, we shall finish the proof by considering the following two cases: Case \textrm{I} ($j=9,10,11$, here $S_{j}(\varphi)$ have the term $\|\varphi\|^{p}_{p}$ and do not have the term $\|\varphi\|^{q}_{q}$) and Case \textrm{II} ($j=12,13,14$, here $S_{j}(\varphi)$ both have the terms $\|\varphi\|^{p}_{p}$ and $\|\varphi\|^{q}_{q}$).\\
{\bf{Case \textrm{I}: \bm{$j=9,10,11.$}}}

For the cross-constrained variational functionals $S_{j}(\varphi)$ having the term $\|\varphi\|^{p}_{p}$ but not having the term $\|\varphi\|^{q}_{q}$, by applying $S_{j}(\varphi)>0$, we get
\begin{align}
\|\varphi\|^{p}_{p}<\frac{2p}{n(p-2)\lambda_{1}}\left(c\| \varphi\|^{2}_{2}+\||x|\varphi\|^{2}_{2}\right).\label{4.74}
\end{align}
Since the assumptions we adopted here are stronger than Theorem \ref{th4.1}, we can retrieve (\ref{4.5}) for $j=9,10,11$.
By substituting (\ref{4.74}) and $K(\varphi)<d_{\mathcal{N}_{j}}$ into (\ref{4.5}) with $j=9,10,11$, we have
\begin{align*}
d_{\mathcal{N}_{j}}+\left(\frac{\lambda_{2}}{q}+\left(\frac{\lambda_{1}}{p}+\frac{\lambda_{2}}{q}\right)\frac{2p c}{n(p-2)\lambda_{1}}\right)\|\varphi_{0}\|^{2}_{2}>&\frac{1}{2}\| \varphi\|^{2}_{H}-\left(\frac{\lambda_{1}}{p}+\frac{\lambda_{2}}{q}\right)\frac{2p}{n(p-2)\lambda_{1}}\||x|\varphi\|^{2}_{2}\nonumber\\
\geq&\frac{1}{2}\| \varphi\|^{2}_{H}-\left(\frac{\lambda_{1}}{p}+\frac{\lambda_{2}}{q}\right)\frac{2p}{n(p-2)\lambda_{1}}\| \varphi\|^{2}_{H}.
\end{align*}
The rest proof is similar to Theorem \ref{th4.1}.\\
{\bf{Case \textrm{II}: \bm{$j=12,13,14.$}}}

Since the cross-constrained variational functionals $S_{j}(\varphi)$ have both the terms $\|\varphi\|^{p}_{p}$ and $\|\varphi\|^{q}_{q}$, we can apply $S_{j}(\varphi)>0$ to control both the terms $\|\varphi\|^{p}_{p}$ and $\|\varphi\|^{q}_{q}$
as
\begin{align}
\frac{n(p-2)\lambda_{1}}{2p}\|\varphi\|^{p}_{p}+\frac{n(q-2)\lambda_{2}}{2q}\|\varphi\|^{q}_{q}<\| \varphi\|^{2}_{2}+\||x|\varphi\|^{2}_{2}.\label{4.75u}
\end{align}
According to (\ref{4.75u}), we can retrieve (\ref{4.10}) as follows
\begin{align*}
&\frac{\lambda_{1}}{p}\|\varphi\|^{p}_{p}+\frac{\lambda_{2}}{q}\|\varphi\|^{q}_{q}
<\frac{2}{n(q-2)}\left(\| \varphi\|^{2}_{2}+\||x|\varphi\|^{2}_{2}\right)
\leq\frac{2}{n(q-2)}\|\varphi\|^{2}_{H}.
\end{align*}
The rest of the proof is similar to Theorem \ref{th4.1}.
\end{proof}
\begin{remark}\ (Fail to prove the finite time blowup by the cross-constrained variational functionals $S_{j}(\varphi)$, i.e., $j=9,10,\cdots,14$, without the term $\|\nabla\varphi\|^{2}_{2}$)\label{rem4.2}
We analyze the reason that we fail to prove finite time blowup by the cross-constrained variational functionals $S_{j}(\varphi)$, $j=9,10,\cdots,14$. In the investigation of the finite time blowup, we need to use the auxiliary functional $J(t)=\||x|\varphi\|^{2}_{2}$. The key step of proving the finite time blowup of the solution to the Cauchy problem (\ref{1.1}) is to verify $J''(t)<0$, i.e.,
\begin{align}
\|\nabla\varphi\|^2_{2}<\||x|\varphi\|^{2}_{2}+\frac{n(p-2)\lambda_{1}}{2p}\|\varphi\|^{p}_{p}+\frac{n(q-2)\lambda_{2}}{2q}\|\varphi\|^{q}_{q},\label{fil}
\end{align}
which means that we need to know the relationship between the terms $\|\nabla\varphi\|^2_{2}$ and $\||x|\varphi\|^{2}_{2}+\frac{n(p-2)\lambda_{1}}{2p}\|\varphi\|^{p}_{p}+\frac{n(q-2)\lambda_{2}}{2q}\|\varphi\|^{q}_{q}$ in (\ref{fil}).
For the initial data $\varphi_{0}\in\mathcal{V}_{j}$, i.e., $S_{j}(\varphi_{0})<0$, since $S_{j}(\varphi)$ does not have the term $\|\nabla\varphi\|^2_{2}$, we can not obtain this relationship by the condition $S_{j}(\varphi)<0$, which means that we fail to verify $J''(t)<0$.

\end{remark}
For the cross-constrained variational functionals $S_{j}(\varphi)$, $j=15,16,17,18$,
since $S_{j}(\varphi)$ have the term $\|\varphi\|^{q}_{q}$ and do not have the term $\|\varphi\|^{p}_{p}$, we fail to prove $d_{{\mathcal{N}_{j}}}>0$ and we further fail to prove global existence, which will be explained in following Remark \ref{rem4.3}. Next, we give the finite time blowup criteria for $S_{j}(\varphi)$, $j=15,16,17,18$.
\begin{theorem}\label{th4.4}(Finite time blowup criteria for $S_{j}(\varphi)$, $j=15,16,17,18$, without the term $\|\varphi\|^{p}_{p}$)
Let $p$, $q$ satisfy (\ref{1.2}), $\lambda_{1}>0$, and $\lambda_{2}>0$.
Then the corresponding solution $\varphi$ to problem (\ref{1.1}) blows up in finite time provided $K(\varphi_{0})<d_{{\mathcal{N}_{j}}}$ and $S_{j}(\varphi)<0$, $j=15,16,17,18$.

\end{theorem}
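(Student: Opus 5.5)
The plan follows the proof of Theorem \ref{th4.2}. We argue by contradiction, supposing $T_{\rm max}=+\infty$, and use the virial functional $J(t)=\||x|\varphi\|^{2}_{2}$ together with the identity (\ref{3.1}) of Lemma \ref{lem3.1}. The key point is that for $j=15,16,17,18$ the functional $S_{j}$ omits the $p$-term. Since $\lambda_{1}>0$ and $p>2$, that omitted term only helps: for every $\varphi$,
\begin{align*}
\|\nabla\varphi\|^{2}_{2}-\frac{n(p-2)\lambda_{1}}{2p}\|\varphi\|^{p}_{p}-\frac{n(q-2)\lambda_{2}}{2q}\|\varphi\|^{q}_{q}\leq S_{15}(\varphi)\leq S_{j}(\varphi),\quad j=15,16,17,18.
\end{align*}
Hence $S_{j}(\varphi(t))<0$ yields the inequalities (\ref{4.13}) (with $\min\{c,1\}$ replaced by $1$) for $j=16,18$, and (\ref{4.16}) for $j=15,17$.

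First, we establish that $S_{j}(\varphi(t))<0$ persists for all $t\in[0,T_{\max})$. By Lemma \ref{lem2.2}, $K(\varphi(t))=K(\varphi_{0})<d_{\mathcal{N}_{j}}$ for all $t$. Suppose there is a first time $t_{0}$ with $S_{j}(\varphi(t_{0}))=0$. Mass conservation gives $\varphi(t_{0})\neq0$, so $\varphi(t_{0})\in\mathcal{N}_{j}$ and therefore $K(\varphi(t_{0}))\geq d_{\mathcal{N}_{j}}$, a contradiction. This is exactly the argument of Lemma \ref{lem2.4}. It uses only the definition of $d_{\mathcal{N}_{j}}$ as an infimum and the continuity of $t\mapsto S_{j}(\varphi(t))$ in $H$. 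It does \emph{not} require $d_{\mathcal{N}_{j}}>0$, which is why no restriction on $\lambda_{1},\lambda_{2}$ beyond positivity appears in the statement. If $d_{\mathcal{N}_{j}}\le 0$ the hypothesis forces $K(\varphi_0)<0$, and Theorem \ref{th3.1} applies directly anyway.

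Second, we derive the differential inequality in two groups.

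For $j=16,18$, $S_{j}$ contains $\|\varphi\|_{2}^{2}$. Splitting $J''$ as in (\ref{4.14}) gives
\[
J''(t)<-8\||x|\varphi\|_{2}^{2}-8\|\varphi\|^{2}_{2}\leq-8\|\varphi_{0}\|^{2}_{2}<0 .
\]
Integrating twice, as in (\ref{3.13})--(\ref{3.14}), shows that $J(t)$ is bounded above by a concave quadratic that becomes negative. Hence $J$ must tend to $0$ at some finite $\tilde t$, and Lemma \ref{lem3.2} then forces $\|\nabla\varphi\|_{2}\to\infty$, contradicting $T_{\max}=\infty$.

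For $j=15,17$, we only obtain $J''(t)<-8J(t)$, as in (\ref{4.17}). We then run the Riccati argument with $g=J'/J$ from (\ref{4.18})--(\ref{4.20}): $g$ eventually becomes negative, and then $1/g$ grows linearly and must turn positive, which is a contradiction.

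The main obstacle is the invariance step. Because the intersection of the sublevel set $\{K<d_{\mathcal N_j}\}$ with $\{S_j<0\}$ is preserved purely by the variational characterization, we must be careful that the crossing time argument uses $\varphi(t_0)\ne0$ and continuity of $S_j$ in $H$, both of which are available from Lemmas \ref{lem2.1} and \ref{lem2.2}. A secondary technical point in the $g$-argument is ensuring $J(t)>0$ throughout. This holds because $J(t)=0$ together with Lemma \ref{lem3.2} and $\|\varphi_0\|_2>0$ would already give blowup.
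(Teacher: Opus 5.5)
Your proposal is correct and is essentially the paper's own proof. It proves invariance of $\{K<d_{\mathcal{N}_{j}}\}\cap\{S_{j}<0\}$ by the first-crossing argument of Lemma \ref{lem2.4}, then uses $J''(t)<-8\|\varphi_{0}\|_{2}^{2}$ for $j=16,18$ and $J''(t)<-8J(t)$ with the Riccati argument on $J'/J$ for $j=15,17$; your remark that the crossing argument needs no sign information on $d_{\mathcal{N}_{j}}$ only makes the paper's split into $d_{\mathcal{N}_{j}}\leq0$ and $d_{\mathcal{N}_{j}}>0$ superfluous.

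Be aware, though, that the statement is vacuous. Fix $0\neq\psi\in H$, choose $\mu=\mu(\sigma)$ with $S_{j}(\mu\sigma^{n/2}\psi(\sigma x))=0$, and let $\sigma\to\infty$. Then $\|\varphi\|_{p}^{p}\sim\sigma^{2p/(q-2)-n}$ outgrows $\|\nabla\varphi\|_{2}^{2}\sim\sigma^{2-n+4/(q-2)}$ and every other term of $K$, because $p>q$. Hence $d_{\mathcal{N}_{j}}=-\infty$ for $j=15,16,17,18$, the hypothesis $K(\varphi_{0})<d_{\mathcal{N}_{j}}$ is never met, and both proofs hold only vacuously.
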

\begin{proof}
For the condition $K(\varphi_{0})<d_{{\mathcal{N}_{j}}}$, $j=15,16,17,18$, since we fail to prove $d_{{\mathcal{N}_{j}}}>0$, which will be explained in Remark \ref{rem4.3}, we need to divide the proof into two cases $d_{{\mathcal{N}_{j}}}\leq0$ and $d_{{\mathcal{N}_{j}}}>0$. If $d_{{\mathcal{N}_{j}}}\leq0$, $j=15,16,17,18$, occurs, we notice that the condition $K(\varphi_{0})<d_{{\mathcal{N}_{j}}}$ means $K(\varphi_{0})<0$, which turns to the negative initial energy case. And the finite time blowup can be given by Theorem \ref{th3.1}. For $d_{{\mathcal{N}_{j}}}>0$, $j=15,16,17,18$, by the similar process of proving Lemma \ref{lem2.4}, we can prove the solution $\varphi(t)\in\mathcal{V}_{j}$ for $t\in(0,T_{\rm{max}})$ provided the initial data $\varphi_{0}\in\mathcal{V}_{j}$. Next, we prove $T_{\rm{max}}<+\infty$ for this case. Arguing by contradiction, we suppose $T_{\rm{max}}=+\infty$. Then, we shall apply the condition $S_{j}(\varphi)<0$, i.e., $\varphi\in \mathcal{V}_{j}$, $j=15,16,17,18$, to verify $J''(t)<0$, which is the key step of this proof. Here, the strategies of this process depend on the structures of the cross-constrained variational functionals $S_{j}(\varphi)$, $j=15,16,17,18$. We shall divide the proof into two cases, i.e., Case \textrm{I} ($j=16,18$, here the cross-constrained variational functionals $S_{j}(\varphi)$ have the term $\|\varphi\|^{2}_{2}$) and Case \textrm{II} ($j=15,17$, here $S_{j}(\varphi)$ do not have the term $\|\varphi\|^{2}_{2}$).\\
{\bf{Case \textrm{I}: \bm{$j=16,18.$}}}

For the cases $j=16,18$, the cross-constrained variational functionals $S_{j}(\varphi)$ having the term $\|\varphi\|^{2}_{2}$, by $S_{j}(\varphi)<0$ and $\lambda_{1}>0$, we know
\begin{align*}
\|\nabla\varphi\|^{2}_{2}+\|\varphi\|^{2}_{2}<\frac{n(q-2)}{2q}\lambda_{2}\|\varphi\|^{q}_{q},
\end{align*}
i.e.,
\begin{align}\label{lamuda1fangsuo}
\|\nabla\varphi\|^{2}_{2}+\|\varphi\|^{2}_{2}-\frac{n(q-2)}{2q}\lambda_{2}\|\varphi\|^{q}_{q}<0.
\end{align}
By (\ref{lamuda1fangsuo}), Lemma \ref{lem3.1} and the condition $\lambda_{1}>0$, we have
\begin{align}\label{Jerjiedao}
\begin{split}
J''(t)\leq&8\|\nabla\varphi\|^{2}_{2}-8\||x|\varphi\|^{2}_{2}-\frac{8n(q-2)}{2q}\lambda_{2}\|\varphi\|^{q}_{q}\\
=&8\left(\|\nabla\varphi\|^{2}_{2}+\|\varphi\|^{2}_{2}-\frac{n(q-2)}{2q}\lambda_{2}\|\varphi\|^{q}_{q}\right)-8\||x|\varphi\|^{2}_{2}-8\|\varphi\|^{2}_{2}\\
<&-8\|\varphi\|^{2}_{2},
\end{split}
\end{align}
where $t\in(0,+\infty)$. The rest of the proof is similar to Theorem \ref{th4.2}.\\
{\bf{Case \textrm{II}: \bm{$j=15,17.$}}}

For the cases $j=15,17$, the cross-constrained variational functionals $S_{j}(\varphi)$ not having the term $\|\varphi\|^{2}_{2}$, by $S_{j}(\varphi)<0$ and $\lambda_{1}>0$, we can deduce
\begin{align*}
\|\nabla\varphi\|^{2}_{2}<\frac{n(q-2)}{2q}\lambda_{2}\|\varphi\|^{q}_{q},
\end{align*}
i.e.,
\begin{align}
\|\nabla\varphi\|^{2}_{2}-\frac{n(q-2)}{2q}\lambda_{2}\|\varphi\|^{q}_{q}<0.\label{4.16+}
\end{align}
 Next, according to (\ref{4.16+}), Lemma \ref{lem3.1} and the condition $\lambda_{1}>0$, we have
\begin{align*}
J''(t)\leq&8\left(\|\nabla\varphi\|^{2}_{2}-\frac{n(q-2)}{2q}\lambda_{2}\|\varphi\|^{q}_{q}\right)-8\||x|\varphi\|^{2}_{2}\nonumber\\
<&-8\||x|\varphi\|^{2}_{2}\nonumber\\
=&-8J(t)
\end{align*}
for $t\in[0,+\infty)$. The rest of the proof is similar to Theorem \ref{th4.2}.
\end{proof}
\begin{remark}(Fail to prove the global existence by the cross-constrained variational functionals $S_{j}(\varphi)$, i.e., $j=15,16,17,18$, without the term $\|\varphi\|^{p}_{p}$)\label{rem4.3} For the cross-constrained variational functionals $S_{j}(\varphi)$, $j=15,16,17,18$, due to these cross-constrained variational combinations without the term $\|\varphi\|^{p}_{p}$, we fail to prove the conclusion $d_{{\mathcal{N}_{j}}}>0$, $j=15,16,17,18$, which is required for the invariant manifold used to prove the global existence. Compared to the case of the finite time blowup, where we can apply the condition $K(\varphi_{0})<0$ to finish the proof without using the invariant manifold, in order to prove the global existence, we need the restriction $0\leq K(\varphi_{0})<d_{{\mathcal{N}_{j}}}$, $j=15,16,17,18$, to prove the invariance of the manifold for low initial energy $K(\varphi_{0})<d_{{\mathcal{N}_{j}}}$, which is ensured to make sense by $d_{{\mathcal{N}_{j}}}>0$, $j=15,16,17,18$. Meanwhile, since the cross-constrained variational functionals $S_{j}(\varphi)$, $j=15,16,17,18$, do not have the term $\|\varphi\|^{p}_{p}$, it seems too difficult to estimate the boundness of the solution. According to the two points above, we fail to prove the global existence. In the following analysis, we shall analyze and explain the reasons for failing to obtain $d_{{\mathcal{N}_{j}}}>0$, $j=15,16,17,18$, and the boundedness estimate of the solution.\\
{\bf{(i) Why do we fail to prove \bm{$d_{{\mathcal{N}_{j}}}>0$}, \bm{$j=15,16,17,18$}?}} In order to prove $d_{{\mathcal{N}_{j}}}>0$, $j=15,16,17,18$, we need to control the negative terms in $K(\varphi)$, i.e., $-\frac{\lambda_{1}}{p}\|\varphi\|^{p}_{p}$ and $-\frac{\lambda_{2}}{q}\|\varphi\|^{q}_{q}$, in $\mathcal{N}_{j}$ which also means $S_{j}(\varphi)=0$, $j=15,16,17,18$.
Here, the cross-constrained variational functionals $S_{j}(\varphi)$, $j=15,16,17,18$, have the term $\|\varphi\|^{q}_{q}$ and do not have the term $\|\varphi\|^{p}_{p}$. By the fact $S_{j}(\varphi)=0$, $j=15,16,17,18$, we have
\begin{align}
\|\varphi\|^{q}_{q}\leq\frac{2q}{n(q-2)\lambda_{2}}\| \varphi\|^{2}_{H},\label{kongzhiq}
\end{align}
which means that only the term $\|\varphi\|^{q}_{q}$ is controlled. Thus, we need to deal with the term $\|\varphi\|^{p}_{p}$ additionally, where $p>q$. Compared to the case of the cross-constrained variational functionals $S_{j}(\varphi)$, $j=1,2,3,4$, with the term $\|\varphi\|^{p}_{p}$ and without the term $\|\varphi\|^{q}_{q}$, in which we can control the term $\|\varphi\|^{q}_{q}$ when the term $\|\varphi\|^{p}_{p}$ is controlled by applying $S_{j}(\varphi)=0$, $j=1,2,3,4$, according to the assumption $p>q$, as shown in Lemma \ref{lem2.3}, we do not have the appropriate method to control the term $\|\varphi\|^{p}_{p}$ even the term $\|\varphi\|^{q}_{q}$ is controlled for the case $j=15,16,17,18$ due to $p>q$, so we fail to prove $d_{{\mathcal{N}_{j}}}>0$, $j=15,16,17,18$.\\
{\bf{(ii) Why do we fail to obtain the boundedness estimate of the solution?}} Even if we suppose that we have good enough conditions to obtain the conclusion $d_{{\mathcal{N}_{j}}}>0$, $j=15,16,17,18$, and then prove the invariance of the manifold $\mathcal{W}_{j}$, $j=15,16,17,18$, in order to obtain the boundedness estimate of $\|\varphi\|_{H}$ to finish the proof of the global existence, we need to control the negative terms in $K(\varphi)$, i.e., $-\frac{\lambda_{1}}{p}\|\varphi\|^{p}_{p}$ and $-\frac{\lambda_{2}}{q}\|\varphi\|^{q}_{q}$, in the invariant manifold $\mathcal{W}_{j}$, $j=15,16,17,18$, i.e., $S_{j}(\varphi)>0$, $j=15,16,17,18$. However, for the cross-constrained variational functionals $S_{j}(\varphi)$, $j=15,16,17,18$, having the term $\|\varphi\|^{q}_{q}$ and not having the term $\|\varphi\|^{p}_{p}$, by the condition $S_{j}(\varphi)>0$, $j=15,16,17,18$, we know
\begin{align*}
\|\varphi\|^{q}_{q}<\frac{2q}{n(q-2)\lambda_{2}}\| \varphi\|^{2}_{H},
\end{align*}
which means that only the term $\|\varphi\|^{q}_{q}$ is controlled, and thus we also need to deal with the term $\|\varphi\|^{p}_{p}$ additionally. Therefore, the similar difficulty in proving $d_{{\mathcal{N}_{j}}}>0$, $j=15,16,17,18$, also arises in obtaining the boundedness estimate of $\|\varphi\|_{H}$, hence we fail to prove the global existence by the cross-constrained variational functionals $S_{j}(\varphi)$, $j=15,16,17,18$.
\end{remark}

 \section{Global existence and finite time blowup for high initial energy $K(\varphi_{0})\geq d_{\mathcal{N}_{j}}$, $j=1,2,\cdots,8$}\label{sec5}
In this section, we aim to focus on the high initial energy $K(\varphi_{0})\geq d_{\mathcal{N}_{j}}$, $j=1,2,\cdots,8$, to prove the global existence and finite time blowup of the solution to Cauchy problem (\ref{1.1}). Here, the so-called high initial energy case means the extension from the low initial energy $K(\varphi_{0})<d_{\mathcal{N}_{j}}$, $j=1,2,\cdots,8$, to the case $K(\varphi_{0})\geq d_{\mathcal{N}_{j}}$, $j=1,2,\cdots,8$. The related problem has been continuously attempted in this research field, but has not been successfully solved due to some critical difficulties. More precisely, for the case $K(\varphi_{0})\geq d_{\mathcal{N}_{j}}$, $j=1,2,\cdots,8$, we can not use the strategy of dealing with low initial energy to prove the invariance of the manifolds used to prove the global existence and finite time blowup, which is to use the condition $K(\varphi_{0})<d_{\mathcal{N}_{j}}$ and definition of $d_{\mathcal{N}_{j}}$ to finish the proofs via argument by contradiction. In order to deal with high initial energy, we need to find the additional conditions since the conditions $S_{j}(\varphi_{0})>0$ and $S_{j}(\varphi_{0})<0$ are not enough to prove the global existence and finite time blowup. Thus, a lot of research has focused on the non-negative low initial energy or negative initial energy \allowdisplaybreaks\cite{r4-1-2-00-1,r4-1-2-5,r14,r4-1-2-1,r4-1-2-4-5}. Meanwhile, since we use the cross-constrained variational theory to treat the Cauchy problem (\ref{1.1}), for the constructed cross-constrained problems (\ref{2.5}), we need to deal with eight cases for high initial energy, i.e., $K(\varphi_{0})\geq d_{\mathcal{N}_{j}}$, $j=1,2,\cdots,8$, which makes this job very difficult. In this section, we shall introduce the conditions $K(\varphi_{0})\leq R_{j}(\|\varphi_{0}\|_{2})$ to show the eight classifications of the initial data to distinguish the global existence and finite time blowup for high initial energy $K(\varphi_{0})\geq d_{\mathcal{N}_{j}}$, $j=1,2,\cdots,8$, where $R_{j}(\|\varphi_{0}\|_{2})$, $j=1,2,\cdots,8$, are defined by (\ref{Rf1})-(\ref{Rf8}), respectively. For the cases $j=9,10,\cdots,14$, we give six global existence criteria for $K(\varphi_{0})\geq d_{\mathcal{N}_{j}}$, $j=9,10,\cdots,14$. Due to the lack of the mandatory term $\|\nabla\varphi\|^{2}_{2}$ for the cross-constrained variational combinations $S_{j}(\varphi)$, $j=9,10,\cdots,14$, we fail to prove the finite time blowup for $K(\varphi_{0})\geq d_{\mathcal{N}_{j}}$, $j=9,10,\cdots,14$.

\subsection{Global existence $\&$ finite time blowup for high initial energy $K(\varphi_{0})\geq d_{{\mathcal{N}_{j}}}$, $j=1,2,\cdots,8$}\label{sec5.1}
Next, we focus on high initial energy $K(\varphi_{0})\geq d_{\mathcal{N}_{j}}$, $j=1,2,\cdots,8$, to present eight classifications of the initial data to distinguish the global existence and finite time blowup. More precisely, for $j=1,2,\cdots,8$, we define the space of the initial data as follows\begin{align}
\mathcal{H}_{R_{j}}:=\left\{\varphi\in H\Big|d_{_{\mathcal{N}_{j}}}\leq K(\varphi)\leq R_{j}(\|\varphi\|_{2})\right\},\label{HR}
\end{align}
where
\begin{align}\label{Rf1}
\begin{split}
R_{1}(\|\varphi\|_{2}):=&\left(\frac{1}{2}-\frac{\lambda_{2}}{q}+\frac{n}{2}\left(\frac{1}{2}-\left(\frac{\lambda_{1}}{p}+\frac{\lambda_{2}}{q}\right)\frac{2p}{n(p-2)\lambda_{1}}\right)^{\frac{1}{2}}\right)
\|\varphi\|^{2}_{2}\\
&+\frac{1}{2}\left(\frac{1}{2}-\left(\frac{\lambda_{1}}{p}+\frac{\lambda_{2}}{q}\right)\frac{2p}{n(p-2)\lambda_{1}}\right) M_{1}(\|\varphi\|_{2}),
\end{split}
\end{align}
\begin{align}\label{Rf2}
\begin{split}
R_{2}(\|\varphi\|_{2}):=&\left(\frac{1}{2}-\frac{\lambda_{2}}{q}+\frac{n}{2^{\frac{1}{2}}}\left(\frac{1}{2}-\left(\frac{\lambda_{1}}{p}+\frac{\lambda_{2}}{q}\right)
\frac{2p}{n(p-2)\lambda_{1}}\right)\right)\|\varphi\|^{2}_{2}\\
&+\frac{1}{2}\left(\frac{1}{2}-\left(\frac{\lambda_{1}}{p}+\frac{\lambda_{2}}{q}\right)\frac{2p}{n(p-2)\lambda_{1}}\right)
\\&\times\frac{M_{1}(\|\varphi\|_{2})+M_{2}(\|\varphi\|_{2})
+\left|M_{1}(\|\varphi\|_{2})-M_{2}(\|\varphi\|_{2})\right|}{2},
\end{split}
\end{align}
\begin{align}\label{Rf3}
\begin{split}
R_{3}(\|\varphi\|_{2}):=&\left(\frac{1}{2}-\left(\frac{\lambda_{1}}{p}+\frac{\lambda_{2}}{q}\right)\frac{2p c}{n(p-2)\lambda_{1}}-\frac{\lambda_{2}}{q}+\frac{n}{2}\left(\frac{1}{2}-\left(\frac{\lambda_{1}}{p}+\frac{\lambda_{2}}{q}\right)\frac{2p}{n(p-2)\lambda_{1}}\right)^{\frac{1}{2}}
\right)\|\varphi\|^{2}_{2}\\
&+\frac{1}{2}\left(\frac{1}{2}-\left(\frac{\lambda_{1}}{p}+\frac{\lambda_{2}}{q}\right)\frac{2p}{n(p-2)\lambda_{1}}\right)
\\&\times\frac{M_{1}(\|\varphi\|_{2})+M_{3}(\|\varphi\|_{2})
+\left|M_{1}(\|\varphi\|_{2})-M_{3}(\|\varphi\|_{2})\right|}{2},
\end{split}
\end{align}
\begin{align}\label{Rf4}
\begin{split}
R_{4}(\|\varphi\|_{2}):=&\left(\frac{1}{2}-\left(\frac{\lambda_{1}}{p}+\frac{\lambda_{2}}{q}\right)\frac{2p c}{n(p-2)\lambda_{1}}-\frac{\lambda_{2}}{q}+\frac{n}{2^{\frac{1}{2}}}\left(\frac{1}{2}-\left(\frac{\lambda_{1}}{p}+\frac{\lambda_{2}}{q}\right)\frac{2p}{n(p-2)
\lambda_{1}}\right)\right)\|\varphi\|^{2}_{2}\\
&+
\frac{1}{2}\left(\frac{1}{2}-\left(\frac{\lambda_{1}}{p}+\frac{\lambda_{2}}{q}\right)\frac{2p}{n(p-2)\lambda_{1}}\right)
\\&\times\frac{M_{1}(\|\varphi\|_{2})+M_{4}(\|\varphi\|_{2})
+\left|M_{1}(\|\varphi\|_{2})-M_{4}(\|\varphi\|_{2})\right|}{2},
\end{split}
\end{align}
\begin{align}\label{Rf5}
\begin{split}
R_{5}(\|\varphi\|_{2}):=&\frac{1}{2}\left(\frac{1}{2}-\frac{2}{n(q-2)}\right)N_{1}(\|\varphi\|_{2})+\left(\frac{1}{2}+\frac{n}{2}\left(\frac{1}{2}-\frac{2}{n(q-2)}\right)^{\frac{1}{2}}\right)\|\varphi\|^{2}_{2},
\end{split}\end{align}
\begin{align}\label{Rf6}
\begin{split}
R_{6}(\|\varphi\|_{2}):=&\frac{1}{2}\left(\frac{1}{2}-\frac{2}{n(q-2)}\right)\frac{N_{1}(\|\varphi\|_{2})+N_{2}(\|\varphi\|_{2})+\left|N_{1}(\|\varphi\|_{2})-
N_{2}(\|\varphi\|_{2})\right|}{2}\\
&+\left(\frac{1}{2}+\frac{n}{2^{\frac{1}{2}}}\left(\frac{1}{2}-\frac{2}{n(q-2)}\right)\right)\|\varphi\|^{2}_{2},
\end{split}
\end{align}
\begin{align}\label{Rf7}
\begin{split}
R_{7}(\|\varphi\|_{2}):=&\frac{1}{2}\left(\frac{1}{2}-\frac{2}{n(q-2)}\right)\frac{N_{1}(\|\varphi\|_{2})+N_{3}(\|\varphi\|_{2})+\left|N_{1}(\|\varphi\|_{2})-N_{3}(\|\varphi\|_{2})\right|}{2}\\
&+\left(\frac{1}{2}-\frac{2}{n(q-2)}+\frac{n}{2}\left(\frac{1}{2}-\frac{2}{n(q-2)}\right)^{\frac{1}{2}}\right)\|\varphi\|^{2}_{2},
\end{split}
\end{align}
\begin{align}\label{Rf8}
\begin{split}
R_{8}(\|\varphi\|_{2}):=&\frac{1}{2}\left(\frac{1}{2}-\frac{2}{n(q-2)}\right)\frac{N_{1}(\|\varphi\|_{2})+N_{4}(\|\varphi\|_{2})
+\left|N_{1}(\|\varphi\|_{2})-N_{4}(\|\varphi\|_{2})\right|}{2}\\
&
+\left(\frac{1}{2}-\frac{2}{n(q-2)}+\frac{n}{2^{\frac{1}{2}}}\left(\frac{1}{2}-\frac{2}{n(q-2)}\right)\right)\|\varphi\|^{2}_{2},
\end{split}
\end{align}
\begin{align}\label{M1}
M_{1}(\|\varphi\|_{2}):=\left(\frac{2p}{n(p-2) \lambda_{1}C_{1}\|\varphi\|^{p-\frac{n(p-2)}{2}}_{2}}\right)^{\frac{4}{n(p-2)-4}},
\end{align}
\begin{align}\label{M2}
M_{2}(\|\varphi\|_{2}):=\left(\frac{2p}{(p-2)\lambda_{1} C_{1}\|\varphi\|^{p-\frac{n(p-2)}{2}-2}_{2}}\right)^{\frac{4}{n(p-2)}},
\end{align}
\begin{align}\label{M3}
M_{3}(\|\varphi\|_{2}):=\left(\frac{4p c^{\frac{1}{2}}}{n(p-2) \lambda_{1}C_{1}\|\varphi\|^{p-\frac{n(p-2)+2}{2}}_{2}}\right)^{\frac{4}{n(p-2)-2}},
\end{align}
\begin{align}\label{M4}
M_{4}(\|\varphi\|_{2}):=\left(\frac{2p(n+c)}{n(p-2)\lambda_{1} C_{1}\|\varphi\|^{p-\frac{n(p-2)}{2}-2}_{2}}\right)^{\frac{4}{n(p-2)}},
\end{align}
\begin{align}\label{N1}
N_{1}(\|\varphi\|_{2}):=\frac{A_{1}(\|\varphi\|_{2})+B_{1}(\|\varphi\|_{2})-\left|A_{1}(\|\varphi\|_{2})-B_{1}(\|\varphi\|_{2})\right|}{2},
\end{align}
\begin{align}\label{N2}
N_{2}(\|\varphi\|_{2}):=\frac{A_{2}(\|\varphi\|_{2})+B_{2}(\|\varphi\|_{2})-\left|A_{2}(\|\varphi\|_{2})-B_{2}(\|\varphi\|_{2})\right|}{2}
,
\end{align}
\begin{align}\label{N3}
N_{3}(\|\varphi\|_{2}):=\frac{A_{3}(\|\varphi\|_{2})+B_{3}(\|\varphi\|_{2})-\left|A_{3}(\|\varphi\|_{2})-B_{3}(\|\varphi\|_{2})\right|}{2},
\end{align}
\begin{align}\label{N4}
N_{4}(\|\varphi\|_{2}):=\frac{A_{4}(\|\varphi\|_{2})+B_{4}(\|\varphi\|_{2})-\left|A_{4}(\|\varphi\|_{2})-B_{4}(\|\varphi\|_{2})\right|}{2},
\end{align}
\begin{align}\label{A1}
A_{1}(\|\varphi\|_{2}):=\left(\frac{p}{n(p-2)C_{1}\lambda_{1}\|\varphi\|^{p-\frac{n(p-2)}{2}}_{2}}\right)^{\frac{4}{n(p-2)-4}},
\end{align}
\begin{align}\label{B1}
B_{1}(\|\varphi\|_{2}):=\left(\frac{q}{n(q-2)C_{3}\lambda_{2}\|\varphi\|^{q-\frac{n(q-2)}{2}}_{2}}\right)^{\frac{4}{n(q-2)-4}}.
\end{align}
\begin{align}\label{A2}
A_{2}(\|\varphi\|_{2}):=\left(\frac{p}{(p-2)C_{1}\lambda_{1}\|\varphi\|^{p-\frac{n(p-2)}{2}-2}_{2}}\right)^{\frac{4}{n(p-2)}}
\end{align}
\begin{align}\label{B2}
B_{2}(\|\varphi\|_{2}):=\left(\frac{q}{(q-2)C_{3}\lambda_{2}\|\varphi\|^{q-\frac{n(q-2)}{2}-2}_{2}}\right)^{\frac{4}{n(q-2)}}.
\end{align}

\begin{align}\label{A3}
A_{3}(\|\varphi\|_{2}):=\left(\frac{2p}{n(p-2)\lambda_{1}C_{1}\|\varphi\|^{p-\frac{n(p-2)}{2}-1}_{2}}\right)^{\frac{4}{n(p-2)-2}},
\end{align}
\begin{align}\label{B3}
B_{3}(\|\varphi\|_{2}):=\left(\frac{2q}{n(q-2)\lambda_{2}C_{3}\|\varphi\|^{q-\frac{n(q-2)}{2}-1}_{2}}\right)^{\frac{4}{n(q-2)-2}},
\end{align}

\begin{align}\label{A4}
A_{4}(\|\varphi\|_{2}):=\left(\frac{p(n+1)}{n(p-2)C_{1}\lambda_{1}\|\varphi\|^{p-\frac{n(p-2)}{2}-2}_{2}}\right)^{\frac{4}{n(p-2)}},
\end{align}
\begin{align}\label{B4}
B_{4}(\|\varphi\|_{2}):=\left(\frac{q(n+1)}{n(q-2)C_{3}\lambda_{2}\|\varphi\|^{q-\frac{n(q-2)}{2}-2}_{2}}\right)^{\frac{4}{n(q-2)}},
\end{align} and $c>0$ is chosen such that (\ref{cdetiaojian}) holds.

We divide $\mathcal{H}_{R_{j}}$ into
\begin{align}
\mathcal{W}_{R_{j}}:=\left\{\varphi\in H\Big|d_{_{\mathcal{N}_{j}}}\leq K(\varphi)\leq R_{j}(\|\varphi\|_{2}),S_{j}(\varphi)>0\right\},\label{WR}
\end{align}
and
\begin{align}
\mathcal{V}_{R_{j}}:=\left\{\varphi\in H\Big|d_{_{\mathcal{N}_{j}}}\leq K(\varphi)\leq R_{j}(\|\varphi\|_{2}),S_{j}(\varphi)<0\right\},\label{VR}
\end{align}
that is, $\mathcal{H}_{R_{j}}=\mathcal{W}_{R_{j}}\cup\mathcal{V}_{R_{j}}$, $j=1,2,\cdots,8$.
We shall use the manifolds $\mathcal{W}_{R_{j}}$ and $\mathcal{V}_{R_{j}}$, $j=1,2,\cdots,8$, to prove the global existence and finite time blowup, respectively. We first need to prove the invariance of the manifolds $\mathcal{W}_{R_{j}}$ and $\mathcal{V}_{R_{j}}$, $j=1,2,\cdots,8$.
Here, we add the condition $ K(\varphi_{0})\leq R_{j}(\|\varphi_{0}\|_{2})$, $j=1,2,\cdots,8$, when we consider the case of $K(\varphi_{0})\geq d_{_{\mathcal{N}_{j}}}$, $j=1,2,\cdots,8$. We shall explain the necessity of this condition in Remark \ref{rem5.2}.
And for high initial energy case $K(\varphi_{0})\geq d_{{\mathcal{N}_{j}}}$, $j=1,2,\cdots,8$, we also need the conclusion $d_{{\mathcal{N}_{j}}}>0$, $j=1,2,\cdots,8$, since negative initial energy $K(\varphi_{0})<0$ and non-negative low initial energy $0\leq K(\varphi_{0})<d_{{\mathcal{N}_{j}}}$, $j=1,2,\cdots,8$, are discussed in Section \ref{sec3} and Section \ref{sec4}, respectively. For the cases $j=1,2,3,4$, we require the additional restrictions (\ref{2.9}) and (\ref{2.10}), which are not required for $j=5,6,7,8$, as we analyze in Remark \ref{rem2.1}. Thus, we naturally add the additional restrictions (\ref{2.9}) and (\ref{2.10}) when we prove the global existence and finite time blowup for $K(\varphi_{0})\geq d_{{\mathcal{N}_{j}}}$, $j=1,2,3,4$.

\begin{lemma}\label{lem5.1}
(The invariance of the manifolds $\mathcal{W}_{R_{j}}$ and $\mathcal{V}_{R_{j}}$ for high initial energy $K(\varphi_{0})\geq d_{\mathcal{N}_{j}}$, $j=1,2,\cdots,8$)
For $j=1,2,\cdots,8$, let $p$ and $q$ satisfy (\ref{1.2}), and $\lambda_{1}>0$, $\lambda_{2}>0$ satisfy the following conditions

(i)\ for $j=1,2,3,4$: $\lambda_{1}$ and $\lambda_{2}$ additionally satisfy (\ref{2.9}) and (\ref{2.10});

(ii)\ for $j=5,6,7,8$: no additional restrictions on $\lambda_{1}$ and $\lambda_{2}$.\\
Then, for $\varphi_{0}\in \mathcal{W}_{R_{j}}$, the corresponding solution $\varphi$ of (\ref{1.1}) belongs to $\mathcal{W}_{R_{j}}$. And for $\varphi_{0}\in \mathcal{V}_{R_{j}}$, the corresponding solution $\varphi$ of (\ref{1.1}) belongs to $\mathcal{V}_{R_{j}}$.

\begin{proof}
 We only write down the details of proving the invariance of the manifold $\mathcal{W}_{R_{j}}$, $j=1,2,\cdots,8$, since the proof of proving the invariance of the manifold $\mathcal{V}_{R_{j}}$, $j=1,2,\cdots,8$, is similar.

For the initial data $\varphi_{0}\in\mathcal{W}_{R_{j}}$, $j=1,2,\cdots,8$, we know that there exists a unique solution $\varphi\in C([0,T_{\rm{max}}),H)$ to the Cauchy problem (\ref{1.1}) according to Lemma \ref{lem2.1}, which means the continuity of $S_{j}(\varphi(t))$, $j=1,2,\cdots,8$, with respect to time $t$. For $\varphi_{0}\in\mathcal{W}_{R_{j}}$, $j=1,2,\cdots,8$, we know
\begin{align}
d_{_{\mathcal{N}_{j}}}\leq K(\varphi_{0})\leq R_{j}(\|\varphi_{0}\|_{2}),\ j=1,2,\cdots,8,\label{K0--detiaoj1}
\end{align}
and $S_{j}(\varphi_{0})>0$, $j=1,2,\cdots,8$. According to (\ref{2.2}) and
\begin{align}
K(\varphi(t))=K(\varphi_{0})\label{K0shouheng}
\end{align}
indicated by (\ref{2.2}) and (\ref{2.3}) for all $t\in(0,T_{\rm{max}})$, we know
\begin{align}
d_{_{\mathcal{N}_{j}}}\leq K(\varphi(t))<R_{j}(\|\varphi(t)\|_{2}),\ j=1,2,\cdots,8.\label{Kfaitdayud}
\end{align}
Next, we only need to prove $S_{j}(\varphi(t))>0$, $j=1,2,\cdots,8$, for $t\in(0,T_{\rm{max}})$, which means $\varphi(t)\in\mathcal{W}_{R_{j}}$, $j=1,2,\cdots,8$, for all $t\in(0,T_{\rm{max}})$.
Here, the corresponding strategies depend on the structures of the cross-constrained variational functionals $S_{j}(\varphi)$, $j=1,2,\cdots,8$. We need to use the eight different strategies to deal with these eight cases $j=1,2,\cdots,8$. Thus, we treat these eight cases one by one.\\
{\bf{Case \textrm{I}: \bm{$j=1.$}}}

We prove $S_{1}(\varphi(t))>0$ for $t\in(0,T_{\rm{max}})$. Arguing by contradiction, we suppose that there exists the first time $\tau_{1}\in(0,T_{\rm{max}})$ such that $S_{1}(\varphi(\tau_{1}))=0$ and $S_{1}(\varphi(t))>0$ for $t\in(0,\tau_{1})$, and we know $\varphi(\tau_{1})\neq 0$ due to (\ref{2.2}) and $\varphi_{0}\neq0$. Next, we estimate $K(\varphi(\tau_{1}))$.
According to $S_{1}(\varphi(\tau_{1}))=0$, we have
\begin{align}
\|\varphi(\tau_{1})\|^{p}_{p}&=\frac{2p}{n(p-2)\lambda_{1}}\|\nabla \varphi(\tau_{1})\|^{2}_{2}.\label{jdengyu1}
\end{align}
By substituting (\ref{jdengyu1}) into (\ref{2.16}), we have
\begin{align}\label{2.17-pjwei1}
\begin{split}
K(\varphi(\tau_{1}))>&\left(\frac{1}{2}-\left(\frac{\lambda_{1}}{p}+\frac{\lambda_{2}}{q}\right)\frac{2p}{n(p-2)\lambda_{1}}\right)\|\nabla \varphi(\tau_{1})\|^{2}_{2}+\frac{1}{2}\||x|\varphi(\tau_{1})\|^{2}_{2}+\left(\frac{1}{2}-\frac{\lambda_{2}}{q}\right)\|\varphi(\tau_{1})\|^{2}_{2}\\
=&\frac{1}{2}\left(\frac{1}{2}-\left(\frac{\lambda_{1}}{p}+\frac{\lambda_{2}}{q}\right)\frac{2p}{n(p-2)\lambda_{1}}\right)\|\nabla \varphi(\tau_{1})\|^{2}_{2}+\frac{1}{2}\||x|\varphi(\tau_{1})\|^{2}_{2}+\left(\frac{1}{2}-\frac{\lambda_{2}}{q}\right)\|\varphi(\tau_{1})\|^{2}_{2}\\
&+\frac{1}{2}\left(\frac{1}{2}-\left(\frac{\lambda_{1}}{p}+\frac{\lambda_{2}}{q}\right)\frac{2p}{n(p-2)\lambda_{1}}\right)\|\nabla \varphi(\tau_{1})\|^{2}_{2}.
\end{split}
\end{align}
In (\ref{2.17-pjwei1}), the assumptions (\ref{2.9}) and (\ref{2.10}) ensure $$\frac{1}{2}-\left(\frac{\lambda_{1}}{p}+\frac{\lambda_{2}}{q}\right)\frac{2p}{n(p-2)\lambda_{1}}>0$$ and $$\frac{1}{2}-\frac{\lambda_{2}}{q}>0,$$respectively. By Cauchy-Schwartz inequality and Lemma \ref{lem3.2}, we know that (\ref{2.17-pjwei1}) turns to
\begin{align}\label{2.17-pjwei1-1}
\begin{split}
K(\varphi(\tau_{1}))>&\left(\frac{1}{2}-\left(\frac{\lambda_{1}}{p}+\frac{\lambda_{2}}{q}\right)\frac{2p}{n(p-2)\lambda_{1}}\right)^{\frac{1}{2}}
\|\nabla \varphi(\tau_{1})\|_{2}\||x|\varphi(\tau_{1})\|_{2}\\
&+\frac{1}{2}\left(\frac{1}{2}-\left(\frac{\lambda_{1}}{p}+\frac{\lambda_{2}}{q}\right)\frac{2p}{n(p-2)\lambda_{1}}\right)\|\nabla \varphi(\tau_{1})\|^{2}_{2}+\left(\frac{1}{2}-\frac{\lambda_{2}}{q}\right)\|\varphi(\tau_{1})\|^{2}_{2}\\
\geq&\frac{n}{2}\left(\frac{1}{2}-\left(\frac{\lambda_{1}}{p}+\frac{\lambda_{2}}{q}\right)\frac{2p}{n(p-2)\lambda_{1}}\right)^{\frac{1}{2}}
\|\varphi(\tau_{1})\|^{2}_{2}\\
&+\frac{1}{2}\left(\frac{1}{2}-\left(\frac{\lambda_{1}}{p}+\frac{\lambda_{2}}{q}\right)\frac{2p}{n(p-2)\lambda_{1}}\right)\|\nabla \varphi(\tau_{1})\|^{2}_{2}+\left(\frac{1}{2}-\frac{\lambda_{2}}{q}\right)\|\varphi(\tau_{1})\|^{2}_{2}.
\end{split}
\end{align}
 Then, we estimate the term $\|\nabla \varphi(\tau_{1})\|^{2}_{2}$ in (\ref{2.17-pjwei1-1}). According to $S_{1}(\varphi(\tau_{1}))=0$, i.e.,
(\ref{jdengyu1}), and Gagliardo-Nirenberg inequality (\ref{GN-budengshi}), we know
\begin{align}
\frac{2p}{n(p-2)\lambda_{1} C_{1}}\leq\|\nabla\varphi(\tau_{1})\|^{\frac{n(p-2)-4}{2}}_{2}\|\varphi(\tau_{1})\|^{p-\frac{n(p-2)}{2}}_{2}.\label{Gnbudengshi1}
\end{align}
In (\ref{Gnbudengshi1}), we know $\frac{n(p-2)-4}{2}>0$ and $p-\frac{n(p-2)}{2}>0$ according to (\ref{1.2}), i.e., $2+\frac{4}{n}<p$ and $p<\frac{2n}{n-2}$. Due to (\ref{2.2}), we know (\ref{Gnbudengshi1}) turns to
\begin{align*}
\frac{2p}{n(p-2)\lambda_{1}C_{1}}\leq\|\nabla\varphi(\tau_{1})\|^{\frac{n(p-2)-4}{2}}_{2}\|\varphi_{0}\|^{p-\frac{n(p-2)}{2}}_{2},
\end{align*}
i.e.,
\begin{align}
\|\nabla\varphi(\tau_{1})\|^{2}_{2}\geq M_{1}(\|\varphi_{0}\|_{2}),\label{Gnbudengshi1-1}
\end{align}where $M_{1}(\|\varphi_{0}\|_{2})$ is defined by (\ref{M1}).
Due to (\ref{Gnbudengshi1-1}), (\ref{2.2}) and (\ref{2.3}), we know that (\ref{2.17-pjwei1-1}) turns into
\begin{align*}
K(\varphi_{0})
>&\left(\frac{1}{2}-\frac{\lambda_{2}}{q}+\frac{n}{2}\left(\frac{1}{2}-\left(\frac{\lambda_{1}}{p}+\frac{\lambda_{2}}{q}\right)\frac{2p}{n(p-2)\lambda_{1}}\right)^{\frac{1}{2}}\right)
\|\varphi_{0}\|^{2}_{2}\nonumber\\
&+\frac{1}{2}\left(\frac{1}{2}-\left(\frac{\lambda_{1}}{p}+\frac{\lambda_{2}}{q}\right)\frac{2p}{n(p-2)\lambda_{1}}\right) M_{1}(\|\varphi_{0}\|_{2}),
\end{align*}
which contradicts (\ref{K0--detiaoj1}) for $j=1$.
Thus, we have $S_{1}(\varphi(t))>0$ for $t\in(0,T_{\rm{max}})$.\\
{\bf{Case \textrm{II}: \bm{$j=2.$}}}

We prove $S_{2}(\varphi(t))>0$ for $t\in(0,T_{\rm{max}})$. Arguing by contradiction, we suppose that there exists the first time $\tau_{2}\in(0,T_{\rm{max}})$ such that $S_{2}(\varphi(\tau_{2}))=0$ and $S_{2}(\varphi(t))>0$ for $t\in(0,\tau_{2})$, and we know $\varphi(\tau_{2})\neq 0$ due to (\ref{2.2}) and $\varphi_{0}\neq0$. Next, we estimate $K(\varphi(\tau_{2}))$.
According to $S_{2}(\varphi(\tau_{2}))=0$, we have
\begin{align}
\|\varphi(\tau_{2})\|^{p}_{p}=\frac{2p}{n(p-2)\lambda_{1}}\left(\|\nabla \varphi(\tau_{2})\|^{2}_{2}+\||x|\varphi(\tau_{2})\|^{2}_{2}\right).\label{2jdengyu2-2}
\end{align}
By substituting (\ref{2jdengyu2-2}) into (\ref{2.16}), we have
\begin{align}\label{2.17-jdengyu2}
\begin{split}
K(\varphi(\tau_{2}))>&\left(\frac{1}{2}-\left(\frac{\lambda_{1}}{p}+\frac{\lambda_{2}}{q}\right)\frac{2p}{n(p-2)\lambda_{1}}\right)\left(\|\nabla \varphi(\tau_{2})\|^{2}_{2}+\||x|\varphi(\tau_{2})\|^{2}_{2}\right)+\left(\frac{1}{2}-\frac{\lambda_{2}}{q}\right)\|\varphi(\tau_{2})\|^{2}_{2}\\
=&\frac{1}{2}\left(\frac{1}{2}-\left(\frac{\lambda_{1}}{p}+\frac{\lambda_{2}}{q}\right)\frac{2p}{n(p-2)\lambda_{1}}\right)\|\nabla \varphi(\tau_{2})\|^{2}_{2}\\
&+\left(\frac{1}{2}-\left(\frac{\lambda_{1}}{p}+\frac{\lambda_{2}}{q}\right)\frac{2p}{n(p-2)\lambda_{1}}\right)\||x|\varphi(\tau_{2})\|^{2}_{2}\\
&+\frac{1}{2}\left(\frac{1}{2}-\left(\frac{\lambda_{1}}{p}+\frac{\lambda_{2}}{q}\right)\frac{2p}{n(p-2)\lambda_{1}}\right)\|\nabla \varphi(\tau_{2})\|^{2}_{2}+\left(\frac{1}{2}-\frac{\lambda_{2}}{q}\right)\|\varphi(\tau_{2})\|^{2}_{2}.
\end{split}
\end{align}
In (\ref{2.17-jdengyu2}), the assumptions (\ref{2.9}) and (\ref{2.10}) ensure $$\frac{1}{2}-\left(\frac{\lambda_{1}}{p}+\frac{\lambda_{2}}{q}\right)\frac{2p}{n(p-2)\lambda_{1}}>0$$ and $$\frac{1}{2}-\frac{\lambda_{2}}{q}>0,$$respectively. By Cauchy-Schwartz inequality and Lemma \ref{lem3.2}, we know that (\ref{2.17-jdengyu2}) turns to
\begin{align}\label{2.17-jdengyu2-2}
\begin{split}
K(\varphi(\tau_{2}))>&\frac{2}{2^{\frac{1}{2}}}\left(\frac{1}{2}-\left(\frac{\lambda_{1}}{p}+\frac{\lambda_{2}}{q}\right)
\frac{2p}{n(p-2)\lambda_{1}}\right)\|\nabla \varphi(\tau_{2})\|_{2}\||x|\varphi(\tau_{2})\|_{2}\\
&+\frac{1}{2}\left(\frac{1}{2}-\left(\frac{\lambda_{1}}{p}+\frac{\lambda_{2}}{q}\right)\frac{2p}{n(p-2)\lambda_{1}}\right)\|\nabla \varphi(\tau_{2})\|^{2}_{2}+\left(\frac{1}{2}-\frac{\lambda_{2}}{q}\right)\|\varphi(\tau_{2})\|^{2}_{2}\\
\geq&\frac{n}{2^{\frac{1}{2}}}\left(\frac{1}{2}-\left(\frac{\lambda_{1}}{p}+\frac{\lambda_{2}}{q}\right)\frac{2p}{n(p-2)\lambda_{1}}\right)\| \varphi(\tau_{2})\|_{2}^{2}\\
&+\frac{1}{2}\left(\frac{1}{2}-\left(\frac{\lambda_{1}}{p}+\frac{\lambda_{2}}{q}\right)\frac{2p}{n(p-2)\lambda_{1}}\right)\|\nabla \varphi(\tau_{2})\|^{2}_{2}+\left(\frac{1}{2}-\frac{\lambda_{2}}{q}\right)\|\varphi(\tau_{2})\|^{2}_{2}.
\end{split}
\end{align}
Meanwhile, from Cauchy-Schwartz inequality and Lemma \ref{lem3.2}, we also have
\begin{align}
\|\nabla \varphi(\tau_{2})\|^{2}_{2}+\||x|\varphi(\tau_{2})\|^{2}_{2}
\geq 2\|\nabla \varphi(\tau_{2})\|_{2}\||x|\varphi(\tau_{2})\|_{2}
\geq n\|\varphi(\tau_{2})\|^{2}_{2}.\label{erfanshude1-2}
\end{align}
Combining (\ref{erfanshude1-2}) and (\ref{2jdengyu2-2}), we have
\begin{align}
&\|\varphi(\tau_{2})\|^{p}_{p}\geq \frac{2p}{(p-2)\lambda_{1}}\|\varphi(\tau_{2})\|^{2}_{2}.\label{erfanshude1-1}
\end{align}
Then, according to (\ref{erfanshude1-1}) and Gagliardo-Nirenberg inequality (\ref{GN-budengshi}), we know
\begin{align}
\frac{2p}{(p-2)\lambda_{1} C_{1}}\leq\|\nabla\varphi(\tau_{2})\|^{\frac{n(p-2)}{2}}_{2}\|\varphi(\tau_{2})\|^{p-\frac{n(p-2)}{2}-2}_{2}.\label{2Gnbudengshi-1-1}
\end{align}
Due to (\ref{2.2}), we know (\ref{2Gnbudengshi-1-1}) turns to
\begin{align}
\frac{2p}{(p-2)\lambda_{1}C_{1}}\leq\|\nabla\varphi(\tau_{2})\|^{\frac{n(p-2)}{2}}_{2}\|\varphi_{0}\|^{p-\frac{n(p-2)}{2}-2}_{2},\label{2Gnbudengshi2}
\end{align}
i.e.,
\begin{align}
\|\nabla\varphi(\tau_{2})\|^{2}_{2}\geq M_{2}(\|\varphi_{0}\|_{2}),\label{2Gnbudengshi2-1-2}
\end{align}where $M_{2}(\|\varphi_{0}\|_{2})$ is defined by (\ref{M2}).
Meanwhile, (\ref{2jdengyu2-2}) gives
\begin{align}\label{shizi12}
\|\varphi(\tau_{2})\|^{p}_{p}>\frac{2p}{n(p-2)\lambda_{1}}\|\nabla \varphi(\tau_{2})\|^{2}_{2}.
\end{align}
By the similar process of giving (\ref{Gnbudengshi1-1}), we can use (\ref{shizi12}) to obtain
\begin{align}
\|\nabla\varphi(\tau_{2})\|^{2}_{2}\geq M_{1}(\|\varphi_{0}\|_{2}),\label{Gnbudengshi1-1-2}
\end{align}where $M_{1}(\|\varphi_{0}\|_{2})$ is defined by (\ref{M1}).
From (\ref{2Gnbudengshi2-1-2}) and (\ref{Gnbudengshi1-1-2}), we have
\begin{align*}
\|\nabla\varphi(\tau_{2})\|^{2}_{2}\geq \frac{M_{1}(\|\varphi_{0}\|_{2})+M_{2}(\|\varphi_{0}\|_{2})
+\left|M_{1}(\|\varphi_{0}\|_{2})-M_{2}(\|\varphi_{0}\|_{2})\right|}{2},
\end{align*}
which means (\ref{2.17-jdengyu2-2}) becomes
\begin{align}\label{zhunbeimaodun-2+2-1}
\begin{split}
K(\varphi(\tau_{2}))>&\left(\frac{1}{2}-\frac{\lambda_{2}}{q}+\frac{n}{2^{\frac{1}{2}}}\left(\frac{1}{2}-\left(\frac{\lambda_{1}}{p}+\frac{\lambda_{2}}{q}\right)
\frac{2p}{n(p-2)\lambda_{1}}\right)\right)\|\varphi(\tau_{2})\|^{2}_{2}\\
&+\frac{1}{2}\left(\frac{1}{2}-\left(\frac{\lambda_{1}}{p}+\frac{\lambda_{2}}{q}\right)\frac{2p}{n(p-2)\lambda_{1}}\right)
\\&\times\frac{M_{1}(\|\varphi_{0}\|_{2})
+M_{2}(\|\varphi_{0}\|_{2})
+\left|M_{1}(\|\varphi_{0}\|_{2})-M_{2}(\|\varphi_{0}\|_{2})\right|}{2}.
\end{split}
\end{align}
According to (\ref{2.2}) and (\ref{2.3}), we know (\ref{zhunbeimaodun-2+2-1}) turns to
\begin{align*}
K(\varphi_{0})>&\left(\frac{1}{2}-\frac{\lambda_{2}}{q}+\frac{n}{2^{\frac{1}{2}}}\left(\frac{1}{2}-\left(\frac{\lambda_{1}}{p}+\frac{\lambda_{2}}{q}\right)
\frac{2p}{n(p-2)\lambda_{1}}\right)\right)\|\varphi_{0}\|^{2}_{2}\nonumber\\
&+\frac{1}{2}\left(\frac{1}{2}-\left(\frac{\lambda_{1}}{p}+\frac{\lambda_{2}}{q}\right)\frac{2p}{n(p-2)\lambda_{1}}\right)
\\&\times\frac{M_{1}(\|\varphi_{0}\|_{2})+M_{2}(\|\varphi_{0}\|_{2})
+\left|M_{1}(\|\varphi_{0}\|_{2})-M_{2}(\|\varphi_{0}\|_{2})\right|}{2},
\end{align*}
which contradicts (\ref{K0--detiaoj1}) for $j=2$. Thus, we have $S_{2}(\varphi(t))>0$ for $t\in(0,T_{\rm{max}})$.\\
{\bf{Case \textrm{III}: \bm{$j=3.$}}}

We prove $S_{3}(\varphi(t))>0$ for all $t\in(0,T_{\rm{max}})$. Arguing by contradiction, we suppose that there exists the first time $\tau_{3}\in(0,T_{\rm{max}})$ to make $S_{3}(\varphi(\tau_{3}))=0$, i.e.,
\begin{align}
\|\varphi(\tau_{3})\|^{p}_{p}=&\frac{2p}{n(p-2)\lambda_{1}}\left(\|\nabla \varphi(\tau_{3})\|^{2}_{2}+c\|\varphi(\tau_{3})\|^{2}_{2}\right),\label{p-gujij3}
\end{align}and $S_{3}(\varphi(t))>0$ for $t\in(0,\tau_{3})$,
and we know $\varphi(\tau_{3})\neq 0$ due to (\ref{2.2}) and $\varphi_{0}\neq0$.
Next, we estimate $K(\varphi(\tau_{3}))$.
By substituting (\ref{p-gujij3}) into (\ref{2.16}), we have
\begin{align}\label{p-3zhunbeimaodun+1}
\begin{split}
K(\varphi(\tau_{3}))>&\left(\frac{1}{2}-\left(\frac{\lambda_{1}}{p}+\frac{\lambda_{2}}{q}\right)\frac{2p}{n(p-2)\lambda_{1}}\right)\|\nabla \varphi(\tau_{3})\|^{2}_{2}+\frac{1}{2}\||x|\varphi(\tau_{3})\|^{2}_{2}\\
&+\left(\frac{1}{2}-\left(\frac{\lambda_{1}}{p}+\frac{\lambda_{2}}{q}\right)\frac{2p c}{n(p-2)\lambda_{1}}-\frac{\lambda_{2}}{q}\right)\|\varphi(\tau_{3})\|^{2}_{2}\\
=&\frac{1}{2}\left(\frac{1}{2}-\left(\frac{\lambda_{1}}{p}+\frac{\lambda_{2}}{q}\right)\frac{2p}{n(p-2)\lambda_{1}}\right)\|\nabla \varphi(\tau_{3})\|^{2}_{2}+\frac{1}{2}\||x|\varphi(\tau_{3})\|^{2}_{2}\\
&+\frac{1}{2}\left(\frac{1}{2}-\left(\frac{\lambda_{1}}{p}+\frac{\lambda_{2}}{q}\right)\frac{2p}{n(p-2)\lambda_{1}}\right)\|\nabla \varphi(\tau_{3})\|^{2}_{2}\\
&+\left(\frac{1}{2}-\left(\frac{\lambda_{1}}{p}+\frac{\lambda_{2}}{q}\right)\frac{2p c}{n(p-2)\lambda_{1}}-\frac{\lambda_{2}}{q}\right)\|\varphi(\tau_{3})\|^{2}_{2}.
\end{split}
\end{align}
In (\ref{p-3zhunbeimaodun+1}), the assumption (\ref{2.9}) ensures $$\frac{1}{2}-\left(\frac{\lambda_{1}}{p}+\frac{\lambda_{2}}{q}\right)\frac{2p}{n(p-2)\lambda_{1}}>0.$$
And the assumption (\ref{2.10}) ensures $$\frac{1}{2}-\frac{\lambda_{2}}{q}>0.$$We choose $c>0$ to make $$\frac{1}{2}-\left(\frac{\lambda_{1}}{p}+\frac{\lambda_{2}}{q}\right)\frac{2p c}{n(p-2)\lambda_{1}}-\frac{\lambda_{2}}{q}>0,$$i.e.,
(\ref{cdetiaojian}), hold.
According to Cauchy-Schwartz inequality and Lemma \ref{lem3.2}, we know (\ref{p-3zhunbeimaodun+1}) turns to
\begin{align}\label{p-3zhunbeimaodun+1-1}
\begin{split}
K(\varphi(\tau_{3}))>
&\left(\frac{1}{2}-\left(\frac{\lambda_{1}}{p}+\frac{\lambda_{2}}{q}\right)\frac{2p}{n(p-2)\lambda_{1}}\right)^{\frac{1}{2}}\|\nabla \varphi(\tau_{3})\|_{2}\||x|\varphi(\tau_{3})\|_{2}\\
&+\frac{1}{2}\left(\frac{1}{2}-\left(\frac{\lambda_{1}}{p}+\frac{\lambda_{2}}{q}\right)\frac{2p}{n(p-2)\lambda_{1}}\right)\|\nabla \varphi(\tau_{3})\|^{2}_{2}\\
&+\left(\frac{1}{2}-\left(\frac{\lambda_{1}}{p}+\frac{\lambda_{2}}{q}\right)\frac{2p c}{n(p-2)\lambda_{1}}-\frac{\lambda_{2}}{q}\right)\|\varphi(\tau_{3})\|^{2}_{2}\\
\geq
&\frac{n}{2}\left(\frac{1}{2}-\left(\frac{\lambda_{1}}{p}+\frac{\lambda_{2}}{q}\right)\frac{2p}{n(p-2)\lambda_{1}}\right)^{\frac{1}{2}}\|\varphi(\tau_{3})\|^{2}_{2}\\
&+\frac{1}{2}\left(\frac{1}{2}-\left(\frac{\lambda_{1}}{p}+\frac{\lambda_{2}}{q}\right)\frac{2p}{n(p-2)\lambda_{1}}\right)\|\nabla \varphi(\tau_{3})\|^{2}_{2}\\
&+\left(\frac{1}{2}-\left(\frac{\lambda_{1}}{p}+\frac{\lambda_{2}}{q}\right)\frac{2p c}{n(p-2)\lambda_{1}}-\frac{\lambda_{2}}{q}\right)\|\varphi(\tau_{3})\|^{2}_{2}.
\end{split}
\end{align}
Meanwhile, by Cauchy-Schwartz inequality, we know (\ref{p-gujij3}) turns to
\begin{align}\label{erfanshude1p3}
\begin{split}
\|\varphi(\tau_{3})\|^{p}_{p}
\geq&\frac{4p c^{\frac{1}{2}}}{n(p-2)\lambda_{1}}\|\nabla \varphi(\tau_{3})\|_{2}\|\varphi(\tau_{3})\|_{2}.
\end{split}
\end{align}
Then, according to (\ref{erfanshude1p3}) and Gagliardo-Nirenberg inequality (\ref{GN-budengshi}), we know
\begin{align*}
\frac{4p c^{\frac{1}{2}}}{n(p-2)\lambda_{1}C_{1}}\|\nabla \varphi(\tau_{3})\|_{2}\|\varphi(\tau_{3})\|_{2}\leq\|\nabla\varphi(\tau_{3})\|^{\frac{n(p-2)}{2}}_{2}\|\varphi(\tau_{3})\|^{p-\frac{n(p-2)}{2}}_{2},
\end{align*}
which means
\begin{align}
\frac{4p c^{\frac{1}{2}}}{n(p-2)\lambda_{1}C_{1}}\leq\|\nabla\varphi(\tau_{3})\|^{\frac{n(p-2)-2}{2}}_{2}\|\varphi(\tau_{3})\|^{p-\frac{n(p-2)+2}{2}}_{2}.\label{p3Gnbudengshi-1-1}
\end{align}
Due to (\ref{2.2}), we know (\ref{p3Gnbudengshi-1-1}) turns to
\begin{align*}
\frac{4p c^{\frac{1}{2}}}{n(p-2)\lambda_{1}C_{1}}\leq\|\nabla\varphi(\tau_{3})\|^{\frac{n(p-2)-2}{2}}_{2}\|\varphi_{0}\|^{p-\frac{n(p-2)+2}{2}}_{2},
\end{align*}
i.e.,
\begin{align}
\|\nabla\varphi(\tau_{3})\|^{2}_{2}\geq M_{3}(\|\varphi_{0}\|_{2}),\label{jdengyu3dexiajie}
\end{align}where $M_{3}(\|\varphi_{0}\|_{2})$ is defined by (\ref{M3}).
Meanwhile, (\ref{p-gujij3}) gives
\begin{align}
\|\varphi(\tau_{3})\|^{p}_{p}>\frac{2p}{n(p-2)\lambda_{1}}\|\nabla \varphi(\tau_{3})\|^{2}_{2}.\label{shizi13}
\end{align}
By the similar process of giving (\ref{Gnbudengshi1-1}), we can use (\ref{shizi13}) to obtain
\begin{align}
\|\nabla\varphi(\tau_{3})\|^{2}_{2}\geq M_{1}(\|\varphi_{0}\|_{2}),\label{Gnbudengshi1-1-3}
\end{align}where $M_{1}(\|\varphi_{0}\|_{2})$ is defined by (\ref{M1}). Due to (\ref{jdengyu3dexiajie}) and (\ref{Gnbudengshi1-1-3}), we have
\begin{align*}
\|\nabla\varphi(\tau_{3})\|^{2}_{2}\geq \frac{M_{1}(\|\varphi_{0}\|_{2})+M_{3}(\|\varphi_{0}\|_{2})
+\left|M_{1}(\|\varphi_{0}\|_{2})-M_{3}(\|\varphi_{0}\|_{2})\right|}{2},
\end{align*}
which means (\ref{p-3zhunbeimaodun+1-1}) turns to
\begin{align}\label{p-3zhunbeimaodun+1-1-1}
\begin{split}
K(\varphi(\tau_{3}))
>
&\frac{n}{2}\left(\frac{1}{2}-\left(\frac{\lambda_{1}}{p}+\frac{\lambda_{2}}{q}\right)\frac{2p}{n(p-2)\lambda_{1}}\right)^{\frac{1}{2}}\|\varphi(\tau_{3})\|^{2}_{2}\\
&+\frac{1}{2}\left(\frac{1}{2}-\left(\frac{\lambda_{1}}{p}+\frac{\lambda_{2}}{q}\right)\frac{2p}{n(p-2)\lambda_{1}}\right)
\\&\times\frac{M_{1}(\|\varphi_{0}\|_{2})+M_{3}(\|\varphi_{0}\|_{2})
+\left|M_{1}(\|\varphi_{0}\|_{2})-M_{3}(\|\varphi_{0}\|_{2})\right|}{2}\\
&+\left(\frac{1}{2}-\left(\frac{\lambda_{1}}{p}+\frac{\lambda_{2}}{q}\right)\frac{2p c}{n(p-2)\lambda_{1}}-\frac{\lambda_{2}}{q}\right)\|\varphi(\tau_{3})\|^{2}_{2}.
\end{split}
\end{align}
According to (\ref{2.2}) and (\ref{2.3}), we know (\ref{p-3zhunbeimaodun+1-1-1}) turns to
\begin{align*}
K(\varphi_{0})
>&\left(\frac{1}{2}-\left(\frac{\lambda_{1}}{p}+\frac{\lambda_{2}}{q}\right)\frac{2p c}{n(p-2)\lambda_{1}}-\frac{\lambda_{2}}{q}+\frac{n}{2}\left(\frac{1}{2}-\left(\frac{\lambda_{1}}{p}+\frac{\lambda_{2}}{q}\right)\frac{2p}{n(p-2)\lambda_{1}}\right)^{\frac{1}{2}}
\right)\|\varphi_{0}\|^{2}_{2}\\
&+\frac{1}{2}\left(\frac{1}{2}-\left(\frac{\lambda_{1}}{p}+\frac{\lambda_{2}}{q}\right)\frac{2p}{n(p-2)\lambda_{1}}\right)
\\&\times\frac{M_{1}(\|\varphi_{0}\|_{2})+M_{3}(\|\varphi_{0}\|_{2})
+\left|M_{1}(\|\varphi_{0}\|_{2})-M_{3}(\|\varphi_{0}\|_{2})\right|}{2},
\end{align*}
which contradicts (\ref{K0--detiaoj1}) for $j=3$.
Thus, we have $S_{3}(\varphi(t))>0$ for $t\in(0,T_{\rm{max}})$.\\
{\bf{Case \textrm{IV}: \bm{$j=4.$}}}

We prove $S_{4}(\varphi(t))>0$ for all $t\in(0,T_{\rm{max}})$. Arguing by contradiction, we suppose that there exists the first time $\tau_{4}\in(0,T_{\rm{max}})$ such that $S_{4}(\varphi(\tau_{4}))=0$, i.e.,
\begin{align}
\|\varphi(\tau_{4})\|^{p}_{p}=&\frac{2p}{n(p-2)\lambda_{1}}\left(\|\nabla \varphi(\tau_{4})\|^{2}_{2}+c\|\varphi(\tau_{4})\|^{2}_{2}+\||x|\varphi(\tau_{4})\|^{2}_{2}\right),\label{gujij4-p}
\end{align} and $S_{4}(\varphi(t))>0$ for $t\in(0,\tau_{4})$, and we know $\varphi(\tau_{4})\neq 0$ due to (\ref{2.2}) and $\varphi_{0}\neq0$.
Next, we estimate $K(\varphi(\tau_{4}))$.
By substituting (\ref{gujij4-p}) into (\ref{2.16}), we have
\begin{align}\label{p4---zhunbeimaodun+1}
\begin{split}
K(\varphi(\tau_{4}))>&\left(\frac{1}{2}-\left(\frac{\lambda_{1}}{p}+\frac{\lambda_{2}}{q}\right)\frac{2p}{n(p-2)\lambda_{1}}\right)\left(\|\nabla \varphi(\tau_{4})\|^{2}_{2}+\||x|\varphi(\tau_{4})\|^{2}_{2}\right)\\
&+\left(\frac{1}{2}-\left(\frac{\lambda_{1}}{p}+\frac{\lambda_{2}}{q}\right)\frac{2p c}{n(p-2)\lambda_{1}}-\frac{\lambda_{2}}{q}\right)\|\varphi(\tau_{4})\|^{2}_{2}\\
=&\frac{1}{2}\left(\frac{1}{2}-\left(\frac{\lambda_{1}}{p}+\frac{\lambda_{2}}{q}\right)\frac{2p}{n(p-2)\lambda_{1}}\right)\|\nabla \varphi(\tau_{4})\|^{2}_{2}\\
&+\left(\frac{1}{2}-\left(\frac{\lambda_{1}}{p}+\frac{\lambda_{2}}{q}\right)\frac{2p}{n(p-2)\lambda_{1}}\right)
\||x|\varphi(\tau_{4})\|^{2}_{2}\\
&+
\frac{1}{2}\left(\frac{1}{2}-\left(\frac{\lambda_{1}}{p}+\frac{\lambda_{2}}{q}\right)\frac{2p}{n(p-2)\lambda_{1}}\right)\|\nabla \varphi(\tau_{4})\|^{2}_{2}\\
&+\left(\frac{1}{2}-\left(\frac{\lambda_{1}}{p}+\frac{\lambda_{2}}{q}\right)\frac{2p c}{n(p-2)\lambda_{1}}-\frac{\lambda_{2}}{q}\right)\|\varphi(\tau_{4})\|^{2}_{2}.
\end{split}
\end{align}
In (\ref{p4---zhunbeimaodun+1}), the assumption (\ref{2.9}) ensures $$\frac{1}{2}-\left(\frac{\lambda_{1}}{p}+\frac{\lambda_{2}}{q}\right)\frac{2p}{n(p-2)\lambda_{1}}>0.$$
And the assumption (\ref{2.10}) ensures $$\frac{1}{2}-\frac{\lambda_{2}}{q}>0.$$We choose $c>0$ to make $$\frac{1}{2}-\left(\frac{\lambda_{1}}{p}+\frac{\lambda_{2}}{q}\right)\frac{2p c}{n(p-2)\lambda_{1}}-\frac{\lambda_{2}}{q}>0,$$ i.e., (\ref{cdetiaojian}), hold.
According to Cauchy-Schwartz inequality and Lemma \ref{lem3.2}, we know (\ref{p4---zhunbeimaodun+1}) turns to
\begin{align}\label{p4--+-zhunbeimaodun+1-}
\begin{split}
K(\varphi(\tau_{4}))>&\frac{2}{2^{\frac{1}{2}}}\left(\frac{1}{2}-\left(\frac{\lambda_{1}}{p}+\frac{\lambda_{2}}{q}\right)\frac{2p}{n(p-2)
\lambda_{1}}\right)\|\nabla \varphi(\tau_{4})\|_{2}\||x|\varphi(\tau_{4})\|_{2}\\
&+
\frac{1}{2}\left(\frac{1}{2}-\left(\frac{\lambda_{1}}{p}+\frac{\lambda_{2}}{q}\right)\frac{2p}{n(p-2)\lambda_{1}}\right)\|\nabla \varphi(\tau_{4})\|^{2}_{2}\\
&+\left(\frac{1}{2}-\left(\frac{\lambda_{1}}{p}+\frac{\lambda_{2}}{q}\right)\frac{2p c}{n(p-2)\lambda_{1}}-\frac{\lambda_{2}}{q}\right)\|\varphi(\tau_{4})\|^{2}_{2}\\
\geq&\frac{n}{2^{\frac{1}{2}}}\left(\frac{1}{2}-\left(\frac{\lambda_{1}}{p}+\frac{\lambda_{2}}{q}\right)\frac{2p}{n(p-2)
\lambda_{1}}\right)\|\varphi(\tau_{4})\|^{2}_{2}\\
&+
\frac{1}{2}\left(\frac{1}{2}-\left(\frac{\lambda_{1}}{p}+\frac{\lambda_{2}}{q}\right)\frac{2p}{n(p-2)\lambda_{1}}\right)\|\nabla \varphi(\tau_{4})\|^{2}_{2}\\
&+\left(\frac{1}{2}-\left(\frac{\lambda_{1}}{p}+\frac{\lambda_{2}}{q}\right)\frac{2p c}{n(p-2)\lambda_{1}}-\frac{\lambda_{2}}{q}\right)\|\varphi(\tau_{4})\|^{2}_{2}.
\end{split}
\end{align}
Meanwhile, by Cauchy-Schwartz inequality, Lemma \ref{lem3.2} and (\ref{gujij4-p}), we have
\begin{align}\label{p4-erfanshude1-1}
\begin{split}
\|\varphi(\tau_{4})\|^{p}_{p}
\geq&\frac{2p}{n(p-2)\lambda_{1}}\left(2\|\nabla \varphi(\tau_{4})\|_{2}\||x|\varphi(\tau_{4})\|_{2}+c\|\varphi(\tau_{4})\|^{2}_{2}\right)\\
\geq&\frac{2p}{n(p-2)\lambda_{1}}\left(n\|\varphi(\tau_{4})\|^{2}_{2}+c\|\varphi(\tau_{4})\|^{2}_{2}\right).
\end{split}
\end{align}
Then, according to (\ref{p4-erfanshude1-1}) and Gagliardo-Nirenberg inequality (\ref{GN-budengshi}), we know
\begin{align}
\frac{2p(n+c)}{n(p-2)\lambda_{1} C_{1}}\leq\|\nabla\varphi(\tau_{4})\|^{\frac{n(p-2)}{2}}_{2}\|\varphi(\tau_{4})\|^{p-\frac{n(p-2)}{2}-2}_{2}.\label{p4--Gnbudengshi-1-1}
\end{align}
Due to (\ref{2.2}), we know (\ref{p4--Gnbudengshi-1-1}) turns to
\begin{align*}
\frac{2p(n+c)}{n(p-2)\lambda_{1} C_{1}}\leq\|\nabla\varphi(\tau_{4})\|^{\frac{n(p-2)}{2}}_{2}\|\varphi_{0}\|^{p-\frac{n(p-2)}{2}-2}_{2},
\end{align*}
i.e.,
\begin{align}
\|\nabla\varphi(\tau_{4})\|^{2}_{2}\geq M_{4}(\|\varphi_{0}\|_{2}),\label{p-4-Gnbudengshi2-1-2}
\end{align}where $M_{4}(\|\varphi_{0}\|_{2})$ is defined by (\ref{M4}).
Meanwhile, (\ref{gujij4-p}) gives
\begin{align}\label{shizi14}
\|\varphi(\tau_{4})\|^{p}_{p}>\frac{2p}{n(p-2)\lambda_{1}}\|\nabla \varphi(\tau_{4})\|^{2}_{2}.
\end{align}By the similar process of giving (\ref{Gnbudengshi1-1}), we can use (\ref{shizi14}) to obtain
\begin{align}
\|\nabla\varphi(\tau_{4})\|^{2}_{2}\geq M_{1}(\|\varphi_{0}\|_{2}),\label{Gnbudengshi1-1-4}
\end{align}where $M_{1}(\|\varphi_{0}\|_{2})$ is defined by (\ref{M1}).
By (\ref{p-4-Gnbudengshi2-1-2}) and (\ref{Gnbudengshi1-1-4}), we have
\begin{align*}
\|\nabla\varphi(\tau_{4})\|^{2}_{2}\geq \frac{M_{1}(\|\varphi_{0}\|_{2})+M_{4}(\|\varphi_{0}\|_{2})
+\left|M_{1}(\|\varphi_{0}\|_{2})-M_{4}(\|\varphi_{0}\|_{2})\right|}{2},
\end{align*}which means (\ref{p4--+-zhunbeimaodun+1-}) becomes
\begin{align}\label{p4--+-zhunbeimaodun+1}
\begin{split}
K(\varphi(\tau_{4}))
>&\frac{n}{2^{\frac{1}{2}}}\left(\frac{1}{2}-\left(\frac{\lambda_{1}}{p}+\frac{\lambda_{2}}{q}\right)\frac{2p}{n(p-2)
\lambda_{1}}\right)\|\varphi(\tau_{4})\|^{2}_{2}\\
&+
\frac{1}{2}\left(\frac{1}{2}-\left(\frac{\lambda_{1}}{p}+\frac{\lambda_{2}}{q}\right)\frac{2p}{n(p-2)\lambda_{1}}\right)
\\&\times\frac{M_{1}(\|\varphi_{0}\|_{2})+M_{4}(\|\varphi_{0}\|_{2})
+\left|M_{1}(\|\varphi_{0}\|_{2})-M_{4}(\|\varphi_{0}\|_{2})\right|}{2}\\
&+\left(\frac{1}{2}-\left(\frac{\lambda_{1}}{p}+\frac{\lambda_{2}}{q}\right)\frac{2p c}{n(p-2)\lambda_{1}}-\frac{\lambda_{2}}{q}\right)\|\varphi(\tau_{4})\|^{2}_{2}.
\end{split}
\end{align}
According to (\ref{p4--+-zhunbeimaodun+1}), (\ref{2.2}) and (\ref{2.3}), we know
\begin{align*}
K(\varphi_{0})
>&\left(\frac{1}{2}-\left(\frac{\lambda_{1}}{p}+\frac{\lambda_{2}}{q}\right)\frac{2p c}{n(p-2)\lambda_{1}}-\frac{\lambda_{2}}{q}+\frac{n}{2^{\frac{1}{2}}}\left(\frac{1}{2}-\left(\frac{\lambda_{1}}{p}+\frac{\lambda_{2}}{q}\right)\frac{2p}{n(p-2)
\lambda_{1}}\right)\right)\|\varphi_{0}\|^{2}_{2}\nonumber\\
&+
\frac{1}{2}\left(\frac{1}{2}-\left(\frac{\lambda_{1}}{p}+\frac{\lambda_{2}}{q}\right)\frac{2p}{n(p-2)\lambda_{1}}\right)
\\&\times\frac{M_{1}(\|\varphi_{0}\|_{2})+M_{4}(\|\varphi_{0}\|_{2})
+\left|M_{1}(\|\varphi_{0}\|_{2})-M_{4}(\|\varphi_{0}\|_{2})\right|}{2},\end{align*}
which contradicts (\ref{K0--detiaoj1}) for $j=4$. Thus, we have $S_{4}(\varphi(t))>0$ for $t\in(0,T_{\rm{max}})$.\\
{\bf{Case \textrm{V}: \bm{$j=5.$}}}

We prove $S_{5}(\varphi(t))>0$ for $t\in(0,T_{\rm{max}})$. Arguing by contradiction, we suppose that there exists the first time $\tau_{5}\in(0,T_{\rm{max}})$ to make $S_{5}(\varphi(\tau_{5}))=0$ and $S_{5}(\varphi(t))>0$ for $t\in(0,\tau_{5})$, and we know $\varphi(\tau_{5})\neq 0$ due to (\ref{2.2}) and $\varphi_{0}\neq0$. Next, we estimate $K(\varphi(\tau_{5}))$.
According to $S_{5}(\varphi(\tau_{5}))=0$, we have
\begin{align}
\frac{n(p-2)\lambda_{1}}{2p}\|\varphi(\tau_{5})\|^{p}_{p}+\frac{n(q-2)\lambda_{2}}{2q}\|\varphi(\tau_{5})\|^{q}_{q}=\|\nabla \varphi(\tau_{5})\|^{2}_{2}.\label{jdengyu5}
\end{align}
Due to $p>q$ assumed in this paper, we know
\begin{align}\label{chuli1-p-q}
\begin{split}
&\frac{n(p-2)\lambda_{1}}{2p}\|\varphi(\tau_{5})\|^{p}_{p}+\frac{n(q-2)\lambda_{2}}{2q}\|\varphi(\tau_{5})\|^{q}_{q}\\
>&\frac{n(q-2)\lambda_{1}}{2p}\|\varphi(\tau_{5})\|^{p}_{p}+\frac{n(q-2)\lambda_{2}}{2q}\|\varphi(\tau_{5})\|^{q}_{q}\\
=&\frac{n(q-2)}{2}\left(\frac{\lambda_{1}}{p}\|\varphi(\tau_{5})\|^{p}_{p}+\frac{\lambda_{2}}{q}\|\varphi(\tau_{5})\|^{q}_{q}\right).
\end{split}
\end{align}
Combining (\ref{jdengyu5}) and (\ref{chuli1-p-q}), we have
\begin{align}
&\frac{\lambda_{1}}{p}\|\varphi(\tau_{5})\|^{p}_{p}+\frac{\lambda_{2}}{q}\|\varphi(\tau_{5})\|^{q}_{q}
<\frac{2}{n(q-2)}\|\nabla\varphi(\tau_{5})\|^{2}_{2}.\label{pq5gujijgaoneng3+}
\end{align}
By substituting (\ref{pq5gujijgaoneng3+}) into $K(\varphi(\tau_{5}))$, we have
\begin{align}\label{pq5zhunbeimaodun-1-5}
\begin{split}
K(\varphi(\tau_{5}))>&\left(\frac{1}{2}-\frac{2}{n(q-2)}\right)\|\nabla \varphi(\tau_{5})\|^{2}_{2}+\frac{1}{2}\||x|\varphi(\tau_{5})\|^{2}_{2}+\frac{1}{2}\|\varphi(\tau_{5})\|^{2}_{2}\\
=&\frac{1}{2}\left(\frac{1}{2}-\frac{2}{n(q-2)}\right)\|\nabla \varphi(\tau_{5})\|^{2}_{2}+\frac{1}{2}\left(\frac{1}{2}-\frac{2}{n(q-2)}\right)\|\nabla \varphi(\tau_{5})\|^{2}_{2}+\frac{1}{2}\||x|\varphi(\tau_{5})\|^{2}_{2}
\\&+\frac{1}{2}\|\varphi(\tau_{5})\|^{2}_{2}.
\end{split}
\end{align}
In (\ref{pq5zhunbeimaodun-1-5}), the assumption (\ref{1.2}) ensures $$\frac{1}{2}-\frac{2}{n(q-2)}>0.$$ By Cauchy-Schwartz inequality and Lemma \ref{lem3.2}, we know (\ref{pq5zhunbeimaodun-1-5}) turns to
\begin{align}\label{pq5zhunbeimaodun-1}
\begin{split}
K(\varphi(\tau_{5}))\geq&\frac{1}{2}\left(\frac{1}{2}-\frac{2}{n(q-2)}\right)\|\nabla \varphi(\tau_{5})\|^{2}_{2}+\left(\frac{1}{2}-\frac{2}{n(q-2)}\right)^{\frac{1}{2}}\|\nabla \varphi(\tau_{5})\|_{2}\||x|\varphi(\tau_{5})\|_{2}\\&+\frac{1}{2}\|\varphi(\tau_{5})\|^{2}_{2}\\
\geq&\frac{1}{2}\left(\frac{1}{2}-\frac{2}{n(q-2)}\right)\|\nabla \varphi(\tau_{5})\|^{2}_{2}+\frac{n}{2}\left(\frac{1}{2}-\frac{2}{n(q-2)}\right)^{\frac{1}{2}}\|\varphi(\tau_{5})\|^{2}_{2}+\frac{1}{2}\|\varphi(\tau_{5})\|^{2}_{2}.
\end{split}
\end{align}
Meanwhile, according to $(\ref{jdengyu5})$, (\ref{GN-budengshi}) and (\ref{GN-budengshi1}), we have
\begin{align*}
\|\nabla \varphi(\tau_{5})\|^{2}_{2}=&\frac{n(p-2)\lambda_{1}}{2p}\|\varphi(\tau_{5})\|^{p}_{p}+\frac{n(q-2)\lambda_{2}}{2q}\|\varphi(\tau_{5})\|^{q}_{q}\nonumber\\
\leq&\frac{n(p-2)C_{1}\lambda_{1}}{2p}\|\nabla\varphi(\tau_{5})\|^{\frac{n(p-2)}{2}}_{2}\|\varphi(\tau_{5})\|^{p-\frac{n(p-2)}{2}}_{2}\nonumber\\
&+\frac{n(q-2)C_{3}\lambda_{2}}{2q}\|\nabla\varphi(\tau_{5})\|^{\frac{n(q-2)}{2}}_{2}\|\varphi(\tau_{5})\|^{q-\frac{n(q-2)}{2}}_{2},
\end{align*}
i.e.,
\begin{align}\label{jdengyu5-5}
\begin{split}
1
\leq&\frac{n(p-2)C_{1}\lambda_{1}}{2p}\|\nabla\varphi(\tau_{5})\|^{\frac{n(p-2)-4}{2}}_{2}\|\varphi(\tau_{5})\|^{p-\frac{n(p-2)}{2}}_{2}\\
&+\frac{n(q-2)C_{3}\lambda_{2}}{2q}\|\nabla\varphi(\tau_{5})\|^{\frac{n(q-2)-4}{2}}_{2}\|\varphi(\tau_{5})\|^{q-\frac{n(q-2)}{2}}_{2},
\end{split}
\end{align}
which means
\begin{align*}
\frac{1}{2}
\leq&\frac{n(p-2)C_{1}\lambda_{1}}{2p}\|\nabla\varphi(\tau_{5})\|^{\frac{n(p-2)-4}{2}}_{2}\|\varphi(\tau_{5})\|^{p-\frac{n(p-2)}{2}}_{2},
\end{align*}
i.e.,
\begin{align}
\left(\frac{p}{n(p-2)C_{1}\lambda_{1}\|\varphi(\tau_{5})\|^{p-\frac{n(p-2)}{2}}_{2}}\right)^{\frac{4}{n(p-2)-4}}
\leq&\|\nabla\varphi(\tau_{5})\|^{2}_{2},\label{jdengyu5-1}
\end{align}
or
\begin{align*}
\frac{1}{2}
\leq&\frac{n(q-2)C_{3}\lambda_{2}}{2q}\|\nabla\varphi(\tau_{5})\|^{\frac{n(q-2)-4}{2}}_{2}\|\varphi(\tau_{5})\|^{q-\frac{n(q-2)}{2}}_{2},
\end{align*}
i.e.,
\begin{align}
\left(\frac{q}{n(q-2)C_{3}\lambda_{2}\|\varphi(\tau_{5})\|^{q-\frac{n(q-2)}{2}}_{2}}\right)^{\frac{4}{n(q-2)-4}}
\leq&\|\nabla\varphi(\tau_{5})\|^{2}_{2}.\label{jdengyu5-2}
\end{align}
According to (\ref{2.2}), (\ref{jdengyu5-1}) and (\ref{jdengyu5-2}) turn to
\begin{align}
\left(\frac{p}{n(p-2)C_{1}\lambda_{1}\|\varphi_{0}\|^{p-\frac{n(p-2)}{2}}_{2}}\right)^{\frac{4}{n(p-2)-4}}
\leq&\|\nabla\varphi(\tau_{5})\|^{2}_{2}\label{jdengyu5-1-1}
\end{align}
and
\begin{align}
\left(\frac{q}{n(q-2)C_{3}\lambda_{2}\|\varphi_{0}\|^{q-\frac{n(q-2)}{2}}_{2}}\right)^{\frac{4}{n(q-2)-4}}
\leq&\|\nabla\varphi(\tau_{5})\|^{2}_{2},\label{jdengyu5-2-1}
\end{align}
respectively. From (\ref{jdengyu5-1-1}) and (\ref{jdengyu5-2-1}), we have
\begin{align}
\|\nabla\varphi(\tau_{5})\|^{2}_{2}\geq&N_{1}(\|\varphi_{0}\|_{2}),\label{jdengyu5-2-2}
\end{align}
where $N_{1}(\|\varphi_{0}\|_{2})$ is defined (\ref{N1}).
According to (\ref{jdengyu5-2-2}), we know (\ref{pq5zhunbeimaodun-1}) turns to
\begin{align}\label{zhunbeimaodun-1-5-}
\begin{split}
K(\varphi(\tau_{5}))
>&\frac{1}{2}\left(\frac{1}{2}-\frac{2}{n(q-2)}\right)N_{1}(\|\varphi_{0}\|_{2})+\frac{n}{2}\left(\frac{1}{2}-\frac{2}{n(q-2)}\right)^{\frac{1}{2}}\|\varphi(\tau_{5})\|^{2}_{2}+\frac{1}{2}\|\varphi(\tau_{5})\|^{2}_{2}.
\end{split}
\end{align}
Due to (\ref{2.2}) and (\ref{2.3}), we know (\ref{zhunbeimaodun-1-5-}) turns to
\begin{align*}
K(\varphi_{0})
>&\frac{1}{2}\left(\frac{1}{2}-\frac{2}{n(q-2)}\right)N_{1}(\|\varphi_{0}\|_{2})+\left(\frac{1}{2}+\frac{n}{2}\left(\frac{1}{2}-\frac{2}{n(q-2)}\right)^{\frac{1}{2}}\right)\|\varphi_{0}\|^{2}_{2},
\end{align*}
which contradicts (\ref{K0--detiaoj1}) for $j=5$. Thus, we have $S_{5}(\varphi(t))>0$ for $t\in(0,T_{\rm{max}})$.\\
{\bf{Case \textrm{VI}: \bm{$j=6.$}}}

We prove $S_{6}(\varphi(t))>0$ for $t\in(0,T_{\rm{max}})$. Arguing by contradiction, we suppose that there exists the first time $\tau_{6}\in(0,T_{\rm{max}})$ to make $S_{6}(\varphi(\tau_{6}))=0$ and $S_{6}(\varphi(t))>0$ for $t\in(0,\tau_{6})$, and we know $\varphi(\tau_{6})\neq 0$ due to (\ref{2.2}) and $\varphi_{0}\neq0$. Next, we estimate $K(\varphi(\tau_{6}))$.
According to $S_{6}(\varphi(\tau_{6}))=0$, we have
\begin{align}
\frac{n(p-2)\lambda_{1}}{2p}\|\varphi(\tau_{6})\|^{p}_{p}+\frac{n(q-2)\lambda_{2}}{2q}\|\varphi(\tau_{6})\|^{q}_{q}=\|\nabla \varphi(\tau_{6})\|^{2}_{2}+\||x|\varphi(\tau_{6})\|^{2}_{2}.\label{pq6jdengyu6}
\end{align}
Using (\ref{pq6jdengyu6}) and retrieving (\ref{chuli1-p-q}) for $j=6$, we know
\begin{align}
&\frac{\lambda_{1}}{p}\|\varphi(\tau_{6})\|^{p}_{p}+\frac{\lambda_{2}}{q}\|\varphi(\tau_{6})\|^{q}_{q}
<\frac{2}{n(q-2)}\left(\|\nabla\varphi(\tau_{6})\|^{2}_{2}+\||x|\varphi(\tau_{6})\|^{2}_{2}\right).\label{gujijgaoneng3+}
\end{align}
By substituting (\ref{gujijgaoneng3+}) into $K(\varphi(\tau_{6}))$, we have
\begin{align}\label{pq6zhunbeimaodun-1-5}
\begin{split}
K(\varphi(\tau_{6}))>&\left(\frac{1}{2}-\frac{2}{n(q-2)}\right)\left(\|\nabla \varphi(\tau_{6})\|^{2}_{2}+\||x|\varphi(\tau_{6})\|^{2}_{2}\right)+\frac{1}{2}\|\varphi(\tau_{6})\|^{2}_{2}\\
=&\frac{1}{2}\left(\frac{1}{2}-\frac{2}{n(q-2)}\right)\|\nabla \varphi(\tau_{6})\|^{2}_{2}+\frac{1}{2}\left(\frac{1}{2}-\frac{2}{n(q-2)}\right)\|\nabla \varphi(\tau_{6})\|^{2}_{2}\\
&+\left(\frac{1}{2}-\frac{2}{n(q-2)}\right)\||x|\varphi(\tau_{6})\|^{2}_{2}+\frac{1}{2}\|\varphi(\tau_{6})\|^{2}_{2}.
\end{split}
\end{align}
In (\ref{pq6zhunbeimaodun-1-5}), the assumption (\ref{1.2}) ensures $$\frac{1}{2}-\frac{2}{n(q-2)}>0.$$ By Cauchy-Schwartz inequality and Lemma \ref{lem3.2}, we know (\ref{pq6zhunbeimaodun-1-5}) turns to
\begin{align}\label{pq6-zhunbeimaodun-1}
\begin{split}
K(\varphi(\tau_{6}))>&\frac{1}{2}\left(\frac{1}{2}-\frac{2}{n(q-2)}\right)\|\nabla \varphi(\tau_{6})\|^{2}_{2}+\frac{2}{2^{\frac{1}{2}}}\left(\frac{1}{2}-\frac{2}{n(q-2)}\right)\|\nabla \varphi(\tau_{6})\|_{2}\||x|\varphi(\tau_{6})\|_{2}
\\
&+\frac{1}{2}\|\varphi(\tau_{6})\|^{2}_{2}\\
\geq&\frac{1}{2}\left(\frac{1}{2}-\frac{2}{n(q-2)}\right)\|\nabla \varphi(\tau_{6})\|^{2}_{2}+\frac{n}{2^{\frac{1}{2}}}\left(\frac{1}{2}-\frac{2}{n(q-2)}\right)\|\varphi(\tau_{6})\|^{2}_{2}+\frac{1}{2}\|\varphi(\tau_{6})\|^{2}_{2}.
\end{split}
\end{align}
Meanwhile, by using $(\ref{pq6jdengyu6})$ and retrieving (\ref{erfanshude1-2}) for $j=6$, we have
\begin{align}
\frac{n(p-2)\lambda_{1}}{2p}\|\varphi(\tau_{6})\|^{p}_{p}+\frac{n(q-2)\lambda_{2}}{2q}\|\varphi(\tau_{6})\|^{q}_{q}\geq n\|\varphi(\tau_{6})\|^{2}_{2}.\label{pq6jdengyu6-1}
\end{align}
According to (\ref{pq6jdengyu6-1}), (\ref{GN-budengshi}) and (\ref{GN-budengshi1}), we have
\begin{align*}
n\|\varphi(\tau_{6})\|^{2}_{2}\leq&\frac{n(p-2)\lambda_{1}}{2p}\|\varphi(\tau_{6})\|^{p}_{p}+\frac{n(q-2)\lambda_{2}}{2q}\|\varphi(\tau_{6})\|^{q}_{q}\nonumber\\
\leq&\frac{n(p-2)C_{1}\lambda_{1}}{2p}\|\nabla\varphi(\tau_{6})\|^{\frac{n(p-2)}{2}}_{2}\|\varphi(\tau_{6})\|^{p-\frac{n(p-2)}{2}}_{2}\nonumber\\
&+\frac{n(q-2)C_{3}\lambda_{2}}{2q}\|\nabla\varphi(\tau_{6})\|^{\frac{n(q-2)}{2}}_{2}\|\varphi(\tau_{6})\|^{q-\frac{n(q-2)}{2}}_{2},
\end{align*}
i.e.,
\begin{align}
n
\leq&\frac{n(p-2)C_{1}\lambda_{1}}{2p}\|\nabla\varphi(\tau_{6})\|^{\frac{n(p-2)}{2}}_{2}\|\varphi(\tau_{6})\|^{p-\frac{n(p-2)}{2}-2}_{2}\nonumber\\
&+\frac{n(q-2)C_{3}\lambda_{2}}{2q}\|\nabla\varphi(\tau_{6})\|^{\frac{n(q-2)}{2}}_{2}\|\varphi(\tau_{6})\|^{q-\frac{n(q-2)}{2}-2}_{2},
\label{jdengyu5-5}
\end{align}
which means
\begin{align*}
\frac{n}{2}
\leq&\frac{n(p-2)C_{1}\lambda_{1}}{2p}\|\nabla\varphi(\tau_{6})\|^{\frac{n(p-2)}{2}}_{2}\|\varphi(\tau_{6})\|^{p-\frac{n(p-2)}{2}-2}_{2},
\end{align*}
i.e.,
\begin{align}
\left(\frac{p}{(p-2)C_{1}\lambda_{1}\|\varphi(\tau_{6})\|^{p-\frac{n(p-2)}{2}-2}_{2}}\right)^{\frac{4}{n(p-2)}}
\leq&\|\nabla\varphi(\tau_{6})\|^{2}_{2},\label{jdengyu6-1}
\end{align}
or
\begin{align*}
\frac{n}{2}
\leq&\frac{n(q-2)C_{3}\lambda_{2}}{2q}\|\nabla\varphi(\tau_{6})\|^{\frac{n(q-2)}{2}}_{2}\|\varphi(\tau_{6})\|^{q-\frac{n(q-2)}{2}-2}_{2},
\end{align*}
i.e.,
\begin{align}
\left(\frac{q}{(q-2)C_{3}\lambda_{2}\|\varphi(\tau_{6})\|^{q-\frac{n(q-2)}{2}-2}_{2}}\right)^{\frac{4}{n(q-2)}}
\leq&\|\nabla\varphi(\tau_{6})\|^{2}_{2}.\label{jdengyu6-2}
\end{align}
According to (\ref{2.2}), (\ref{jdengyu6-1}) and (\ref{jdengyu6-2}) turn to
\begin{align}
\left(\frac{p}{(p-2)C_{1}\lambda_{1}\|\varphi_{0}\|^{p-\frac{n(p-2)}{2}-2}_{2}}\right)^{\frac{4}{n(p-2)}}
\leq&\|\nabla\varphi(\tau_{6})\|^{2}_{2}\label{jdengyu6-1-1}
\end{align}
and
\begin{align}
\left(\frac{q}{(q-2)C_{3}\lambda_{2}\|\varphi_{0}\|^{q-\frac{n(q-2)}{2}-2}_{2}}\right)^{\frac{4}{n(q-2)}}
\leq&\|\nabla\varphi(\tau_{6})\|^{2}_{2},\label{jdengyu6-2-1}
\end{align}
respectively. From (\ref{jdengyu6-1-1}) and (\ref{jdengyu6-2-1}), we have
\begin{align}
\|\nabla\varphi(\tau_{6})\|^{2}_{2}\geq&N_{2}(\|\varphi_{0}\|_{2})
,\label{jdengyu6-2-2-6}
\end{align}
where $N_{2}(\|\varphi_{0}\|_{2})$ is defined by (\ref{N2}).
Meanwhile, (\ref{pq6jdengyu6}) also gives
\begin{align}\label{shizi16}
&\frac{n(p-2)\lambda_{1}}{2p}\|\varphi(\tau_{6})\|^{p}_{p}+\frac{n(q-2)\lambda_{2}}{2q}\|\varphi(\tau_{6})\|^{q}_{q}
>\|\nabla\varphi(\tau_{6})\|^{2}_{2}.
\end{align}By the similar process of giving (\ref{jdengyu5-2-2}), we can use (\ref{shizi16}) to obtain
\begin{align}
\|\nabla\varphi(\tau_{6})\|^{2}_{2}\geq&N_{1}(\|\varphi_{0}\|_{2}),\label{jdengyu5-2-2-6}
\end{align}
where $N_{1}(\|\varphi_{0}\|_{2})$ is defined by (\ref{N1}).
From (\ref{jdengyu6-2-2-6}) and (\ref{jdengyu5-2-2-6}), we know
\begin{align*}
\|\nabla\varphi(\tau_{6})\|^{2}_{2}\geq&\frac{N_{1}(\|\varphi_{0}\|_{2})+N_{2}(\|\varphi_{0}\|_{2})+\left|N_{1}(\|\varphi_{0}\|_{2})-
N_{2}(\|\varphi_{0}\|_{2})\right|}{2},
\end{align*}
which means
(\ref{pq6-zhunbeimaodun-1}) turns to
\begin{align}\label{pq6--zhunbeimaodun-1-5}
\begin{split}
K(\varphi(\tau_{6}))
>&\frac{1}{2}\left(\frac{1}{2}-\frac{2}{n(q-2)}\right)\frac{N_{1}(\|\varphi_{0}\|_{2})+N_{2}(\|\varphi_{0}\|_{2})+\left|N_{1}(\|\varphi_{0}\|_{2})-
N_{2}(\|\varphi_{0}\|_{2})\right|}{2}\\
&+\left(\frac{1}{2}+\frac{n}{2^{\frac{1}{2}}}\left(\frac{1}{2}-\frac{2}{n(q-2)}\right)\right)\|\varphi(\tau_{6})\|^{2}_{2}.
\end{split}
\end{align}
Due to (\ref{2.2}) and (\ref{2.3}), we know (\ref{pq6--zhunbeimaodun-1-5}) turns to
\begin{align*}
K(\varphi_{0})>&\frac{1}{2}\left(\frac{1}{2}-\frac{2}{n(q-2)}\right)\frac{N_{1}(\|\varphi_{0}\|_{2})+N_{2}(\|\varphi_{0}\|_{2})
+\left|N_{1}(\|\varphi_{0}\|_{2})-
N_{2}(\|\varphi_{0}\|_{2})\right|}{2}\\
&+\left(\frac{1}{2}+\frac{n}{2^{\frac{1}{2}}}\left(\frac{1}{2}-\frac{2}{n(q-2)}\right)\right)\|\varphi_{0}\|^{2}_{2},
\end{align*}
which contradicts (\ref{K0--detiaoj1}) for $j=6$. Thus, we have $S_{6}(\varphi(t))>0$ for $t\in(0,T_{\rm{max}})$.\\
{\bf{Case \textrm{VI}: \bm{$j=7.$}}}

We prove $S_{7}(\varphi(t))>0$ for $t\in(0,T_{\rm{max}})$. Arguing by contradiction, we suppose that there exists the first time $\tau_{7}\in(0,T_{\rm{max}})$ to make $S_{7}(\varphi(\tau_{7}))=0$ and $S_{7}(\varphi(t))>0$ for $t\in(0,\tau_{7})$, and we know $\varphi(\tau_{7})\neq 0$ due to (\ref{2.2}) and $\varphi_{0}\neq0$. Next, we estimate $K(\varphi(\tau_{7}))$.
According to $S_{7}(\varphi(\tau_{7}))=0$, we have
\begin{align}
\frac{n(p-2)\lambda_{1}}{2p}\|\varphi(\tau_{7})\|^{p}_{p}+\frac{n(q-2)\lambda_{2}}{2q}\|\varphi(\tau_{7})\|^{q}_{q}=\|\nabla \varphi(\tau_{7})\|^{2}_{2}+\|\varphi(\tau_{7})\|^{2}_{2}.\label{jdengyu7}
\end{align}
Using (\ref{jdengyu7}) and retrieving (\ref{chuli1-p-q}) for $j=7$, we know
\begin{align}
&\frac{\lambda_{1}}{p}\|\varphi(\tau_{7})\|^{p}_{p}+\frac{\lambda_{2}}{q}\|\varphi(\tau_{7})\|^{q}_{q}
<\frac{2}{n(q-2)}\left(\|\nabla\varphi(\tau_{7})\|^{2}_{2}+\|\varphi(\tau_{7})\|^{2}_{2}\right).\label{gujijgaoneng3-7+}
\end{align}
By substituting (\ref{gujijgaoneng3-7+}) into $K(\varphi(\tau_{7}))$, we have
\begin{align}\label{pq7zhunbeimaodun-1-5}
\begin{split}
K(\varphi(\tau_{7}))>&\left(\frac{1}{2}-\frac{2}{n(q-2)}\right)\|\nabla \varphi(\tau_{7})\|^{2}_{2}+\frac{1}{2}\||x|\varphi(\tau_{7})\|^{2}_{2}+\left(\frac{1}{2}-\frac{2}{n(q-2)}\right)\| \varphi(\tau_{7})\|^{2}_{2}\\
=&\frac{1}{2}\left(\frac{1}{2}-\frac{2}{n(q-2)}\right)\|\nabla \varphi(\tau_{7})\|^{2}_{2}+\frac{1}{2}\left(\frac{1}{2}-\frac{2}{n(q-2)}\right)\|\nabla \varphi(\tau_{7})\|^{2}_{2}\\
&+\frac{1}{2}\||x|\varphi(\tau_{7})\|^{2}_{2}
+\left(\frac{1}{2}-\frac{2}{n(q-2)}\right)\|\varphi(\tau_{7})\|^{2}_{2}.
\end{split}
\end{align}
In (\ref{pq7zhunbeimaodun-1-5}), the assumption (\ref{1.2}) ensures $$\frac{1}{2}-\frac{2}{n(q-2)}>0.$$ By Cauchy-Schwartz inequality and Lemma \ref{lem3.2}, we know (\ref{pq7zhunbeimaodun-1-5}) turns to
\begin{align}\label{pq7-zhunbeimaodun-7-1}
\begin{split}
K(\varphi(\tau_{7}))>&\frac{1}{2}\left(\frac{1}{2}-\frac{2}{n(q-2)}\right)\|\nabla \varphi(\tau_{7})\|^{2}_{2}+\left(\frac{1}{2}-\frac{2}{n(q-2)}\right)^{\frac{1}{2}}\|\nabla \varphi(\tau_{7})\|_{2}\||x|\varphi(\tau_{7})\|_{2}\\&+\left(\frac{1}{2}-\frac{2}{n(q-2)}\right)\|\varphi(\tau_{7})\|^{2}_{2}\\
\geq&\frac{1}{2}\left(\frac{1}{2}-\frac{2}{n(q-2)}\right)\|\nabla \varphi(\tau_{7})\|^{2}_{2}
+\frac{n}{2}\left(\frac{1}{2}-\frac{2}{n(q-2)}\right)^{\frac{1}{2}}\|\varphi(\tau_{7})\|^{2}_{2}\\&+\left(\frac{1}{2}-\frac{2}{n(q-2)}\right)\|\varphi(\tau_{7})\|^{2}_{2}.
\end{split}
\end{align}
Meanwhile, according to $(\ref{jdengyu7})$ and Cauchy-Schwartz inequality, we have
\begin{align}
&\frac{n(p-2)\lambda_{1}}{2p}\|\varphi(\tau_{7})\|^{p}_{p}+\frac{n(q-2)\lambda_{2}}{2q}\|\varphi(\tau_{7})\|^{q}_{q}
=\|\nabla \varphi(\tau_{7})\|^{2}_{2}+\|\varphi(\tau_{7})\|^{2}_{2}
\geq2\|\nabla \varphi(\tau_{7})\|_{2}\|\varphi(\tau_{7})\|_{2}.\label{jdengyu7-1}
\end{align}
According to (\ref{jdengyu7-1}), (\ref{GN-budengshi}) and (\ref{GN-budengshi1}), we have
\begin{align*}
2\|\nabla \varphi(\tau_{7})\|_{2}\|\varphi(\tau_{7})\|_{2}\leq&\frac{n(p-2)\lambda_{1}}{2p}\|\varphi(\tau_{7})\|^{p}_{p}
+\frac{n(q-2)\lambda_{2}}{2q}\|\varphi(\tau_{7})\|^{q}_{q}\nonumber\\
\leq&\frac{n(p-2)\lambda_{1}C_{1}}{2p}\|\nabla\varphi(\tau_{7})\|^{\frac{n(p-2)}{2}}_{2}\|\varphi(\tau_{7})\|^{p-\frac{n(p-2)}{2}}_{2}\nonumber\\
&+\frac{n(q-2)\lambda_{2}C_{3}}{2q}\|\nabla\varphi(\tau_{7})\|^{\frac{n(q-2)}{2}}_{2}\|\varphi(\tau_{7})\|^{q-\frac{n(q-2)}{2}}_{2},
\end{align*}
which means
\begin{align}\label{jieguo6}
\begin{split}
2
\leq&\frac{n(p-2)\lambda_{1}C_{1}}{2p}\|\nabla\varphi(\tau_{7})\|^{\frac{n(p-2)}{2}-1}_{2}\|\varphi(\tau_{7})\|^{p-\frac{n(p-2)}{2}-1}_{2}\\
&+\frac{n(q-2)\lambda_{2}C_{3}}{2q}\|\nabla\varphi(\tau_{7})\|^{\frac{n(q-2)}{2}-1}_{2}\|\varphi(\tau_{7})\|^{q-\frac{n(q-2)}{2}-1}_{2}.
\end{split}
\end{align}
According to (\ref{jieguo6}), we have
\begin{align}
1
\leq\frac{n(p-2)\lambda_{1}C_{1}}{2p}\|\nabla\varphi(\tau_{7})\|^{\frac{n(p-2)}{2}-1}_{2}\|\varphi(\tau_{7})\|^{p-\frac{n(p-2)}{2}-1}_{2}
\label{jieguo6-p}
\end{align}
or
\begin{align}
1
\leq\frac{n(q-2)\lambda_{2}C_{3}}{2q}\|\nabla\varphi(\tau_{7})\|^{\frac{n(q-2)}{2}-1}_{2}\|\varphi(\tau_{7})\|^{q-\frac{n(q-2)}{2}-1}_{2}.\label{jieguo6-q}
\end{align}
According to (\ref{2.2}), (\ref{jieguo6-p}) and (\ref{jieguo6-q}) turn to
\begin{align}
\left(\frac{2p}{n(p-2)\lambda_{1}C_{1}\|\varphi_{0}\|^{p-\frac{n(p-2)}{2}-1}_{2}}\right)^{\frac{4}{n(p-2)-2}}
\leq&\|\nabla\varphi(\tau_{7})\|^{2}_{2}\label{jdengyu7-1-1}
\end{align}
and
\begin{align}
\left(\frac{2q}{n(q-2)\lambda_{2}C_{3}\|\varphi_{0}\|^{q-\frac{n(q-2)}{2}-1}_{2}}\right)^{\frac{4}{n(q-2)-2}}
\leq&\|\nabla\varphi(\tau_{7})\|^{2}_{2},\label{jdengyu7-1-2}
\end{align}
respectively. From (\ref{jdengyu7-1-1}) and (\ref{jdengyu7-1-2}), we have
\begin{align}
\|\nabla\varphi(\tau_{7})\|^{2}_{2}\geq&N_{3}(\|\varphi_{0}\|_{2}),
\label{jdengyu6-2-2-7}
\end{align}
where $N_{3}(\|\varphi_{0}\|_{2})$ is defined by (\ref{N3}).
Meanwhile, (\ref{jdengyu7}) also gives
\begin{align}
\frac{n(p-2)\lambda_{1}}{2p}\|\varphi(\tau_{7})\|^{p}_{p}+\frac{n(q-2)\lambda_{2}}{2q}\|\varphi(\tau_{7})\|^{q}_{q}>\|\nabla \varphi(\tau_{7})\|^{2}_{2}.\label{shizi17}
\end{align}By the similar process of giving (\ref{jdengyu5-2-2}), we can use (\ref{shizi17}) to obtain
\begin{align}
\|\nabla\varphi(\tau_{7})\|^{2}_{2}\geq&N_{1}(\|\varphi_{0}\|_{2}),\label{jdengyu5-2-2-7}
\end{align}
where $N_{1}(\|\varphi_{0}\|_{2})$ are defined by (\ref{N1}).
From (\ref{jdengyu6-2-2-7}) and (\ref{jdengyu5-2-2-7}), we know
\begin{align*}
\|\nabla\varphi(\tau_{7})\|^{2}_{2}\geq&\frac{N_{1}(\|\varphi_{0}\|_{2})+N_{3}(\|\varphi_{0}\|_{2})+\left|N_{1}(\|\varphi_{0}\|_{2})-N_{3}(\|\varphi_{0}\|_{2})\right|}{2},
\end{align*}
which means
(\ref{pq7-zhunbeimaodun-7-1}) turns to
\begin{align}\label{pq-7-zhunbeimaodun-7-1}
\begin{split}
K(\varphi(\tau_{7}))
>&\frac{1}{2}\left(\frac{1}{2}-\frac{2}{n(q-2)}\right)\frac{N_{1}(\|\varphi_{0}\|_{2})+N_{3}(\|\varphi_{0}\|_{2})+\left|N_{1}(\|\varphi_{0}\|_{2})-N_{3}(\|\varphi_{0}\|_{2})\right|}{2}\\
&+\frac{n}{2}
\left(\frac{1}{2}-\frac{2}{n(q-2)}\right)^{\frac{1}{2}}\|\varphi(\tau_{7})\|^{2}_{2}+\left(\frac{1}{2}-\frac{2}{n(q-2)}\right)\|\varphi(\tau_{7})\|^{2}_{2}.
\end{split}
\end{align}
According to (\ref{2.2}) and (\ref{2.3}), (\ref{pq-7-zhunbeimaodun-7-1}) becomes
\begin{align*}
K(\varphi_{0})
>&\frac{1}{2}\left(\frac{1}{2}-\frac{2}{n(q-2)}\right)\frac{N_{1}(\|\varphi_{0}\|_{2})+N_{3}(\|\varphi_{0}\|_{2})+\left|N_{1}(\|\varphi_{0}\|_{2})-N_{3}(\|\varphi_{0}\|_{2})\right|}{2}\nonumber\\
&+\left(\frac{1}{2}-\frac{2}{n(q-2)}+\frac{n}{2}\left(\frac{1}{2}-\frac{2}{n(q-2)}\right)^{\frac{1}{2}}\right)\|\varphi_{0}\|^{2}_{2},
\end{align*}
which contradicts (\ref{K0--detiaoj1}) for $j=7$. Thus, we have $S_{7}(\varphi(t))>0$ for $t\in(0,T_{\rm{max}})$.\\
{\bf{Case \textrm{VIII}: \bm{$j=8.$}}}

We prove $S_{8}(\varphi(t))>0$ for $t\in(0,T_{\rm{max}})$. Arguing by contradiction, we suppose that there exists the first time $\tau_{8}\in(0,T_{\rm{max}})$ to make $S_{8}(\varphi(\tau_{8}))=0$ and $S_{8}(\varphi(t))>0$ for $t\in(0,\tau_{8})$, and we know $\varphi(\tau_{8})\neq 0$ due to (\ref{2.2}) and $\varphi_{0}\neq0$. Next, we estimate $K(\varphi(\tau_{8}))$.
According to $S_{8}(\varphi(\tau_{8}))=0$, we have
\begin{align}
\frac{n(p-2)\lambda_{1}}{2p}\|\varphi(\tau_{8})\|^{p}_{p}+\frac{n(q-2)\lambda_{2}}{2q}\|\varphi(\tau_{8})\|^{q}_{q}=\|\nabla\varphi(\tau_{8})\|^{2}_{2}+\|\varphi(\tau_{8})\|^{2}_{2}+\||x|\varphi(\tau_{8})\|^{2}_{2}.\label{jdengyu6}
\end{align}
Using (\ref{jdengyu6}) and  retrieving (\ref{chuli1-p-q}) for $j=8$, we know
\begin{align}
&\frac{\lambda_{1}}{p}\|\varphi(\tau_{8})\|^{p}_{p}+\frac{\lambda_{2}}{q}\|\varphi(\tau_{8})\|^{q}_{q}<\frac{2}{n(q-2)}
\left(\|\nabla\varphi(\tau_{8})\|^{2}_{2}+\|\varphi(\tau_{8})\|^{2}_{2}+\||x|\varphi(\tau_{8})\|^{2}_{2}\right).\label{gujijgaoneng3-8+}
\end{align}
By substituting (\ref{gujijgaoneng3-8+}) into $K(\varphi(\tau_{8}))$, we have
\begin{align}\label{zhunbeimaodun-1-5}
\begin{split}
K(\varphi(\tau_{8}))>&\left(\frac{1}{2}-\frac{2}{n(q-2)}\right)
\left(\|\nabla \varphi(\tau_{8})\|^{2}_{2}+\| \varphi(\tau_{8})\|^{2}_{2}+\||x|\varphi(\tau_{8})\|^{2}_{2}\right)\\
=&\frac{1}{2}\left(\frac{1}{2}-\frac{2}{n(q-2)}\right)\|\nabla \varphi(\tau_{8})\|^{2}_{2}+\frac{1}{2}\left(\frac{1}{2}-\frac{2}{n(q-2)}\right)\|\nabla \varphi(\tau_{8})\|^{2}_{2}\\
&+\left(\frac{1}{2}-\frac{2}{n(q-2)}\right)\||x|\varphi(\tau_{8})\|^{2}_{2}+\left(\frac{1}{2}-\frac{2}{n(q-2)}\right)\|\varphi(\tau_{8})\|^{2}_{2}.
\end{split}
\end{align}
In (\ref{zhunbeimaodun-1-5}), the assumption (\ref{1.2}) ensures $$\frac{1}{2}-\frac{2}{n(q-2)}>0.$$ By Cauchy-Schwartz inequality and Lemma \ref{lem3.2}, we know (\ref{zhunbeimaodun-1-5}) turns to
\begin{align}\label{zhunbeimaodun-7-1}
\begin{split}
K(\varphi(\tau_{8}))>&\frac{1}{2}\left(\frac{1}{2}-\frac{2}{n(q-2)}\right)\|\nabla \varphi(\tau_{8})\|^{2}_{2}+\frac{2}{2^{\frac{1}{2}}}\left(\frac{1}{2}-\frac{2}{n(q-2)}\right)\|\nabla \varphi(\tau_{8})\|_{2}\||x|\varphi(\tau_{8})\|_{2}\\
&+\left(\frac{1}{2}-\frac{2}{n(q-2)}\right)\|\varphi(\tau_{8})\|^{2}_{2}\\
\geq&\frac{1}{2}\left(\frac{1}{2}-\frac{2}{n(q-2)}\right)\|\nabla \varphi(\tau_{8})\|^{2}_{2}+\frac{n}{2^{\frac{1}{2}}}\left(\frac{1}{2}-\frac{2}{n(q-2)}\right)\|\varphi(\tau_{8})\|^{2}_{2}
\\&+\left(\frac{1}{2}-\frac{2}{n(q-2)}\right)\|\varphi(\tau_{8})\|^{2}_{2}.
\end{split}
\end{align}
Meanwhile, according to $(\ref{jdengyu6})$, Cauchy-Schwartz inequality and Lemma \ref{lem3.2}, we have
\begin{align}\label{zhunbeimaodun-8-1}
\begin{split}
&\frac{n(p-2)\lambda_{1}}{2p}\|\varphi(\tau_{8})\|^{p}_{p}+\frac{n(q-2)\lambda_{2}}{2q}\|\varphi(\tau_{8})\|^{q}_{q}\\=&\|\nabla \varphi(\tau_{8})\|^{2}_{2}+\||x|\varphi(\tau_{8})\|^{2}_{2}+\|\varphi(\tau_{8})\|^{2}_{2}\\
\geq&2\|\nabla \varphi(\tau_{8})\|_{2}\||x|\varphi(\tau_{8})\|_{2}+\|\varphi(\tau_{8})\|^{2}_{2}\\
\geq&(n+1)\|\varphi(\tau_{8})\|^{2}_{2}.
\end{split}
\end{align}
According to (\ref{zhunbeimaodun-8-1}), (\ref{GN-budengshi}) and (\ref{GN-budengshi1}), we have
\begin{align*}
(n+1)\|\varphi(\tau_{8})\|^{2}_{2}\leq&\frac{n(p-2)\lambda_{1}}{2p}\|\varphi(\tau_{8})\|^{p}_{p}
+\frac{n(q-2)\lambda_{2}}{2q}\|\varphi(\tau_{8})\|^{q}_{q}\nonumber\\
\leq&\frac{n(p-2)C_{1}\lambda_{1}}{2p}\|\nabla\varphi(\tau_{8})\|^{\frac{n(p-2)}{2}}_{2}\|\varphi(\tau_{8})\|^{p-\frac{n(p-2)}{2}}_{2}\nonumber\\
&+\frac{n(q-2)C_{3}\lambda_{2}}{2q}\|\nabla\varphi(\tau_{8})\|^{\frac{n(q-2)}{2}}_{2}\|\varphi(\tau_{8})\|^{q-\frac{n(q-2)}{2}}_{2},
\end{align*}
i.e.,
\begin{align*}
n+1
\leq&\frac{n(p-2)C_{1}\lambda_{1}}{2p}\|\nabla\varphi(\tau_{8})\|^{\frac{n(p-2)}{2}}_{2}\|\varphi(\tau_{8})\|^{p-\frac{n(p-2)}{2}-2}_{2}\nonumber\\
&+\frac{n(q-2)C_{3}\lambda_{2}}{2q}\|\nabla\varphi(\tau_{8})\|^{\frac{n(q-2)}{2}}_{2}\|\varphi(\tau_{8})\|^{q-\frac{n(q-2)}{2}-2}_{2},
\end{align*}
which means
\begin{align}
\frac{n+1}{2}
\leq\frac{n(p-2)C_{1}\lambda_{1}}{2p}\|\nabla\varphi(\tau_{8})\|^{\frac{n(p-2)}{2}}_{2}\|\varphi(\tau_{8})\|^{p-\frac{n(p-2)}{2}-2}_{2},\label{jieguo7}
\end{align}
i.e.,
\begin{align}
\left(\frac{p(n+1)}{n(p-2)C_{1}\lambda_{1}\|\varphi(\tau_{8})\|^{p-\frac{n(p-2)}{2}-2}_{2}}\right)^{\frac{4}{n(p-2)}}
\leq\|\nabla\varphi(\tau_{8})\|^{2}_{2},\label{jieguo8-p-}
\end{align}
or
\begin{align}
\frac{n+1}{2}
\leq\frac{n(q-2)C_{3}\lambda_{2}}{2q}\|\nabla\varphi(\tau_{8})\|^{\frac{n(q-2)}{2}}_{2}\|\varphi(\tau_{8})\|^{q-\frac{n(q-2)}{2}-2}_{2},\label{jieguo7}
\end{align}
i.e.,
\begin{align}
\left(\frac{q(n+1)}{n(q-2)C_{3}\lambda_{2}\|\varphi(\tau_{8})\|^{q-\frac{n(q-2)}{2}-2}_{2}}\right)^{\frac{4}{n(q-2)}}
\leq\|\nabla\varphi(\tau_{8})\|^{2}_{2}.\label{jieguo8-q-}
\end{align}
According to (\ref{2.2}), (\ref{jieguo8-p-}) and (\ref{jieguo8-q-}) turn to
\begin{align}
\left(\frac{p(n+1)}{n(p-2)C_{1}\lambda_{1}\|\varphi_{0}\|^{p-\frac{n(p-2)}{2}-2}_{2}}\right)^{\frac{4}{n(p-2)}}
\leq\|\nabla\varphi(\tau_{8})\|^{2}_{2}\label{jieguo8-p}
\end{align}
and
\begin{align}
\left(\frac{q(n+1)}{n(q-2)C_{3}\lambda_{2}\|\varphi_{0}\|^{q-\frac{n(q-2)}{2}-2}_{2}}\right)^{\frac{4}{n(q-2)}}
\leq\|\nabla\varphi(\tau_{8})\|^{2}_{2},\label{jieguo8-q}
\end{align}
respectively. From (\ref{jieguo8-p}) and (\ref{jieguo8-q}), we have
\begin{align}
\|\nabla\varphi(\tau_{8})\|^{2}_{2}\geq&N_{4}(\|\varphi_{0}\|_{2}),
\label{jdengyu6-2-2}
\end{align}
where $N_{4}(\|\varphi_{0}\|_{2})$ is defined by (\ref{N4}).
Meanwhile, (\ref{jdengyu6}) also gives
\begin{align}
\frac{n(p-2)\lambda_{1}}{2p}\|\varphi(\tau_{8})\|^{p}_{p}+\frac{n(q-2)\lambda_{2}}{2q}\|\varphi(\tau_{8})\|^{q}_{q}>\|\nabla \varphi(\tau_{8})\|^{2}_{2}.\label{shizi18}
\end{align}By the similar process of giving (\ref{jdengyu5-2-2}), we can use (\ref{shizi18}) to obtain
\begin{align}
\|\nabla\varphi(\tau_{8})\|^{2}_{2}\geq&N_{1}(\|\varphi_{0}\|_{2}),\label{jdengyu5-2-2-8}
\end{align}
where $N_{1}(\|\varphi_{0}\|_{2})$ is defined by (\ref{N1}).
 According to (\ref{jdengyu6-2-2}) and (\ref{jdengyu5-2-2-8}), we have
 \begin{align*}
\|\nabla\varphi(\tau_{8})\|^{2}_{2}\geq&\frac{N_{1}(\|\varphi_{0}\|_{2})+N_{4}(\|\varphi_{0}\|_{2})+\left|N_{1}(\|\varphi_{0}\|_{2})-N_{4}(\|\varphi_{0}
\|_{2})\right|}{2},
\end{align*}which means (\ref{zhunbeimaodun-7-1}) turns to
\begin{align}\label{zhunbeimaodun-7-1-8}
\begin{split}
K(\varphi(\tau_{8}))
>&\frac{1}{2}\left(\frac{1}{2}-\frac{2}{n(q-2)}\right)\frac{N_{1}(\|\varphi_{0}\|_{2})+N_{4}(\|\varphi_{0}\|_{2})+\left|N_{1}(\|\varphi_{0}\|_{2})-N_{4}(\|\varphi_{0}
\|_{2})\right|}{2}
\\&+\frac{n}{2^{\frac{1}{2}}}\left(\frac{1}{2}-\frac{2}{n(q-2)}\right)\|\varphi(\tau_{8})\|^{2}_{2}
+\left(\frac{1}{2}-\frac{2}{n(q-2)}\right)\|\varphi(\tau_{8})\|^{2}_{2}.
\end{split}
\end{align}
According to (\ref{2.2}) and (\ref{2.3}), (\ref{zhunbeimaodun-7-1-8}) turns to
\begin{align*}
K(\varphi_{0})
>&\frac{1}{2}\left(\frac{1}{2}-\frac{2}{n(q-2)}\right)\frac{N_{1}(\|\varphi_{0}\|_{2})+N_{4}(\|\varphi_{0}\|_{2})+\left|N_{1}(\|\varphi_{0}\|_{2})-N_{4}(\|\varphi_{0}\|_{2})\right|}{2}\nonumber\\
&
+\left(\frac{1}{2}-\frac{2}{n(q-2)}+\frac{n}{2^{\frac{1}{2}}}\left(\frac{1}{2}-\frac{2}{n(q-2)}\right)\right)\|\varphi_{0}\|^{2}_{2},
\end{align*}
which contradicts (\ref{K0--detiaoj1}) for $j=8$. Thus, we have $S_{8}(\varphi(t))>0$ for $t\in(0,T_{\rm{max}})$.
\end{proof}
\end{lemma}

Next, we use the invariant manifolds $\mathcal{W}_{R_{j}}$ and $\mathcal{V}_{R_{j}}$, $j=1,2,\cdots,8$, to prove the global existence and finite time blowup, respectively, for high initial energy $K(\varphi_{0})\geq d_{\mathcal{N}_{j}}$, $j=1,2,\cdots,8$.
\begin{theorem}\label{th5.1}(Global existence for high initial energy $K(\varphi_{0})\geq d_{\mathcal{N}_{j}}$, $j=1,2,\cdots,8$)
For $j=1,2,\cdots,8$, let $p$ and $q$ satisfy (\ref{1.2}), and $\lambda_{1}>0$, $\lambda_{2}>0$ satisfy the following conditions

(i)\ for $j=1,2,3,4$, $\lambda_{1}$ and $\lambda_{2}$ additionally satisfy (\ref{2.9}) and (\ref{2.10});

(ii)\ for $j=5,6,7,8$, no additional restrictions on $\lambda_{1}$ and $\lambda_{2}$.\\
 Then the solution $\varphi$ to Cauchy problem (\ref{1.1}) exists globally provided $\varphi_{0}\in \mathcal{W}_{R_{j}}$.\begin{proof}
For the initial data $\varphi_{0}\in \mathcal{W}_{R_{j}}$, $j=1,2,\cdots,8$, we know the corresponding solution $\varphi\in \mathcal{W}_{R_{j}}$, i.e., $S_{j}(\varphi)>0$, according to Lemma \ref{lem5.1}. Then, by the similar process of proving (\ref{4.6}) for $j=1,2,3,4$ and (\ref{4.11}) for $j=5,6,7,8$, we can use the condition $S_{j}(\varphi)>0$, $j=1,2,3,4$, to obtain
\begin{align}\label{shi1}
\begin{split}
&K(\varphi_{0})+\left(\frac{\lambda_{2}}{q}+\left(\frac{\lambda_{1}}{p}+\frac{\lambda_{2}}{q}\right)\frac{2p c}{n(p-2)\lambda_{1}}\right)\|\varphi_{0}\|^{2}_{2}\\
\geq&\left(\frac{1}{2}-\left(\frac{\lambda_{1}}{p}+\frac{\lambda_{2}}{q}\right)\frac{2p}{n(p-2)\lambda_{1}}\right)\| \varphi\|^{2}_{H}
\end{split}\end{align}
and the condition $S_{j}(\varphi)>0$, $j=5,6,7,8$, to obtain
\begin{align}\label{shi2}
K(\varphi_{0})\geq
&\left(\frac{1}{2}-\frac{2}{n(q-2)}\right)\|\varphi\|^{2}_{H},
\end{align}
 where the assumptions (\ref{2.9}) and (\ref{1.2}) ensure
$$\frac{1}{2}-\left(\frac{\lambda_{1}}{p}+\frac{\lambda_{2}}{q}\right)\frac{2p}{n(p-2)\lambda_{1}}>0$$
and $$\frac{1}{2}-\frac{2}{n(q-2)}>0,$$ respectively, $c>0$ is chosen such that (\ref{cdetiaojian}) holds.
From (\ref{shi1}) and (\ref{shi2}), we have
\begin{align}
\|\varphi\|^{2}_{H}
\leq\frac{2qn(p-2)\lambda_{1}}{qn(p-2)\lambda_{1}-4\lambda_{1}q-4\lambda_{2}p} \left(K(\varphi_{0})+\left(\frac{\lambda_{2}}{q}+\left(\frac{\lambda_{1}}{p}+\frac{\lambda_{2}}{q}\right)\frac{2p c}{n(p-2)\lambda_{1}}\right)\|\varphi_{0}\|^{2}_{2}\right)\label{youjie1}
\end{align}for $j=1,2,3,4$
and
\begin{align}
\|\varphi\|^{2}_{H}\leq\frac{2n(q-2)K(\varphi_{0})}{n(q-2)-4}\label{youjie2}
\end{align}for $j=5,6,7,8$, respectively,
 which mean $T_{\rm{max}}=+\infty$ according to Lemma \ref{lem2.1}.
\end{proof}
\end{theorem}

\begin{theorem}\label{th5.2}(Finite time blowup for high initial energy $K(\varphi_{0})\geq d_{\mathcal{N}_{j}}$, $j=1,2,\cdots,8$)
For $j=1,2,\cdots,8$, let $p$ and $q$ satisfy (\ref{1.2}), and $\lambda_{1}>0$, $\lambda_{2}>0$ satisfy the following conditions

(i)\ for $j=1,2,3,4$, $\lambda_{1}$ and $\lambda_{2}$ additionally satisfy (\ref{2.9}) and (\ref{2.10});

(ii)\ for $j=5,6,7,8$, no additional restrictions on $\lambda_{1}$ and $\lambda_{2}$.\\
Then the solution $\varphi$ to Cauchy problem (\ref{1.1}) blows up in finite time provided $\varphi_{0}\in \mathcal{V}_{R_{j}}$.
\begin{proof}

For the initial data $\varphi_{0}\in \mathcal{V}_{R_{j}}$, $j=1,2,\cdots,8$, we know the corresponding solution $\varphi\in \mathcal{V}_{R_{j}}$, i.e., $S_{j}(\varphi)<0$, according to Lemma \ref{lem5.1}. Then, by the similar process of proving Theorem \ref{th4.2}, we can apply the auxiliary functional $J(t)=\||x|\varphi\|^{2}_{2}$ and use the condition $S_{j}(\varphi)<0$, $j=1,2,5,6$, to obtain (\ref{4.17}) and $S_{j}(\varphi)<0$, $j=3,4,7,8$, to give (\ref{4.15}) to complete the proof of $T_{\rm{max}}<+\infty$, which means the finite time blowup of the solution according to Lemma \ref{lem2.1}.
\end{proof}
\end{theorem}

\begin{remark}\label{rem5.2}(The necessity of the condition $K(\varphi_{0})\leq R_{j}(\|\varphi_{0}\|_{2})$, $j=1,2,\cdots,8$, for proving the global existence and finite time blowup of high initial energy case $K(\varphi_{0})\geq d_{{\mathcal{N}_{j}}}$)
When considering the initial conditions to classify the global existence and the finite time blowup of the solution for the high initial energy case $K(\varphi_{0})\geq d_{{\mathcal{N}_{j}}}$, $j=1,2,\cdots,8$, condition $K(\varphi_{0})\leq R_{j}(\|\varphi_{0}\|_{2})$, $j=1,2,\cdots,8$, seems technical but necessary because condition $K(\varphi_{0})\leq R_{j}(\|\varphi_{0}\|_{2})$, $j=1,2,\cdots,8$, plays a crucial role in proving the invariant manifolds for the high initial energy case $K(\varphi_{0})\geq d_{{\mathcal{N}_{j}}}$, $j=1,2,\cdots,8$, including the so-called stable manifold and unstable manifold. The situation here is very different from the bounded non-negative energy case $0\leq K(\varphi_{0})<d_{{\mathcal{N}_{j}}}$, $j=1,2,\cdots,8$, for which there is a natural contradiction between the sign-change of $S_{j}(\varphi)$ and the assumption $K(\varphi_{0})<d_{{\mathcal{N}_{j}}}$, $j=1,2,\cdots,8$, however, such contradiction will not happen for the high initial energy case $K(\varphi_{0})\geq d_{{\mathcal{N}_{j}}}$, $j=1,2,\cdots,8$, anymore. Hence, we find this condition $K(\varphi_{0})\leq R_{j}(\|\varphi_{0}\|_{2})$, $j=1,2,\cdots,8$, to retrieve a new contradiction to finish the proof of the invariant manifolds.

\end{remark}

\begin{remark}\label{Manifoldsfeikong}(The examples of the initial data in the invariant manifolds $\mathcal{W}_{R_{j}}$ and $\mathcal{V}_{R_{j}}$, $j=1,2,\cdots,8$, used to prove the global existence and the finite time blowup, respectively, for high initial energy $K(\varphi_{0})\geq d_{{\mathcal{N}_{j}}}$, $j=1,2,\cdots,8$)
For high initial energy $K(\varphi_{0})\geq d_{{\mathcal{N}_{j}}}$, $j=1,2,\cdots,8$, in order to prove the global existence and the finite time blowup, we need the invariant manifolds $\mathcal{W}_{R_{j}}$ and $\mathcal{V}_{R_{j}}$, $j=1,2,\cdots,8$, respectively, as shown in Theorem \ref{th5.1} and Theorem \ref{th5.2}, respectively. Next, we give some examples of the initial data in the invariant manifolds $\mathcal{W}_{R_{j}}$ and $\mathcal{V}_{R_{j}}$, $j=1,2,\cdots,8$, to show that these two manifolds are not empty, which means the conditions in Theorem \ref{th5.1} and Theorem \ref{th5.2} make sense. We first focus on the case $j=1$ .\\
(1)\ Example of the initial data belonging to the manifold $\mathcal{W}_{R_{1}}$.

Find a function $\zeta_{1}\in H$ satisfying $\zeta_{1}\neq0$ and
\begin{align}
d_{_{\mathcal{N}_{1}}}< \frac{1}{2}\left(\frac{1}{2}-\left(\frac{\lambda_{1}}{p}+\frac{\lambda_{2}}{q}\right)\frac{2p}{n(p-2)\lambda_{1}}\right)M_{1}(\|\zeta_{1}\|_{2}),\label{hanshu}
 \end{align}where $M_{1}(\|\zeta_{1}\|_{2})$ is defined by (\ref{M1}).
 Here, we shall use functions $\zeta_{j}$, $j=1,2,\cdots,8$, to find the examples of the initial data belonging to $\mathcal{W}_{R_{j}}$ and $\mathcal{V}_{R_{j}}$, $j=1,2,\cdots,8$, respectively. Hence for the case $j=1$, we use the lower index $1$ to denote this function $\zeta_{1}$.
 For a constant $\sigma>0$, define $$w(x):=\sigma^{\frac{n}{2}}\zeta_{1}(\sigma x).$$ And we denote $$y=\left(y_{1},y_{2},\cdots,y_{n}\right):=\sigma x=\left(\sigma x_{1},\sigma x_{2},\cdots,\sigma x_{n}\right)$$ and
\begin{align*}
\nabla_{y}:=\left(\frac{\partial }{\partial y_{1}},\frac{\partial }{\partial y_{2}},\cdots,\frac{\partial }{\partial y_{n}}\right).
\end{align*}
Then we have
\begin{align}
\|\nabla w\|^{2}_{2}=&\left\|\nabla\left(\sigma^{\frac{n}{2}}\zeta_{1}(\sigma x)\right)\right\|^{2}_{2}
=\int_{\mathbb{R}^n} \sigma^{n}|\nabla\zeta_{1}(\sigma x)|^{2}{\rm d}x
=\int_{\mathbb{R}^n} |\nabla_{y}\zeta_{1}(y)|^{2}\sigma^{2}{\rm d}y
=\sigma^{2}\|\nabla\zeta_{1}\|^{2}_{2},\label{daihuan}
\end{align}
\begin{align}\label{daihuanw--}
\||x|w\|^{2}_{2}=&\int_{\mathbb{R}^n} \frac{\sigma^{n}}{\sigma^{2}}||\sigma x|\zeta_{1}(\sigma x)|^{2}{\rm d}x
=\int_{\mathbb{R}^n} \frac{1}{\sigma^{2}}||y|\zeta_{1}(y)|^{2}{\rm d}y
=\frac{1}{\sigma^{2}}\||x|\zeta_{1}\|^{2}_{2},
\end{align}
\begin{align}
\|w\|^{2}_{2}=&\int_{\mathbb{R}^n} \sigma^{n}|\zeta_{1}(\sigma x)|^{2}{\rm d}x
=\int_{\mathbb{R}^n} |\zeta_{1}(y)|^{2}{\rm d}y
=\|\zeta_{1}\|^{2}_{2},\label{daihuan2}
\end{align}
and
\begin{align}
\|w\|^{p}_{p}=\int_{\mathbb{R}^n} \left|\sigma^{\frac{n}{2}}\zeta_{1}(\sigma x)\right|^{p}{\rm d}x
=\int_{\mathbb{R}^n} \frac{\sigma^{\frac{pn}{2}}}{\sigma^{n}}|\zeta_{1}(y)|^{p}{\rm d}y
=\sigma^{\frac{n(p-2)}{2}}\|\zeta_{1}\|^{p}_{p}.\label{daihuan3}
\end{align}
Then, we have
\begin{align}\label{S1w}
\begin{split}
S_{1}(w)=&\|\nabla w\|_{2}^{2}-\frac{n(p-2)\lambda_{1}}{2p}\| w\|_{p}^{p}\\
=&\sigma^{2}\|\nabla\zeta_{1}\|^{2}_{2}-\frac{n(p-2)\lambda_{1}}{2p}\sigma^{\frac{n(p-2)}{2}}\|\zeta_{1}\|^{p}_{p}\\
=&\sigma^{2}\left(\|\nabla\zeta_{1}\|^{2}_{2}-\left(\frac{n(p-2)\lambda_{1}}{2p}\|\zeta_{1}\|^{p}_{p}\right)\sigma^{\frac{n(p-2)}{2}-2}\right).
\end{split}
\end{align}
The assumption (\ref{1.2}), i.e., $2+\frac{4}{n}<p$, ensures
$$\frac{n(p-2)}{2}-2>0.$$
Thus, there exists a unique $\sigma_{0}>0$ such that $S_{1}(\sigma_{0}^{\frac{n}{2}}\zeta_{1}(\sigma_{0} x))=0$, $S_{1}(\sigma^{\frac{n}{2}}\zeta_{1}(\sigma x))>0$ for $0<\sigma<\sigma_{0}$ and $S_{1}(\sigma^{\frac{n}{2}}\zeta_{1}(\sigma x))<0$ for $\sigma>\sigma_{0}$. We denote \begin{align}\label{w0x}
w_{0}(x):=\sigma_{0}^{\frac{n}{2}}\zeta_{1}(\sigma_{0} x).
\end{align}
Due to $\|w_{0}\|^{2}_{2}=\|\zeta_{1}\|^{2}_{2}$ given by (\ref{daihuan2}), we know $w_{0}$ also satisfies (\ref{hanshu}).
And from $S_{1}(w_{0})=0$, by similar process of dealing with $j=1$ in the proof of Lemma \ref{lem5.1}, we have
\begin{align}
K(w_{0})> R_{1}(\|w_{0}\|_{2}).\label{Kw0}\end{align}
For $s>0$, we have
\begin{align}\label{S1sw}
\begin{split}
S_{1}(sw_{0})=&\left(\|\nabla w_{0}\|_{2}^{2}\right)s^{2}-\left(\frac{n(p-2)\lambda_{1}}{2p}\| w_{0}\|_{p}^{p}\right)s^{p}\\
=&s^{2}\left(\|\nabla w_{0}\|^{2}_{2}-\left(\frac{n(p-2)\lambda_{1}}{2p}\|w_{0}\|^{p}_{p}\right)s^{p-2}\right).
\end{split}
\end{align}
Then, according to $S_{1}(w_{0})=0$, we know $S_{1}(sw_{0})>0$ for $0<s<1$ and $S_{1}(sw_{0})<0$ for $s>1$. Meanwhile, we have
\begin{align}\label{Ks-}
\begin{split}
K(sw_{0})=&\left(\frac{1}{2}\|\nabla w_{0}\|_{2}^{2}+\frac{1}{2}\|w_{0}\|_{2}^{2}+\frac{1}{2}\||x|w_{0}\|_{2}^{2}\right)s^{2}-\left(\frac{\lambda_{1}}{p}\| w_{0}\|_{p}^{p}\right)s^{p}-\left(\frac{\lambda_{2}}{q}\| w_{0}\|_{q}^{q}\right)s^{q}\\
=&s^{2}\left(\frac{1}{2}\|\nabla w_{0}\|_{2}^{2}+\frac{1}{2}\|w_{0}\|_{2}^{2}+\frac{1}{2}\||x|w_{0}\|_{2}^{2}-\left(\frac{\lambda_{1}}{p}\| w_{0}\|_{p}^{p}\right)s^{p-2}
-\left(\frac{\lambda_{2}}{q}\| w_{0}\|_{q}^{q}\right)s^{q-2}\right).
\end{split}\end{align}
From $(\ref{Ks-})$, for $s=0$, we know $K(s w_{0})=0$. Combining this fact, (\ref{Kw0}) and the continuity of $K(s w_{0})$ with respect to $s$, we know there exists an $\bar{s}\in(0,1)$ such that \begin{align*}
d_{\mathcal{N}_{1}}\leq K(\bar{s}w_{0})\leq & \frac{1}{2}\left(\frac{1}{2}-\left(\frac{\lambda_{1}}{p}+\frac{\lambda_{2}}{q}\right)\frac{2p}{n(p-2)\lambda_{1}}\right)\left(\frac{2p}{n(p-2)\lambda_{1} C_{1}\|w_{0}\|^{p-\frac{n(p-2)}{2}}_{2}}\right)^{\frac{4}{n(p-2)-4}}\nonumber\\
\leq& \frac{1}{2}\left(\frac{1}{2}-\left(\frac{\lambda_{1}}{p}+\frac{\lambda_{2}}{q}\right)\frac{2p}{n(p-2)\lambda_{1}}\right)\left(\frac{2p}{n(p-2) \lambda_{1}C_{1}\bar{s}^{p-\frac{n(p-2)}{2}}\|w_{0}\|^{p-\frac{n(p-2)}{2}}_{2}}\right)^{\frac{4}{n(p-2)-4}}\nonumber\\
=&\frac{1}{2}\left(\frac{1}{2}-\left(\frac{\lambda_{1}}{p}+\frac{\lambda_{2}}{q}\right)\frac{2p}{n(p-2)\lambda_{1}}\right)\left(\frac{2p}{n(p-2) \lambda_{1}C_{1}\|\bar{s}w_{0}\|^{p-\frac{n(p-2)}{2}}_{2}}\right)^{\frac{4}{n(p-2)-4}}\nonumber\\
<&R_{1}(\|\bar{s}w_{0}\|_{2})\end{align*}
due to \begin{align*}
p-\frac{n(p-2)}{2}>0
\end{align*} given by (\ref{1.2}).
Meanwhile, noticing (\ref{S1sw}) and $S_{1}(w_{0})=0$, we know $S_{1}(\bar{s}w_{0})>0$ due to $0<\bar{s}<1$. Thus, we have $\bar{s}w_{0}\in\mathcal{W}_{R_{1}}$.\\
(2)\ Example of the initial data belonging to the manifold $\mathcal{V}_{R_{1}}$.

 For the function $\zeta_{1}$ satisfying (\ref{hanshu}), by (\ref{daihuan})-(\ref{daihuan3}) and
 \begin{align*}
\|w\|^{q}_{q}=\int_{\mathbb{R}^n} \left|\sigma^{\frac{n}{2}}\zeta_{1}(\sigma x)\right|^{q}{\rm d}x
=\int_{\mathbb{R}^n} \frac{\sigma^{\frac{qn}{2}}}{\sigma^{n}}|\zeta_{1}(y)|^{q}{\rm d}y
=\sigma^{\frac{n(q-2)}{2}}\|\zeta_{1}\|^{q}_{q},
\end{align*}
 we have
\begin{align}\label{Ks}
\begin{split}
K(\sigma^{\frac{n}{2}}\zeta_{1}(\sigma x))=&\left(\frac{1}{2}\|\nabla \zeta_{1}\|_{2}^{2}\right)\sigma^{2}+\frac{1}{2}\|\zeta_{1}\|_{2}^{2}+\left(\frac{1}{2}\||x|\zeta_{1}\|_{2}^{2}\right)\sigma^{-2}-\left(\frac{\lambda_{1}}{p}\| \zeta_{1}\|_{p}^{p}\right)\sigma^{\frac{n(p-2)}{2}}\\&-\left(\frac{\lambda_{2}}{q}\|\zeta_{1}\|_{q}^{q}\right)\sigma^{\frac{n(q-2)}{2}}\\
=&\sigma^{2}\left(\frac{1}{2}\|\nabla \zeta_{1}\|_{2}^{2}+\left(\frac{1}{2}\|\zeta_{1}\|_{2}^{2}\right)\sigma^{-2}+\left(\frac{1}{2}\||x|\zeta_{1}\|_{2}^{2}\right)\sigma^{-4}
-\left(\frac{\lambda_{1}}{p}\| \zeta_{1}\|_{p}^{p}\right)\sigma^{\frac{n(p-2)-4}{2}}
\right.\\
&\left.-\left(\frac{\lambda_{2}}{q}\|\zeta_{1}\|_{q}^{q}\right)\sigma^{\frac{n(q-2)-4}{2}}\right)
.
\end{split}
\end{align}
Thus, we can find large enough $\sigma_{1}>\sigma_{0}$ such that $K(\sigma_{1}^{\frac{n}{2}}\zeta_{1}(\sigma_{1} x))<0$, where $\sigma_{0}$ satisfies $S_{1}(\sigma_{0}^{\frac{n}{2}}\zeta_{1}(\sigma_{0} x))=0$. Meanwhile, as the analysis below (\ref{w0x}), the function $\sigma_{0}^{\frac{n}{2}}\zeta_{1}(\sigma_{0} x)$ also satisfies (\ref{hanshu}), which gives $d_{\mathcal{N}_{1}}<R_{1}(\|\sigma_{0}^{\frac{n}{2}}\zeta_{1}(\sigma_{0} x)\|_{2})$. Thus, we know that there exists a $\sigma^{*}\in(\sigma_{0},\sigma_{1})$ such that
\begin{align*}
d_{\mathcal{N}_{1}}\leq K(\left(\sigma^{*}\right)^{\frac{n}{2}}\zeta_{1}(\sigma^{*} x))\leq R_{1}(\|\sigma_{0}^{\frac{n}{2}}\zeta_{1}(\sigma_{0} x)\|_{2})=R_{1}(\|\left(\sigma^{*}\right)^{\frac{n}{2}}\zeta_{1}(\sigma^{*} x)\|_{2})\end{align*}according to the continuity of $K(\sigma^{\frac{n}{2}}\zeta_{1}(\sigma x))$ with respect to $\sigma$, (\ref{Kw0}) and (\ref{daihuan2}). And by observing (\ref{S1w}), from $\sigma^{*}>\sigma_{0}$ and $S_{1}(\sigma_{0}^{\frac{n}{2}}\zeta_{1}(\sigma_{0} x))=0$, we have $S_{1}(\left(\sigma^{*}\right)^{\frac{n}{2}}\zeta_{1}(\sigma^{*} x))<0$. Then, we know
$\left(\sigma^{*}\right)^{\frac{n}{2}}\zeta_{1}(\sigma^{*} x)\in\mathcal{V}_{R_{1}}$.\\
(3)\ Examples of the initial data belonging to the manifolds $\mathcal{W}_{R_{j}}$ and $\mathcal{V}_{R_{j}}$, $j=2,3,4$.

We find functions $\zeta_{j}\in H$, $j=2,3,4$, satisfying $\zeta_{j}\neq0$ and
\begin{align}
d_{_{\mathcal{N}_{j}}}< \frac{1}{2}\left(\frac{1}{2}-\left(\frac{\lambda_{1}}{p}+\frac{\lambda_{2}}{q}\right)\frac{2p}{n(p-2)\lambda_{1}}\right)M_{1}(\|\zeta_{j}\|_{2}),\ j=2,3,4,\label{hanshu----}
 \end{align}
 where $M_{1}(\|\zeta_{j}\|_{2})$ is defined by (\ref{M1}).
We shall use functions $\zeta_{j}$, $j=2,3,4$, to deal with the cases $j=2,3,4$, respectively. We first treat the case $j=2$.
Due to
\begin{align}\label{xiaoyuzuixiaozhi}
M_{1}(\|\zeta_{2}\|_{2})\leq\frac{M_{1}(\|\zeta_{2}\|_{2})+M_{2}(\|\zeta_{2}\|_{2})
+\left|M_{1}(\|\zeta_{2}\|_{2})-M_{2}(\|\zeta_{2}\|_{2})\right|}{2},
\end{align}
where $M_{2}(\|\zeta_{j}\|_{2})$ is defined by (\ref{M2}),
we know (\ref{hanshu----}) gives
\begin{align*}
d_{_{\mathcal{N}_{2}}}<\frac{1}{2}\left(\frac{1}{2}-\left(\frac{\lambda_{1}}{p}+\frac{\lambda_{2}}{q}\right)\frac{2p}{n(p-2)\lambda_{1}}\right)\frac{M_{1}(\|\zeta_{2}\|_{2})+M_{2}(\|\zeta_{2}\|_{2})
+\left|M_{1}(\|\zeta_{2}\|_{2})-M_{2}(\|\zeta_{2}\|_{2})\right|}{2}.
\end{align*}
By the similar arguments for the manifolds $\mathcal{W}_{R_{1}}$ and $\mathcal{V}_{R_{1}}$, we can use the above function $\zeta_{2}$ to show the examples of the initial data belonging to the manifolds $\mathcal{W}_{R_{2}}$ and $\mathcal{V}_{R_{2}}$.

 By the similar process of dealing with the case $j=2$, we can use the functions $\zeta_{j}$, $j=3,4$,
 to show the examples of the initial data belonging to the manifolds $\mathcal{W}_{R_{j}}$ and $\mathcal{V}_{R_{j}}$, $j=3,4$.\\
(4)\ Examples of the initial data belonging to the manifolds $\mathcal{W}_{R_{5}}$ and $\mathcal{V}_{R_{5}}$.

 For the case $j=5$, we find a function $\zeta_{5}\in H$ satisfying $\zeta_{5}\neq0$,
\begin{align}
d_{_{\mathcal{N}_{5}}}< \frac{1}{2}\left(\frac{1}{2}-\frac{2}{n(q-2)}\right)A_{1}(\|\zeta_{5}\|_{2}),\label{hanshu---0-}
 \end{align}
 and
 \begin{align}
d_{_{\mathcal{N}_{5}}}< \frac{1}{2}\left(\frac{1}{2}-\frac{2}{n(q-2)}\right)B_{1}(\|\zeta_{5}\|_{2}),\label{hanshu---0-1}
 \end{align}
 where $A_{1}(\|\zeta_{5}\|_{2})$ and $B_{1}(\|\zeta_{5}\|_{2})$ are defined by (\ref{A1}) and (\ref{B1}), respectively,
\begin{align*}
p-\frac{n(p-2)}{2}>0,
\end{align*} and
\begin{align*}
q-\frac{n(q-2)}{2}>0
\end{align*}are given by (\ref{1.2}).
From (\ref{hanshu---0-}) and (\ref{hanshu---0-1}), we have
\begin{align}
d_{_{\mathcal{N}_{5}}}< \frac{1}{2}\left(\frac{1}{2}-\frac{2}{n(q-2)}\right)N_{1}(\|\zeta_{5}\|_{2}),\label{hanshu---0-1-}
 \end{align}
 where
$N_{1}(\|\zeta_{5}\|_{2})$ is defined by (\ref{N1}).
Then, by the similar arguments for the manifolds $\mathcal{W}_{R_{1}}$ and $\mathcal{V}_{R_{1}}$, we can use the above function $\zeta_{5}$ to show the examples of the initial data belonging to the manifolds $\mathcal{W}_{R_{5}}$ and $\mathcal{V}_{R_{5}}$.\\
(5)\ Examples of the initial data belonging to the manifolds $\mathcal{W}_{R_{j}}$ and $\mathcal{V}_{R_{j}}$, $j=6,7,8$.

We find functions $\zeta_{j}\in H$, $j=6,7,8$, satisfying $\zeta_{j}\neq0$ and
\begin{align}
d_{_{\mathcal{N}_{j}}}< \frac{1}{2}\left(\frac{1}{2}-\frac{2}{n(q-2)}\right) N_{1}(\|\zeta_{j}\|_{2}),\ j=6,7,8,\label{hanshu----==}
 \end{align}
where $N_{1}(\|\zeta_{j}\|_{2})$ is defined by (\ref{N1}).
We shall use functions $\zeta_{j}$, $j=6,7,8$, to deal with the cases $j=6,7,8$, respectively. We first treat the case $j=6$.
Due to
\begin{align*}
N_{1}(\|\zeta_{6}\|_{2})\leq\frac{N_{1}(\|\zeta_{6}\|_{2})+N_{2}(\|\zeta_{6}\|_{2})
+\left|N_{1}(\|\zeta_{6}\|_{2})-N_{2}(\|\zeta_{6}\|_{2})\right|}{2},
\end{align*}
where $N_{2}(\|\zeta_{6}\|_{2})$ is defined by (\ref{N2}).
we know (\ref{hanshu----==}) gives
\begin{align*}
d_{_{\mathcal{N}_{6}}}<\frac{1}{2}\left(\frac{1}{2}-\frac{2}{n(q-2)}\right)\frac{N_{1}(\|\zeta_{6}\|_{2})+N_{2}(\|\zeta_{6}\|_{2})
+\left|N_{1}(\|\zeta_{6}\|_{2})-N_{2}(\|\zeta_{6}\|_{2})\right|}{2}.
\end{align*}
By the similar arguments of the manifolds $\mathcal{W}_{R_{1}}$ and $\mathcal{V}_{R_{1}}$, we can use the above function $\zeta_{6}$ to show the examples of the initial data belonging to the manifolds $\mathcal{W}_{R_{6}}$ and $\mathcal{V}_{R_{6}}$.

By the similar process of dealing with the case $j=6$, we can use the functions $\zeta_{j}$, $j=7,8$, to show the examples of the initial data belonging to the manifolds $\mathcal{W}_{R_{j}}$ and $\mathcal{V}_{R_{j}}$, $j=7,8$.
\end{remark}

\subsection{Some imperfect criteria of global existence for high initial energy $K(\varphi_{0})\geq d_{{\mathcal{N}_{j}}}$, $j=9,10,\cdots,14$}\label{sec5.2}

In Subsection \ref{sec5.1}, we use the eight cross-constrained variational functionals $S_{j}(\varphi)$, $j=1,2,\cdots,8$, with both the
mandatory terms $\|\nabla\varphi\|^{2}_{2}$ and $\|\varphi\|^{p}_{p}$ to give the eight classifications of the initial data to distinguish the global solution and the finite time blowup solution for high initial energy $K(\varphi_{0})\geq d_{{\mathcal{N}_{j}}}$, $j=1,2,\cdots,8$.
In this subsection, we focus on the cross-constrained variational functionals $S_{j}(\varphi)$, $j=9,10,\cdots,14$, without the mandatory term $\|\nabla\varphi\|^{2}_{2}$ to give six global existence criteria for $K(\varphi_{0})\geq d_{{\mathcal{N}_{j}}}$, $j=9,10,\cdots,14$. In Remark \ref{rem5.3}, we shall point out that we fail to prove the finite time blowup by $S_{j}(\varphi)$, $j=9,10,\cdots,14$, for $K(\varphi_{0})\geq d_{{\mathcal{N}_{j}}}$, $j=9,10,\cdots,14$. Especially in Remark \ref{rem5.4}, we explain that we can neither prove the global existence nor the finite time blowup for $K(\varphi_{0})\geq d_{{\mathcal{N}_{j}}}$, $j=15,16,17,18$, due to the lack of the mandatory term $\|\varphi\|^{p}_{p}$ in the cross-constrained variational functional $S_{j}(\varphi)$, $j=15,16,17,18$.

Next, we focus on high initial energy $K(\varphi_{0})\geq d_{\mathcal{N}_{j}}$, $j=9,10,\cdots,14$, to present six global existence criteria. More precisely, for $j=9,10,\cdots,14$, we define the manifold $\mathcal{W}_{R_{j}}$, $j=9,10,\cdots,14$, by (\ref{WR}), i.e.,
\begin{align}\label{5.119}
\mathcal{W}_{R_{j}}=\left\{\varphi\in H\Big|d_{_{\mathcal{N}_{j}}}\leq K(\varphi)\leq R_{j}\left(\|\varphi\|_{2}\right), S_{j}(\varphi)>0\right\},\ j=9,10,\cdots,14,
\end{align}
where
\begin{align}\label{L9}
\begin{split}
R_{9}(\|\varphi\|_{2}):=&\left(\frac{n}{2^{\frac{3}{2}}}+\frac{1}{2}-\left(\frac{\lambda_{1}}{p}+\frac{\lambda_{2}}{q}\right)\frac{2p c}{n(p-2)\lambda_{1}}-\frac{\lambda_{2}}{q}\right)\|\varphi\|^{2}_{2}\\
&+\frac{1}{4}\left(\frac{2p c}{n(p-2)\lambda_{1}C_{1}\|\varphi\|^{p-\frac{n(p-2)}{2}-2}_{2}}\right)^{\frac{4}{n(p-2)}},
\end{split}
\end{align}

\begin{align*}
R_{10}(\|\varphi\|_{2}):=&\left(\frac{n}{2}\left(\frac{1}{2}-\left(\frac{\lambda_{1}}{p}+\frac{\lambda_{2}}{q}\right)\frac{2p}{n(p-2)\lambda_{1}}\right)^{\frac{1}{2}}+\frac{1}{2}
-\frac{\lambda_{2}}{q}\right)\|\varphi\|^{2}_{2}\nonumber\\
&+\frac{1}{4}\left(\frac{np}{2(p-2)\lambda_{1}C_{1}\|\varphi\|^{p-\frac{n(p-2)}{2}-4}_{2}}\right)
^{\frac{4}{n(p-2)+4}}
,\end{align*}

\begin{align*}
R_{11}(\|\varphi\|_{2}):=&\left(\frac{n}{2}\left(\frac{1}{2}-\left(\frac{\lambda_{1}}{p}+\frac{\lambda_{2}}{q}\right)\frac{2p}{n(p-2)\lambda_{1}}\right)
^{\frac{1}{2}}+\frac{1}{2}-\left(\frac{\lambda_{1}}{p}+\frac{\lambda_{2}}{q}\right)\frac{2p c}{n(p-2)\lambda_{1}}-\frac{\lambda_{2}}{q}\right)\|\varphi\|^{2}_{2}\nonumber\\
&+\frac{1}{4}\left(\frac{2p c}{n(p-2)\lambda_{1}C_{1}\|\varphi\|^{p-\frac{n(p-2)}{2}-2}_{2}}\right)^{\frac{4}{n(p-2)}},
\end{align*}

\begin{align}\label{L12}
R_{12}(\|\varphi\|_{2}):=\frac{A_{5}(\|\varphi\|_{2})+B_{5}(\|\varphi\|_{2})-\left|A_{5}(\|\varphi\|_{2})-B_{5}(\|\varphi\|_{2})\right|}{8}
+\left(\frac{n}{2^{\frac{3}{2}}}+\frac{1}{2}-\frac{2}{n(q-2)}\right)\|\varphi\|^{2}_{2},
\end{align}

\begin{align*}
R_{13}(\|\varphi\|_{2}):=&\frac{A_{6}(\|\varphi\|_{2})+B_{6}(\|\varphi\|_{2})-\left|A_{6}(\|\varphi\|_{2})-B_{6}(\|\varphi\|_{2})\right|}{8}
\\&+\left(\frac{1}{2}+\frac{n}{2}\left(\frac{1}{2}-\frac{2}{n(q-2)}\right)^{\frac{1}{2}}\right)\|\varphi\|^{2}_{2},
\end{align*}

\begin{align*}
R_{14}(\|\varphi\|_{2}):=
&\frac{A_{5}(\|\varphi\|_{2})+B_{5}(\|\varphi\|_{2})-
\left|A_{5}(\|\varphi\|_{2})-B_{5}(\|\varphi\|_{2})\right|}{8}
\nonumber\\&+\left(\frac{n}{2}\left(\frac{1}{2}-\frac{2}{n(q-2)}\right)^{\frac{1}{2}}+\frac{1}{2}-\frac{2}{n(q-2)}\right)\|\varphi\|^{2}_{2},
\end{align*}

\begin{align}\label{A5}
A_{5}(\|\varphi\|_{2}):=\left(\frac{p }{n(p-2)C_{1}\lambda_{1}\|\varphi\|^{p-\frac{n(p-2)}{2}-2}_{2}}\right)^{\frac{4}{n(p-2)}},
\end{align}
\begin{align}\label{B5}
B_{5}(\|\varphi\|_{2}):=\left(\frac{q }{n(q-2)C_{3}\lambda_{2}\|\varphi\|^{q-\frac{n(q-2)}{2}-2}_{2}}\right)^{\frac{4}{n(q-2)}},
\end{align}
\begin{align}\label{A6}
A_{6}(\|\varphi\|_{2}):=\left(\frac{np}{4(p-2)\lambda_{1}C_{1}\|\varphi\|^{p-\frac{n(p-2)}{2}-4}_{2}}\right)^{\frac{4}{n(p-2)+4}},
\end{align}and
\begin{align}\label{B6}
B_{6}(\|\varphi\|_{2}):=\left(\frac{nq}{4(q-2)\lambda_{2}C_{3}\|\varphi\|^{q-\frac{n(q-2)}{2}-4}_{2}}\right)^{\frac{4}{n(q-2)+4}}
.
\end{align}
By the similar analysis in Remark $\ref{Manifoldsfeikong}$, we know that the manifold $\mathcal{W}_{R_{j}}$, $j=9,10,\cdots,14$, is not empty.

Next,
we shall use the manifold $\mathcal{W}_{R_{j}}$, $j=9,10,\cdots,14$, to prove the global existence for $K(\varphi_{0})\geq d_{_{\mathcal{N}_{j}}}$, $j=9,10,\cdots,14$, in following Theorem $\ref{dinglis9--14}$. For the case $K(\varphi_{0})<d_{_{\mathcal{N}_{j}}}$, $j=9,10,\cdots,14$, the global existence is proved in Theorem $\ref{dinglis9-14}$, and the conditions there ensure that $d_{_{\mathcal{N}_{j}}}>0$, $j=9,10,\cdots,14$. As in Theorem $\ref{dinglis9--14}$, we also need the conclusion $d_{_{\mathcal{N}_{j}}}>0$, $j=9,10,\cdots,14$, we naturally
take the corresponding the conditions of Theorem $\ref{dinglis9-14}$ into the following Theorem $\ref{dinglis9--14}$.

\begin{theorem}\label{dinglis9--14}(Global existence criteria for high initial energy $K(\varphi_{0})\geq d_{\mathcal{N}_{j}}$, $j=9,10,\cdots,14$)
Let $n$, $p$, $q$, $\lambda_{1}>0$, $\lambda_{2}>0$ satisfy the following conditions

(i)\ for $j=9,11$: $n=1,2$, $p$, $q$ satisfy (\ref{1.2}), $\lambda_{1}$ and $\lambda_{2}$ additionally satisfy (\ref{2.9}) and (\ref{2.10});

(ii)\ for $j=10$: $n=1$, $p>q\geq6$, $\lambda_{1}$ and $\lambda_{2}$ additionally satisfy (\ref{2.9}) and (\ref{2.10});

(iii)\ for $j=12,14$: $n=1,2$, $p$, $q$ satisfy (\ref{1.2}), with no additional restrictions on $\lambda_{1}$ and $\lambda_{2}$;

(iv)\ for $j=13$: $n=1$, $p>q\geq6$, with no additional restrictions on $\lambda_{1}$ and $\lambda_{2}$.\\
Then
the corresponding solution $\varphi$ to problem (\ref{1.1}) exists globally in time provided $\varphi_{0}\in \mathcal{W}_{R_{j}}$, $j=9,10,\cdots,14$.

\end{theorem}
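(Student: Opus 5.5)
The proof will follow the template of Lemma \ref{lem5.1} and Theorem \ref{th5.1}, in two steps: first show that $\mathcal{W}_{R_{j}}$, $j=9,10,\cdots,14$, is invariant under the flow of (\ref{1.1}), and then derive a uniform bound on $\|\varphi\|_{H}$ on this set. Throughout we use the hypotheses of Theorem \ref{dinglis9-14}, so that $d_{\mathcal{N}_{j}}>0$ (Step I of that proof) and all the exponent restrictions there ($n=1,2$, or $n=1$ with $p>q\geq 6$) are available.

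\textbf{Invariance.} Let $\varphi_{0}\in\mathcal{W}_{R_{j}}$. Since $K$ and $\|\varphi\|_{2}$ are conserved (Lemma \ref{lem2.2}), we have $d_{\mathcal{N}_{j}}\leq K(\varphi(t))\leq R_{j}(\|\varphi(t)\|_{2})$ for all $t$. It remains to show $S_{j}(\varphi(t))>0$. Suppose not, and let $\tau_{j}$ be the first time with $S_{j}(\varphi(\tau_{j}))=0$. We then seek a lower bound $K(\varphi(\tau_{j}))>R_{j}(\|\varphi_{0}\|_{2})$, which gives the contradiction.

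For $j=9,10,11$, the relation $S_{j}=0$ gives (\ref{4.21}). Inserting it into (\ref{2.16}) yields (\ref{4.22}), a lower bound by $\frac12\|\nabla\varphi\|_{2}^{2}$ plus positive multiples of $\||x|\varphi\|_{2}^{2}$ and $\|\varphi\|_{2}^{2}$. We then split the gradient term in half:
\begin{itemize}
\item the first half, $\frac14\|\nabla\varphi\|^2_2$, is paired with the $\||x|\varphi\|_{2}^{2}$ term via Cauchy--Schwarz and Lemma \ref{lem3.2}, producing a term of the form $\frac{n}{2}(\cdot)^{1/2}\|\varphi\|_{2}^{2}$ (which gives the constant $n/2^{3/2}$ in $R_9$);
\item the second half, $\frac14\|\nabla\varphi\|^2_2$, is bounded below by a constant depending only on $\|\varphi_{0}\|_{2}$.
\end{itemize}
For the lower bound on the gradient when $j=9,11$, we use (\ref{4.24}) together with the Gagliardo--Nirenberg inequality (\ref{GN-budengshi}), exactly as in (\ref{4.25}). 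Solving for $\|\nabla\varphi\|^2_2$, with $\|\varphi\|_{2}=\|\varphi_{0}\|_{2}$, gives the quantity appearing in $R_{9}$ and $R_{11}$. For $j=10$, we instead combine (\ref{4.28}), (\ref{4.29}) and Lemma \ref{lem3.2} to eliminate $\||x|\varphi\|_{2}$ in favour of $\|\varphi\|_{2}^{2}\le \frac{2}{n}\|\nabla\varphi\|_2\||x|\varphi\|_2$. This produces the exponent $4/(n(p-2)+4)$ appearing in $R_{10}$.

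For $j=12,13,14$, we start from (\ref{4.36})--(\ref{4.38}). The constraint $S_{j}=0$ forces either the $p$-term or the $q$-term to dominate half of the corresponding quadratic quantity, as in (\ref{4.39})/(\ref{4.40}) or (\ref{4.55})/(\ref{4.56}). Applying (\ref{GN-budengshi}) or (\ref{GN-budengshi1}) then gives $\|\nabla\varphi\|_{2}^{2}\ge\min\{A_{5},B_{5}\}$, respectively $\min\{A_{6},B_{6}\}$. Combined with the same halving and Cauchy--Schwarz trick, this yields $K(\varphi_{0})>R_{j}(\|\varphi_{0}\|_{2})$.

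\textbf{Global bound.} Once invariance is known, $S_{j}(\varphi(t))>0$ holds for all $t$. For $j=9,10,11$, this gives (\ref{4.74}); substituting it into (\ref{4.5}), with $d_{\mathcal{N}_{j}}$ replaced by $K(\varphi_{0})$, yields a bound of the form (\ref{youjie1}). For $j=12,13,14$, we have (\ref{4.75u}), which leads to (\ref{4.10}) and hence the bound (\ref{youjie2}). In either case $\|\varphi\|_{H}$ stays bounded, so $T_{\max}=+\infty$ by Lemma \ref{lem2.1}.

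\textbf{Expected obstacle.} The hard part is the invariance step, specifically matching the constants in $R_{j}$ exactly. In the $j=10,13$ cases, the lower bound on $\|\nabla\varphi\|_{2}$ comes from a coupled inequality in $\|\nabla\varphi\|_{2}$ and $\||x|\varphi\|_{2}$, and the powers of $\|\varphi_{0}\|_{2}$ must come out with the correct sign. This is where the restrictions $n=1$ and $q\ge6$ are used. The plan here is to eliminate $\||x|\varphi\|_{2}$ via Lemma \ref{lem3.2} written as $\||x|\varphi\|_{2}\ge \frac{n}{2}\|\varphi\|_{2}^{2}/\|\nabla\varphi\|_{2}$, rather than splitting by Young's inequality.
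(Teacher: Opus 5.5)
Your architecture is the paper's. You use a first-time contradiction at $\tau_j$ with $S_j(\varphi(\tau_j))=0$, split $\frac12\|\nabla\varphi\|_2^2$ in half, apply Cauchy--Schwarz plus Lemma \ref{lem3.2} to one half, and bound $\|\nabla\varphi\|_2^2$ from below by Gagliardo--Nirenberg in terms of $\|\varphi_0\|_2$ for the other; then (\ref{4.74}) or (\ref{4.75u}) gives the uniform $H$-bound. The step that fails as written is your starting lower bound for $K(\varphi(\tau_j))$. You insert the uniform relaxations (\ref{4.21})--(\ref{4.22}) for $j=9,10,11$ and (\ref{4.36})--(\ref{4.38}) for $j=12,13,14$. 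These were built for $d_{\mathcal{N}_j}>0$, where constants do not matter. But $R_j$ is a sharp threshold: a lower bound strictly below $R_j(\|\varphi_0\|_2)$ gives no contradiction with $K(\varphi_0)\le R_j(\|\varphi_0\|_2)$.
\begin{itemize}
\item For $j=9$, (\ref{4.22}) replaces $\frac12\||x|\varphi\|_2^2$ by $a\||x|\varphi\|_2^2$ with $a=\frac12-\bigl(\frac{\lambda_1}{p}+\frac{\lambda_2}{q}\bigr)\frac{2p}{n(p-2)\lambda_1}<\frac12$. Your Cauchy--Schwarz step then yields only $\frac{n}{2}a^{1/2}\|\varphi\|_2^2<\frac{n}{2^{3/2}}\|\varphi\|_2^2$, contrary to your parenthetical.
\item For $j=10$, the coefficient of $\|\varphi\|_2^2$ picks up $-\bigl(\frac{\lambda_1}{p}+\frac{\lambda_2}{q}\bigr)\frac{2pc}{n(p-2)\lambda_1}$, which is absent from $R_{10}$.
\item For $j=12$ (resp.\ $j=13$), (\ref{4.38}) lowers the coefficient of $\||x|\varphi\|_2^2$ (resp.\ $\|\varphi\|_2^2$) from $\frac12$ to $\frac12-\frac{2}{n(q-2)}$. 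However, $R_{12}$ contains $\frac{n}{2^{3/2}}$ and $R_{13}$ contains $\frac12$.
\end{itemize}
Only for $j=11,14$ do (\ref{4.22}) and (\ref{4.38}) reproduce $R_j$. The fix, which is what the paper does, is to use the exact identity $S_j(\varphi(\tau_j))=0$: substitute it into (\ref{2.16}) for $j=9,10,11$, and into $K$ via (\ref{chuli1-p-q}) for $j=12,13,14$. Then whichever of $\||x|\varphi\|_2^2$, $\|\varphi\|_2^2$ is absent from $S_j$ keeps its full coefficient $\frac12$.

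Smaller points. For $j=10$, citing (\ref{4.29}) goes the wrong way. It trades the conserved $\|\varphi\|_2$ for the non-conserved $\||x|\varphi\|_2$ and leads to (\ref{4.30}). For $p>6$ that carries a positive power of $\||x|\varphi\|_2$, so it gives no lower bound on $\|\nabla\varphi\|_2$ in terms of $\|\varphi_0\|_2$ alone. The step in your last paragraph is the correct one and coincides with the paper's. You bound the left side of (\ref{4.28}) below by $\frac{n^2}{4}\|\varphi\|_2^4/\|\nabla\varphi\|_2^2$ and then apply (\ref{GN-budengshi}). This gives exactly the $R_{10}$ term, and with the $p/q$ dichotomy it gives $A_6$, $B_6$ for $j=13$.

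In the invariance step the sign of the exponent of $\|\varphi_0\|_2$ is irrelevant. Mass is conserved, and you only solve for $\|\nabla\varphi\|_2$, whose exponent is positive. So the dimensional restrictions enter only through $d_{\mathcal{N}_j}>0$ from Theorem \ref{dinglis9-14}, not where you place them.

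Finally, at the endpoint $n=1$, $q=6$ allowed by (iv), $\frac12-\frac{2}{n(q-2)}=0$, so (\ref{youjie2}) is vacuous for $j=13$. The paper shares this defect. In that case you should keep $K(\varphi_0)>\frac12\|\nabla\varphi\|_2^2+\frac12\|\varphi\|_2^2$. Then $J''\le 8\|\nabla\varphi\|_2^2$ bounds $\||x|\varphi\|_2$ on finite time intervals.
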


\begin{proof}
For the initial data $\varphi_{0}\in\mathcal{W}_{R_{j}}$, $j=9,10,\cdots,14$, we prove $T_{\rm{max}}=+\infty$. First, we need to prove the invariance of the manifold $\mathcal{W}_{R_{j}}$, $j=9,10,\cdots,14$. Thus, we divide the proof into the following two steps, i.e., Step \textrm{I} (the manifold $\mathcal{W}_{R_{j}}$, $j=9,10,\cdots,14$, is invariant) and Step \textrm{II} (global existence).\\
\textbf{Step \textrm{I}:\ The manifold \bm{$\mathcal{W}_{R_{j}}$}, \bm{$j=9,10,\cdots,14$}, is invariant.}

For the initial data $\varphi_{0}\in\mathcal{W}_{R_{j}}$, $j=9,10,\cdots,14$, we know that there exists a unique solution $\varphi\in C([0,T_{\rm{max}}),H)$ to the Cauchy problem (\ref{1.1}) according to Lemma \ref{lem2.1}, which means the continuity of $S_{j}(\varphi(t))$, $j=9,10,\cdots,14$, with respect to time $t$. For $\varphi_{0}\in\mathcal{W}_{R_{j}}$, $j=9,10,\cdots,14$, we know
\begin{align}
d_{_{\mathcal{N}_{j}}}\leq K(\varphi_{0})\leq R_{j}(\|\varphi_{0}\|_{2}),\ j=9,10,\cdots,14,\label{K0detiaoj1}
\end{align}
and $S_{j}(\varphi_{0})>0$, $j=9,10,\cdots,14$. According to (\ref{K0detiaoj1}), (\ref{2.2}) and (\ref{2.3}), we have
\begin{align}
d_{_{\mathcal{N}_{j}}}\leq K(\varphi(t))\leq R_{j}(\|\varphi(t)\|_{2}),\ j=9,10,\cdots,14.\label{Kfaitdayud}
\end{align}
Next, we only need to prove $S_{j}(\varphi(t))>0$, $j=9,10,\cdots,14$, for $t\in(0,T_{\rm{max}})$, which means $\varphi(t)\in\mathcal{W}_{R_{j}}$, $j=9,10,\cdots,14$, for all $t\in(0,T_{\rm{max}})$.
Here, the corresponding strategies depend on the structures of the cross-constrained variational functionals $S_{j}(\varphi)$, $j=9,10,\cdots,14$. We need to use six different strategies to deal with these six cases $j=9,10,\cdots,14$, respectively. Thus, we treat these six cases one by one.\\
{\bf{Case \textrm{I}: \bm{$j=9.$}}}

We prove $S_{9}(\varphi(t))>0$ for all $t\in(0,T_{\rm{max}})$. Arguing by contradiction, we suppose that there exists the first time $\tau_{9}\in(0,T_{\rm{max}})$ to make $S_{9}(\varphi(\tau_{9}))=0$ and $S_{9}(\varphi(t))>0$ for $t\in(0,\tau_{9})$, and we know $\varphi(\tau_{9})\neq 0$ from (\ref{2.2}) and $\varphi_{0}\neq0$.
Next, we estimate $K(\varphi(\tau_{9}))$.
According to $S_{9}(\varphi(\tau_{9}))=0$, we have
\begin{align}
\|\varphi(\tau_{9})\|^{p}_{p}=&\frac{2p c}{n(p-2)\lambda_{1}}\|\varphi(\tau_{9})\|^{2}_{2}.\label{p-gujij9}
\end{align}
Substituting (\ref{p-gujij9}) into (\ref{2.16}), we have
\begin{align}\label{p-9zhunbeimaodun+1}
\begin{split}
K(\varphi(\tau_{9}))>&\frac{1}{2}\|\nabla \varphi(\tau_{9})\|^{2}_{2}+\frac{1}{2}\||x|\varphi(\tau_{9})\|^{2}_{2}\\
&+\left(\frac{1}{2}-\left(\frac{\lambda_{1}}{p}+\frac{\lambda_{2}}{q}\right)\frac{2p c}{n(p-2)\lambda_{1}}-\frac{\lambda_{2}}{q}\right)\|\varphi(\tau_{9})\|^{2}_{2}\\
=&\frac{1}{4}\|\nabla \varphi(\tau_{9})\|^{2}_{2}+\frac{1}{2}\||x|\varphi(\tau_{9})\|^{2}_{2}+\frac{1}{4}\|\nabla \varphi(\tau_{9})\|^{2}_{2}\\
&+\left(\frac{1}{2}-\left(\frac{\lambda_{1}}{p}+\frac{\lambda_{2}}{q}\right)\frac{2p c}{n(p-2)\lambda_{1}}-\frac{\lambda_{2}}{q}\right)\|\varphi(\tau_{9})\|^{2}_{2}.
\end{split}
\end{align}
In (\ref{p-9zhunbeimaodun+1}), the assumption (\ref{2.10}) ensures $$\frac{1}{2}-\frac{\lambda_{2}}{q}>0.$$Then, $c>0$ is chosen to make $$\frac{1}{2}-\left(\frac{\lambda_{1}}{p}+\frac{\lambda_{2}}{q}\right)\frac{2p c}{n(p-2)\lambda_{1}}-\frac{\lambda_{2}}{q}>0,$$i.e., (\ref{cdetiaojian}), hold.
According to Cauchy-Schwartz inequality and Lemma \ref{lem3.2}, we know (\ref{p-9zhunbeimaodun+1}) turns to
\begin{align}\label{p-9zhunbeimaodun+1-1}
\begin{split}
K(\varphi(\tau_{9}))>
&\frac{1}{2^{\frac{1}{2}}}\|\nabla \varphi(\tau_{9})\|_{2}\||x|\varphi(\tau_{9})\|_{2}+\frac{1}{4}\|\nabla \varphi(\tau_{9})\|^{2}_{2}\\
&+\left(\frac{1}{2}-\left(\frac{\lambda_{1}}{p}+\frac{\lambda_{2}}{q}\right)\frac{2p c}{n(p-2)\lambda_{1}}-\frac{\lambda_{2}}{q}\right)\|\varphi(\tau_{9})\|^{2}_{2}\\
\geq
&\frac{n}{2^{\frac{3}{2}}}\|\varphi(\tau_{9})\|^{2}_{2}+\frac{1}{4}\|\nabla \varphi(\tau_{9})\|^{2}_{2}+\left(\frac{1}{2}-\left(\frac{\lambda_{1}}{p}+\frac{\lambda_{2}}{q}\right)\frac{2p c}{n(p-2)\lambda_{1}}-\frac{\lambda_{2}}{q}\right)\|\varphi(\tau_{9})\|^{2}_{2}.
\end{split}
\end{align}
Then, by Gagliardo-Nirenberg inequality (\ref{GN-budengshi}), we know (\ref{p-gujij9}) becomes
\begin{align*}
\frac{2p c}{n(p-2)\lambda_{1}C_{1}}\|\varphi(\tau_{9})\|^{2}_{2}\leq\|\nabla\varphi(\tau_{9})\|^{\frac{n(p-2)}{2}}_{2}\|\varphi(\tau_{9})\|^{p-\frac{n(p-2)}{2}}_{2},
\end{align*}
i.e.,
\begin{align}
\frac{2p c}{n(p-2)\lambda_{1}C_{1}}\leq\|\nabla\varphi(\tau_{9})\|^{\frac{n(p-2)}{2}}_{2}\|\varphi(\tau_{9})\|^{p-\frac{n(p-2)}{2}-2}_{2}.\label{p-9-zhunbeimaodun+1}
\end{align}
Due to (\ref{2.2}), we know (\ref{p-9-zhunbeimaodun+1}) turns to
\begin{align*}
\frac{2p c}{n(p-2)\lambda_{1}C_{1}}\leq\|\nabla\varphi(\tau_{9})\|^{\frac{n(p-2)}{2}}_{2}\|\varphi_{0}\|^{p-\frac{n(p-2)}{2}-2}_{2},
\end{align*}
i.e.,
\begin{align}
\left(\frac{2p c}{n(p-2)\lambda_{1}C_{1}\|\varphi_{0}\|^{p-\frac{n(p-2)}{2}-2}_{2}}\right)^{\frac{4}{n(p-2)}}\leq\|\nabla\varphi(\tau_{9})\|^{2}_{2},\label{jdengyu11}
\end{align}
which means (\ref{p-9zhunbeimaodun+1-1}) turns to
\begin{align}\label{p--9zhunbeimaodun+1-1}
\begin{split}
K(\varphi(\tau_{9}))
>
&\frac{n}{2^{\frac{3}{2}}}\|\varphi(\tau_{9})\|^{2}_{2}+\frac{1}{4}\left(\frac{2p c}{n(p-2)\lambda_{1}C_{1}\|\varphi_{0}\|^{p-\frac{n(p-2)}{2}-2}_{2}}\right)^{\frac{4}{n(p-2)}}\\
&+\left(\frac{1}{2}-\left(\frac{\lambda_{1}}{p}+\frac{\lambda_{2}}{q}\right)\frac{2p c}{n(p-2)\lambda_{1}}-\frac{\lambda_{2}}{q}\right)\|\varphi(\tau_{9})\|^{2}_{2}.
\end{split}
\end{align}
According to (\ref{2.2}) and (\ref{2.3}), we know (\ref{p--9zhunbeimaodun+1-1}) becomes
\begin{align*}
K(\varphi_{0})
>
&\left(\frac{n}{2^{\frac{3}{2}}}+\frac{1}{2}-\left(\frac{\lambda_{1}}{p}+\frac{\lambda_{2}}{q}\right)\frac{2p c}{n(p-2)\lambda_{1}}-\frac{\lambda_{2}}{q}\right)\|\varphi_{0}\|^{2}_{2}\nonumber\\
&+\frac{1}{4}\left(\frac{2p c}{n(p-2)\lambda_{1}C_{1}\|\varphi_{0}\|^{p-\frac{n(p-2)}{2}-2}_{2}}\right)^{\frac{4}{n(p-2)}},
\end{align*}
which contradicts (\ref{K0detiaoj1}) for $j=9$.
Thus, we have $S_{9}(\varphi(t))>0$ for $t\in(0,T_{\rm{max}})$.\\
{\bf{Case \textrm{II}: \bm{$j=10.$}}}

We prove $S_{10}(\varphi(t))>0$ for $t\in(0,T_{\rm{max}})$. Arguing by contradiction, we suppose that there exists the first time $\tau_{10}\in(0,T_{\rm{max}})$ to make $S_{10}(\varphi(\tau_{10}))=0$ and $S_{10}(\varphi(t))>0$ for $t\in(0,\tau_{10})$, and we know $\varphi(\tau_{10})\neq 0$ due to (\ref{2.2}) and $\varphi_{0}\neq0$. Next, we estimate $K(\varphi(\tau_{10}))$.
According to $S_{10}(\varphi(\tau_{10}))=0$, we have
\begin{align}
\|\varphi(\tau_{10})\|^{p}_{p}=\frac{2p}{n(p-2)\lambda_{1}}\||x|\varphi(\tau_{10})\|^{2}_{2}.\label{10-2jdengyu2-2}
\end{align}
By substituting (\ref{10-2jdengyu2-2}) into (\ref{2.16}), we have
\begin{align}\label{2.17--jdengyu10}
\begin{split}
K(\varphi(\tau_{10}))>&\frac{1}{2}\|\nabla \varphi(\tau_{10})\|^{2}_{2}+\left(\frac{1}{2}-\left(\frac{\lambda_{1}}{p}+\frac{\lambda_{2}}{q}\right)\frac{2p}{n(p-2)\lambda_{1}}\right)
\||x|\varphi(\tau_{10})\|^{2}_{2}+\left(\frac{1}{2}-\frac{\lambda_{2}}{q}\right)\|\varphi(\tau_{10})\|^{2}_{2}\\
=&\frac{1}{4}\|\nabla \varphi(\tau_{10})\|^{2}_{2}+\frac{1}{4}\|\nabla \varphi(\tau_{10})\|^{2}_{2}+\left(\frac{1}{2}-\left(\frac{\lambda_{1}}{p}
+\frac{\lambda_{2}}{q}\right)\frac{2p}{n(p-2)\lambda_{1}}\right)\||x|\varphi(\tau_{10})\|^{2}_{2}\\
&+\left(\frac{1}{2}-\frac{\lambda_{2}}{q}\right)\|\varphi(\tau_{10})\|^{2}_{2}.
\end{split}
\end{align}
In (\ref{2.17--jdengyu10}), the assumption (\ref{2.9}) ensures $$\frac{1}{2}-\left(\frac{\lambda_{1}}{p}+\frac{\lambda_{2}}{q}\right)\frac{2p}{n(p-2)\lambda_{1}}>0.$$ And the assumption (\ref{2.10}) ensures $$\frac{1}{2}-\frac{\lambda_{2}}{q}>0.$$By Cauchy-Schwartz inequality and Lemma \ref{3.2}, we know that (\ref{2.17--jdengyu10}) turns to
\begin{align}\label{2.17-jdengyu10}
\begin{split}
K(\varphi(\tau_{10}))
>&\frac{1}{4}\|\nabla \varphi(\tau_{10})\|^{2}_{2}+\left(\frac{1}{2}-\frac{\lambda_{2}}{q}\right)\|\varphi(\tau_{10})\|^{2}_{2}\\
&+\left(\frac{1}{2}-\left(\frac{\lambda_{1}}{p}+\frac{\lambda_{2}}{q}\right)\frac{2p}{n(p-2)\lambda_{1}}\right)^{\frac{1}{2}}
\|\nabla\varphi(\tau_{10})\|_{2}\||x|\varphi(\tau_{10})\|_{2}\\
\geq&\frac{1}{4}\|\nabla \varphi(\tau_{10})\|^{2}_{2}+\frac{n}{2}\left(\frac{1}{2}-\left(\frac{\lambda_{1}}{p}+\frac{\lambda_{2}}{q}\right)
\frac{2p}{n(p-2)\lambda_{1}}\right)^{\frac{1}{2}}\|\varphi(\tau_{10})\|^{2}_{2}\\&+\left(\frac{1}{2}-\frac{\lambda_{2}}{q}\right)
\|\varphi(\tau_{10})\|^{2}_{2}.
\end{split}
\end{align}
Then, according to (\ref{10-2jdengyu2-2}) and Gagliardo-Nirenberg inequality (\ref{GN-budengshi}), we know
\begin{align*}
\||x|\varphi(\tau_{10})\|^{2}_{2}=&\frac{n(p-2)\lambda_{1} }{2p}\|\varphi(\tau_{10})\|^{p}_{p}\nonumber\\
\leq&\frac{n(p-2)\lambda_{1}C_{1} }{2p}\|\nabla\varphi(\tau_{10})\|^{\frac{n(p-2)}{2}}_{2}\|\varphi(\tau_{10})\|^{p-\frac{n(p-2)}{2}}_{2}\nonumber\\
=&\frac{n(p-2)\lambda_{1}C_{1} }{2p}\|\nabla\varphi(\tau_{10})\|^{\frac{n(p-2)}{2}}_{2}\|\varphi(\tau_{10})\|^{p-\frac{n(p-2)}{2}-4}_{2}\|\varphi(\tau_{10})\|^{4}_{2}\nonumber\\
\leq&\frac{2(p-2)\lambda_{1}C_{1} }{np}\|\nabla\varphi(\tau_{10})\|^{\frac{n(p-2)}{2}}_{2}\|\varphi(\tau_{10})\|^{p-\frac{n(p-2)}{2}-4}_{2}\|\nabla\varphi(\tau_{10})\|^{2}_{2}\||x|\varphi(\tau_{10})\|^{2}_{2},
\end{align*}
which means
\begin{align}
\|\nabla\varphi(\tau_{10})\|^{2}_{2}\geq\left(\frac{np}{2(p-2)\lambda_{1}C_{1}\|\varphi(\tau_{10})\|^{p-\frac{n(p-2)}{2}-4}_{2}}\right)
^{\frac{4}{n(p-2)+4}}.\label{10-Gnbudengshi-1-1}
\end{align}
By substituting (\ref{10-Gnbudengshi-1-1}) into (\ref{2.17-jdengyu10}), we have
\begin{align}\label{2.17-jdengyu--10}
\begin{split}
K(\varphi(\tau_{10}))
>&\frac{1}{4}\left(\frac{np}{2(p-2)\lambda_{1}C_{1}\|\varphi(\tau_{10})\|^{p-\frac{n(p-2)}{2}-4}_{2}}\right)
^{\frac{4}{n(p-2)+4}}\\
&+\frac{n}{2}\left(\frac{1}{2}-\left(\frac{\lambda_{1}}{p}+\frac{\lambda_{2}}{q}\right)\frac{2p}{n(p-2)\lambda_{1}}\right)^{\frac{1}{2}}\|\varphi(\tau_{10})\|^{2}_{2}
+\left(\frac{1}{2}-\frac{\lambda_{2}}{q}\right)\|\varphi(\tau_{10})\|^{2}_{2}.
\end{split}
\end{align}
Due to (\ref{2.2}) and (\ref{2.3}), we know (\ref{2.17-jdengyu--10}) turns to
\begin{align*}
K(\varphi_{0})
>&\frac{1}{4}\left(\frac{np}{2(p-2)\lambda_{1}C_{1}\|\varphi_{0}\|^{p-\frac{n(p-2)}{2}-4}_{2}}\right)
^{\frac{4}{n(p-2)+4}}\nonumber\\
&+\left(\frac{n}{2}\left(\frac{1}{2}-\left(\frac{\lambda_{1}}{p}+\frac{\lambda_{2}}{q}\right)\frac{2p}{n(p-2)\lambda_{1}}\right)^{\frac{1}{2}}+\frac{1}{2}-\frac{\lambda_{2}}{q}\right)\|\varphi_{0}\|^{2}_{2},
\end{align*}
which contradicts (\ref{K0detiaoj1}) for $j=10$. Thus, we know $S_{10}(\varphi(t))>0$ for $t\in(0,T_{\rm{max}})$.\\
{\bf{Case \textrm{III}: \bm{$j=11.$}}}

We prove $S_{11}(\varphi(t))>0$ for all $t\in(0,T_{\rm{max}})$. Arguing by contradiction, we suppose that there exists the first time $\tau_{11}\in(0,T_{\rm{max}})$ to make
$S_{11}(\varphi(\tau_{11}))=0$ and $S_{11}(\varphi(t))>0$ for $t\in(0,\tau_{11})$, and we know $\varphi(\tau_{11})\neq 0$ from (\ref{2.2}) and $\varphi_{0}\neq0$.
Next, we estimate $K(\varphi(\tau_{11}))$.
According to $S_{11}(\varphi(\tau_{11}))=0$, we have
\begin{align}
\|\varphi(\tau_{11})\|^{p}_{p}=&\frac{2p}{n(p-2)\lambda_{1}}\left(c\|\varphi(\tau_{11})\|^{2}_{2}+\||x|\varphi(\tau_{11})\|^{2}_{2}\right).\label{11-gujij4-p}
\end{align}
By substituting (\ref{11-gujij4-p}) into (\ref{2.16}), we have
\begin{align}\label{p11---zhunbeimaodun+1}
\begin{split}
K(\varphi(\tau_{11}))>&\frac{1}{2}\|\nabla \varphi(\tau_{11})\|^{2}_{2}+\left(\frac{1}{2}-\left(\frac{\lambda_{1}}{p}+\frac{\lambda_{2}}{q}\right)\frac{2p}{n(p-2)\lambda_{1}}\right)\||x|\varphi(\tau_{11})\|^{2}_{2}
\\
&+\left(\frac{1}{2}-\left(\frac{\lambda_{1}}{p}+\frac{\lambda_{2}}{q}\right)\frac{2p c}{n(p-2)\lambda_{1}}-\frac{\lambda_{2}}{q}\right)\|\varphi(\tau_{11})\|^{2}_{2}\\
=&\frac{1}{4}\|\nabla \varphi(\tau_{11})\|^{2}_{2}+\frac{1}{4}\|\nabla \varphi(\tau_{11})\|^{2}_{2}\\
&+\left(\frac{1}{2}-\left(\frac{\lambda_{1}}{p}+\frac{\lambda_{2}}{q}\right)\frac{2p}{n(p-2)\lambda_{1}}\right)
\||x|\varphi(\tau_{11})\|^{2}_{2}\\
&+\left(\frac{1}{2}-\left(\frac{\lambda_{1}}{p}+\frac{\lambda_{2}}{q}\right)\frac{2p c}{n(p-2)\lambda_{1}}-\frac{\lambda_{2}}{q}\right)\|\varphi(\tau_{11})\|^{2}_{2}.
\end{split}
\end{align}
In (\ref{p11---zhunbeimaodun+1}), the assumptions (\ref{2.9}) and (\ref{2.10}) ensure $$\frac{1}{2}-\left(\frac{\lambda_{1}}{p}+\frac{\lambda_{2}}{q}\right)\frac{2p}{n(p-2)\lambda_{1}}>0$$
and $$\frac{1}{2}-\frac{\lambda_{2}}{q}>0,$$ respectively. Then, $c>0$ is chosen such that $$\frac{1}{2}-\left(\frac{\lambda_{1}}{p}+\frac{\lambda_{2}}{q}\right)\frac{2p c}{n(p-2)\lambda_{1}}-\frac{\lambda_{2}}{q}>0,$$ i.e., (\ref{cdetiaojian}), holds.
According to Cauchy-Schwartz inequality and Lemma \ref{lem3.2}, we know (\ref{p11---zhunbeimaodun+1}) turns to
\begin{align}\label{p11+---zhunbeimaodun+1}
\begin{split}
K(\varphi(\tau_{11}))>
&\frac{1}{4}\|\nabla \varphi(\tau_{11})\|^{2}_{2}+\left(\frac{1}{2}-\left(\frac{\lambda_{1}}{p}+\frac{\lambda_{2}}{q}\right)\frac{2p}{n(p-2)\lambda_{1}}\right)^{\frac{1}{2}}
\|\nabla \varphi(\tau_{11})\|_{2}\||x|\varphi(\tau_{11})\|_{2}\\
&+\left(\frac{1}{2}-\left(\frac{\lambda_{1}}{p}+\frac{\lambda_{2}}{q}\right)\frac{2p c}{n(p-2)\lambda_{1}}-\frac{\lambda_{2}}{q}\right)\|\varphi(\tau_{11})\|^{2}_{2}\\
\geq&\frac{1}{4}\|\nabla \varphi(\tau_{11})\|^{2}_{2}+\frac{n}{2}\left(\frac{1}{2}-\left(\frac{\lambda_{1}}{p}+\frac{\lambda_{2}}{q}\right)\frac{2p}{n(p-2)\lambda_{1}}\right)^{\frac{1}{2}}
\|\varphi(\tau_{11})\|^{2}_{2}\\
&+\left(\frac{1}{2}-\left(\frac{\lambda_{1}}{p}+\frac{\lambda_{2}}{q}\right)\frac{2p c}{n(p-2)\lambda_{1}}-\frac{\lambda_{2}}{q}\right)\|\varphi(\tau_{11})\|^{2}_{2}.
\end{split}
\end{align}
Meanwhile, according to (\ref{11-gujij4-p}), we have
\begin{align}
\|\varphi(\tau_{11})\|^{p}_{p}>&\frac{2p c}{n(p-2)\lambda_{1}}\|\varphi(\tau_{11})\|^{2}_{2}.\label{p-gujij9-11}
\end{align}
By the similar process of proving (\ref{jdengyu11}), we can use (\ref{p-gujij9-11}) to give \begin{align}
\left(\frac{2p c}{n(p-2)\lambda_{1}C_{1}\|\varphi_{0}\|^{p-\frac{n(p-2)}{2}-2}_{2}}\right)^{\frac{4}{n(p-2)}}\leq\|\nabla\varphi(\tau_{11})\|^{2}_{2}.\label{jdengyu11---}
\end{align}
By substituting (\ref{jdengyu11---}) into (\ref{p11+---zhunbeimaodun+1}), we have
\begin{align}\label{j11}
\begin{split}
K(\varphi(\tau_{11}))
>&\frac{1}{4}\left(\frac{2p c}{n(p-2)\lambda_{1}C_{1}\|\varphi_{0}\|^{p-\frac{n(p-2)}{2}-2}_{2}}\right)^{\frac{4}{n(p-2)}}\\
&+\left(\frac{n}{2}\left(\frac{1}{2}-\left(\frac{\lambda_{1}}{p}+\frac{\lambda_{2}}{q}\right)\frac{2p}{n(p-2)\lambda_{1}}\right)^{\frac{1}{2}}+\frac{1}{2}-\left(\frac{\lambda_{1}}{p}+\frac{\lambda_{2}}{q}\right)\frac{2p c}{n(p-2)\lambda_{1}}
\right.\\&\left.-\frac{\lambda_{2}}{q}\right)\|\varphi(\tau_{11})\|^{2}_{2}.
\end{split}
\end{align}
According to (\ref{2.2}) and (\ref{2.3}), we know (\ref{j11}) turns to
\begin{align*}
K(\varphi_{0})
>&\frac{1}{4}\left(\frac{2p c}{n(p-2)\lambda_{1}C_{1}\|\varphi_{0}\|^{p-\frac{n(p-2)}{2}-2}_{2}}\right)^{\frac{4}{n(p-2)}}\nonumber\\
&+\left(\frac{n}{2}\left(\frac{1}{2}-\left(\frac{\lambda_{1}}{p}+\frac{\lambda_{2}}{q}\right)\frac{2p}{n(p-2)\lambda_{1}}\right)^{\frac{1}{2}}+\frac{1}{2}-\left(\frac{\lambda_{1}}{p}+\frac{\lambda_{2}}{q}\right)\frac{2p c}{n(p-2)\lambda_{1}}-\frac{\lambda_{2}}{q}\right)\|\varphi_{0}\|^{2}_{2},
\end{align*}
which contradicts (\ref{K0detiaoj1}) for $j=11$. Thus, we have $S_{11}(\varphi(t))>0$ for $t\in(0,T_{\rm{max}})$.\\
{\bf{Case \textrm{IV}: \bm{$j=12.$}}}

We prove $S_{12}(\varphi(t))>0$ for $t\in(0,T_{\rm{max}})$. Arguing by contradiction, we suppose that there exists the first time $\tau_{12}\in(0,T_{\rm{max}})$ to make $S_{12}(\varphi(\tau_{12}))=0$ and $S_{12}(\varphi(t))>0$ for $t\in(0,\tau_{12})$, and we know $\varphi(\tau_{12})\neq 0$ from (\ref{2.2}) and $\varphi_{0}\neq0$. Next, we estimate $K(\varphi(\tau_{12}))$.
According to $S_{12}(\varphi(\tau_{12}))=0$, we have
\begin{align}
\frac{n(p-2)\lambda_{1}}{2p}\|\varphi(\tau_{12})\|^{p}_{p}+\frac{n(q-2)\lambda_{2}}{2q}\|\varphi(\tau_{12})\|^{q}_{q}=\| \varphi(\tau_{12})\|^{2}_{2}.\label{jdengyu12}
\end{align}
Using (\ref{jdengyu12}) and retrieving (\ref{chuli1-p-q}) for $j=12$, we know
\begin{align}
&\frac{\lambda_{1}}{p}\|\varphi(\tau_{12})\|^{p}_{p}+\frac{\lambda_{2}}{q}\|\varphi(\tau_{12})\|^{q}_{q}
<\frac{2}{n(q-2)}\|\varphi(\tau_{12})\|^{2}_{2}.\label{pq12gujijgaoneng3+}
\end{align}
By substituting (\ref{pq12gujijgaoneng3+}) into $K(\varphi(\tau_{12}))$, we have
\begin{align}\label{pq12--+zhunbeimaodun-1-5}
\begin{split}
K(\varphi(\tau_{12}))>&\frac{1}{2}\|\nabla \varphi(\tau_{12})\|^{2}_{2}+\left(\frac{1}{2}-\frac{2}{n(q-2)}\right)\|\varphi(\tau_{12})\|^{2}_{2}+\frac{1}{2}\||x|\varphi(\tau_{12})\|^{2}_{2}\\
=&\frac{1}{4}\|\nabla \varphi(\tau_{12})\|^{2}_{2}+\frac{1}{4}\|\nabla \varphi(\tau_{12})\|^{2}_{2}+\frac{1}{2}\||x|\varphi(\tau_{12})\|^{2}_{2}+\left(\frac{1}{2}-\frac{2}{n(q-2)}\right)\|\varphi(\tau_{12})\|^{2}_{2}.
\end{split}
\end{align}
In (\ref{pq12--+zhunbeimaodun-1-5}), the assumption (\ref{1.2}) ensures $$\frac{1}{2}-\frac{2}{n(q-2)}>0.$$ By Cauchy-Schwartz inequality and Lemma \ref{lem3.2}, (\ref{pq12--+zhunbeimaodun-1-5}) becomes
\begin{align}\label{pq12-+-+zhunbeimaodun-1-5}
\begin{split}
K(\varphi(\tau_{12}))
>&\frac{1}{4}\|\nabla \varphi(\tau_{12})\|^{2}_{2}+\frac{1}{2^{\frac{1}{2}}}\|\nabla \varphi(\tau_{12})\|_{2}\||x|\varphi(\tau_{12})\|_{2}+\left(\frac{1}{2}-\frac{2}{n(q-2)}\right)\|\varphi(\tau_{12})\|^{2}_{2}\\
\geq&\frac{1}{4}\|\nabla \varphi(\tau_{12})\|^{2}_{2}+\frac{n}{2^{\frac{3}{2}}}\| \varphi(\tau_{12})\|^{2}_{2}+\left(\frac{1}{2}-\frac{2}{n(q-2)}\right)\|\varphi(\tau_{12})\|^{2}_{2}.
\end{split}
\end{align}
Meanwhile, according to $(\ref{jdengyu12})$, (\ref{GN-budengshi}) and (\ref{GN-budengshi1}), we have
\begin{align*}
\| \varphi(\tau_{12})\|^{2}_{2}=&\frac{n(p-2)\lambda_{1}}{2p}\|\varphi(\tau_{12})\|^{p}_{p}+\frac{n(q-2)\lambda_{2}}{2q}\|\varphi(\tau_{12})\|^{q}_{q}\nonumber\\
\leq&\frac{n(p-2)C_{1}\lambda_{1}}{2p}\|\nabla\varphi(\tau_{12})\|^{\frac{n(p-2)}{2}}_{2}\|\varphi(\tau_{12})\|^{p-\frac{n(p-2)}{2}}_{2}\nonumber\\
&+\frac{n(q-2)C_{3}\lambda_{2}}{2q}\|\nabla\varphi(\tau_{12})\|^{\frac{n(q-2)}{2}}_{2}\|\varphi(\tau_{12})\|^{q-\frac{n(q-2)}{2}}_{2},
\end{align*}
i.e.,\label{jdengyu5-5}
\begin{align}
\begin{split}
1
\leq&\frac{n(p-2)C_{1}\lambda_{1}}{2p}\|\nabla\varphi(\tau_{12})\|^{\frac{n(p-2)}{2}}_{2}\|\varphi(\tau_{12})\|^{p-\frac{n(p-2)}{2}-2}_{2}\\
&+\frac{n(q-2)C_{3}\lambda_{2}}{2q}\|\nabla\varphi(\tau_{12})\|^{\frac{n(q-2)}{2}}_{2}\|\varphi(\tau_{12})\|^{q-\frac{n(q-2)}{2}-2}_{2},
\end{split}
\end{align}
which means
\begin{align*}
\frac{1}{2}
\leq&\frac{n(p-2)C_{1}\lambda_{1}}{2p}\|\nabla\varphi(\tau_{12})\|^{\frac{n(p-2)}{2}}_{2}\|\varphi(\tau_{12})\|^{p-\frac{n(p-2)}{2}-2}_{2},
\end{align*}
i.e.,
\begin{align}
\left(\frac{p }{n(p-2)C_{1}\lambda_{1}\|\varphi(\tau_{12})\|^{p-\frac{n(p-2)}{2}-2}_{2}}\right)^{\frac{4}{n(p-2)}}
\leq&\|\nabla\varphi(\tau_{12})\|^{2}_{2},\label{jdengyu12-1}
\end{align}
or
\begin{align*}
\frac{1}{2}
\leq&\frac{n(q-2)C_{3}\lambda_{2}}{2q}\|\nabla\varphi(\tau_{12})\|^{\frac{n(q-2)}{2}}_{2}\|\varphi(\tau_{12})\|^{q-\frac{n(q-2)}{2}-2}_{2},
\end{align*}
i.e.,
\begin{align}
\left(\frac{q }{n(q-2)C_{3}\lambda_{2}\|\varphi(\tau_{12})\|^{q-\frac{n(q-2)}{2}-2}_{2}}\right)^{\frac{4}{n(q-2)}}
\leq&\|\nabla\varphi(\tau_{12})\|^{2}_{2}.\label{jdengyu12-2}
\end{align}
By (\ref{2.2}), we see that (\ref{jdengyu12-1}) and (\ref{jdengyu12-2}) become
\begin{align}
\left(\frac{p }{n(p-2)C_{1}\lambda_{1}\|\varphi_{0}\|^{p-\frac{n(p-2)}{2}-2}_{2}}\right)^{\frac{4}{n(p-2)}}
\leq&\|\nabla\varphi(\tau_{12})\|^{2}_{2}\label{jdengyu12-1-1}
\end{align}
and
\begin{align}
\left(\frac{q }{n(q-2)C_{3}\lambda_{2}\|\varphi_{0}\|^{q-\frac{n(q-2)}{2}-2}_{2}}\right)^{\frac{4}{n(q-2)}}
\leq&\|\nabla\varphi(\tau_{12})\|^{2}_{2},\label{jdengyu12-2-1}
\end{align}
respectively. From (\ref{jdengyu12-1-1}) and (\ref{jdengyu12-2-1}), we have
\begin{align}
\|\nabla\varphi(\tau_{12})\|^{2}_{2}\geq&\frac{A_{5}(\|\varphi_{0}\|_{2})+B_{5}(\|\varphi_{0}\|_{2})-\left|A_{5}(\|\varphi_{0}\|_{2})-B_{5}(\|\varphi_{0}\|_{2})\right|}{2},\label{jdengyu12-2-2}
\end{align}
where $A_{5}(\|\varphi_{0}\|_{2})$ and $B_{5}(\|\varphi_{0}\|_{2})$ are defined by (\ref{A5}) and (\ref{B5}), respectively.
According to (\ref{jdengyu12-2-2}), we know (\ref{pq12-+-+zhunbeimaodun-1-5}) turns to
\begin{align}
\begin{split}
K(\varphi(\tau_{12}))>&\frac{A_{5}(\|\varphi_{0}\|_{2})+B_{5}(\|\varphi_{0}\|_{2})-\left|A_{5}(\|\varphi_{0}\|_{2})-B_{5}(\|\varphi_{0}\|_{2})\right|}{8}\\
&+\left(\frac{n}{2^{\frac{3}{2}}}+\frac{1}{2}-\frac{2}{n(q-2)}\right)
\|\varphi(\tau_{12})\|^{2}_{2}.\label{pq12--+-+zhunbeimaodun-1-5}
\end{split}
\end{align}
Due to (\ref{2.2}) and (\ref{2.3}), (\ref{pq12--+-+zhunbeimaodun-1-5}) becomes
\begin{align*}
K(\varphi_{0})>&\frac{A_{5}(\|\varphi_{0}\|_{2})+B_{5}(\|\varphi_{0}\|_{2})-\left|A_{5}(\|\varphi_{0}\|_{2})-B_{5}(\|\varphi_{0}\|_{2})\right|}{8}+\left(\frac{n}{2^{\frac{3}{2}}}+\frac{1}{2}-\frac{2}{n(q-2)}\right)
\|\varphi_{0}\|^{2}_{2},
\end{align*}
which contradicts (\ref{K0detiaoj1}) for $j=12$. Thus, we have $S_{12}(\varphi(t))>0$ for $t\in(0,T_{\rm{max}})$.\\
{\bf{Case \textrm{V}: \bm{$j=13.$}}}

We prove $S_{13}(\varphi(t))>0$ for $t\in(0,T_{\rm{max}})$. Arguing by contradiction, we suppose that there exists the first time $\tau_{13}\in(0,T_{\rm{max}})$ to make $S_{13}(\varphi(\tau_{13}))=0$ and $S_{13}(\varphi(t))>0$ for $t\in(0,\tau_{13})$, and we know $\varphi(\tau_{13})\neq 0$ due to (\ref{2.2}) and $\varphi_{0}\neq0$. Next, we estimate $K(\varphi(\tau_{13}))$.
According to $S_{13}(\varphi(\tau_{13}))=0$, we have
\begin{align}
\frac{n(p-2)\lambda_{1}}{2p}\|\varphi(\tau_{13})\|^{p}_{p}+\frac{n(q-2)\lambda_{2}}{2q}
\|\varphi(\tau_{13})\|^{q}_{q}=\||x|\varphi(\tau_{13})\|^{2}_{2}.\label{pq13jdengyu6}
\end{align}
Using (\ref{pq13jdengyu6}) and retrieving (\ref{chuli1-p-q}) for $j=13$, we know
\begin{align}
&\frac{\lambda_{1}}{p}\|\varphi(\tau_{13})\|^{p}_{p}+\frac{\lambda_{2}}{q}\|\varphi(\tau_{13})\|^{q}_{q}
<\frac{2}{n(q-2)}\||x|\varphi(\tau_{13})\|^{2}_{2}.\label{gujijgaoneng13+}
\end{align}
By substituting (\ref{gujijgaoneng13+}) into $K(\varphi(\tau_{13}))$, we have
\begin{align}\label{pq13--+zhunbeimaodun-1-5}
\begin{split}
K(\varphi(\tau_{13}))>&\frac{1}{2}\|\nabla \varphi(\tau_{13})\|^{2}_{2}+\left(\frac{1}{2}-\frac{2}{n(q-2)}\right)\||x|\varphi(\tau_{13})\|^{2}_{2}+\frac{1}{2}\| \varphi(\tau_{13})\|^{2}_{2}\\
=&\frac{1}{4}\|\nabla \varphi(\tau_{13})\|^{2}_{2}+\frac{1}{4}\|\nabla \varphi(\tau_{13})\|^{2}_{2}+\left(\frac{1}{2}-\frac{2}{n(q-2)}\right)\||x|\varphi(\tau_{13})\|^{2}_{2}+\frac{1}{2}\| \varphi(\tau_{13})\|^{2}_{2}.
\end{split}
\end{align}
In (\ref{pq13--+zhunbeimaodun-1-5}), the assumption (\ref{1.2}) ensures $$\frac{1}{2}-\frac{2}{n(q-2)}>0.$$ By Cauchy-Schwartz inequality and Lemma \ref{lem3.2}, we know (\ref{pq13--+zhunbeimaodun-1-5}) becomes
\begin{align}\label{j13}
\begin{split}
K(\varphi(\tau_{13}))>&\frac{1}{4}\|\nabla \varphi(\tau_{13})\|^{2}_{2}+\left(\frac{1}{2}-\frac{2}{n(q-2)}\right)^{\frac{1}{2}}\|\nabla \varphi(\tau_{13})\|_{2}\||x|\varphi(\tau_{13})\|_{2}+\frac{1}{2}\| \varphi(\tau_{13})\|^{2}_{2}\\
\geq&\frac{1}{4}\|\nabla \varphi(\tau_{13})\|^{2}_{2}+\frac{n}{2}\left(\frac{1}{2}-\frac{2}{n(q-2)}\right)^{\frac{1}{2}}\|\varphi(\tau_{13})\|^{2}_{2}+\frac{1}{2}\| \varphi(\tau_{13})\|^{2}_{2}.
\end{split}
\end{align}
Meanwhile, according to (\ref{GN-budengshi}), (\ref{GN-budengshi1}) and Lemma \ref{lem3.2},
we know (\ref{pq13jdengyu6}) turns to
\begin{align*}
\||x|\varphi(\tau_{13})\|^{2}_{2}=&\frac{n(p-2)\lambda_{1}}{2p}\|\varphi(\tau_{13})\|^{p}_{p}+\frac{n(q-2)\lambda_{2}}{2q}
\|\varphi(\tau_{13})\|^{q}_{q}\nonumber\\
\leq&\frac{n(p-2)\lambda_{1}C_{1}}{2p}\|\nabla\varphi(\tau_{13})\|^{\frac{n(p-2)}{2}}_{2}\|\varphi(\tau_{13})\|^{p-\frac{n(p-2)}{2}}_{2}\nonumber\\
&+\frac{n(q-2)\lambda_{2}C_{3}}{2q}\|\nabla\varphi(\tau_{13})\|^{\frac{n(q-2)}{2}}_{2}\|\varphi(\tau_{13})\|^{q-\frac{n(q-2)}{2}}_{2}\nonumber\\
=&\frac{n(p-2)\lambda_{1}C_{1}}{2p}\|\nabla\varphi(\tau_{13})\|^{\frac{n(p-2)}{2}}_{2}\|\varphi(\tau_{13})\|^{p-\frac{n(p-2)}{2}-4}_{2}
\|\varphi(\tau_{13})\|^{4}_{2}\nonumber\\
&+\frac{n(q-2)\lambda_{2}C_{3}}{2q}\|\nabla\varphi(\tau_{13})\|^{\frac{n(q-2)}{2}}_{2}\|\varphi(\tau_{13})\|^{q-\frac{n(q-2)}{2}-4}_{2}
\|\varphi(\tau_{13})\|^{4}_{2}\nonumber\\
\leq&\frac{2(p-2)\lambda_{1}C_{1}}{np}\|\nabla\varphi(\tau_{13})\|^{\frac{n(p-2)}{2}}_{2}\|\varphi(\tau_{13})\|^{p-\frac{n(p-2)}{2}-4}_{2}
\|\nabla\varphi(\tau_{13})\|^{2}_{2}\||x|\varphi(\tau_{13})\|^{2}_{2}\nonumber\\
&+\frac{2(q-2)\lambda_{2}C_{3}}{nq}\|\nabla\varphi(\tau_{13})\|^{\frac{n(q-2)}{2}}_{2}\|\varphi(\tau_{13})\|^{q-\frac{n(q-2)}{2}-4}_{2}
\|\nabla\varphi(\tau_{13})\|^{2}_{2}\||x|\varphi(\tau_{13})\|^{2}_{2},
\end{align*}
i.e.,
\begin{align*}
1
\leq&\frac{2(p-2)\lambda_{1}C_{1}}{np}\|\nabla\varphi(\tau_{13})\|^{\frac{n(p-2)+4}{2}}_{2}\|\varphi(\tau_{13})\|^{p-\frac{n(p-2)}{2}-4}_{2}
\nonumber\\
&+\frac{2(q-2)\lambda_{2}C_{3}}{nq}\|\nabla\varphi(\tau_{13})\|^{\frac{n(q-2)+4}{2}}_{2}\|\varphi(\tau_{13})\|^{q-\frac{n(q-2)}{2}-4}_{2}
,
\end{align*}
which means
\begin{align}
\frac{1}{2}
\leq&\frac{2(p-2)\lambda_{1}C_{1}}{np}\|\nabla\varphi(\tau_{13})\|^{\frac{n(p-2)+4}{2}}_{2}\|\varphi(\tau_{13})\|^{p-\frac{n(p-2)}{2}-4}_{2}\label{13-1}
\end{align}
or
\begin{align}
\frac{1}{2}
\leq
&\frac{2(q-2)\lambda_{2}C_{3}}{nq}\|\nabla\varphi(\tau_{13})\|^{\frac{n(q-2)+4}{2}}_{2}\|\varphi(\tau_{13})\|^{q-\frac{n(q-2)}{2}-4}_{2}.\label{13-2}
\end{align}
Here, from (\ref{2.2}), we see that (\ref{13-1}) and (\ref{13-2}) give
\begin{align}
\|\nabla\varphi(\tau_{13})\|^{2}_{2}\geq\left(\frac{np}{4(p-2)\lambda_{1}C_{1}\|\varphi_{0}\|^{p-\frac{n(p-2)}{2}-4}_{2}}\right)^{\frac{4}{n(p-2)+4}}
\label{13-1-p}
\end{align}
and
\begin{align}
\|\nabla\varphi(\tau_{13})\|^{2}_{2}\geq\left(\frac{nq}{4(q-2)\lambda_{2}C_{3}\|\varphi_{0}\|^{q-\frac{n(q-2)}{2}-4}_{2}}\right)^{\frac{4}{n(q-2)+4}}
,\label{13-2-q}
\end{align}
respectively.
Then, according to (\ref{13-1-p}) and (\ref{13-2-q}), we have
\begin{align}
\|\nabla\varphi(\tau_{13})\|^{2}_{2}\geq\frac{A_{6}(\|\varphi_{0}\|_{2})+B_{6}(\|\varphi_{0}\|_{2})-\left|A_{6}(\|\varphi_{0}\|_{2})-B_{6}(\|\varphi_{0}\|_{2})\right|}{2},\label{jdengyu13pq-2-2}
\end{align}
where $A_{6}(\|\varphi_{0}\|_{2}$ and $B_{6}(\|\varphi_{0}\|_{2}$ are defined by (\ref{A6}) and (\ref{B6}), respectively.
By substituting (\ref{jdengyu13pq-2-2}) into (\ref{j13}), we have
\begin{align}\label{j13--}
\begin{split}
K(\varphi(\tau_{13}))>&\frac{A_{6}(\|\varphi_{0}\|_{2})+B_{6}(\|\varphi_{0}\|_{2})-\left|A_{6}(\|\varphi_{0}\|_{2})-B_{6}(\|\varphi_{0}\|_{2})\right|}{8}
\\
&+\frac{n}{2}\left(\frac{1}{2}-\frac{2}{n(q-2)}\right)^{\frac{1}{2}}\|\varphi(\tau_{13})\|^{2}_{2}+\frac{1}{2}\| \varphi(\tau_{13})\|^{2}_{2}.
\end{split}
\end{align}
Due to (\ref{2.2}) and (\ref{2.3}), we know (\ref{j13--}) turns to
\begin{align*}
K(\varphi_{0})>&\frac{A_{6}(\|\varphi_{0}\|_{2})+B_{6}(\|\varphi_{0}\|_{2})-\left|A_{6}(\|\varphi_{0}\|_{2})-B_{6}(\|\varphi_{0}\|_{2})\right|}{8}
\\&+\left(\frac{1}{2}+\frac{n}{2}\left(\frac{1}{2}-\frac{2}{n(q-2)}\right)^{\frac{1}{2}}\right)\|\varphi_{0}\|^{2}_{2},
\end{align*}
which contradicts (\ref{K0detiaoj1}) for $j=13$. Thus, we have $S_{13}(\varphi(t))>0$ for $t\in(0,T_{\rm{max}})$.\\
{\bf{Case \textrm{VI}: \bm{$j=14.$}}}

We prove $S_{14}(\varphi(t))>0$ for $t\in(0,T_{\rm{max}})$. Arguing by contradiction, we suppose that there exists the first time $\tau_{14}\in(0,T_{\rm{max}})$ to make $S_{14}(\varphi(\tau_{14}))=0$ and $S_{14}(\varphi(t))>0$ for $t\in(0,\tau_{14})$, and we know $\varphi(\tau_{14})\neq 0$ from (\ref{2.2}) and $\varphi_{0}\neq0$. Next, we estimate $K(\varphi(\tau_{14}))$.
According to $S_{14}(\varphi(\tau_{14}))=0$, we have
\begin{align}
\frac{n(p-2)\lambda_{1}}{2p}\|\varphi(\tau_{14})\|^{p}_{p}+\frac{n(q-2)\lambda_{2}}{2q}
\|\varphi(\tau_{14})\|^{q}_{q}=\|\varphi(\tau_{14})\|^{2}_{2}+\||x|\varphi(\tau_{14})\|^{2}_{2}.\label{jdengyu14}
\end{align}
Using (\ref{jdengyu14}) and retrieving (\ref{chuli1-p-q}) for $j=14$, we know
\begin{align}
&\frac{\lambda_{1}}{p}\|\varphi(\tau_{14})\|^{p}_{p}+\frac{\lambda_{2}}{q}\|\varphi(\tau_{14})\|^{q}_{q}<\frac{2}{n(q-2)}\left(\|\varphi(\tau_{14})\|^{2}_{2}+\||x|\varphi(\tau_{14})\|^{2}_{2}\right).\label{gujijgaoneng3-14+}
\end{align}
By substituting (\ref{gujijgaoneng3-14+}) into $K(\varphi(\tau_{14}))$, we have
\begin{align}\label{pq14zhunbeimaodun-1-5}
\begin{split}
K(\varphi(\tau_{14}))>&\frac{1}{2}\|\nabla \varphi(\tau_{14})\|^{2}_{2}+\left(\frac{1}{2}-\frac{2}{n(q-2)}\right)\||x|\varphi(\tau_{14})\|^{2}_{2}+\left(\frac{1}{2}-\frac{2}{n(q-2)}\right)\| \varphi(\tau_{14})\|^{2}_{2}\\
=&\frac{1}{4}\|\nabla \varphi(\tau_{14})\|^{2}_{2}+\frac{1}{4}\|\nabla \varphi(\tau_{14})\|^{2}_{2}+\left(\frac{1}{2}-\frac{2}{n(q-2)}\right)\||x|\varphi(\tau_{14})\|^{2}_{2}\\&+\left(\frac{1}{2}-\frac{2}{n(q-2)}\right)\|\varphi(\tau_{14})\|^{2}_{2}
.
\end{split}
\end{align}
In (\ref{pq14zhunbeimaodun-1-5}), the assumption (\ref{1.2}) ensures $$\frac{1}{2}-\frac{2}{n(q-2)}>0.$$ By Cauchy-Schwartz inequality and Lemma \ref{lem3.2}, we know (\ref{pq14zhunbeimaodun-1-5}) turns to
\begin{align}\label{pq--14zhunbeimaodun-1-5}
\begin{split}
K(\varphi(\tau_{14}))>&\frac{1}{4}\|\nabla \varphi(\tau_{14})\|^{2}_{2}+\left(\frac{1}{2}-\frac{2}{n(q-2)}\right)^{\frac{1}{2}}\|\nabla \varphi(\tau_{14})\|_{2}\||x|\varphi(\tau_{14})\|_{2}\\
&+\left(\frac{1}{2}-\frac{2}{n(q-2)}\right)\|\varphi(\tau_{14})\|^{2}_{2}\\
\geq&\frac{1}{4}\|\nabla \varphi(\tau_{14})\|^{2}_{2}+\frac{n}{2}\left(\frac{1}{2}-\frac{2}{n(q-2)}\right)^{\frac{1}{2}}\| \varphi(\tau_{14})\|^{2}_{2}+\left(\frac{1}{2}-\frac{2}{n(q-2)}\right)\|\varphi(\tau_{14})\|^{2}_{2}
.\end{split}
\end{align}
Meanwhile, (\ref{jdengyu14}) can also give
\begin{align}\label{14deshizi}
\frac{n(p-2)\lambda_{1}}{2p}\|\varphi(\tau_{14})\|^{p}_{p}+\frac{n(q-2)\lambda_{2}}{2q}\|\varphi(\tau_{14})\|^{q}_{q}>\| \varphi(\tau_{14})\|^{2}_{2}.
\end{align}By the similar process of obtaining (\ref{jdengyu12-2-2}), we can use (\ref{14deshizi}) to give
\begin{align}
\|\nabla\varphi(\tau_{14})\|^{2}_{2}\geq&\frac{A_{5}(\|\varphi_{0}\|_{2})+B_{5}(\|\varphi_{0}\|_{2})-\left|A_{5}(\|\varphi_{0}\|_{2})-B_{5}(\|\varphi_{0}\|_{2})\right|}{2},
\label{--14jdengyu12-2-2}
\end{align}
where $A_{5}(\|\varphi_{0}\|_{2})$ and $B_{5}(\|\varphi_{0}\|_{2})$ are defined by (\ref{A5}) and (\ref{B6}), respectively.
 Substituting (\ref{--14jdengyu12-2-2}) into (\ref{pq--14zhunbeimaodun-1-5}), we have
\begin{align}\label{pq-+-14zhunbeimaodun-1-5}
\begin{split}
K(\varphi(\tau_{14}))
>&\frac{A_{5}(\|\varphi_{0}\|_{2})+B_{5}(\|\varphi_{0}\|_{2})-
\left|A_{5}(\|\varphi_{0}\|_{2})-B_{5}(\|\varphi_{0}\|_{2})\right|}{8}
\\&+\frac{n}{2}\left(\frac{1}{2}-\frac{2}{n(q-2)}\right)^{\frac{1}{2}}\| \varphi(\tau_{14})\|^{2}_{2}+\left(\frac{1}{2}-\frac{2}{n(q-2)}\right)\|\varphi(\tau_{14})\|^{2}_{2}
.
\end{split}
\end{align}
According to (\ref{2.2}) and (\ref{2.3}), (\ref{pq-+-14zhunbeimaodun-1-5}) turns to
\begin{align*}
K(\varphi_{0})
>&\frac{A_{5}(\|\varphi_{0}\|_{2})+B_{5}(\|\varphi_{0}\|_{2})
-\left|A_{5}(\|\varphi_{0}\|_{2})-B_{5}(\|\varphi_{0}\|_{2})\right|}{8}\\&+\left(\frac{n}{2}\left(\frac{1}{2}-\frac{2}{n(q-2)}\right)^{\frac{1}{2}}+\frac{1}{2}-\frac{2}{n(q-2)}\right)\|\varphi_{0}\|^{2}_{2},
\end{align*}
which contradicts (\ref{K0detiaoj1}) for $j=14$. Thus, we have $S_{14}(\varphi(t))>0$ for $t\in(0,T_{\rm{max}})$.\\
\textbf{Step \textrm{II}:\ Global existence.}

For the initial data $\varphi_{0}\in \mathcal{W}_{R_{j}}$, $j=9,10,\cdots,14$, we know the corresponding solution $\varphi\in \mathcal{W}_{R_{j}}$, i.e., $S_{j}(\varphi)>0$, according to Step \textrm{I}. Then, by the process similar to the proof of Theorem \ref{th5.1}, we can use the condition $S_{j}(\varphi)>0$, $j=9,10,\cdots,14$, to obtain
(\ref{youjie1})
for $j=9,10,11$ and (\ref{youjie2})
for $j=12,13,14$.
Thus, we know $T_{\rm{max}}=+\infty$ according to Lemma \ref{lem2.1}.
\end{proof}

\begin{remark}\label{rem5.3}(Fail to prove the finite time blowup by the cross-constrained variational functionals $S_{j}(\varphi)$, $j=9,10,\cdots,14$, without the term $\|\nabla\varphi\|^{2}_{2}$, for high initial energy $K(\varphi_{0})\geq d_{_{\mathcal{N}_{j}}}$, $j=9,10,\cdots,14$)
When we select and combine the energy structural terms to construct the cross-constrained variational structures, for the mandatory terms $\|\nabla\varphi\|^{2}_{2}$ and $\|\varphi\|^{p}_{p}$, if we only select the term $\|\varphi\|^{p}_{p}$ and not select the term $\|\nabla\varphi\|^{2}_{2}$, i.e., to consider the case of $S_{j}(\varphi)$, $j=9,10,\cdots,14$, it is too difficult to prove the finite time blowup for the high initial energy case $K(\varphi_{0})\geq d_{_{\mathcal{N}_{j}}}$, $j=9,10,\cdots,14$. Especially, we notice that we can prove the invariance of the manifold $\mathcal{V}_{R_{j}}$, $j=9,10,\cdots,14$, by the similar idea of proving Lemma \ref{lem5.1}. However, we fail to prove the finite time blowup by using the manifold $\mathcal{V}_{R_{j}}$, $j=9,10,\cdots,14$. This is mainly because we need to use auxiliary functional
$J(t)=\||x|\varphi\|^{2}_{2}$ and to verify $J''(t)<0$ when we prove the finite time blowup. Then, due to the lack of the term $\|\nabla\varphi\|^{2}_{2}$ in the cross-constrained variational functionals $S_{j}(\varphi)$, $j=9,10,\cdots,14$, we shall meet the similar difficulty pointed out in Remark \ref{rem4.2}, which means that we fail to prove the finite time blowup for the high initial energy case $K(\varphi_{0})\geq d_{_{\mathcal{N}_{j}}}$, $j=9,10,\cdots,14$.
\end{remark}

\begin{remark}\label{rem5.4}(Fail to prove the global existence and finite time blowup by the cross-constrained variational functionals $S_{j}(\varphi)$, $j=15,16,\cdots,21$, without the term $\|\varphi\|^{p}_{p}$ for high initial energy $K(\varphi_{0})\geq d_{_{\mathcal{N}_{j}}}$, $j=15,16,\cdots,21$)
When we construct the cross-constrained variational structures by selecting and combining the energy structure terms, if we do not select the mandatory term $\|\varphi\|^{p}_{p}$, i.e., we consider the case of $S_{j}(\varphi)$, $j=15,16,\cdots,21$, for the high initial energy case $K(\varphi_{0})\geq d_{_{\mathcal{N}_{j}}}$, $j=15,16,\cdots,21$, we can neither prove the global existence nor the finite time blowup. The reason is that we fail to prove the invariance of the corresponding manifolds.
For high initial energy case $K(\varphi_{0})\geq d_{_{\mathcal{N}_{j}}}$, $j=15,16,\cdots,21$, in order to prove the invariance of the manifolds, by the similar analysis in Remark \ref{rem5.2}, we need to find additional conditions to retrieve the contradiction between the sign-change of $S_{j}(\varphi)$ and the assumption $K(\varphi_{0})\geq d_{{\mathcal{N}_{j}}}$, $j=15,16,\cdots,21$. Here, like the cases $j=1,2,\cdots,14$, in order to construct such additional conditions, we need to apply the technique of controlling the negative terms in ``energy'' functional $K(\varphi)$, i.e., $-\frac{\lambda_{1}}{p}\|\varphi\|^{p}_{p}$ and $-\frac{\lambda_{2}}{q}\|\varphi\|^{q}_{q}$, in $\mathcal{N}_{j}$, $j=1,2,\cdots,14$, as shown in Lemma \ref{lem5.1} and Theorem \ref{dinglis9--14}. However, for the cross-constrained variational functionals $S_{j}(\varphi)$, $j=15,16,\cdots,21$, due to the lack of the term $\|\varphi\|^{p}_{p}$, by the similar analysis in Remark \ref{rem4.3}, we fail to
control the above negative terms in $\mathcal{N}_{j}$, $j=15,16,\cdots,21$, which means that we fail to prove the invariance of the manifolds. This is the reason why we fail to prove the global existence and finite time blowup by the cross-constrained variational functionals $S_{j}(\varphi)$, $j=15,16,\cdots,21$, without the term $\|\varphi\|^{p}_{p}$ for high initial energy $K(\varphi_{0})\geq d_{_{\mathcal{N}_{j}}}$, $j=15,16,\cdots,21$.
\end{remark}

\subsection{The relationship between the eight classifications of the initial data given by the eight different cross-constrained variational functionals $S_{j}(\varphi)$, $j=1,2,\cdots,8$}\label{sec5.3}

In Section \ref{sec4}, we used the sign of eight cross-constrained variational functionals $S_{j}(\varphi)$, $j=1,2,\cdots,8$, having different structures to classify the initial data to derive eight different criteria to distinguish the global existence and the finite time blowup for the non-negative low initial energy case $0\leq K(\varphi_{0})<d_{\mathcal{N}_{j}}$, $j=1,2,\cdots,8$, as well as for the high initial energy case $K(\varphi_{0})\geq d_{\mathcal{N}_{j}}$, $j=1,2,\cdots,8$, in Section \ref{sec5}. Exactly speaking, we use the positive sign of $S_{j}(\varphi)$, i.e., $S_{j}(\varphi)>0$, to describe the initial data leading to the global existence and use the negative sign of $S_{j}(\varphi)$, i.e., $S_{j}(\varphi)<0$, to describe the initial data leading to the finite time blowup. Because these $S_{j}(\varphi)$, $j=1,2,\cdots,8$, have different structures based on their different combinations of the energy structural terms, these classification criteria are different from each other. Clearly speaking,  for different $S_{j}(\varphi)$, for example, for any $k\neq l$, $S_{k}(\varphi)>0$ dose not mean $S_{l}(\varphi)>0$. Next, we shall show an interesting fact that if we impose some conditions on the initial energy $K(\varphi_{0})$ to ``press'' it to a sufficient low level, these eight different cross-constrained variational functionals $S_{j}(\varphi)$, $j=1,2,\cdots,8$, play the same role.

 In Theorem \ref{th4.1} and Theorem \ref{th4.2}, for the non-negative low initial energy $0\leq K(\varphi_{0})<d_{\mathcal{N}_{j}}$, $j=1,2,\cdots,8$, we used the sign of the eight cross-constrained variational functionals $S_{j}(\varphi)$, $j=1,2,\cdots,8$, to divide the space of the initial data
 \begin{align*}
\mathcal{H}_{j}=\left\{\varphi\in H\big|0\leq K(\varphi)<d_{\mathcal{N}_{j}}\right\}
\end{align*}
into
\begin{align*}
\mathcal{W}_{j}=\left\{\varphi\in H\big|0\leq K(\varphi)<d_{{\mathcal{N}_{j}}},S_{j}(\varphi)>0\right\}\cup\big\{0\big\}
\end{align*}
and
\begin{align*}
\mathcal{V}_{j}=\left\{\varphi\in H\big|0\leq K(\varphi)<d_{{\mathcal{N}_{j}}},S_{j}(\varphi)<0\right\},
\end{align*}
which are applied to prove the global existence and the finite time blowup, respectively. And in Theorem \ref{th5.1} and Theorem \ref{th5.2}, for the high initial energy $K(\varphi_{0})\geq d_{\mathcal{N}_{j}}$, $j=1,2,\cdots,8$, we used the sign of $S_{j}(\varphi)$, $j=1,2,\cdots,8$, to divide the space of the initial data
\begin{align*}
\mathcal{H}_{R_{j}}=\left\{\varphi\in H\Big|d_{_{\mathcal{N}_{j}}}\leq K(\varphi)\leq R_{j}(\|\varphi\|_{2})\right\}
\end{align*}into
\begin{align*}
\mathcal{W}_{R_{j}}=\left\{\varphi\in H\Big|d_{_{\mathcal{N}_{j}}}\leq K(\varphi)\leq R_{j}(\|\varphi\|_{2}),S_{j}(\varphi)>0\right\}
\end{align*}
and
\begin{align*}
\mathcal{V}_{R_{j}}=\left\{\varphi\in H\Big|d_{_{\mathcal{N}_{j}}}\leq K(\varphi)\leq R_{j}(\|\varphi\|_{2}),S_{j}(\varphi)<0\right\},
\end{align*}
 which are applied to prove the global existence and the finite time blowup, respectively.
 Now we have to deal with the situation that we have many, for example, $s$ different criteria, $s\in\left\{1,2,\cdots,8\right\}$, which means we have $s$ different Nehari manifolds $\mathcal{N}_{j_{1}}, \mathcal{N}_{j_{2}}, \cdots, \mathcal{N}_{j_{s}}$ with corresponding $d_{_{\mathcal{N}_{j_{1}}}}, d_{_{\mathcal{N}_{j_{2}}}}, \cdots, d_{_{\mathcal{N}_{j_{s}}}}$ and $S_{j_{1}}(\varphi), S_{j_{2}}(\varphi), \cdots, S_{j_{s}}(\varphi)$. Here, for $s\in\{1,2,\cdots,8\}$, we use the minimum of these $d_{_{\mathcal{N}_{j_{1}}}}, d_{_{\mathcal{N}_{j_{2}}}}, \cdots,$ $d_{_{\mathcal{N}_{j_{s}}}}$ defined as \begin{align}\label{dmin}
d^{{\rm min}}_{{\mathcal{N}_{j_{1}}},\cdots,{\mathcal{N}_{j_{s}}}}:=\min\left\{d_{\mathcal{N}_{j_{1}}}
,d_{\mathcal{N}_{j_{2}}},\cdots,d_{\mathcal{N}_{j_{s}}}\right\}
\end{align} to ``press'' the energy level to the lowest, i.e.,
\begin{align}\label{guangyi-dineng-1}
\mathcal{H}_{j_{1},\cdots,j_{s}}:=\left\{\varphi\in H\Big|0\leq K(\varphi)<d^{{\rm min}}_{{\mathcal{N}_{j_{1}}},\cdots,{\mathcal{N}_{j_{s}}}}\right\}.
\end{align}
Obviously, (\ref{guangyi-dineng-1}) can be regarded as \begin{align*}
&\mathcal{H}_{j_{1},\cdots,j_{s}}=\bigcap_{k=1}^{s}\mathcal{H}_{j_{k}}.
\end{align*}
 Now we consider these $s$ different cross-constrained variational problems. We use $j_{k}\in\{j_{1},j_{2},\cdots,\\
 j_{s}\}$ to denote these $s$ cross-constrained variational functionals $S_{j_{k}}(\varphi)$, $j_{k}\in\{j_{1},j_{2},\cdots,j_{s}\}$, having different structures. According to the signs of $S_{j_{k}}(\varphi)$, we divide the space of the initial data $\mathcal{H}_{j_{1},\cdots,j_{s}}$ into
\begin{align*}
\mathcal{W}_{j_{k},d^{{\rm min}}_{{\mathcal{N}_{j_{1}}},\cdots,{\mathcal{N}_{j_{s}}}}}:=\left\{\varphi\in H\Big|0\leq K(\varphi)<d^{{\rm min}}_{{\mathcal{N}_{j_{1}}},\cdots,{\mathcal{N}_{j_{s}}}},S_{j_{k}}(\varphi)>0\right\}\cup \Big\{0\Big\}
\end{align*}
and
\begin{align*}
\mathcal{V}_{j_{k},d^{{\rm min}}_{{\mathcal{N}_{j_{1}}},\cdots,{\mathcal{N}_{j_{s}}}}}:=\left\{\varphi\in H\Big|0\leq K(\varphi)<d^{{\rm min}}_{{\mathcal{N}_{j_{1}}},\cdots,{\mathcal{N}_{j_{s}}}},S_{j_{k}}(\varphi)<0\right\},
\end{align*}that is,
\begin{align}
\mathcal{H}_{j_{1},\cdots,j_{s}}=\mathcal{W}_{j_{k},d^{{\rm min}}_{{\mathcal{N}_{j_{1}}},\cdots,{\mathcal{N}_{j_{s}}}}}\cup\mathcal{V}_{j_{k},d^{{\rm min}}_{{\mathcal{N}_{j_{1}}},\cdots,{\mathcal{N}_{j_{s}}}}}.
\end{align}
In following Theorem \ref{th5.3}, we claim that if we control the initial energy $K(\varphi_{0})$ to the lowest level, i.e.,
\begin{align}\label{Kxiaoyud0}
0\leq K(\varphi_{0})<d^{{\rm min}}_{{\mathcal{N}_{j_{1}}},\cdots,{\mathcal{N}_{j_{s}}}},
\end{align}the results of classifying the initial data by using the sign of the different cross-constrained variational functionals $S_{j_{k}}(\varphi)$, $j_{k}\in\{j_{1},j_{2},\cdots,j_{s}\}$ are same. In other words, these different $S_{j_{k}}(\varphi)$ play the same role in this case.

\begin{theorem}[The relationship between the classifications of the initial data given by the $s$ different cross-constrained variational functionals $S_{j_{k}}(\varphi)$, $j_{k}\in\{j_{1},j_{2},\cdots,j_{s}\}$, for the non-negative low initial energy]\label{th5.3}
Let $p$ and $q$ satisfy (\ref{1.2}), $\lambda_{1}>0$ and $\lambda_{2}>0$ satisfy (\ref{2.9}) and (\ref{2.10}). For $s\in\left\{1,2,\cdots,8\right\}$, $j_{k}\neq j_{m}$, $j_{k},j_{m}\in\{j_{1},j_{2},\cdots,j_{s}\}\subset\{1,2,\cdots,8\}$, one has \begin{align}\label{guangyi-dineng}
\mathcal{W}_{j_{k},d^{{\rm min}}_{{\mathcal{N}_{j_{1}}},\cdots,{\mathcal{N}_{j_{s}}}}}=\mathcal{W}_{j_{m},d^{{\rm min}}_{{\mathcal{N}_{j_{1}}},\cdots,{\mathcal{N}_{j_{s}}}}}
\end{align}
and
\begin{align}\label{guangyi-dineng2}
\mathcal{V}_{j_{k},d^{{\rm min}}_{{\mathcal{N}_{j_{1}}},\cdots,{\mathcal{N}_{j_{s}}}}}=\mathcal{V}_{j_{m},d^{{\rm min}}_{{\mathcal{N}_{j_{1}}},\cdots,{\mathcal{N}_{j_{s}}}}}.
\end{align}
 \end{theorem}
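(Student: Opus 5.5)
Since $\mathcal{H}_{j_{1},\cdots,j_{s}}=\mathcal{W}_{j_{k},d^{\rm min}}\cup\mathcal{V}_{j_{k},d^{\rm min}}$ for every $k$, and these two pieces are disjoint, it suffices to prove that the sign of $S_{j}(\varphi)$ is the same for all $j\in\{j_{1},\cdots,j_{s}\}$ whenever $\varphi\in\mathcal{H}_{j_{1},\cdots,j_{s}}\setminus\{0\}$. First I would show that no $S_{j_k}$ can vanish there: if $S_{j_{k}}(\varphi)=0$ with $\varphi\neq0$, then $\varphi\in\mathcal{N}_{j_{k}}$, so $K(\varphi)\geq d_{\mathcal{N}_{j_{k}}}\geq d^{\rm min}$. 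This contradicts $K(\varphi)<d^{\rm min}$. Hence each $S_{j_{k}}(\varphi)$ is strictly positive or strictly negative, and only the sign comparison between $j_k$ and $j_m$ remains.

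For the sign comparison I would use a path-connectedness argument along the ray $\iota\mapsto\iota\varphi$, as in Remark \ref{rem4.1}. Suppose, for contradiction, that $S_{j_{k}}(\varphi)>0$ and $S_{j_{m}}(\varphi)<0$. Along the ray, $S_{j}(\iota\varphi)=\iota^{2}a_{j}-(\text{positive powers }\iota^{p},\iota^{q})$, where $a_j$ is the positive quadratic part. So each $S_{j}(\iota\varphi)$ is positive for small $\iota$ and changes sign exactly once, at a value $\iota_{j}>0$ (uniqueness of the crossing holds because the negative part grows faster than $\iota^{2}$). The hypotheses therefore give $\iota_{j_{m}}<1<\iota_{j_{k}}$. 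Now $\iota_{j_{m}}\varphi\in\mathcal{N}_{j_{m}}$, so $K(\iota_{j_{m}}\varphi)\geq d_{\mathcal{N}_{j_{m}}}\geq d^{\rm min}>K(\varphi)$. On the other hand, $\frac{d}{d\iota}K(\iota\varphi)$ is a combination of $\iota\|\varphi\|_H^2$ and the nonlinear terms. On $[\iota_{j_m},1]$ we have $S_{j_m}(\iota\varphi)\le 0$, which should make this derivative negative, as it does in the Nehari case. That would give $K(\iota_{j_m}\varphi)\ge K(\varphi)$, which is consistent and not a contradiction. So this route alone does not decide the sign, and I would instead work directly at $\varphi$.

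\textbf{The main obstacle.} The difficulty is to convert ``$K(\varphi)<d^{\rm min}$ and $S_{j_m}(\varphi)<0$'' into ``$S_{j_k}(\varphi)<0$''. My plan is to use the scaling $w_{\sigma}=\sigma^{n/2}\varphi(\sigma x)$ from Remark \ref{Manifoldsfeikong}, which preserves $\|\varphi\|_{2}$. Along this family the relevant quantities behave as $\|\nabla w_\sigma\|^2=\sigma^{2}\|\nabla\varphi\|^2$, $\||x|w_\sigma\|^2=\sigma^{-2}\||x|\varphi\|^2$, and $\|w_\sigma\|_p^p=\sigma^{n(p-2)/2}\|\varphi\|_p^p$ (similarly for $q$). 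Moreover $\frac{d}{d\sigma}K(w_{\sigma})\big|_{\sigma=1}$ equals $\|\nabla\varphi\|^{2}-\||x|\varphi\|^{2}-\frac{n(p-2)\lambda_1}{2p}\|\varphi\|_p^p-\frac{n(q-2)\lambda_2}{2q}\|\varphi\|_q^q=\tfrac18J''$. The intended steps are as follows.

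1. Move $\sigma$ from $1$ toward $0$ (if $S_{j_k}(\varphi)>0$) or toward $\infty$. Along the way, locate the first zero $\sigma^{*}$ of $S_{j_{k}}(w_\sigma)$ or of $S_{j_{m}}(w_\sigma)$.
2. Show that $K(w_\sigma)$ stays below $d^{\rm min}$ up to $\sigma^{*}$. For this I would use $J''<0$, which is guaranteed by $S_{j_m}<0$ as in Theorem \ref{th4.2}, so that $K(w_\sigma)$ is monotone in the right direction.
3. At $\sigma^{*}$ the function lies in some $\mathcal{N}_{j}$ while $K<d^{\rm min}$, which is the desired contradiction.

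Verifying the monotonicity in step 2 on the whole interval, and not merely at $\sigma=1$, is the step I expect to be hardest. If that fails, I would fall back on a one-parameter family combining both scalings, and invoke the conditions \eqref{2.9} and \eqref{2.10} to control the $\|\varphi\|_q^q$ term, as in Lemma \ref{lem2.3}.
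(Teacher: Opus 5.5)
Your first step is correct and matches the paper's first sub-case: a nonzero $\varphi$ with $K(\varphi)<d^{\rm min}$ cannot satisfy $S_{j}(\varphi)=0$ for any $j\in\{j_1,\cdots,j_s\}$, since it would lie in $\mathcal{N}_{j}$. The gap is that the content of the theorem is never proved, namely that $S_{j_k}(\varphi)>0$ and $S_{j_m}(\varphi)<0$ cannot hold together.

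You rightly discard the ray argument, but the scaling plan is not yet an argument. Your step 1 prescribes moving $\sigma$ toward $0$ when $S_{j_k}(\varphi)>0$, which is the wrong direction. There $K(w_\sigma)$ increases, because $\frac{d}{d\sigma}K(w_\sigma)|_{\sigma=1}<0$, and the only zero you meet is that of $S_{j_m}$, at a level above $K(\varphi)$, exactly as with the ray. Moreover, the persistence of $S_{j_m}(w_\sigma)<0$ (hence of $J''<0$) away from $\sigma=1$ is explicitly left open.

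The paper needs none of this, because it uses the dynamics already proved. If $S_{j_k}(\varphi_0)>0>S_{j_m}(\varphi_0)$ with $0\le K(\varphi_0)<d^{\rm min}$, then $\varphi_0\in\mathcal{W}_{j_k}\cap\mathcal{V}_{j_m}$. So the solution of \eqref{1.1} is global by Theorem~\ref{th4.1} and blows up in finite time by Theorem~\ref{th4.2}, a contradiction.

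Your scaling idea can still be completed into a purely variational proof if you move $\sigma$ toward $+\infty$. The observation you are missing is that, for every $j\in\{1,\cdots,8\}$, the map $\sigma\mapsto\sigma^{-2}S_j(w_\sigma)$ is strictly decreasing with limit $-\infty$. After dividing by $\sigma^2$:
\begin{itemize}
\item the gradient term is constant;
\item the optional terms become nonnegative multiples of $\sigma^{-2}\|\varphi\|_2^2$ and $\sigma^{-4}\||x|\varphi\|_2^2$;
\item the nonlinear terms carry $-\sigma^{\frac{n(p-2)}{2}-2}$ and $-\sigma^{\frac{n(q-2)}{2}-2}$, whose exponents are positive by \eqref{1.2}.
\end{itemize}
Hence $S_{j_m}(w_\sigma)<0$ for all $\sigma\ge1$. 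Dropping its nonnegative terms and using $\lambda_2>0$ then gives
\[
\sigma\frac{d}{d\sigma}K(w_\sigma)=\|\nabla w_\sigma\|_2^2-\||x|w_\sigma\|_2^2-\frac{n(p-2)\lambda_1}{2p}\|w_\sigma\|_p^p-\frac{n(q-2)\lambda_2}{2q}\|w_\sigma\|_q^q<-\||x|w_\sigma\|_2^2<0
\]
on $[1,\infty)$. Meanwhile $S_{j_k}(w_\sigma)$ is positive at $\sigma=1$, so it vanishes at some $\sigma^*>1$. Then $w_{\sigma^*}\in\mathcal{N}_{j_k}$ with $K(w_{\sigma^*})<K(\varphi)<d_{\mathcal{N}_{j_k}}$, contradicting the definition of $d_{\mathcal{N}_{j_k}}$.

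This route avoids Lemma~\ref{lem2.4} and Theorems~\ref{th4.1}--\ref{th4.2} entirely and does not need \eqref{2.9}--\eqref{2.10}. The paper's route is shorter only because the global-existence/blowup dichotomy of Section~\ref{sec4} is already available.
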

 \begin{proof}
 We only write down the details of proving the conclusion (\ref{guangyi-dineng}) since he proof of the conclusion (\ref{guangyi-dineng2}) is similar.
For $j_{k}\neq j_{m}$, $j_{k},j_{m}\in\{j_{1},j_{2},\cdots,j_{s}\}$, from (\ref{Kxiaoyud0}), we know
\begin{align}\label{Kxiaoyudjk}
0\leq K(\varphi_{0})<d_{{\mathcal{N}_{j_{k}}}}
\end{align}
and
\begin{align}\label{Kxiaoyudjm}
0\leq K(\varphi_{0})<d_{{\mathcal{N}_{j_{m}}}}.
\end{align}
Then, to prove the conclusion (\ref{guangyi-dineng}), we only need to claim that for the initial data $\varphi_{0}$ satisfying both (\ref{Kxiaoyudjk}) and (\ref{Kxiaoyudjm}), $S_{j_{k}}(\varphi_{0})>0$ means $S_{j_{m}}(\varphi_{0})>0$. Arguing by contradiction, we suppose that $S_{j_{m}}(\varphi_{0})\leq0$. If $S_{j_{m}}(\varphi_{0})=0$ holds, from the definition of $d_{{\mathcal{N}_{j_{m}}}}$, i.e., (\ref{2.5}), and $\varphi_{0}\neq0$ given by the condition $S_{j_{k}}(\varphi_{0})>0$, we deduce
\begin{align*}
K(\varphi_{0})\geq d_{{\mathcal{N}_{j_{m}}}},
\end{align*}which contradicts (\ref{Kxiaoyudjm}). And if $S_{j_{m}}(\varphi_{0})<0$ holds, by (\ref{Kxiaoyudjm}), we see that $\varphi_{0}\in\mathcal{V}_{j_{m}}$. Then, according to Theorem \ref{th4.2}, we know that the corresponding solution $\varphi$ to the problem (\ref{1.1}) blows up in finite time. However, the conditions $S_{j_{k}}(\varphi_{0})>0$ and (\ref{Kxiaoyudjk}) imply $\varphi_{0}\in\mathcal{W}_{j_{k}}$, which means that the solution $\varphi$ exists globally according to Theorem \ref{th4.1}. This leads to a contradiction. Thus, we have $S_{j_{m}}(\varphi_{0})>0$.
\end{proof}

The $s$ different cross-constrained variational functionals $S_{j_{k}}(\varphi)$, $j_{k}\in\{j_{1},j_{2},\cdots,j_{s}\}$, reflect the $s$ different combinations of the energy structural terms and thus have the different structures, which leads to the $s$ different Nehari manifolds $\mathcal{N}_{j_{k}}$, i.e., $S_{j_{k}}(\varphi)=0$, $j_{k}\in\{j_{1},j_{2},\cdots,j_{s}\}$. Based on common sense and intuition, we should have believed that these $s$ different cross-constrained variational functionals should give the $s$ different classifications of the initial data.
 However, the conclusions (\ref{guangyi-dineng}) and (\ref{guangyi-dineng2}) present a fact contrary to this intuition, that is, if we control the initial energy $K(\varphi_{0})$ to the lowest energy level, the classifications of the initial data given by these $s$ different cross-constrained variational functionals are same. Next, in order to clearly show the classifications of the initial data, i.e., the conclusions (\ref{guangyi-dineng}) and (\ref{guangyi-dineng2}), we take the example of $s=3$ and assume
\begin{align}\label{dizhengdejiashe}
d_{{\mathcal{N}_{j_{1}}}}<d_{{\mathcal{N}_{j_{2}}}}<d_{{\mathcal{N}_{j_{3}}}},
\end{align} and then use Figure 2 to make a comparison of the classifications and the sensory misclassifications of the initial data.
\\
\begin{figure}[!h]
  \centering \includegraphics[width=0.9\textwidth]{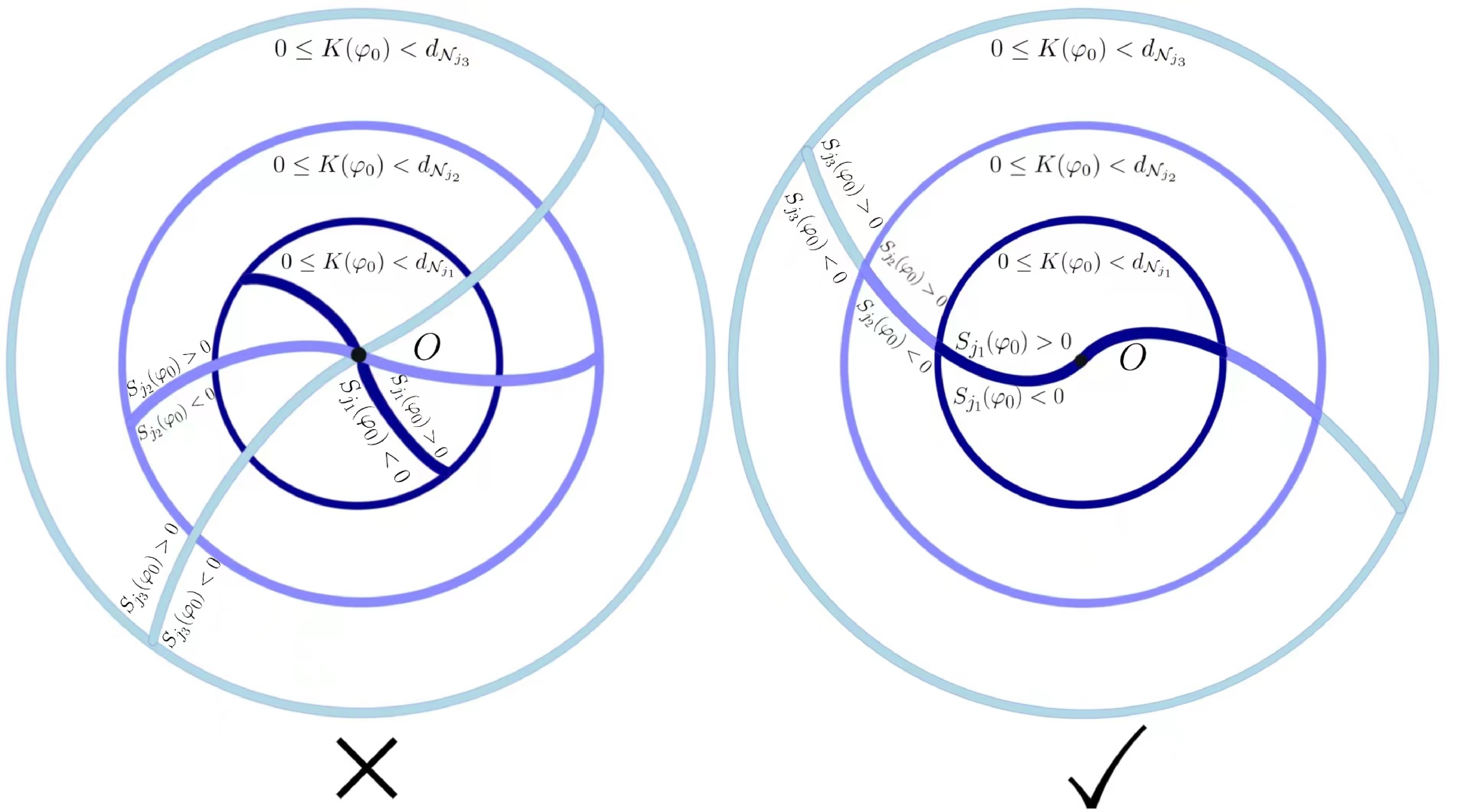}\\
  \caption{The sensory misclassifications of the initial data (left) v.s. the actual classifications of the initial data (right) by using $S_{j_{1}}(\varphi)-S_{j_{3}}(\varphi)$}
  \label{b}
\end{figure}

In the left and right schematic diagrams in Figure 2, we use the parts inside the three circles with different colors to represent the initial data with energy levels $0\leq K(\varphi_{0})<d_{{\mathcal{N}_{j_{k}}}}$, $k=1,2,3$, respectively. Here, using a sphere to represent $0\leq K(\varphi_{0})<d_{{\mathcal{N}_{j_{k}}}}$, $k=1,2,3$, makes it convenient for us to intuitively understand the relationship between $0\leq K(\varphi_{0})<d_{{\mathcal{N}_{j_{k}}}}$, $k=1,2,3$, and $S_{j_{k}}(\varphi_{0})>0$ or $S_{j_{k}}(\varphi_{0})<0$, $k=1,2,3$. However, from the definition of the ``energy'' functional $K(\varphi)$, i.e., (\ref{2.4}), we have no reason to believe that $K(\varphi)$ can represent the norm of a certain space, hence it is not rigorous mathematically to regard $K(\varphi)$ as the radius to represent $0\leq K(\varphi_{0})<d_{{\mathcal{N}_{j_{k}}}}$, $k=1,2,3$. Thus, we use three concentric circles with increasing radii as a schematic diagram to show the inclusion relationship between these three different energy levels.
In the left schematic diagram in Figure 2, we show the sensory misclassifications of the initial data with energy levels $0\leq K(\varphi_{0})<d_{{\mathcal{N}_{j_{k}}}}$, $k=1,2,3$, by using $S_{j_{k}}(\varphi)$, $k=1,2,3$. More precisely, according to our intuition, three different cross-constrained variational functionals $S_{j_{k}}(\varphi)$, $k=1,2,3$, should give three different classifications of the initial data. In order to present this intuition, we use three curves with three different colors that do not overlap to represent $S_{j_{k}}(\varphi_{0})=0$, $k=1,2,3$, respectively, which are used to divide the inner parts of the three circles with three colors, i.e., $0\leq K(\varphi_{0})<d_{{\mathcal{N}_{j_{k}}}}$, $k=1,2,3$. However, as shown in the right schematic diagram in Figure 2,
the conclusions (\ref{guangyi-dineng}) and (\ref{guangyi-dineng2}) tell us that the three curves representing $S_{j_{k}}(\varphi_{0})=0$, $k=1,2,3$, respectively, with the different colors overlap in the lowest energy level $0\leq K(\varphi_{0})<d_{{\mathcal{N}_{j_{1}}}}$. And the curves representing $S_{j_{2}}(\varphi_{0})=0$ and $S_{j_{3}}(\varphi_{0})=0$ overlap in the second lowest energy level $0\leq K(\varphi_{0})<d_{{\mathcal{N}_{j_{2}}}}$. Certainly, this observation is also valid for any case when we use $s$ ($s=2,3,\cdots,8$) different cross-constrained variational functionals $S_{j_{k}}(\varphi)$, $j_{k}\in\{j_{1},j_{2},\cdots,j_{s}\}$, to classify the initial data.

For the high initial energy case and $s\in\{2,3,\cdots,8\}$, we define the space of the initial data
\begin{align}\label{guangyi-gaoneng-1}
\mathcal{H}_{R_{j_{1},\cdots,j_{s}}}:=\left\{\varphi\in H\Big|d^{{\rm max}}_{{\mathcal{N}_{j_{1}}},\cdots,{\mathcal{N}_{j_{s}}}}\leq K(\varphi)\leq R^{{\rm min}}_{j_{1},\cdots,j_{s}}(\|\varphi\|_{2})\right\},
\end{align}where \begin{align}\label{dmax}
d^{{\rm max}}_{{\mathcal{N}_{j_{1}}},\cdots,{\mathcal{N}_{j_{s}}}}:=\max\left\{d_{\mathcal{N}_{j_{1}}},d_{\mathcal{N}_{j_{2}}},\cdots,d_{\mathcal{N}_{j_{s}}}\right\}
\end{align}and
\begin{align}\label{Rmin}
R^{{\rm min}}_{j_{1},\cdots,j_{s}}(\|\varphi\|_{2}):=\min\Big\{R_{j_{1}}(\|\varphi\|_{2}),R_{j_{2}}(\|\varphi\|_{2}),\cdots,
R_{j_{s}}(\|\varphi\|_{2})\Big\}.
\end{align}
Here, (\ref{guangyi-gaoneng-1}) can be regarded as \begin{align*}
&\mathcal{H}_{R_{j_{1},\cdots,j_{s}}}=\bigcap_{k=1}^{s}\mathcal{H}_{R_{j_{k}}}.
\end{align*}
For any cross-constrained variational functionals $S_{j_{k}}(\varphi)$, $j_{k}\in\{j_{1},j_{2},\cdots,j_{s}\}$,
we use the sign of $S_{j_{k}}(\varphi)$
to divide the space of the initial data $\mathcal{H}_{R_{j_{1},\cdots,j_{s}}}$ into
\begin{align*}
\mathcal{W}_{R_{j_{k}},d_{\mathcal{N}_{j_{1}},\cdots,\mathcal{N}_{j_{s}}}^{{\rm max}}}:=\left\{\varphi\in H\Big|d^{{\rm max}}_{{\mathcal{N}_{j_{1}}},\cdots,{\mathcal{N}_{j_{s}}}}\leq K(\varphi)\leq R^{{\rm min}}_{j_{1},\cdots,j_{s}}(\|\varphi\|_{2}),S_{j_{k}}(\varphi)>0\right\}
\end{align*}
and
\begin{align*}
\mathcal{V}_{R_{j_{k}},d^{{\rm max}}_{{\mathcal{N}_{j_{1}}},\cdots,{\mathcal{N}_{j_{s}}}}}:=\left\{\varphi\in H\Big|d^{{\rm max}}_{{\mathcal{N}_{j_{1}}},\cdots,{\mathcal{N}_{j_{s}}}}\leq K(\varphi)\leq R^{{\rm min}}_{j_{1},\cdots,j_{s}}(\|\varphi\|_{2}),S_{j_{k}}(\varphi)<0\right\},
\end{align*}respectively, that is,
\begin{align*}
\mathcal{H}_{R_{j_{1},\cdots,j_{s}}}=\mathcal{W}_{R_{j_{k}},d_{\mathcal{N}_{j_{1}},\cdots,\mathcal{N}_{j_{s}}}^{{\rm max}}}\cup\mathcal{V}_{R_{j_{k}},d_{\mathcal{N}_{j_{1}},\cdots,\mathcal{N}_{j_{s}}}^{{\rm max}}}.
\end{align*} Next, in the following Theorem \ref{th5.4}, we shall show that
if we apply a stricter condition
\begin{align}\label{gaonengdebijiao}
d^{{\rm max}}_{{\mathcal{N}_{j_{1}}},\cdots,{\mathcal{N}_{j_{s}}}}\leq K(\varphi_{0})\leq R^{{\rm min}}_{j_{1},j_{2},\cdots,j_{s}}(\|\varphi_{0}\|_{2})
\end{align} on the initial energy, the results of classifying the initial data by using the sign of the different cross-constrained variational functionals $S_{j_{k}}(\varphi)$, $j_{k}\in\{j_{1},j_{2},\cdots,j_{s}\}$, are also the same.
\begin{theorem}\label{th5.4}(Relationship between the classifications of the initial data given by $s$ different cross-constrained variational functionals $S_{j_{k}}(\varphi)$, $j_{k}\in\{j_{1},j_{2},\cdots,j_{s}\}$, for the high initial energy)
Let $p$ and $q$ satisfy (\ref{1.2}), $\lambda_{1}>0$ and $\lambda_{2}>0$ satisfy (\ref{2.9}) and (\ref{2.10}). For $s\in\left\{1,2,\cdots,8\right\}$ and any $j_{k}\neq j_{m}$, $j_{k},j_{m}\in\{j_{1},j_{2},\cdots,j_{s}\}\subset\{1,2,\cdots,8\}$, one has \begin{align}\label{guangyi-gaoneng}
\mathcal{W}_{R_{j_{k}},d^{{\rm max}}_{{\mathcal{N}_{j_{1}}},\cdots,{\mathcal{N}_{j_{s}}}}}=\mathcal{W}_{R_{j_{m}},d^{{\rm max}}_{{\mathcal{N}_{j_{1}}},\cdots,{\mathcal{N}_{j_{s}}}}}
\end{align}
and
\begin{align}\label{guangyi-gaoneng2}
\mathcal{V}_{R_{j_{k}},d^{{\rm max}}_{{\mathcal{N}_{j_{1}}},\cdots,{\mathcal{N}_{j_{s}}}}}=\mathcal{V}_{R_{j_{m}},d^{{\rm max}}_{{\mathcal{N}_{j_{1}}},\cdots,{\mathcal{N}_{j_{s}}}}}.
\end{align}
\end{theorem}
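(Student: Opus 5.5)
The plan mirrors the proof of Theorem \ref{th5.3}, with the low-energy dichotomy replaced by the high-energy dichotomy of Theorems \ref{th5.1} and \ref{th5.2}. It suffices to prove (\ref{guangyi-gaoneng}); the proof of (\ref{guangyi-gaoneng2}) is symmetric, with the roles of $>$ and $<$ exchanged.

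Fix $j_{k}\neq j_{m}$ and take $\varphi_{0}\in\mathcal{W}_{R_{j_{k}},d^{{\rm max}}_{{\mathcal{N}_{j_{1}}},\cdots,{\mathcal{N}_{j_{s}}}}}$. By (\ref{dmax}) and (\ref{Rmin}), condition (\ref{gaonengdebijiao}) implies
\begin{align*}
d_{\mathcal{N}_{j_{k}}}\leq K(\varphi_{0})\leq R_{j_{k}}(\|\varphi_{0}\|_{2}),\qquad d_{\mathcal{N}_{j_{m}}}\leq K(\varphi_{0})\leq R_{j_{m}}(\|\varphi_{0}\|_{2}).
\end{align*}
Together with $S_{j_{k}}(\varphi_{0})>0$, this gives $\varphi_{0}\in\mathcal{W}_{R_{j_{k}}}$. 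Theorem \ref{th5.1} then shows the solution is global. Note also that $\varphi_{0}\neq0$, since $K(\varphi_{0})\geq d_{\mathcal{N}_{j_{k}}}>0$ by Lemma \ref{lem2.3}; the hypotheses (\ref{2.9}) and (\ref{2.10}) cover all $j\leq 8$. We must show $S_{j_{m}}(\varphi_{0})>0$, and we argue by contradiction, splitting into two cases.

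\textbf{Case $S_{j_{m}}(\varphi_{0})<0$.} Then $\varphi_{0}\in\mathcal{V}_{R_{j_{m}}}$, and Theorem \ref{th5.2} gives finite time blowup. This contradicts global existence by uniqueness of the solution (Lemma \ref{lem2.1}).

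\textbf{Case $S_{j_{m}}(\varphi_{0})=0$.} This is the main obstacle. In Theorem \ref{th5.3} this case was excluded by $K<d_{\mathcal{N}_{j_{m}}}$, which is no longer available. Instead I would reuse the contradiction mechanism of Lemma \ref{lem5.1}. There, at a time $\tau$ with $S_{j}(\varphi(\tau))=0$ and $\varphi(\tau)\neq0$, the estimates (built from (\ref{2.16}), Lemma \ref{lem3.2} and the Gagliardo--Nirenberg inequalities) yield the strict inequality $K(\varphi(\tau))>R_{j}(\|\varphi(\tau)\|_{2})$. These estimates use only $S_{j}=0$ at that single instant, not the time evolution. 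Applying them directly to $\varphi_{0}\in\mathcal{N}_{j_{m}}$ gives $K(\varphi_{0})>R_{j_{m}}(\|\varphi_{0}\|_{2})\geq R^{{\rm min}}_{j_{1},\cdots,j_{s}}(\|\varphi_{0}\|_{2})$, contradicting (\ref{gaonengdebijiao}).

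Hence $S_{j_{m}}(\varphi_{0})>0$, so $\varphi_{0}\in\mathcal{W}_{R_{j_{m}},d^{{\rm max}}_{{\mathcal{N}_{j_{1}}},\cdots,{\mathcal{N}_{j_{s}}}}}$. Exchanging $k$ and $m$ gives the reverse inclusion, which proves (\ref{guangyi-gaoneng}). The blowup sets in (\ref{guangyi-gaoneng2}) are handled the same way, invoking Theorem \ref{th5.1} in the case $S_{j_{m}}>0$ and the same Lemma \ref{lem5.1} estimate in the case $S_{j_{m}}=0$.
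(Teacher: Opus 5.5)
Your proposal is correct and follows the paper's intended route. The paper omits this proof as "similar" to Theorem~\ref{th5.3}, and you supply exactly the needed adaptation. The case $S_{j_m}(\varphi_0)<0$ (resp.\ $>0$) is excluded by the global-existence/blowup dichotomy of Theorems~\ref{th5.1} and~\ref{th5.2}. The case $S_{j_m}(\varphi_0)=0$ is excluded by the static estimate from Lemma~\ref{lem5.1}, which gives $K(\varphi_0)>R_{j_m}(\|\varphi_0\|_2)$; the paper uses the same device in Remark~\ref{Manifoldsfeikong} to obtain $K(w_0)>R_1(\|w_0\|_2)$.
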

Here, since the proofs of the conclusions (\ref{guangyi-gaoneng}) and (\ref{guangyi-gaoneng2}) are similar to (\ref{guangyi-dineng}) and (\ref{guangyi-dineng2}), respectively, we omit the details of proof of Theorem \ref{th5.4}. And by the similar analysis in Remark \ref{Manifoldsfeikong}, we can find the example of the initial data belonging to $\mathcal{W}_{R_{j_{k}},d^{{\rm max}}_{{\mathcal{N}_{j_{1}}},\cdots,{\mathcal{N}_{j_{s}}}}}$ or $\mathcal{V}_{R_{j_{k}},d^{{\rm max}}_{{\mathcal{N}_{j_{1}}},\cdots,{\mathcal{N}_{j_{s}}}}}$, $j_{k}\in\{j_{1},j_{2},\cdots,j_{s}\}$, $s\in\{1,2,\cdots,8\}$.

From the above analysis, for any $s$ classifications of the initial data obtained by using the corresponding cross-constrained variational functionals $S_{j_{k}}(\varphi)$, $j_{k}\in\{j_{1},j_{2},\cdots,j_{s}\}$, $s\in\{1,2,\cdots,8\}$, in non-negative low initial energy case, as long as we decrease the initial energy from $0\leq K(\varphi_{0})<d_{\mathcal{N}_{j_{k}}}$, $j_{k}\in\{j_{1},j_{2},\cdots,j_{s}\}$, $s\in\{1,2,\cdots,8\}$, to (\ref{Kxiaoyud0}), we can see that these classifications of the initial data are same. On the surface, by imposing the stricter conditions on initial energy $K(\varphi_{0})$, we can make these classifications obtained by the different cross-constrained variational functionals $S_{j_{k}}(\varphi)$, $j_{k}\in\{j_{1},j_{2},\cdots,j_{s}\}$, $s\in\{1,2,\cdots,8\}$, indistinguishable. Through in-depth analysis, this fact actually reflects the relationship between the initial energy level and different combinations of the energy structural terms. More precisely, if we put the initial energy $K(\varphi_{0})$ at the different levels, the classifications of the initial data obtained by using the different cross-constrained variational functionals $S_{j}(\varphi)$, $j=1,2,\cdots,8$, will be different, and if we put the initial energy $K(\varphi_{0})$ at the lowest level, these classifications will be same. And for the high initial energy case, by the similar analysis, we can see the similar fact to the non-negative low energy case we analyzed above.

\section{High initial energy for semilinear Schr\"{o}dinger equation}\label{sec6}
In this section, we focus on the Cauchy problem of the standard semilinear Schr\"{o}dinger equation
\begin{flalign}\label{6.1}
(a) \ \ \ \ \ \ \ \ \ \ \ \ \ \ \ \ \ \ \ \ \ \ \ \ \ \ \ \
\begin{cases}
\displaystyle i \varphi_{t}+\Delta \varphi+|\varphi|^{p-2}\varphi=0, \ \ x\in \mathbb{R}^n, \ t>0,\\[2pt]
\displaystyle \varphi(x,0)=\varphi_0(x), \ \ \ \ \ \ \ \ \ \ \ \ \ x\in \mathbb{R}^n.
\end{cases}
\ \ \ \ \ \  \ \ \ \ \ \  \ \ \ \ \ \ \  \ \  \ \ \
\end{flalign}We aim to prove the global existence and the finite time blowup for the high initial energy case, like what we did in Section \ref{sec5}. On the surface, the model equation in (\ref{1.1}) studied in Section \ref{sec5} covers the model equation in (\ref{6.1}). It seems that either the problem (\ref{6.1}) need not be investigated in the high initial energy case, or we only need to directly apply the strategy proposed in Section \ref{sec5} to obtain the corresponding conclusions for the problem (\ref{6.1}) easily, however, from a mathematical perspective, we still cannot make such simple judgment. Although the five structural terms in $K(\varphi)$ of (\ref{1.1}), i.e., (\ref{2.4}), include all the three structural terms in $K_{(a)}(\varphi)$ of (\ref{6.1}), i.e., (\ref{Ka}), we need to consider a series of different combinations of the three ``energy'' structural terms for the problem (\ref{6.1}), while such analysis and discussion are different from those for the combinations of the five ``energy'' structural terms in Section \ref{sec5}.
Especially it is a classical and important model. By a quick review of the references \cite{r4-1-2-0,55-9,Annals-2005,r4-1-2-00,r14}, we see that high initial energy case for (\ref{6.1}) has remained an open problem and received constant focus for nearly half a century. In view of the characteristic of the strategy proposed in Section \ref{sec5}, i.e., to consider as many effective cross-constrained variational combinations as possible, for the model equation in (\ref{6.1}), we shall also find out as many effective cross-constrained variational combinations as possible to discuss the corresponding high energy case.

We first construct the cross-constrained variational structures for the model equation in (\ref{6.1}).
 More precisely, we select and combine the three structural terms in ``energy'' functional $K_{(a)}(\varphi)$ defined for the model equation in (\ref{6.1}) by adjusting their coefficients to present three cross-constrained variational combinations. Here, since we used the notation $(a)$ to denote the model equation in (\ref{6.1}) in the ``Introduction'' section, we also use the lower index $(a)$ to denote the cross-constrained variational functionals and the corresponding manifolds defined for the problem (\ref{6.1}). And since we used the notation $S_{1,(a)}(\varphi)$ to denote the classical form of the Nehari functional in the potential well theory in the ``Introduction'' section,
  we shall use the notations $S_{2,(a)}(\varphi)$-$S_{4,(a)}(\varphi)$ to denote three possible cross-constrained variational functionals for the Cauchy problem (\ref{6.1}).
The cross-constrained variational $S_{2,(a)}(\varphi)$ is defined by (\ref{S2a}), and we define
 \begin{align*}
S_{3,(a)}(\varphi):=\|\nabla \varphi\|^{2}_{2}+\tilde{c}\|\varphi\|^{2}_{2}-\frac{n(p-2)}{2p}\|\varphi\|^{p}_{p}, \ 0<\tilde{c}<1,
\end{align*}
and
\begin{align*}
S_{4,(a)}(\varphi):=\tilde{c}\|\varphi\|^{2}_{2}-\frac{n(p-2)}{2p}\|\varphi\|^{p}_{p}, \ 0<\tilde{c}<1.
\end{align*}
 Subsequently, we shall use the cross-constrained variational functionals $S_{j,(a)}(\varphi)$, $j=2,3,4$, to investigate the global existence and the finite time blowup for the high initial energy case.
 Hence, we shall use the lower index $j=2,3,4$ to denote the corresponding variational structures and manifolds. The cross-constrained variational structure
$d_{\mathcal{N}_{2,(a)}}$ has been defined in (\ref{d2a}). We only need to define
\begin{align}\label{dn3a}
d_{\mathcal{N}_{3,(a)}}:=\inf_{\varphi\in\mathcal{N}_{3,(a)}}K_{(a)}(\varphi)
\end{align}
and
\begin{align}\label{dn4a}
d_{\mathcal{N}_{4,(a)}}:=\inf_{\varphi\in\mathcal{N}_{4,(a)}}K_{(a)}(\varphi),
\end{align}
 where $K_{(a)}(\varphi)$ is defined by (\ref{Ka}),
\begin{align*}
\mathcal{N}_{3,(a)}:=\left\{\varphi\in H^{1}(\mathbb{R}^{n})\big|S_{3,(a)}(\varphi)=0\right\},
\end{align*}
and
\begin{align*}
\mathcal{N}_{4,(a)}:=\left\{\varphi\in H^{1}(\mathbb{R}^{n})\big|S_{4,(a)}(\varphi)=0\right\}.
\end{align*}
 Then, we define the space of the initial data
 \begin{align}\label{HR2a}
\mathcal{H}_{R_{2,(a)}}:=&\left\{\varphi\in H^{1}(\mathbb{R}^{n})\Big|d_{\mathcal{N}_{1,(a)}}\leq K_{(a)}(\varphi)<R_{2,(a)}(\|\varphi\|_{2})\right\},
\end{align}and
  \begin{align}
\mathcal{H}_{R_{j,(a)}}:=\left\{\varphi\in H^{1}(\mathbb{R}^{n})\Big|d_{\mathcal{N}_{j,(a)}}\leq K_{(a)}(\varphi)< R_{j,(a)}(\|\varphi\|_{2})\right\},\ j=3,4,
\end{align}
where
\begin{align}\label{R2a}R_{2,(a)}(\|\varphi\|_{2}):=&\left(\frac{1}{2}-\frac{2}{n(p-2)\gamma}\right)M_{(a)}(\|\varphi\|_{2})+\frac{1}{2}\|\varphi\|^{2}_{2},\end{align}
\begin{align*}
R_{3,(a)}(\|\varphi\|_{2}):=&\left(\frac{1}{2}-\frac{2}{n(p-2)}\right)\frac{\hat{M}_{(a)}(\|\varphi\|_{2})+\tilde{M}_{(a)}(\|\varphi\|_{2})+\left|\hat{M}_{(a)}(\|\varphi\|_{2})
-\tilde{M}_{(a)}(\|\varphi\|_{2})\right|}{2}
\\&+\left(\frac{1}{2}-\frac{2\tilde{c}}{n(p-2)}\right)\|\varphi\|^{2}_{2},\end{align*}
\begin{align*}
R_{4,(a)}(\|\varphi\|_{2}):=\left(\frac{1}{2}-\frac{2 \tilde{c}}{n(p-2)}\right)\|\varphi\|^{2}_{2}+\frac{1}{2}\left(\frac{2p\tilde{c}}{n(p-2) C_{1}\|\varphi\|^{p-\frac{n(p-2)}{2}-2}_{2}}\right)^{\frac{4}{n(p-2)}},
\end{align*}
\begin{align}\label{Maf}
M_{(a)}(\|\varphi\|_{2}):=\left(\frac{2p}{n(p-2)\gamma C_{1}\|\varphi\|^{p-\frac{n(p-2)}{2}}_{2}}\right)^{\frac{4}{n(p-2)-4}},
\end{align}
\begin{align}\label{Mhaf}
\hat{M}_{(a)}(\|\varphi\|_{2}):=\left(\frac{2p}{n(p-2) C_{1}\|\varphi\|^{p-\frac{n(p-2)}{2}}_{2}}\right)^{\frac{4}{n(p-2)-4}},
\end{align}and \begin{align}\label{Mtilea}
\tilde{M}_{(a)}(\|\varphi\|_{2}):=\left(\frac{4p \tilde{c}^{\frac{1}{2}}}{n(p-2)C_{1}\|\varphi\|^{p-\frac{n(p-2)}{2}-1}_{2}}\right)^{\frac{4}{n(p-2)-2}}.
\end{align}
Here, for the case $j=2$, in the definition of $\mathcal{H}_{R_{2,(a)}}$, i.e., (\ref{HR2a}), we consider the case $K_{(a)}(\varphi_{0})\geq d_{\mathcal{N}_{1,(a)}}$ instead of the case $K_{(a)}(\varphi_{0})\geq d_{\mathcal{N}_{2,(a)}}$, where $d_{\mathcal{N}_{1,(a)}}$ is defined by (\ref{d1a}), because in \cite{r14},
 the global well-posedness for low initial energy $K_{(a)}(\varphi_{0})<d_{\mathcal{N}_{1,(a)}}$ was investigated.
In addition, to deal with the high initial energy $K_{(a)}(\varphi_{0})\geq d_{\mathcal{N}_{1,(a)}}$, we choose coefficient $\gamma>0$ in the definition of $S_{2,(a)}(\varphi)$, i.e., (\ref{S2a}), such that
\begin{align}\label{gammadefanwei}
\frac{4}{n(p-2)}<\gamma<1.
\end{align}

Next, we use the cross-constrained variational functionals $S_{j,(a)}(\varphi)$, $j=2,3,4$, to divide the space of the initial data $\mathcal{H}_{R_{j,(a)}}$, $j=2,3,4$, into
\begin{align*}
\mathcal{W}_{R_{j,(a)}}:=\left\{\varphi\in H^{1}(\mathbb{R}^{n})\Big|d_{\mathcal{N}_{1,(a)}}\leq K_{(a)}(\varphi)<R_{j,(a)}(\|\varphi\|_{2}),S_{j,(a)}(\varphi)>0\right\},\ j=2,
\end{align*}
\begin{align*}
\mathcal{V}_{R_{j,(a)}}:=\left\{\varphi\in H^{1}(\mathbb{R}^{n})\Big|d_{\mathcal{N}_{1,(a)}}\leq K_{(a)}(\varphi)< R_{j,(a)}(\|\varphi\|_{2}),S_{j,(a)}(\varphi)<0\right\}, \ j=2,
\end{align*}
\begin{align*}
\mathcal{W}_{R_{j,(a)}}:=\left\{\varphi\in H^{1}(\mathbb{R}^{n})\Big|d_{\mathcal{N}_{j,(a)}}\leq K_{(a)}(\varphi)<R_{j,(a)}(\|\varphi\|_{2}),S_{j,(a)}(\varphi)>0\right\}, \ j=3,4,
\end{align*}
and
\begin{align*}
\mathcal{V}_{R_{j,(a)}}:=\left\{\varphi\in H^{1}(\mathbb{R}^{n})\Big|d_{\mathcal{N}_{j,(a)}}\leq K_{(a)}(\varphi)<R_{j,(a)}(\|\varphi\|_{2}),S_{j,(a)}(\varphi)<0\right\}, \ j=3,4.
\end{align*}
For the cases $j=2,3$, we shall also use the manifolds $\mathcal{W}_{R_{j,(a)}}$ and $\mathcal{V}_{R_{j,(a)}}$ to prove the global existence and the finite time blowup, respectively.
For the case $j=4$, we shall use the manifold $\mathcal{W}_{R_{j,(a)}}$ to prove the global existence for high initial energy $K_{(a)}(\varphi_{0})\geq d_{\mathcal{N}_{j,(a)}}$. And in Remark \ref{rem6.3}, we shall also point out that we fail to prove the finite time blowup by the manifold $\mathcal{V}_{R_{4,(a)}}$ due to the lack of the term $\|\nabla\varphi\|^{2}_{2}$ in the cross-constrained variational functional $S_{4,(a)}(\varphi)$.

Since we use the cross-constrained variational functionals $S_{j,(a)}(\varphi)$, $j=2,3,4$, to deal with the high initial energy case, we shall not consider the case $j=1$ and shall consider the cases $j=2,3,4$.

\subsection{Global existence $\&$ finite time blowup for high initial energy $K_{(a)}(\varphi_{0})\geq d_{{\mathcal{N}_{j,(a)}}}$, $j=2,3$}\label{sec6.1}

In order to use the manifolds $\mathcal{W}_{R_{j,(a)}}$ and $\mathcal{V}_{R_{j,(a)}}$, $j=2,3$, to prove the global existence and the finite time blowup, respectively, for high initial energy $K_{(a)}(\varphi_{0})\geq d_{{\mathcal{N}_{j,(a)}}}$, $j=2,3$, we first prove the invariance of these manifolds.

\begin{lemma}\label{lem6.1}(The invariance of the manifolds $\mathcal{W}_{R_{j,(a)}}$ and $\mathcal{V}_{R_{j,(a)}}$, $j=2,3$, for high initial energy $K_{(a)}(\varphi_{0})\geq d_{\mathcal{N}_{j,(a)}}$, $j=2,3$)
Let $p$ satisfy
\begin{equation}\label{jingdianptiao}
\begin{cases}
2+\frac{4}{n}<p<\frac{2n}{n-2},\ n\geq3,\\
2+\frac{4}{n}<p<+\infty, \ n=1,2.\\
\end{cases}
\end{equation}Let $\varphi$ be the solution to Cauchy problem (\ref{6.1}).
 For the initial data $\varphi_{0}\in\mathcal{W}_{R_{j,(a)}}$, one has $\varphi\in\mathcal{W}_{R_{j,(a)}}$, $j=2,3$.
And for the initial data $\varphi_{0}\in\mathcal{V}_{R_{j,(a)}}$, one has $\varphi\in\mathcal{V}_{R_{j,(a)}}$, $j=2,3$.

\end{lemma}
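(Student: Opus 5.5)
The proof follows the scheme of Lemma \ref{lem5.1}, specialized to the three-term structure of (\ref{6.1}). We write out only the invariance of $\mathcal{W}_{R_{j,(a)}}$; the case of $\mathcal{V}_{R_{j,(a)}}$ is symmetric. For (\ref{6.1}), mass conservation $\|\varphi\|_2=\|\varphi_0\|_2$ and conservation of $E_{(a)}$ give $K_{(a)}(\varphi(t))=K_{(a)}(\varphi_0)$. The local solution is continuous in $H^1$, so $t\mapsto S_{j,(a)}(\varphi(t))$ is continuous. Hence both energy constraints $d\le K_{(a)}(\varphi(t))<R_{j,(a)}(\|\varphi(t)\|_2)$ persist automatically, since the right-hand side depends only on the conserved mass. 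It remains to exclude a first time $\tau\in(0,T_{\max})$ at which $S_{j,(a)}(\varphi(\tau))=0$ while $S_{j,(a)}(\varphi(t))>0$ on $(0,\tau)$. Note that $\varphi(\tau)\neq0$ by mass conservation.

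\textbf{Case $j=2$.} From $S_{2,(a)}(\varphi(\tau))=0$ we get $\|\varphi(\tau)\|_p^p=\frac{2p}{n(p-2)\gamma}\|\nabla\varphi(\tau)\|_2^2$. Substituting this into $K_{(a)}$ gives
\[K_{(a)}(\varphi(\tau))=\Big(\tfrac12-\tfrac{2}{n(p-2)\gamma}\Big)\|\nabla\varphi(\tau)\|_2^2+\tfrac12\|\varphi(\tau)\|_2^2,\]
and the coefficient is positive by (\ref{gammadefanwei}). Combining the same identity with the Gagliardo--Nirenberg inequality (\ref{GN-budengshi}), as in (\ref{Gnbudengshi1})--(\ref{Gnbudengshi1-1}) with $\lambda_1$ replaced by $\gamma$, yields
\[\frac{2p}{n(p-2)\gamma C_1}\le\|\nabla\varphi(\tau)\|_2^{\frac{n(p-2)-4}{2}}\|\varphi_0\|_2^{p-\frac{n(p-2)}{2}}.\]
Both exponents are positive by (\ref{jingdianptiao}), so $\|\nabla\varphi(\tau)\|_2^2\ge M_{(a)}(\|\varphi_0\|_2)$. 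Therefore $K_{(a)}(\varphi_0)=K_{(a)}(\varphi(\tau))\ge R_{2,(a)}(\|\varphi_0\|_2)$, contradicting the strict inequality in the definition of $\mathcal{W}_{R_{2,(a)}}$.

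\textbf{Case $j=3$.} From $S_{3,(a)}(\varphi(\tau))=0$ we get $\|\varphi(\tau)\|_p^p=\frac{2p}{n(p-2)}\big(\|\nabla\varphi(\tau)\|_2^2+\tilde c\|\varphi(\tau)\|_2^2\big)$. Substituting gives
\[K_{(a)}(\varphi(\tau))=\Big(\tfrac12-\tfrac{2}{n(p-2)}\Big)\|\nabla\varphi(\tau)\|_2^2+\Big(\tfrac12-\tfrac{2\tilde c}{n(p-2)}\Big)\|\varphi(\tau)\|_2^2.\]
Both coefficients are positive since $p>2+\frac4n$ and $0<\tilde c<1$. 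To bound $\|\nabla\varphi(\tau)\|_2^2$ from below we use two estimates, mirroring Case III of Lemma \ref{lem5.1}.
\begin{itemize}
\item[(i)] Since $\|\varphi(\tau)\|_p^p>\frac{2p}{n(p-2)}\|\nabla\varphi(\tau)\|_2^2$, the Gagliardo--Nirenberg argument of the case $j=2$ with $\gamma=1$ gives $\|\nabla\varphi(\tau)\|_2^2\ge \hat M_{(a)}(\|\varphi_0\|_2)$.
\item[(ii)] By Cauchy--Schwarz, $\|\nabla\varphi\|_2^2+\tilde c\|\varphi\|_2^2\ge 2\tilde c^{1/2}\|\nabla\varphi\|_2\|\varphi\|_2$, so $\|\varphi(\tau)\|_p^p\ge\frac{4p\tilde c^{1/2}}{n(p-2)}\|\nabla\varphi(\tau)\|_2\|\varphi(\tau)\|_2$. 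Combined with (\ref{GN-budengshi}), as in (\ref{p3Gnbudengshi-1-1}), this gives $\|\nabla\varphi(\tau)\|_2^2\ge\tilde M_{(a)}(\|\varphi_0\|_2)$.
\end{itemize}
Since $\frac{a+b+|a-b|}{2}=\max\{a,b\}$, we have $\|\nabla\varphi(\tau)\|_2^2\ge\max\{\hat M_{(a)},\tilde M_{(a)}\}$. This gives $K_{(a)}(\varphi_0)\ge R_{3,(a)}(\|\varphi_0\|_2)$, again a contradiction.

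\textbf{Expected obstacle.} The main point to check is the exponent bookkeeping in step (ii). We need $n(p-2)-2>0$, which holds, and we must take care with the sign of $p-\frac{n(p-2)}{2}-1$. If this exponent is negative, the inequality for $\|\nabla\varphi\|_2$ is still obtained by isolating $\|\nabla\varphi\|_2$ with the mass held fixed, so the argument survives. We also need to check that the definition of $\mathcal{H}_{R_{2,(a)}}$ uses $d_{\mathcal{N}_{1,(a)}}$ rather than $d_{\mathcal{N}_{2,(a)}}$. This is harmless, because the contradiction uses only the upper bound $R_{j,(a)}$ and never the lower bound $d$.
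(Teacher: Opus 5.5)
Your proposal is correct and follows the paper's proof essentially step for step. Conservation of mass and of $K_{(a)}$ keeps $K_{(a)}(\varphi(t))<R_{j,(a)}(\|\varphi(t)\|_2)$. At a zero of $S_{j,(a)}(\varphi(t))$, the Gagliardo--Nirenberg lower bound gives $\|\nabla\varphi(\tau)\|_2^2\ge M_{(a)}(\|\varphi_0\|_2)$ for $j=2$, and $\|\nabla\varphi(\tau)\|_2^2\ge\max\{\hat M_{(a)},\tilde M_{(a)}\}$ via your estimates (i)--(ii) for $j=3$. This forces $K_{(a)}(\varphi_0)\ge R_{j,(a)}(\|\varphi_0\|_2)$, contradicting the strict upper bound, exactly as in the paper. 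Your side remarks are also correct: the sign of $p-\frac{n(p-2)}{2}-1$ is irrelevant because the mass is fixed, and the lower bound $d_{\mathcal{N}_{1,(a)}}$ is never used.
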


\begin{proof}
We only write down the details of proving the invariance of the manifold $\mathcal{W}_{R_{j,(a)}}$, $j=2,3$, since the proof of the invariance of the manifold $\mathcal{V}_{R_{j,(a)}}$, $j=2,3$, is similar.

For the initial data $\varphi_{0}\in \mathcal{W}_{R_{j,(a)}}$, $j=2,3$, we know that there exists the unique solution $\varphi\in C([0,T_{{\rm{max}},(a)}),H^{1}(\mathbb{R}^{n}))$ according to Theorem 4.3.1 in Chapter 4 in \cite{rz1}\ (see also Theorem 2.1 in \cite{r14}), which means the continuity of $S_{2,(a)}(\varphi(t))$ and $S_{3,(a)}(\varphi(t))$ with respect to time $t$. Here, we use the notation $T_{{\rm{max}},(a)}$ to denote the maximal existence time of the solution to Cauchy problem (\ref{6.1}) in order to distinguish it from $T_{\rm{max}}$ defined for the solution to Cauchy problem (\ref{1.1}). We prove $\varphi(t)\in \mathcal{W}_{R_{j,(a)}}$, $j=2,3$ for $t\in(0,T_{{\rm{max}},(a)})$.
From $\varphi_{0}\in\mathcal{W}_{R_{j,(a)}}$, $j=2,3$, we know that
 \begin{align}
d_{\mathcal{N}_{j,(a)}}\leq& K_{(a)}(\varphi_{0})<R_{j,(a)}\left(\|\varphi_{0}\|_{2}\right)\label{Ka-deguji}
\end{align}and $S_{j,(a)}(\varphi_{0})>0$ hold. According to $\|\varphi_{0}\|^{2}_{2}=\|\varphi(t)\|^{2}_{2}$ and $K_{(a)}(\varphi_{0})=K_{(a)}(\varphi(t))$ for $t\in(0,T_{{\rm{max}},(a)})$ shown by Theorem 4.3.1 in Chapter 4 in \cite{rz1}\ (see also Theorem 2.1 in \cite{r14}), we know
 \begin{align}
d_{\mathcal{N}_{j,(a)}}\leq& K_{(a)}(\varphi(t))<R_{j,(a)}(\|\varphi(t)\|_{2})\label{Kadeguji2-}
\end{align}
for $t\in(0,T_{{\rm{max}},(a)})$ and $j=2,3$. Next, we only need to prove $S_{j,(a)}(\varphi(t))>0$, $j=2,3$, for all $t\in(0,T_{{\rm{max}},(a)})$. The strategies of the proofs depend on the structures of the cross-constrained variational functionals $S_{2,(a)}(\varphi)$ and $S_{3,(a)}(\varphi)$. Thus, we treat these two cases one by one.\\
\textbf{Case \textrm{I}}:\ \bm{$j=2.$}

 We prove $S_{2,(a)}(\varphi(t))>0$ for all $t\in(0,T_{{\rm{max}},(a)})$.
 Arguing by contradiction, we suppose that there exists the first time $\tilde{\tau}_{2}\in(0,T_{{\rm{max}},(a)})$ to make $S_{2,(a)}(\varphi(\tilde{\tau}_{2}))=0$ and $S_{2,(a)}(\varphi(t))>0$ for $t\in(0,\tilde{\tau}_{2})$, and we know $\varphi(\tilde{\tau}_{2})\neq 0$ due to $\|\varphi_{0}\|^{2}_{2}=\|\varphi(t)\|^{2}_{2}$ for $t\in(0,T_{{\rm{max}},(a)})$. Next, we estimate $K_{(a)}(\varphi(\tilde{\tau}_{2}))$. In this process, we use the condition $S_{2,(a)}(\varphi(\tilde{\tau}_{2}))=0$,
i.e.,
\begin{align}
&\frac{1}{p}\|\varphi(\tilde{\tau}_{2})\|^{p}_{p}=\frac{2}{n(p-2)\gamma}\|\nabla\varphi(\tilde{\tau}_{2})\|^{2}_{2}.\label{S2aguji}
\end{align}
By substituting (\ref{S2aguji}) into $K_{(a)}(\tilde{\tau}_{2})$, we can deduce
\begin{align}
K_{(a)}(\varphi(\tilde{\tau}_{2}))=&\left(\frac{1}{2}-\frac{2}{n(p-2)\gamma}\right)\|\nabla\varphi(\tilde{\tau}_{2})\|^{2}_{2}
+\frac{1}{2}\|\varphi(\tilde{\tau}_{2})\|^{2}_{2}.\label{Kadeguji}
\end{align}Here, $\gamma>0$ is chosen to make (\ref{gammadefanwei}), i.e.,
$$\frac{1}{2}-\frac{2}{n(p-2)\gamma}>0,$$ hold.
Applying Gagliardo-Nirenberg inequality (\ref{GN-budengshi}) and (\ref{S2aguji}), we have
\begin{align}
\frac{2p}{n(p-2)\gamma C_{1}}\leq\|\nabla\varphi(\tilde{\tau}_{2})\|^{\frac{n(p-2)-4}{2}}_{2}\|\varphi(\tilde{\tau}_{2})\|^{p-\frac{n(p-2)}{2}}_{2}.\label{Gnbudengshi}
\end{align}
Due to the fact $\|\varphi(\tilde{\tau}_{2})\|^{2}_{2}=\|\varphi_{0}\|^{2}_{2}$, we know (\ref{Gnbudengshi}) turns to
\begin{align}
\frac{2p}{n(p-2)\gamma C_{1}}\leq\|\nabla\varphi(\tilde{\tau}_{2})\|^{\frac{n(p-2)-4}{2}}_{2}\|\varphi_{0}\|^{p-\frac{n(p-2)}{2}}_{2},\label{Gnbudengshi2}
\end{align}
i.e.,
\begin{align}
\|\nabla\varphi(\tilde{\tau}_{2})\|^{2}_{2}\geq M_{(a)}(\|\varphi_{0}\|_{2}),\label{a1Gnbudengshi2}
\end{align}where $M_{(a)}(\|\varphi_{0}\|_{2})$ is defined by (\ref{Maf}).
According to (\ref{a1Gnbudengshi2}), we know that (\ref{Kadeguji}) becomes
\begin{align}
K_{(a)}(\varphi(\tilde{\tau}_{2}))\geq&\left(\frac{1}{2}-\frac{2}{n(p-2)\gamma}\right)M_{(a)}(\|\varphi_{0}\|_{2})+\frac{1}{2}\|\varphi(\tilde{\tau}_{2})\|^{2}_{2}.\label{Kadeguji2--=--==}
\end{align}From the facts $K_{(a)}(\varphi(\tilde{\tau}_{2}))=K_{(a)}(\varphi_{0})$ and $\|\varphi(\tilde{\tau}_{2})\|^{2}_{2}=\|\varphi_{0}\|^{2}_{2}$, we see that (\ref{Kadeguji2--=--==}) gives
\begin{align}
K_{(a)}(\varphi_{0})\geq&\left(\frac{1}{2}-\frac{2}{n(p-2)\gamma}\right)M_{(a)}(\|\varphi_{0}\|_{2})+\frac{1}{2}\|\varphi_{0}\|^{2}_{2},\label{Kadeguji2}
\end{align}
which contradicts (\ref{Ka-deguji}) for $j=2$. Thus, we know $S_{2,(a)}(\varphi(t))>0$ for all $t\in(0,T_{{\rm{max}},(a)})$, which means $\varphi\in\mathcal{W}_{R_{2,(a)}}$.\\
\textbf{Case \textrm{II}}:\ \bm{$j=3.$}

 We prove $S_{3,(a)}(\varphi(t))>0$ for all $t\in(0,T_{{\rm{max}},(a)})$.
 Arguing by contradiction, we suppose that there exists the first time $\tilde{\tau}_{3}\in(0,T_{{\rm{max}},(a)})$ to make $S_{3,(a)}(\varphi(\tilde{\tau}_{3}))=0$ and $S_{3,(a)}(\varphi(t))>0$ for $t\in(0,\tilde{\tau}_{3})$, and we know $\varphi(\tilde{\tau}_{3})\neq 0$ due to $\|\varphi_{0}\|^{2}_{2}=\|\varphi(t)\|^{2}_{2}$ for $t\in(0,T_{{\rm{max}},(a)})$ shown by Theorem 4.3.1 in Chapter 4 in \cite{rz1}\ (see also Theorem 2.1 in \cite{r14}). Next, we estimate $K_{(a)}(\varphi(\tilde{\tau}_{3}))$. In this process, we use the condition $S_{3,(a)}(\varphi(\tilde{\tau}_{3}))=0$,
i.e.,
\begin{align}
&\frac{1}{p}\|\varphi(\tilde{\tau}_{3})\|^{p}_{p}=\frac{2}{n(p-2)}\left(\|\nabla\varphi(\tilde{\tau}_{3})\|^{2}_{2}
+\tilde{c}\|\varphi(\tilde{\tau}_{3})\|^{2}_{2}\right),\ 0<\tilde{c}<1.\label{S3aguji}
\end{align}
 By substituting (\ref{S3aguji}) into $K_{(a)}(\varphi(\tilde{\tau}_{3}))$, we can deduce
\begin{align}\label{3Kadeguji}
\begin{split}
K_{(a)}(\varphi(\tilde{\tau}_{3}))=&\left(\frac{1}{2}-\frac{2}{n(p-2)}\right)\|\nabla\varphi(\tilde{\tau}_{3})\|^{2}_{2}
+\left(\frac{1}{2}-\frac{2\tilde{c}}{n(p-2)}\right)\|\varphi(\tilde{\tau}_{3})\|^{2}_{2}.
\end{split}
\end{align}
Applying Cauchy-Schwartz inequality, we know (\ref{S3aguji}) turns to
\begin{align}
\frac{1}{p}\|\varphi(\tilde{\tau}_{3})\|^{p}_{p}\geq\frac{4 \tilde{c}^{\frac{1}{2}}}{n(p-2)}\|\nabla\varphi(\tilde{\tau}_{3})\|_{2}\|\varphi(\tilde{\tau}_{3})\|_{2}.\label{S--3aguji}
\end{align}
By (\ref{GN-budengshi}), we see that (\ref{S--3aguji}) becomes
\begin{align*}
\frac{4 p\tilde{c}^{\frac{1}{2}}}{n(p-2)C_{1}}\|\nabla\varphi(\tilde{\tau}_{3})\|_{2}\|\varphi(\tilde{\tau}_{3})\|_{2}\leq
\|\nabla\varphi(\tilde{\tau}_{3})\|^{\frac{n(p-2)}{2}}_{2}\|\varphi(\tilde{\tau}_{3})\|^{p-\frac{n(p-2)}{2}}_{2},
\end{align*}
i.e.,
\begin{align*}
\frac{4 p\tilde{c}^{\frac{1}{2}}}{n(p-2)C_{1}}\leq
\|\nabla\varphi(\tilde{\tau}_{3})\|^{\frac{n(p-2)}{2}-1}_{2}\|\varphi(\tilde{\tau}_{3})\|^{p-\frac{n(p-2)}{2}-1}_{2},
\end{align*}
which means
\begin{align}\label{guodudezhisi}
\frac{4 p\tilde{c}^{\frac{1}{2}}}{n(p-2)C_{1}}\leq
\|\nabla\varphi(\tilde{\tau}_{3})\|^{\frac{n(p-2)}{2}-1}_{2}\|\varphi_{0}\|^{p-\frac{n(p-2)}{2}-1}_{2}
\end{align}according to the fact $\|\varphi(\tilde{\tau}_{3})\|^{2}_{2}=\|\varphi_{0}\|^{2}_{2}$.
From (\ref{guodudezhisi}), we have
\begin{align}
\|\nabla\varphi(\tilde{\tau}_{3})\|^{2}_{2}\geq&\tilde{M}_{(a)}(\|\varphi_{0}\|_{2}),\label{Gnbudengshi2}
\end{align}
where $\tilde{M}_{(a)}(\|\varphi_{0}\|_{2})$ is defined by (\ref{Mtilea}).
Meanwhile, (\ref{S3aguji}) also gives
\begin{align}\label{xuyaoshiyong}
&\frac{n(p-2)}{2p}\|\varphi(\tilde{\tau}_{3})\|^{p}_{p}>\|\nabla\varphi(\tilde{\tau}_{3})\|^{2}_{2}.
\end{align}By the similar process of obtaining (\ref{a1Gnbudengshi2}), we can use (\ref{xuyaoshiyong}) to give
\begin{align}
\|\nabla\varphi(\tilde{\tau}_{3})\|^{2}_{2}\geq \hat{M}_{(a)}(\|\varphi_{0}\|_{2}),\label{3a1Gnbudengshi2}
\end{align}where
$\hat{M}_{(a)}(\|\varphi_{0}\|_{2})$ is defined by (\ref{Mhaf}).
By (\ref{Gnbudengshi2}) and (\ref{3a1Gnbudengshi2}), we have
\begin{align*}
\|\nabla\varphi(\tilde{\tau}_{3})\|^{2}_{2}\geq \frac{\hat{M}_{(a)}(\|\varphi_{0}\|_{2})+\tilde{M}_{(a)}(\|\varphi_{0}\|_{2})+\left|\hat{M}_{(a)}(\|\varphi_{0}\|_{2})-\tilde{M}_{(a)}(\|\varphi_{0}\|_{2})\right|}{2},
\end{align*}
which makes
(\ref{3Kadeguji}) become
\begin{align}\label{3Kadeguji--0}
\begin{split}
K_{(a)}(\varphi(\tilde{\tau}_{3}))
\geq&\left(\frac{1}{2}-\frac{2}{n(p-2)}\right)\frac{\hat{M}_{(a)}(\|\varphi_{0}\|_{2})+\tilde{M}_{(a)}(\|\varphi_{0}\|_{2})+\left|\hat{M}_{(a)}(\|\varphi_{0}\|_{2})-\tilde{M}_{(a)}(\|\varphi_{0}\|_{2})\right|}{2}\\
&+\left(\frac{1}{2}-\frac{2\tilde{c}}{n(p-2)}\right)\|\varphi(\tilde{\tau}_{3})\|^{2}_{2}.
\end{split}
\end{align}
By the facts $K_{(a)}(\varphi(\tilde{\tau}_{3}))=K_{(a)}(\varphi_{0})$ and $\|\varphi(\tilde{\tau}_{3})\|^{2}_{2}=\|\varphi_{0}\|^{2}_{2}$, we know that (\ref{3Kadeguji--0}) turns to
\begin{align}\label{3Kadeguji--}
\begin{split}
K_{(a)}(\varphi_{0})
\geq&\left(\frac{1}{2}-\frac{2}{n(p-2)}\right)\frac{\hat{M}_{(a)}(\|\varphi_{0}\|_{2})+\tilde{M}_{(a)}(\|\varphi_{0}\|_{2})+\left|\hat{M}_{(a)}(\|\varphi_{0}\|_{2})-\tilde{M}_{(a)}(\|\varphi_{0}\|_{2})\right|}{2}
\\&+
\left(\frac{1}{2}-\frac{2\tilde{c}}{n(p-2)}\right)\|\varphi_{0}\|^{2}_{2},
\end{split}
\end{align}
 which contradicts (\ref{Ka-deguji}) for $j=3$.
 Thus, we know $S_{3,(a)}(\varphi(t))>0$ for all $t\in(0,T_{{\rm{max}},(a)})$, which means $\varphi\in\mathcal{W}_{R_{3,(a)}}$.
\end{proof}

\begin{theorem}\label{th6.1}(Global existence for high initial energy $K_{(a)}(\varphi_{0})\geq d_{\mathcal{N}_{j,(a)}}$, $j=2,3$)
Let $p$ satisfy (\ref{jingdianptiao}).
Then,
the corresponding solution $\varphi$ to problem (\ref{6.1}) exists globally provided $\varphi_{0}\in \mathcal{W}_{R_{j,(a)}}$, $j=2,3$.
\begin{proof}
The proofs of the global existence depend on the structures of the cross-constrained variational functionals $S_{j,(a)}(\varphi)$, $j=2,3$. Thus, we treat the cases $j=2,3$ separately.\\
\textbf{Case \textrm{I}}:\ \bm{$j=2.$}

For the initial data $\varphi_{0}\in \mathcal{W}_{R_{2,(a)}}$, we know the corresponding solution $\varphi\in \mathcal{W}_{R_{2,(a)}}$, i.e., $S_{2,(a)}(\varphi)>0$, meaning
\begin{align}
\|\nabla\varphi\|^{2}_{2}>\frac{n(p-2)\gamma}{2p}\|\varphi\|^{p}_{p},\label{S2adayuling}
\end{align}
according to Lemma \ref{lem6.1}. Then, substituting (\ref{S2adayuling}) into $K_{(a)}(\varphi)$, we have
\begin{align*}
K_{(a)}(\varphi)\geq&\frac{1}{2}\|\varphi\|^{2}_{2}+\left(\frac{1}{2}-\frac{2}{n(p-2)\gamma}\right)\|\nabla\varphi\|^{2}_{2}\nonumber\\
\geq&\left(\frac{1}{2}-\frac{2}{n(p-2)\gamma}\right)\|\varphi\|^{2}_{H^{1}(\mathbb{R}^{n})},
\end{align*}
which means
\begin{align}\label{gujishi}
K_{(a)}(\varphi_{0})\geq&\left(\frac{1}{2}-\frac{2}{n(p-2)\gamma}\right)\|\varphi\|^{2}_{H^{1}(\mathbb{R}^{n})}
\end{align}
due to the fact $K_{(a)}(\varphi)=K_{(a)}(\varphi_{0})$ given by Theorem 4.3.1 in Chapter 4 in \cite{rz1}\ (see also Theorem 2.1 in \cite{r14}). Here, $\gamma>0$ is chosen to make (\ref{gammadefanwei}), i.e.,
$$\frac{1}{2}-\frac{2}{n(p-2)\gamma}>0,$$ hold.
Due to (\ref{gujishi}) and Theorem 2.1 in \cite{r14}, we know $T_{{\rm {max}},(a)}=+\infty$.\\
\textbf{Case \textrm{II}}:\ \bm{$j=3.$}

For the initial data $\varphi_{0}\in \mathcal{W}_{R_{3,(a)}}$, we know that the corresponding solution $\varphi\in \mathcal{W}_{R_{3,(a)}}$, i.e., $S_{3,(a)}(\varphi)>0$, takes the following inequality
\begin{align}
\|\nabla\varphi\|^{2}_{2}+\tilde{c}\|\varphi\|^{2}_{2}>\frac{n(p-2)}{2p}\|\varphi\|^{p}_{p},\ 0<\tilde{c}<1, \label{S3adayuling}
\end{align}
according to Lemma \ref{lem6.1}. Then, substituting (\ref{S3adayuling}) into $K_{(a)}(\varphi)$, we have
\begin{align*}
K_{(a)}(\varphi)\geq&\left(\frac{1}{2}-\frac{2}{n(p-2)}\right)\|\nabla\varphi\|^{2}_{2}+\left(\frac{1}{2}-\frac{2 \tilde{c}}{n(p-2)}\right)\|\varphi\|^{2}_{2}
\nonumber\\
\geq&\left(\frac{1}{2}-\frac{2}{n(p-2)}\right)\|\varphi\|^{2}_{H^{1}(\mathbb{R}^{n})},
\end{align*}
which means
\begin{align}\label{youjiexinggujiS3a}
K_{(a)}(\varphi_{0})\geq&\left(\frac{1}{2}-\frac{2}{n(p-2)}\right)\|\varphi\|^{2}_{H^{1}(\mathbb{R}^{n})}
\end{align}
due to the fact $K_{(a)}(\varphi)=K_{(a)}(\varphi_{0})$.
From (\ref{youjiexinggujiS3a}) and Theorem 2.1 in \cite{r14}, we know $T_{{\rm {max}},(a)}=+\infty$.
\end{proof}
\end{theorem}

\begin{theorem}\label{th6.2}(Finite time blowup for high initial energy $K_{(a)}(\varphi_{0})\geq d_{\mathcal{N}_{j,(a)}}$, $j=2,3$)
Let $p$ satisfy (\ref{jingdianptiao}).
Then,
the corresponding solution $\varphi$ to problem (\ref{6.1}) blows up in finite time provided $\varphi_{0}\in \mathcal{V}_{R_{j,(a)}}$, $j=2,3$, and $\||x|\varphi_{0}\|_{2}<+\infty$.
\end{theorem}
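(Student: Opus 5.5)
We follow the scheme of Theorem \ref{th4.2}, using the auxiliary functional $J(t)=\||x|\varphi\|^{2}_{2}$, which is finite at $t=0$ by the extra assumption $\||x|\varphi_{0}\|_{2}<+\infty$. The first step is invariance. By Lemma \ref{lem6.1}, $\varphi_{0}\in\mathcal{V}_{R_{j,(a)}}$ implies $\varphi(t)\in\mathcal{V}_{R_{j,(a)}}$ for all $t\in[0,T_{{\rm max},(a)})$, so $S_{j,(a)}(\varphi(t))<0$ along the flow, $j=2,3$.

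The second step is the virial identity for (\ref{6.1}). Redoing the computation of Lemma \ref{lem3.1} with no harmonic potential and a single nonlinearity, as in \cite{r4-1-2-00-1,r14}, we get
\begin{align*}
J''(t)=8\left(\|\nabla\varphi\|^{2}_{2}-\frac{n(p-2)}{2p}\|\varphi\|^{p}_{p}\right).
\end{align*}
We use that $J(t)$ stays finite for $t<T_{{\rm max},(a)}$, which is a standard result in \cite{rz1}.

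The third step turns the sign of $S_{j,(a)}$ into a strictly negative bound on $J''$.

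For $j=3$, $S_{3,(a)}(\varphi)<0$ gives
\begin{align*}
J''(t)=8S_{3,(a)}(\varphi)-8\tilde{c}\|\varphi\|^{2}_{2}<-8\tilde{c}\|\varphi_{0}\|^{2}_{2}<0
\end{align*}
by mass conservation. This is exactly (\ref{4.15}) with $\min\{c,1\}$ replaced by $\tilde{c}$.

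For $j=2$, $\gamma<1$ makes this case the main obstacle. We have $\|\nabla\varphi\|_{2}^{2}<\frac{n(p-2)\gamma}{2p}\|\varphi\|_{p}^{p}$, so
\begin{align*}
J''(t)<-8(1-\gamma)\frac{n(p-2)}{2p}\|\varphi\|^{p}_{p}.
\end{align*}
We still need a uniform negative upper bound. Since $S_{2,(a)}(\varphi(t))<0$ and $\varphi\neq0$, the Gagliardo--Nirenberg argument leading to (\ref{a1Gnbudengshi2}) applies with the reversed inequality. It gives $\|\nabla\varphi(t)\|_{2}^{2}\geq M_{(a)}(\|\varphi_{0}\|_{2})$, hence
\begin{align*}
\|\varphi\|_{p}^{p}>\frac{2p}{n(p-2)\gamma}M_{(a)}(\|\varphi_{0}\|_{2}).
\end{align*}
This yields $J''(t)\le -\delta$ with $\delta=\frac{8(1-\gamma)}{\gamma}M_{(a)}(\|\varphi_{0}\|_{2})>0$, using $\gamma<1$ from (\ref{gammadefanwei}).

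The final step is a contradiction argument. Assume $T_{{\rm max},(a)}=+\infty$. Integrating twice gives $J(t)\le J(0)+J'(0)t-\frac{\delta}{2}t^{2}$, which becomes negative for large $t$. Since $J>0$ and $J$ is continuous, there is a time $\tilde t$ with $J(t)\to0$ as $t\to\tilde t$. Then Lemma \ref{lem3.2} and conservation of mass force $\|\nabla\varphi\|_{2}\to\infty$, which contradicts global existence in $H^{1}$. Exactly as in Theorem \ref{th3.1}, we conclude $T_{{\rm max},(a)}<+\infty$.
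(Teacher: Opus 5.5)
Your proposal is correct and follows essentially the paper's proof. It uses invariance from Lemma 6.1, the virial identity, and the split $J''=8S_{3,(a)}(\varphi)-8\tilde{c}\|\varphi\|_{2}^{2}$ for $j=3$. For $j=2$ it uses the Gagliardo--Nirenberg lower bound $\|\nabla\varphi(t)\|_{2}^{2}\geq M_{(a)}(\|\varphi_{0}\|_{2})$ to get the same constant $\frac{8(1-\gamma)}{\gamma}M_{(a)}(\|\varphi_{0}\|_{2})$, and then closes with the argument of Theorem \ref{th3.1}. The only difference is cosmetic: for $j=2$ you bound $J''$ through $\|\varphi\|_{p}^{p}$, whereas the paper writes $J''=\frac{8}{\gamma}S_{2,(a)}(\varphi)-\frac{8(1-\gamma)}{\gamma}\|\nabla\varphi\|_{2}^{2}$, and the two are algebraically equivalent.
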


\begin{proof}
The proofs of the finite time blowup depend on the structures of the cross-constrained variational functionals $S_{j,(a)}(\varphi)$, $j=2,3$. Thus, we treat the cases $j=2,3$ separately.\\
\textbf{Case \textrm{I}}:\ \bm{$j=2.$}

For the initial data $\varphi_{0}\in \mathcal{V}_{R_{2,(a)}}$, we prove $T_{{\rm max},(a)}<+\infty$. Arguing by contradiction, we suppose $T_{{\rm max},(a)}=+\infty$. According to Theorem 2.2 in \cite{r14}, we know
\begin{align}\label{Jadaoshu}
\begin{split}
J''_{(a)}(t)=&8\|\nabla\varphi(t)\|^{2}_{2}-\frac{8n(p-2)}{2p}\|\varphi(t)\|^{p}_{p}\\
=&\frac{8}{\gamma}\|\nabla\varphi(t)\|^{2}_{2}-\frac{8n(p-2)}{2p}\|\varphi(t)\|^{p}_{p}-
\frac{8-8\gamma}{\gamma}\|\nabla\varphi(t)\|^{2}_{2}\\
=&\frac{8}{\gamma}S_{2,(a)}(\varphi(t))-
\frac{8-8\gamma}{\gamma}\|\nabla\varphi(t)\|^{2}_{2}
\end{split}
\end{align}
for $t\in(0,+\infty)$. Due to $\varphi(t)\in \mathcal{V}_{R_{1,(a)}}$, i.e., $S_{2,(a)}(\varphi(t))<0$, for $t\in(0,+\infty)$, we know that (\ref{Jadaoshu}) turns to
\begin{align}
J''_{(a)}(t)<-\frac{8-8\gamma}{\gamma}\|\nabla\varphi(t)\|^{2}_{2}\label{Jadaoshu2}
\end{align}
for $t\in(0,+\infty)$. Next, we estimate $\|\nabla\varphi(t)\|^{2}_{2}$ in (\ref{Jadaoshu2}). By the similar process of obtaining (\ref{a1Gnbudengshi2}), we can apply $S_{2,(a)}(\varphi(t))<0$, i.e.,
\begin{align}
\frac{2p}{n(p-2)\gamma}\|\nabla \varphi(t)\|^{2}_{2}<\|\varphi(t)\|^{p}_{p},
\end{align}
for $t\in(0,+\infty)$ to obtain
\begin{align}
\|\nabla\varphi(t)\|^{2}_{2}\geq M_{(a)}(\|\varphi_{0}\|_{2})\label{Gnbudengshi2-2}
\end{align}
for $t\in(0,+\infty)$, where
$M_{(a)}(\|\varphi_{0}\|_{2})$ is defined by (\ref{Maf}).
By substituting (\ref{Gnbudengshi2-2}) into (\ref{Jadaoshu2}), we have
\begin{align}
J''_{(a)}(t)<-\frac{8-8\gamma}{\gamma}M_{(a)}(\|\varphi_{0}\|_{2})<0.\label{Jadaoshu4}
\end{align}
The rest proof is similar to Theorem \ref{th3.1}.\\
\textbf{Case \textrm{II}}:\ \bm{$j=3.$}

For the initial data $\varphi_{0}\in \mathcal{V}_{R_{3,(a)}}$, we prove $T_{{\rm {max}},(a)}<+\infty$. Arguing by contradiction, we suppose $T_{{\rm {max}},(a)}=+\infty$. According to Theorem 2.2 in \cite{r14}, we know
\begin{align}\label{Jadaoshujdengyu3}
\begin{split}
J''_{(a)}(t)=&8\|\nabla\varphi(t)\|^{2}_{2}-\frac{8n(p-2)}{2p}\|\varphi(t)\|^{p}_{p}\\
=&8\|\nabla\varphi(t)\|^{2}_{2}+8\tilde{c}\|\varphi(t)\|^{2}_{2}-\frac{8n(p-2)}{2p}\|\varphi(t)\|^{p}_{p}
-8\tilde{c}\|\varphi(t)\|^{2}_{2}\\
=&8S_{3,(a)}(\varphi(t))-8\tilde{c}\|\varphi(t)\|^{2}_{2}
\end{split}
\end{align}
for $t\in(0,+\infty)$. Due to $\varphi(t)\in \mathcal{V}_{R_{3,(a)}}$, i.e., $S_{3,(a)}(\varphi(t))<0$, for $t\in(0,+\infty)$ obtained by Lemma \ref{lem6.1}, and the fact $\|\varphi\|^{2}_{2}=\|\varphi_{0}\|^{2}_{2}$ shown by Theorem 4.3.1 in Chapter 4 in \cite{rz1}\ (see also Theorem 2.1 in \cite{r14}), we know that (\ref{Jadaoshujdengyu3}) turns to
\begin{align}
J''_{(a)}(t)\leq-8\tilde{c}\|\varphi(t)\|^{2}_{2}=-8\tilde{c}\|\varphi_{0}\|^{2}_{2}.\label{Jadaoshu3p}
\end{align}
The rest proof is similar to Theorem \ref{th3.1}.
\end{proof}

\begin{remark}\label{bubianjiheKa}(Examples of the initial data belonging to the manifolds $\mathcal{W}_{R_{j,(a)}}$, $\mathcal{V}_{R_{j,(a)}}$, $j=2,3$, and $\mathcal{W}_{R_{j,(a)}}$, $j=4$, for high initial energy $K_{(a)}(\varphi_{0})\geq d_{{\mathcal{N}_{j,(a)}}}$, $j=2,3,4$)
As shown in Theorem \ref{th6.1} and Theorem \ref{th6.2},
for high initial energy $K_{(a)}(\varphi_{0})\geq d_{{\mathcal{N}_{j,(a)}}}$, $j=2,3$, the global existence and the finite time blowup of the solution are established in the invariant manifolds $\mathcal{W}_{R_{j,(a)}}$ and $\mathcal{V}_{R_{j,(a)}}$, $j=2,3$, respectively. And we shall use the manifold $\mathcal{W}_{R_{4,(a)}}$ to prove the global existence for the high initial energy $K_{(a)}(\varphi_{0})\geq d_{{\mathcal{N}_{4,(a)}}}$ in Subsection \ref{sec6.2}. Here, since the manifold $\mathcal{V}_{R_{4,(a)}}$ cannot to be used to prove the finite time blowup for the high initial energy $K_{(a)}(\varphi_{0})\geq d_{{\mathcal{N}_{4,(a)}}}$, which will be explained in Remark \ref{rem6.3}. Hence, it is necessary to explain that these invariant manifolds are non-empty, which means there exist some initial data in the invariant manifolds $\mathcal{W}_{R_{j,(a)}}$, $\mathcal{V}_{R_{j,(a)}}$, $j=2,3$, and $\mathcal{W}_{R_{j,(a)}}$, $j=4$, similar to Remark \ref{Manifoldsfeikong}. Next, we give some examples of such initial data.\\
(1)\ Example of initial data belonging to manifold $\mathcal{W}_{R_{2,(a)}}$.

 Since the manifolds $\mathcal{W}_{R_{2,(a)}}$ and $\mathcal{V}_{R_{2,(a)}}$ are obtained by dividing the space of the initial data $\mathcal{H}_{R_{2,(a)}}$, in order to find a function belonging to $\mathcal{W}_{R_{2,(a)}}$ or $\mathcal{V}_{R_{2,(a)}}$, we first find a function satisfying the condition $$d_{\mathcal{N}_{1,(a)}}\leq K_{(a)}(\varphi)<R_{2,(a)}(\|\varphi\|_{2})$$ in the definition of $\mathcal{H}_{R_{2,(a)}}$, i.e., (\ref{HR2a}). Here, we need to ensure $d_{\mathcal{N}_{1,(a)}}<R_{2,(a)}(\|\varphi\|_{2})$ holds, which means
\begin{align}\label{dn1xiaoyuA1jiaA2}
d_{\mathcal{N}_{1,(a)}}<\left(\frac{1}{2}-\frac{2}{n(p-2)\gamma}\right)M_{(a)}(\|\varphi\|_{2})+\frac{1}{2}\|\varphi\|^{2}_{2}
\end{align} according to (\ref{R2a}).
 To make (\ref{dn1xiaoyuA1jiaA2}) hold, we only need to make
\begin{align}\label{dn1xiaoyuA1}
d_{\mathcal{N}_{1,(a)}}<\left(\frac{1}{2}-\frac{2}{n(p-2)\gamma}\right)M_{(a)}(\|\varphi\|_{2})
\end{align}
or
\begin{align}\label{dn1xiaoyuA2}
d_{\mathcal{N}_{1,(a)}}<\frac{1}{2}\|\varphi\|^{2}_{2}
\end{align}
hold, where
$M_{(a)}(\|\varphi\|_{2})$ is defined by (\ref{Maf}).
Subsequently, for the case $j=2$, we have two strategies to show the examples of the initial data belonging to $\mathcal{W}_{R_{2,(a)}}$ or $\mathcal{V}_{R_{2,(a)}}$, which are to show such examples by using (\ref{dn1xiaoyuA1}) and (\ref{dn1xiaoyuA2}), respectively.

Find a function $\eta_{2}\in H^{1}(\mathbb{R}^{n})$ satisfying $\eta_{2}\neq0$,
\begin{align}\label{Ma-eta}
d_{_{\mathcal{N}_{1,(a)}}}< \left(\frac{1}{2}-\frac{2}{n(p-2)\gamma}\right)M_{(a)}(\|\eta_{2}\|_{2}),
\end{align} i.e., satisfying $R_{2,(a)}(\|\eta_{2}\|_{2})>d_{_{\mathcal{N}_{1},(a)}}$,
\begin{align}\label{Ma-eta1}
\left(\frac{1}{2}-\frac{2}{n(p-2)\gamma}\right)M_{(a)}(\|\eta_{2}\|_{2})>\frac{1}{2}\|\eta_{2}\|^{2}_{2},
\end{align}and $\||x|\eta_{2}\|_{2}<+\infty$. Next, for the case $j=2$, we shall use the function $\eta_{2}$ to show the examples of the initial data belonging to the manifolds $\mathcal{W}_{R_{2,(a)}}$ and $\mathcal{V}_{R_{2,(a)}}$. Hence we use the lower index $2$ to denote this function $\eta_{2}$.
We take a constant $\rho>0$ and consider a function $\rho^{\frac{n}{2}}\eta_{2}(\rho x)$.
By the similar analysis in Remark \ref{Manifoldsfeikong}, we have
\begin{align}
\left\|\nabla \left(\rho^{\frac{n}{2}}\eta_{2}(\rho x)\right)\right\|^{2}_{2}=&\rho^{2}\|\nabla\eta_{2}\|^{2}_{2},\label{daihuan---}
\end{align}
\begin{align}
\|\rho^{\frac{n}{2}}\eta_{2}(\rho x)\|^{2}_{2}
=&\|\eta_{2}\|^{2}_{2},\label{daihuan2---}
\end{align}
and
\begin{align}
\|\rho^{\frac{n}{2}}\eta_{2}(\rho x)\|^{p}_{p}
=&\rho^{\frac{n(p-2)}{2}}\|\eta_{2}\|^{p}_{p}.\label{daihuan3---}
\end{align}
Thus, we know
\begin{align}\label{daihuan3--0===-0-}
\begin{split}
S_{2,(a)}(\rho^{\frac{n}{2}}\eta_{2}(\rho x))=&\left\|\nabla \left(\rho^{\frac{n}{2}}\eta_{2}(\rho x)\right)\right\|^{2}_{2}-\frac{n(p-2)\gamma}{2p}\|\rho^{\frac{n}{2}}\eta_{2}(\rho x)\|^{p}_{p}\\
=&\left(\|\nabla\eta_{2}\|^{2}_{2}\right)\rho^{2}-\left(\frac{n(p-2)\gamma}{2p}\|\eta_{2}\|^{p}_{p}\right)\rho^{\frac{n(p-2)}{2}}\\
=&\rho^{2}\left(\|\nabla\eta_{2}\|^{2}_{2}-\left(\frac{n(p-2)\gamma}{2p}\|\eta_{2}\|^{p}_{p}\right)\rho^{\frac{n(p-2)}{2}-2}\right).
\end{split}
\end{align}
There exists a unique $\rho^{*}>0$ such that $S_{2,(a)}(\left(\rho^{*}\right)^{\frac{n}{2}}\eta_{2}(\rho^{*} x))=0$, $S_{2,(a)}(\rho^{\frac{n}{2}}\eta_{2}(\rho x))>0$ for $0<\rho<\rho^{*}$ and $S_{2,(a)}(\rho^{\frac{n}{2}}\eta_{2}(\rho x))<0$ for $\rho>\rho^{*}$.
Next, we have
\begin{align}\label{daihuan3-}
\begin{split}
E_{(a)}(\rho^{\frac{n}{2}}\eta_{2}(\rho x)):=&\frac{1}{2}\left\|\nabla \left(\rho^{\frac{n}{2}}\eta_{2}(\rho x)\right)\right\|^{2}_{2}-\frac{1}{p}\|\rho^{\frac{n}{2}}\eta_{2}(\rho x)\|^{p}_{p}\\
=&\left(\frac{1}{2}\|\nabla\eta_{2}\|^{2}_{2}\right)\rho^{2}-\left(\frac{1}{p}\|\eta_{2}\|^{p}_{p}\right)\rho^{\frac{n(p-2)}{2}}\\
=&\rho^{2}\left(\frac{1}{2}\|\nabla\eta_{2}\|^{2}_{2}-\left(\frac{1}{p}\|\eta_{2}\|^{p}_{p}\right)\rho^{\frac{n(p-2)}{2}-2}\right).
\end{split}
\end{align}
From $S_{2,(a)}(\left(\rho^{*}\right)^{\frac{n}{2}}\eta_{2}(\rho^{*} x))=0$, by the similar process of dealing with $j=2$ in the proof of Lemma \ref{lem6.1}, we know
\begin{align}\label{maodundian}
E_{(a)}(\left(\rho^{*}\right)^{\frac{n}{2}}\eta_{2}(\rho^{*} x))\geq R_{2,(a)}(\|\left(\rho^{*}\right)^{\frac{n}{2}}\eta_{2}(\rho^{*} x)\|_{2})-\frac{1}{2}\|\left(\rho^{*}\right)^{\frac{n}{2}}\eta_{2}(\rho^{*} x)\|^{2}_{2}.
\end{align}
From (\ref{daihuan2---}), we know (\ref{maodundian}) becomes
\begin{align}\label{maodundian===}
E_{(a)}(\left(\rho^{*}\right)^{\frac{n}{2}}\eta_{2}(\rho^{*} x))\geq R_{2,(a)}(\|\eta_{2}\|_{2})-\frac{1}{2}\|\eta_{2}\|^{2}_{2}.
\end{align}
 According to (\ref{daihuan3-}), for $\rho=0$, we know $E_{(a)}(\rho^{\frac{n}{2}}\eta_{2}(\rho x))=0$. Combining this fact, (\ref{maodundian===}) and the continuity of $E_{(a)}(\rho^{\frac{n}{2}}\eta_{2}(\rho x))$ with respect to $\rho$, we know that there exists a $\bar{\rho}\in(0,\rho^{*})$ such that \begin{align}\label{duizhao}
 d_{\mathcal{N}_{1,(a)}}\leq E_{(a)}(\left(\bar{\rho}\right)^{\frac{n}{2}}\eta_{2}(\bar{\rho} x))<R_{2,(a)}(\|\eta_{2}\|_{2})-\frac{1}{2}\|\eta_{2}\|^{2}_{2},
 \end{align}which means
  \begin{align}\label{compare}
  d_{\mathcal{N}_{1,(a)}}\leq K_{(a)}(\left(\bar{\rho}\right)^{\frac{n}{2}}\eta_{2}(\bar{\rho} x))<R_{2,(a)}(\|\left(\bar{\rho}\right)^{\frac{n}{2}}\eta_{2}(\bar{\rho} x)\|_{2})
  \end{align}
 due to (\ref{daihuan2---}).
   And noticing (\ref{daihuan3--0===-0-}), $S_{2,(a)}(\left(\rho^{*}\right)^{\frac{n}{2}}\eta_{2}(\rho^{*} x))=0$, we have $S_{2,(a)}(\left(\bar{\rho}\right)^{\frac{n}{2}}\eta_{2}(\bar{\rho} x))>0$ due to $\bar{\rho}<\rho^{*}$. Thus, we know $\left(\bar{\rho}\right)^{\frac{n}{2}}\eta_{2}(\bar{\rho} x)\in\mathcal{W}_{R_{2,(a)}}$.\\

 \noindent{(2)\ Example of the initial data belonging to the manifold $\mathcal{V}_{R_{2,(a)}}$.}

  We use the function $\eta_{2}$ satisfying (\ref{Ma-eta}), (\ref{Ma-eta1}) and $\||x|\eta_{2}\|_{2}<+\infty$ to show the example of the initial data belonging to the manifold $\mathcal{V}_{R_{2,(a)}}$. According to (\ref{daihuan3-}), there exists a $\tilde{\rho}>\rho^{*}$ large enough such that $E_{(a)}(\left(\tilde{\rho}\right)^{\frac{n}{2}}\eta_{2}(\tilde{\rho} x))<0$. Combining this fact, (\ref{maodundian===}) and the continuity of $E_{(a)}(\rho^{\frac{n}{2}}\eta_{2}(\rho x))$ with respect to $\rho$, we know that there exists a $\hat{\rho}\in(\rho^{*},\tilde{\rho})$ such that  $$d_{\mathcal{N}_{1,(a)}}\leq E_{(a)}(\left(\hat{\rho}\right)^{\frac{n}{2}}\eta_{2}(\hat{\rho} x))<R_{2,(a)}(\|\eta_{2}\|_{2})-\frac{1}{2}\|\eta_{2}\|^{2}_{2},$$which means
  $$d_{\mathcal{N}_{1,(a)}}\leq K_{(a)}(\left(\hat{\rho}\right)^{\frac{n}{2}}\eta_{2}(\hat{\rho} x))<R_{2,(a)}(\|\left(\hat{\rho}\right)^{\frac{n}{2}}\eta_{2}(\hat{\rho} x)\|_{2})$$
 due to (\ref{daihuan2---}). And noticing (\ref{daihuan3--0===-0-}), $S_{2,(a)}(\left(\rho^{*}\right)^{\frac{n}{2}}\eta_{2}(\rho^{*} x))=0$ , we know $S_{2,(a)}(\left(\hat{\rho}\right)^{\frac{n}{2}}\eta_{2}(\hat{\rho} x))<0$ from $\hat{\rho}>\rho^{*}$. Thus, we see $\left(\hat{\rho}\right)^{\frac{n}{2}}\eta_{2}(\hat{\rho} x)\in\mathcal{V}_{R_{2,(a)}}$.\\

\noindent{(3)\ Another version of example of the initial data belonging to the manifold $\mathcal{W}_{R_{2,(a)}}$.}

We shall use another
strategy, i.e., to apply (\ref{dn1xiaoyuA2}), to show a different example of the initial data belonging to $\mathcal{W}_{R_{2,(a)}}$ or $\mathcal{V}_{R_{2,(a)}}$.
Find a function $\tilde{\eta}_{2}\in H^{1}(\mathbb{R}^{n})$ satisfying $\tilde{\eta}_{2}\neq0$,  \begin{align}\label{tildeeta-dayu2d}\|\tilde{\eta}_{2}\|_{2}\geq 2d_{\mathcal{N}_{1,(a)}},\end{align}
and $\||x|\tilde{\eta}_{2}\|_{2}<+\infty$.
Then, we take the constant $\varrho>0$ and consider the function $\varrho^{\frac{n}{2}}\tilde{\eta}_{2}(\varrho x)$.
By the similar analysis in Remark \ref{Manifoldsfeikong}, we have
\begin{align}
\left\|\nabla \left(\varrho^{\frac{n}{2}}\tilde{\eta}_{2}(\varrho x)\right)\right\|^{2}_{2}=&\varrho^{2}\|\nabla\tilde{\eta}_{2}\|^{2}_{2},\label{daihuan0}
\end{align}
\begin{align}
\|\varrho^{\frac{n}{2}}\tilde{\eta}_{2}(\varrho x)\|^{2}_{2}
=&\|\tilde{\eta}_{2}\|^{2}_{2},\label{daihuan2-0}
\end{align}
and
\begin{align}
\|\varrho^{\frac{n}{2}}\tilde{\eta}_{2}(\varrho x)\|^{p}_{p}
=&\varrho^{\frac{n(p-2)}{2}}\|\tilde{\eta}_{2}\|^{p}_{p}.\label{daihuan3-0}
\end{align}
Thus, we know
\begin{align}\label{daihuan3--2}
\begin{split}
S_{2,(a)}(\varrho^{\frac{n}{2}}\tilde{\eta}_{2}(\varrho x))=&\left\|\nabla \left(\varrho^{\frac{n}{2}}\tilde{\eta}_{2}(\varrho x)\right)\right\|^{2}_{2}-\frac{n(p-2)\gamma}{2p}\|\varrho^{\frac{n}{2}}\tilde{\eta}_{2}(\varrho x)\|^{p}_{p}\\
=&\left(\|\nabla\tilde{\eta}_{2}\|^{2}_{2}\right)\varrho^{2}-\left(\frac{n(p-2)\gamma}{2p}\|\tilde{\eta}_{2}\|^{p}_{p}\right)\varrho^{\frac{n(p-2)}{2}}\\
=&\varrho^{2}\left(\|\nabla\tilde{\eta}_{2}\|^{2}_{2}-\left(\frac{n(p-2)\gamma}{2p}\|\tilde{\eta}_{2}\|^{p}_{p}\right)\varrho^{\frac{n(p-2)}{2}-2}\right).
\end{split}
\end{align}
There exists a unique $\varrho_{0}>0$ such that $S_{2,(a)}(\varrho_{0}^{\frac{n}{2}}\tilde{\eta}_{2}(\varrho_{0} x))=0$, $S_{2,(a)}(\varrho^{\frac{n}{2}}\tilde{\eta}_{2}(\varrho x))>0$ for $0<\varrho<\varrho_{0}$ and $S_{2,(a)}(\varrho^{\frac{n}{2}}\tilde{\eta}_{2}(\varrho x))<0$ for $\varrho>\varrho_{0}$.
Next, we have
\begin{align}\label{daihuan3--}
\begin{split}
E_{(a)}(\varrho^{\frac{n}{2}}\tilde{\eta}_{2}(\varrho x)):=&\frac{1}{2}\left\|\nabla\left(\varrho^{\frac{n}{2}}\tilde{\eta}_{2}(\varrho x)\right)\right\|^{2}_{2}-\frac{1}{p}\|\varrho^{\frac{n}{2}}\tilde{\eta}_{2}(\varrho x)\|^{p}_{p}\\
=&\left(\frac{1}{2}\|\nabla\tilde{\eta}_{2}\|^{2}_{2}\right)\varrho^{2}-\left(\frac{1}{p}\|\tilde{\eta}_{2}\|^{p}_{p}\right)\varrho^{\frac{n(p-2)}{2}}\\
=&\varrho^{2}\left(\frac{1}{2}\|\nabla\tilde{\eta}_{2}\|^{2}_{2}-\left(\frac{1}{p}\|\tilde{\eta}_{2}\|^{p}_{p}\right)\varrho^{\frac{n(p-2)}{2}-2}\right).
\end{split}
\end{align}
From $S_{2,(a)}(\varrho_{0}^{\frac{n}{2}}\tilde{\eta}_{2}(\varrho_{0} x))=0$, by the similar process of dealing with $j=2$ in the proof of Lemma \ref{lem6.1}, we know
\begin{align}\label{var0}
E_{(a)}(\varrho_{0}^{\frac{n}{2}}\tilde{\eta}_{2}(\varrho_{0} x))\geq \left(\frac{1}{2}-\frac{2}{n(p-2)\gamma}\right)M_{(a)}(\|\varrho_{0}^{\frac{n}{2}}\tilde{\eta}_{2}(\varrho_{0} x)\|_{2}),
\end{align}where $M_{(a)}(\|\varrho_{0}^{\frac{n}{2}}\tilde{\eta}_{2}(\varrho_{0} x)\|_{2})$ is defined by (\ref{Maf}).
From (\ref{daihuan2-0}), we see that (\ref{var0}) becomes
\begin{align}\label{var0--}
E_{(a)}(\varrho_{0}^{\frac{n}{2}}\tilde{\eta}_{2}(\varrho_{0} x))\geq \left(\frac{1}{2}-\frac{2}{n(p-2)\gamma}\right)M_{(a)}(\|\tilde{\eta}_{2}\|_{2}),
\end{align}where $M_{(a)}(\|\tilde{\eta}_{2}\|_{2})$ is defined by (\ref{Maf}).
 According to (\ref{daihuan3--}), for $\varrho=0$, we know $E_{(a)}(\varrho^{\frac{n}{2}}\tilde{\eta}_{2}(\varrho x))=0$. Combining this fact, (\ref{var0--}) and the continuity of $E_{(a)}(\varrho^{\frac{n}{2}}\tilde{\eta}_{2}(\varrho x))$ with respect to $\varrho$, we know that there exists a $\varrho^{*}\in(0,\varrho_{0})$ such that
\begin{align*}0\leq E_{(a)}(\left(\varrho^{*}\right)^{\frac{n}{2}}\tilde{\eta}_{2}(\varrho^{*} x))<\left(\frac{1}{2}-\frac{2}{n(p-2)\gamma}\right)M_{(a)}(\|\tilde{\eta}_{2}\|_{2}),
\end{align*} which means \begin{align*}0\leq E_{(a)}(\left(\varrho^{*}\right)^{\frac{n}{2}}\tilde{\eta}_{2}(\varrho^{*} x))<\left(\frac{1}{2}-\frac{2}{n(p-2)\gamma}\right)M_{(a)}(\|\left(\varrho^{*}\right)^{\frac{n}{2}}\tilde{\eta}_{2}(\varrho^{*} x)\|_{2}),
\end{align*} according to (\ref{daihuan2-0}). Meanwhile, from (\ref{daihuan2-0}) and (\ref{tildeeta-dayu2d}), we have \begin{align*}\|\left(\varrho^{*}\right)^{\frac{n}{2}}\tilde{\eta}_{2}(\varrho^{*} x)\|_{2}\geq 2d_{\mathcal{N}_{1,(a)}}.\end{align*}And noticing (\ref{daihuan3--2}), $S_{2,(a)}(\varrho_{0}^{\frac{n}{2}}\tilde{\eta}_{2}(\varrho_{0} x))=0$ and $\varrho^{*}<\varrho_{0}$, we have $S_{2,(a)}(\left(\varrho^{*}\right)^{\frac{n}{2}}\tilde{\eta}_{2}(\varrho^{*} x))>0$. Thus, we know $\left(\varrho^{*}\right)^{\frac{n}{2}}\tilde{\eta}_{2}(\varrho^{*} x)\in\mathcal{W}_{R_{2,(a)}}$.\\

\noindent{(4)\ Another version of example of the initial data belonging to the manifold $\mathcal{V}_{R_{2,(a)}}$.}

We use the function $\tilde{\eta}_{2}$ satisfying (\ref{tildeeta-dayu2d})
and $\||x|\tilde{\eta}_{2}\|_{2}<+\infty$ to show the example of the initial data belonging to the manifold $\mathcal{V}_{R_{2,(a)}}$.
According to (\ref{daihuan3--}), there exists a $\tilde{\varrho}>\varrho_{0}$ large enough such that $E_{(a)}(\left(\tilde{\varrho}\right)^{\frac{n}{2}}\tilde{\eta}_{2}(\tilde{\varrho} x))<0$. Combining this fact, (\ref{var0--}) and the continuity of $E_{(a)}(\varrho^{\frac{n}{2}}\tilde{\eta}_{2}(\varrho x))$ with respect to $\varrho$, we know that there exists a $\hat{\varrho}\in(\varrho_{0},\tilde{\varrho})$ such that \begin{align*}0\leq E_{(a)}(\left(\hat{\varrho}\right)^{\frac{n}{2}}\tilde{\eta}_{2}(\hat{\varrho} x))<\left(\frac{1}{2}-\frac{2}{n(p-2)\gamma}\right)M_{(a)}(\|\tilde{\eta}_{2}\|_{2}),
\end{align*} which means \begin{align*}0\leq E_{(a)}(\left(\hat{\varrho}\right)^{\frac{n}{2}}\tilde{\eta}_{2}(\hat{\varrho} x))<\left(\frac{1}{2}-\frac{2}{n(p-2)\gamma}\right)M_{(a)}(\|\left(\hat{\varrho}\right)^{\frac{n}{2}}\tilde{\eta}_{2}(\hat{\varrho} x)\|_{2})
\end{align*} according to (\ref{daihuan2-0}). Meanwhile, from (\ref{daihuan2-0}) and (\ref{tildeeta-dayu2d}), we have \begin{align*}\|\left(\hat{\varrho}\right)^{\frac{n}{2}}\tilde{\eta}_{2}(\hat{\varrho} x)\|_{2}\geq 2d_{\mathcal{N}_{1,(a)}}.\end{align*}And noticing (\ref{daihuan3--2}), $S_{2,(a)}(\varrho_{0}^{\frac{n}{2}}\tilde{\eta}_{2}(\varrho_{0} x))=0$ and $\hat{\varrho}>\varrho_{0}$, we have $S_{2,(a)}(\left(\hat{\varrho}\right)^{\frac{n}{2}}\tilde{\eta}_{2}(\hat{\varrho} x))<0$. Thus, we have $\left(\hat{\varrho}\right)^{\frac{n}{2}}\tilde{\eta}_{2}(\hat{\varrho}x)\in\mathcal{V}_{R_{2,(a)}}$.\\

 \noindent{(5)\ Examples of the initial data belonging to the manifolds $\mathcal{W}_{R_{3,(a)}}$, $\mathcal{V}_{R_{3,(a)}}$, and $\mathcal{W}_{R_{4,(a)}}$.}

Here, we need not to show the example of the initial data belonging to $\mathcal{V}_{R_{4,(a)}}$ since it is unapplicable for proving the finite time blowup in the high initial energy case and we do not consider this manifold.

We first focus on the case $j=3$.
 Find a function $\eta_{3}\in H^{1}(\mathbb{R}^{n})$ satisfying $\eta_{3}\neq0$,
\begin{align}
\left(\frac{1}{2}-\frac{2}{n(p-2)}\right)\hat{M}_{(a)}(\|\eta_{3}\|_{2})>\frac{2\tilde{c}}{n(p-2)}\|\eta_{3}\|^{2}_{2}+d_{_{\mathcal{N}_{3,(a)}}}, 0<\tilde{c}<1,\label{hanshu----0-1}
 \end{align}and $\||x|\eta_{3}\|_{2}<+\infty$, where
 $\hat{M}_{(a)}(\|\eta_{3}\|_{2})$ is defined by (\ref{Mhaf}).
Because of
\begin{align*}
\hat{M}_{(a)}(\|\eta_{3}\|_{2})\leq\frac{\hat{M}_{(a)}(\|\eta_{3}\|_{2})+\tilde{M}_{(a)}(\|\eta_{3}\|_{2})+\left|\hat{M}_{(a)}(\|\eta_{3}\|_{2})-\tilde{M}_{(a)}(\|\eta_{3}\|_{2})\right|}{2},
\end{align*}
where $\tilde{M}_{(a)}(\|\eta_{3}\|_{2})$ is defined by (\ref{Mtilea}),
we know (\ref{hanshu----0-1}) gives
\begin{align*}
d_{_{\mathcal{N}_{3,(a)}}}<\left(\frac{1}{2}-\frac{2}{n(p-2)}\right)\frac{\hat{M}_{(a)}(\|\eta_{3}\|_{2})+\tilde{M}_{(a)}(\|\eta_{3}\|_{2})
+\left|\hat{M}_{(a)}(\|\eta_{3}\|_{2})-\tilde{M}_{(a)}(\|\eta_{3}\|_{2})\right|}{2}.
\end{align*}
By the similar arguments for the manifolds $\mathcal{W}_{R_{2,(a)}}$ and $\mathcal{V}_{R_{2,(a)}}$, we can use the above function $\eta_{3}$ to give the examples of the initial data belonging to the manifolds $\mathcal{W}_{R_{3,(a)}}$ and $\mathcal{V}_{R_{3,(a)}}$.

Next, we deal with the case $j=4$.
Find a function $\eta_{4}\in H^{1}(\mathbb{R}^{n})$ satisfying $\eta_{4}\neq0$,
\begin{align}
\frac{1}{2}\left(\frac{2p\tilde{c}}{n(p-2) C_{1}\|\eta_{4}\|^{p-\frac{n(p-2)}{2}-2}_{2}}\right)^{\frac{4}{n(p-2)}}>\frac{2\tilde{c}}{n(p-2)}\|\eta_{4}\|^{2}_{2}+d_{_{\mathcal{N}_{4,(a)}}}, 0<\tilde{c}<1,\label{hanshu----0}
 \end{align}
and $\||x|\eta_{4}\|_{2}<+\infty$.
By the similar arguments for the manifold $\mathcal{W}_{R_{2,(a)}}$, we can use the above function $\eta_{4}$ to give the example of the initial data belonging to the manifold $\mathcal{W}_{R_{4,(a)}}$.
\end{remark}
\subsection{An imperfect criterion of global existence for high initial energy $K_{(a)}(\varphi_{0})\geq d_{{\mathcal{N}_{4},(a)}}$}\label{sec6.2}

 In this subsection, we investigate the cross-constrained variational combination $S_{4,(a)}(\varphi)$ without the mandatory term $\|\nabla\varphi\|^{2}_{2}$ to give a global existence criterion for $K_{(a)}(\varphi_{0})\geq d_{{\mathcal{N}_{4},(a)}}$. More precisely, for the initial data $\varphi_{0}\in\mathcal{W}_{R_{4,(a)}}$, we prove the corresponding solution $\varphi$ exists globally.
And we shall also point out that we fail to prove the finite time blowup by $S_{4,(a)}(\varphi)$ due to the lack of the term $\|\nabla\varphi\|^{2}_{2}$.

\begin{theorem}\label{thm6.-}(Global existence for high initial energy $K_{(a)}(\varphi_{0})\geq d_{\mathcal{N}_{4,(a)}}$)
Let $p$ satisfy (\ref{jingdianptiao}).
Then,
the corresponding solution $\varphi$ to problem (\ref{6.1}) exists globally provided $\varphi_{0}\in \mathcal{W}_{R_{4,(a)}}$.
\end{theorem}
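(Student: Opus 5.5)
The plan follows the two-step pattern of Lemma \ref{lem6.1} and Theorem \ref{th6.1}: first show that $\mathcal{W}_{R_{4,(a)}}$ is invariant under the flow of (\ref{6.1}), then use $S_{4,(a)}(\varphi)>0$ to get a uniform $H^{1}$ bound.

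\textbf{Step 1: invariance.} Let $\varphi_{0}\in\mathcal{W}_{R_{4,(a)}}$. By local well-posedness (Theorem 4.3.1 of \cite{rz1}, Theorem 2.1 of \cite{r14}), $S_{4,(a)}(\varphi(t))$ is continuous in $t$. Conservation of $\|\varphi\|_{2}$ and $K_{(a)}$ keeps
\[
d_{\mathcal{N}_{4,(a)}}\leq K_{(a)}(\varphi(t))<R_{4,(a)}(\|\varphi(t)\|_{2}).
\]
Suppose $\tilde{\tau}_{4}$ is the first time with $S_{4,(a)}(\varphi(\tilde{\tau}_{4}))=0$; note $\varphi(\tilde{\tau}_{4})\neq0$ by mass conservation. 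Then
\[
\frac1p\|\varphi(\tilde{\tau}_{4})\|_{p}^{p}=\frac{2\tilde{c}}{n(p-2)}\|\varphi(\tilde{\tau}_{4})\|_{2}^{2}.
\]
Substituting this into $K_{(a)}$ gives
\[
K_{(a)}(\varphi(\tilde{\tau}_{4}))=\frac12\|\nabla\varphi(\tilde{\tau}_{4})\|_{2}^{2}+\Big(\frac12-\frac{2\tilde{c}}{n(p-2)}\Big)\|\varphi(\tilde{\tau}_{4})\|_{2}^{2}.
\]
Next, combine the same identity with the Gagliardo--Nirenberg inequality (\ref{GN-budengshi}), exactly as in Case I(a) of Theorem \ref{dinglis9-14} (see (\ref{4.24})--(\ref{4.25})). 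This yields
\[
\frac{2p\tilde{c}}{n(p-2)C_{1}}\leq\|\nabla\varphi(\tilde{\tau}_{4})\|_{2}^{\frac{n(p-2)}{2}}\|\varphi_{0}\|_{2}^{p-\frac{n(p-2)}{2}-2},
\]
so that
\[
\|\nabla\varphi(\tilde{\tau}_{4})\|_{2}^{2}\geq\Big(\frac{2p\tilde{c}}{n(p-2)C_{1}\|\varphi_{0}\|_{2}^{p-\frac{n(p-2)}{2}-2}}\Big)^{\frac{4}{n(p-2)}}.
\]
Plugging this in, and using conservation once more, gives $K_{(a)}(\varphi_{0})\geq R_{4,(a)}(\|\varphi_{0}\|_{2})$, which contradicts the strict inequality in the definition of $\mathcal{W}_{R_{4,(a)}}$. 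Hence $S_{4,(a)}(\varphi(t))>0$ for all $t\in(0,T_{{\rm max},(a)})$.

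\textbf{Step 2: bound.} Since $S_{4,(a)}(\varphi)>0$, we have $\frac1p\|\varphi\|_p^p<\frac{2\tilde{c}}{n(p-2)}\|\varphi\|_2^2$. Therefore
\[
K_{(a)}(\varphi_{0})=K_{(a)}(\varphi)\geq\frac12\|\nabla\varphi\|_{2}^{2}+\Big(\frac12-\frac{2\tilde{c}}{n(p-2)}\Big)\|\varphi\|_{2}^{2}.
\]
Because $0<\tilde{c}<1$ and $n(p-2)>4$, the coefficient $\frac12-\frac{2\tilde{c}}{n(p-2)}$ is positive. Hence $\|\varphi\|_{H^{1}}^{2}\leq C\,K_{(a)}(\varphi_{0})$ uniformly in $t$, and the blowup alternative in Theorem 2.1 of \cite{r14} gives $T_{{\rm max},(a)}=+\infty$.

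\textbf{Main obstacle.} The delicate step is the Gagliardo--Nirenberg lower bound in Step 1. It requires the exponent $p-\frac{n(p-2)}{2}-2$ to be non-negative, and under (\ref{jingdianptiao}) this forces $n\leq2$. In dimension $n=2$ the exponent is exactly zero, so the $\|\varphi_{0}\|_2$ factor disappears. For $n\geq3$ the exponent is negative; the argument then needs a positive lower bound on $\|\varphi_0\|_2$ to hold the gradient bound, or an implicit restriction to low dimensions as in Theorem \ref{dinglis9-14}. I would first check the $n=1,2$ case carefully and treat general $n$ by keeping $R_{4,(a)}$ as an explicit function of $\|\varphi_0\|_2$.
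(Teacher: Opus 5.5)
Your proof is correct and is essentially the paper's argument: a first-time contradiction at $S_{4,(a)}(\varphi(\tilde{\tau}_{4}))=0$, the Gagliardo--Nirenberg lower bound on $\|\nabla\varphi(\tilde{\tau}_{4})\|_{2}^{2}$ that yields $K_{(a)}(\varphi_{0})\geq R_{4,(a)}(\|\varphi_{0}\|_{2})$, and then the $H^{1}$ bound from $S_{4,(a)}(\varphi)>0$. The ``main obstacle'' you flag is not real, and the argument works for every $n$: since $\varphi_{0}\neq0$ and the mass is conserved, $\|\varphi_{0}\|_{2}^{p-\frac{n(p-2)}{2}-2}$ is a fixed positive number that you may divide by whatever the sign of its exponent (the restriction $n=1,2$ in Theorem \ref{dinglis9-14} was needed only for a bound uniform over all $\varphi\in\mathcal{N}_{j}$, where $\|\varphi\|_{2}$ is not fixed).
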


\begin{proof}
In order to prove the global existence, we need to prove the invariance of the manifold $\mathcal{W}_{R_{4,(a)}}$. Thus, we divide the proof into two steps, i.e., Sept \textrm{I} (the manifold $\mathcal{W}_{R_{4,(a)}}$ is invariant) and Sept \textrm{II} (Global existence).\\
\textbf{Step \textrm{I}:\ The manifold \bm{$\mathcal{W}_{R_{4,(a)}}$} is invariant.}

According to Theorem 4.3.1 in Chapter 4 in \cite{rz1}\ (see also Theorem 2.1 in \cite{r14}), for the initial data $\varphi_{0}\in \mathcal{W}_{R_{4,(a)}}$, we know that there exists the unique solution $\varphi\in C([0,T_{{\rm{max}},(a)}),H^{1}(\mathbb{R}^{n}))$, which means the continuity of $S_{4,(a)}(\varphi(t))$ with respect to time $t$. We prove $\varphi(t)\in \mathcal{W}_{R_{4,(a)}}$ for $t\in(0,T_{{\rm{max}},(a)})$.
For $\varphi_{0}\in\mathcal{W}_{R_{4,(a)}}$, we know that
 \begin{align}
d_{\mathcal{N}_{4,(a)}}\leq& K_{(a)}(\varphi_{0})<R_{4,(a)}(\|\varphi_{0}\|_{2})\label{Ka-deguji4}
\end{align}and $S_{4,(a)}(\varphi_{0})>0$ hold. According to $\|\varphi_{0}\|^{2}_{2}=\|\varphi(t)\|^{2}_{2}$ and $K_{(a)}(\varphi_{0})=K_{(a)}(\varphi(t))$ for $t\in(0,T_{{\rm{max}},(a)})$ shown by Theorem 4.3.1 in Chapter 4 in \cite{rz1}\ (see also Theorem 2.1 in \cite{r14}), we know
 \begin{align}
d_{\mathcal{N}_{4,(a)}}\leq& K_{(a)}(\varphi(t))<R_{4,(a)}(\|\varphi(t)\|_{2})\label{Kadeguji2-}
\end{align}
for $t\in(0,T_{{\rm{max}},(a)})$. Next, we prove $S_{4,(a)}(\varphi(t))>0$ for all $t\in(0,T_{{\rm{max}},(a)})$. Arguing by contradiction, we suppose that there exists the first time $\tilde{\tau}_{4}\in(0,T_{{\rm{max}},(a)})$ to make $S_{4,(a)}(\varphi(\tilde{\tau}_{4}))=0$, i.e.,
\begin{align}
\tilde{c}\|\varphi(\tilde{\tau}_{4})\|^{2}_{2}=\frac{n(p-2)}{2p}\|\varphi(\tilde{\tau}_{4})\|^{p}_{p},
\ 0<\tilde{c}<1,\label{Ka4}
\end{align} and $S_{4,(a)}(\varphi(t))>0$ for $t\in(0,\tilde{\tau}_{4})$,
 and we know $\varphi(\tilde{\tau}_{4})\neq 0$ due to $\|\varphi_{0}\|^{2}_{2}=\|\varphi(\tilde{\tau}_{4})\|^{2}_{2}$ and $\varphi_{0}\neq0$.
Next, we estimate $K(\varphi(\tilde{\tau}_{4}))$.
By substituting (\ref{Ka4}) into $K_{(a)}(\varphi(\tilde{\tau}_{4}))$, we have
\begin{align}
K_{(a)}(\varphi(\tilde{\tau}_{4}))=&\frac{1}{2}\|\nabla\varphi(\tilde{\tau}_{4})\|^{2}_{2}
+\left(\frac{1}{2}-\frac{2\tilde{c}}{n(p-2)}\right)\|\varphi(\tilde{\tau}_{4})\|^{2}_{2}.\label{4Kadeguji}
\end{align}Here, from $0<\tilde{c}<1$ and the assumption (\ref{1.2}), we have $$\frac{1}{2}-\frac{2\tilde{c}}{n(p-2)}>0$$ in (\ref{4Kadeguji}).
Applying (\ref{GN-budengshi}) and (\ref{Ka4}), we have
\begin{align*}
&\frac{2p\tilde{c}}{n(p-2)}\|\varphi(\tilde{\tau}_{4})\|^{2}_{2}=\|\varphi(\tilde{\tau}_{4})\|^{p}_{p}\leq C_{1}\|\nabla\varphi(\tilde{\tau}_{4})\|^{\frac{n(p-2)}{2}}_{2}\|\varphi(\tilde{\tau}_{4})\|^{p-\frac{n(p-2)}{2}}_{2},
\end{align*}
i.e.,
\begin{align}
&\frac{2p\tilde{c}}{n(p-2)}\leq C_{1}\|\nabla\varphi(\tilde{\tau}_{4})\|^{\frac{n(p-2)}{2}}_{2}\|\varphi(\tilde{\tau}_{4})\|^{p-\frac{n(p-2)}{2}-2}_{2}.\label{Kap-3+Gnbudengshi}
\end{align}
Due to the fact $\|\varphi(\tilde{\tau}_{4})\|^{2}_{2}=\|\varphi_{0}\|^{2}_{2}$, we know (\ref{Kap-3+Gnbudengshi}) turns to
\begin{align*}
\frac{2p\tilde{c}}{n(p-2) C_{1}}\leq\|\nabla\varphi(\tilde{\tau}_{4})\|^{\frac{n(p-2)}{2}}_{2}\|\varphi_{0}\|^{p-\frac{n(p-2)}{2}-2}_{2},
\end{align*}
i.e.,
\begin{align}
\|\nabla\varphi(\tilde{\tau}_{4})\|^{2}_{2}\geq\left(\frac{2p\tilde{c}}{n(p-2) C_{1}\|\varphi_{0}\|^{p-\frac{n(p-2)}{2}-2}_{2}}\right)^{\frac{4}{n(p-2)}}.\label{Ka4Gnbudengshi2}
\end{align}
According to (\ref{Ka4Gnbudengshi2}) and the facts $K_{(a)}(\varphi(\tilde{\tau}_{4}))=K_{(a)}(\varphi_{0})$ and $\|\varphi(\tilde{\tau}_{4})\|^{2}_{2}=\|\varphi_{0}\|^{2}_{2}$, we know that (\ref{4Kadeguji}) turns to
\begin{align*}
K_{(a)}(\varphi_{0})\geq&\frac{1}{2}\left(\frac{2p\tilde{c}}{n(p-2) C_{1}\|\varphi_{0}\|^{p-\frac{n(p-2)}{2}-2}_{2}}\right)^{\frac{4}{n(p-2)}}+\left(\frac{1}{2}-\frac{2\tilde{c}}{n(p-2)}\right)\|\varphi_{0}\|^{2}_{2},
\end{align*}
which contradicts (\ref{Ka-deguji4}).
 Thus, we know $S_{4,(a)}(\varphi(t))>0$ for all $t\in(0,T_{{\rm{max}},(a)})$, which means $\varphi\in\mathcal{W}_{R_{4,(a)}}$.\\
\textbf{Step \textrm{II}:\ Global existence.}

For the initial data $\varphi_{0}\in \mathcal{W}_{R_{4,(a)}}$, we know the corresponding solution $\varphi\in \mathcal{W}_{R_{4,(a)}}$, i.e., $S_{4,(a)}(\varphi)>0$, meaning
\begin{align}
\tilde{c}\|\varphi\|^{2}_{2}>\frac{n(p-2)}{2p}\|\varphi\|^{p}_{p},\ 0<\tilde{c}<1,\label{S4adayuling}
\end{align}
according to Sept \textrm{I}. Then, by substituting (\ref{S4adayuling}) into $K_{(a)}(\varphi)$, we have
\begin{align*}
K_{(a)}(\varphi)\geq&\frac{1}{2}\|\nabla\varphi\|^{2}_{2}+\left(\frac{1}{2}-\frac{2 \tilde{c}}{n(p-2)}\right)\|\varphi\|^{2}_{2}\nonumber\\
\geq&\left(\frac{1}{2}-\frac{2 \tilde{c}}{n(p-2)}\right)\|\varphi\|^{2}_{H^{1}(\mathbb{R}^{n})},
\end{align*}
which means
\begin{align}\label{gujishiS4a}
K_{(a)}(\varphi_{0})
\geq&\left(\frac{1}{2}-\frac{2 \tilde{c}}{n(p-2)}\right)\|\varphi\|^{2}_{H^{1}(\mathbb{R}^{n})}
\end{align}
due to the fact $K_{(a)}(\varphi)=K_{(a)}(\varphi_{0})$. Here, from $0<\tilde{c}<1$ and the assumption (\ref{1.2}), we have $$\frac{1}{2}-\frac{2\tilde{c}}{n(p-2)}>0.$$
From (\ref{gujishiS4a}) and Theorem 2.1 in \cite{r14}, we know $T_{{\rm {max}},(a)}=+\infty$.
\end{proof}

Next, we explain the reason we fail to prove the finite time blowup by the cross-constrained variational functional $S_{4,(a)}(\varphi)$ for the case $K_{(a)}(\varphi_{0})\geq d_{\mathcal{N}_{4,(a)}}$.

\begin{remark}\label{rem6.3}(Fail to prove the finite time blowup by the cross-constrained variational functional $S_{4,(a)}(\varphi)$ without the term $\|\nabla\varphi\|^{2}_{2}$)
When we select and combine the energy structural terms to construct the cross-constrained variational structures for (\ref{6.1}), if we do not select the term $\|\nabla\varphi\|^{2}_{2}$, i.e., to consider the case of $S_{4,(a)}(\varphi)$, we fail to prove the finite time blowup for the high initial energy case $K_{(a)}(\varphi_{0})\geq d_{_{\mathcal{N}_{4,(a)}}}$, similar to the analysis in Remark \ref{rem5.3}. Although we can prove the invariance of the manifold $\mathcal{V}_{R_{4,(a)}}$ by the similar process of proving Lemma \ref{lem6.1}, we fail to prove the finite time blowup by using the manifold $\mathcal{V}_{R_{4,(a)}}$. The reason is that we need to apply the auxiliary functional
$J_{(a)}(t)=\||x|\varphi\|^{2}_{2}$ and verify $J''_{(a)}(t)<0$ in order to prove the finite time blowup. Then, for the cross-constrained variational functional $S_{4,(a)}(\varphi)$, because of the lack of the term $\|\nabla\varphi\|^{2}_{2}$, we shall meet the similar difficulty revealed in Remark \ref{rem4.2}. Thus we fail to prove the finite time blowup for the high initial energy case $K(\varphi_{0})\geq d_{_{\mathcal{N}_{4,(a)}}}$.
 \end{remark}

\section*{Acknowledgement}
Runzhang Xu was supported by the Hainan Provincial Natural Science Foundation of China (126MS0251), the National Natural Science Foundation of China (12271122), and the Fundamental Research Funds for the Central Universities. In the process of this work (October 2019-October 2026), Runzhang Xu visited the Institute of Mathematical Sciences, The Chinese University of Hong Kong, on several occasions (June 9-21, 2025; August 25-27, 2025; December 21-24, 2025; January 15-27, 2026; February 20-March 1, 2026; April 21-May 3, 2026; and May 20-31, 2026), during which he benefited greatly from valuable discussions with Professor Zhouping Xin.

\end{document}